\numberwithin{equation}{section}
\newtheorem{thm}{Theorem}[section]
\newtheorem{lem}[thm]{Lemma}
\newtheorem{Prop}[thm]{Proposition}
\newcommand{\R}{\mathbb{R}}
\def\al {\alpha}
\begin{document}
\title[Nonlocal equations]{On elliptic equations with Stein-Weiss type convolution parts}

\author[L.\ Du]{Lele Du}
\author[F.\ Gao]{Fashun Gao}
\author[M.\ Yang]{Minbo Yang$^*$}

\address{Lele Du,\ \  Minbo Yang,\newline\indent
Department of Mathematics, Zhejiang Normal University, \newline\indent
	321004, Jinhua, Zhejiang, P. R. China}
\email{L. Du:1182714397@qq.com;\ \  M. Yang:mbyang@zjnu.edu.cn}

\address{Fashun Gao, \newline\indent Department of Mathematics and Physics, Henan University of Urban Construction, \newline\indent
 467044, Pingdingshan, Henan,  P. R. China}
\email{fsgao@zjnu.edu.cn}

\subjclass[2010]{35J15; 35J20; 35B06; 35B65}
\keywords{Weighted Hardy-Littlewood-Sobolev inequality; Moving plane methods; Concentration-compactness principle;  Poho\v{z}aev identity; Regularity; Symmetry.}

\thanks{$^*$Minbo Yang is the corresponding author who was partially supported by NSFC(11971436, 12011530199) and ZJNSF(LD19A010001) and Fashun Gao was partially supported by NSFC(11901155).}
\begin{abstract}
The aim of this paper is to study the critical elliptic equations with Stein-Weiss type convolution parts
$$
\displaystyle-\Delta u
=\frac{1}{|x|^{\alpha}}\left(\int_{\mathbb{R}^{N}}\frac{|u(y)|^{2_{\alpha, \mu}^{\ast}}}{|x-y|^{\mu}|y|^{\alpha}}dy\right)
|u|^{2_{\alpha, \mu}^{\ast}-2}u,~~~x\in\mathbb{R}^{N},
$$
where the critical
exponent is due to the weighted Hardy-Littlewood-Sobolev inequality and Sobolev embedding. We develop a nonlocal version of concentration-compactness principle to investigate the existence of solutions and study the regularity, symmetry of positive solutions by moving plane arguments. In the second part, the subcritical case is also considered, the existence, symmetry, regularity of the positive solutions are obtained.

\end{abstract}

\maketitle

%\begin{center}

%	\begin{minipage}{8.5cm}

%		\small
%
%		\tableofcontents
%
%	\end{minipage}
%
%\end{center}

%\medskip

\section{Introduction and main results}

The existence of sharp constant for the Stein-Weiss inequality, also called the weighted Hardy-Littlewood-Sobolev inequality, has attracted a lot of interests.
\begin{Prop}\label{prop1}\cite{SW}
 Let $1<r,s<\infty$, $0<\mu<N$, $\alpha+\beta\geq0$ and $\alpha+\beta+\mu\leq N$, $f\in L^{r}(\mathbb{R}^N)$ and $h\in L^{s}(\mathbb{R}^N)$. There exists a constant $C(\alpha,\beta,\mu,N,s,r)$, independent of $f,h$, such that
\begin{equation}\label{WHLS}
\int_{\mathbb{R}^{N}}\int_{\mathbb{R}^{N}}\frac{f(x)h(y)}{|x|^{\alpha}|x-y|^{\mu}|y|^{\beta}}dxdy\leq C(\alpha,\beta,\mu,N,s,r) |f|_{r}|h|_{s}.
\end{equation}
where
$$
1-\frac{1}{r}-\frac{\mu}{N}<\frac{\alpha}{N}<1-\frac{1}{r} \ \ \mbox{and} \ \ \frac{1}{r}+\frac{1}{s}+\frac{\alpha+\beta+\mu}{N}=2.
$$
\end{Prop}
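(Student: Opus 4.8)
The plan is to prove \eqref{WHLS} \emph{ab initio} by a region decomposition that reduces it to the unweighted Hardy--Littlewood--Sobolev inequality and to elementary convolution estimates for sequences. I would first reduce to $f,h\ge 0$ and record that the kernel $K(x,y)=|x|^{-\alpha}|x-y|^{-\mu}|y|^{-\beta}$ is homogeneous of degree $-(\alpha+\beta+\mu)$, so that, together with the scaling of $|f|_r|h|_s$, the balance relation $\frac1r+\frac1s+\frac{\alpha+\beta+\mu}{N}=2$ is precisely what makes the inequality scale-invariant. Next I would cut $\R^N\times\R^N$ into the ``diagonal'' region $C=\{\,|x|/2<|y|<2|x|\,\}$ and the two ``off-diagonal'' regions $A=\{\,|y|\le|x|/2\,\}$, $B=\{\,|x|\le|y|/2\,\}$, using in each region that one factor of $K$ is comparable to a power of $\max(|x|,|y|)$.

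On $C$, where $|x|\sim|y|$, I would split further into dyadic annuli $C_j=C\cap\{2^j\le|x|<2^{j+1}\}$, $j\in\Z$; there $|x|^{-\alpha}|y|^{-\beta}\sim 2^{-j(\alpha+\beta)}$. Writing $f_j,h_j$ for the restrictions of $f,h$ to the corresponding annuli (of measure $\sim 2^{jN}$) and picking auxiliary exponents $\tilde r\le r$, $\tilde s\le s$ with $\frac1{\tilde r}+\frac1{\tilde s}+\frac\mu N=2$ --- possible exactly because $\alpha+\beta\ge 0$ --- the classical HLS inequality gives $\iint_{C_j}Kfh\lesssim 2^{-j(\alpha+\beta)}|f_j|_{\tilde r}|h_j|_{\tilde s}$, while Hölder on the annulus gives $|f_j|_{\tilde r}\lesssim 2^{jN(1/\tilde r-1/r)}|f_j|_r$ and similarly for $h_j$. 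Since $(\frac1{\tilde r}-\frac1r)+(\frac1{\tilde s}-\frac1s)=\frac{\alpha+\beta}{N}$, the powers of $2^j$ cancel, leaving $\iint_{C_j}Kfh\lesssim|f_j|_r|h_j|_s$; summing in $j$ by Hölder and the embedding $\ell^s\hookrightarrow\ell^{r'}$ (valid since $\alpha+\beta+\mu\le N$ forces $s\le r'$), and using that the annuli have bounded overlap, bounds the contribution of $C$ by $C|f|_r|h|_s$.

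On $A$ one has $|x-y|\sim|x|\ge|y|$, hence $K(x,y)\sim|x|^{-\alpha-\mu}|y|^{-\beta}$, a \emph{product} kernel; $B$ is handled identically via the symmetry $x\leftrightarrow y$, $\alpha\leftrightarrow\beta$, $f\leftrightarrow h$, $r\leftrightarrow s$. Decomposing now in both radial scales, $2^j\le|x|<2^{j+1}$ and $2^k\le|y|<2^{k+1}$ with $k<j$, and applying Hölder on each annulus, the contribution of $A$ is dominated by $\sum_j\sum_{k<j}2^{j(N/r'-\alpha-\mu)}2^{k(N/s'-\beta)}|f_j|_r|h_k|_s$. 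The balance relation yields $(N/r'-\alpha-\mu)+(N/s'-\beta)=0$, so with $\tau:=N/s'-\beta$ this double sum equals $\sum_j|f_j|_r\sum_{m\ge 1}2^{-m\tau}|h_{j-m}|_s$, i.e. $(|f_j|_r)_j\in\ell^r$ paired with the convolution of $(|h_k|_s)_k\in\ell^s$ against $w=(2^{-m\tau})_{m\ge1}$. Hölder in the pair $(\ell^r,\ell^{r'})$ followed by Young's inequality for sequences then closes the estimate: positivity $\tau>0$ --- which is exactly the lower bound $\frac\alpha N>1-\frac1r-\frac\mu N$ of the hypothesis, rewritten as $\beta<N/s'$ --- makes $w$ summable, and $w\in\ell^t$ with $\frac1t=1+\frac1{r'}-\frac1s=\frac{\alpha+\beta+\mu}{N}$ demands $t\ge 1$, i.e. the assumption $\alpha+\beta+\mu\le N$; the upper bound $\frac\alpha N<1-\frac1r$ enters symmetrically for $B$.

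The real work --- and where a misstep is most likely --- is the exponent bookkeeping: checking that admissible $\tilde r,\tilde s\in(1,\infty)$ exist, that the two powers of $2$ in the off-diagonal sum are genuinely opposite so that it collapses to a convolution with an $\ell^t$ (indeed $\ell^1$) kernel, and that the borderline configurations $\alpha+\beta=0$, $\alpha+\beta+\mu=N$, $\mu\uparrow N$ produce no degeneration --- each of which turns out to correspond to exactly one of the stated constraints on $\alpha,\beta,\mu,r,s$. A cleaner but less self-contained route is the original one of Stein and Weiss \cite{SW}: establish the ``diagonal'' case $r=s'$ by a weighted Schur test and pass to the full range by the Marcinkiewicz interpolation theorem, of which the decomposition above is essentially an explicit incarnation.
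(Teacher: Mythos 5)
The paper does not prove this proposition at all: it is quoted verbatim from Stein and Weiss and justified only by the citation \cite{SW}, so there is no internal argument to compare against. Judged on its own, your dyadic scheme is sound and the exponent bookkeeping you worry about does check out. On the diagonal region the auxiliary exponents exist precisely when $\alpha+\beta\ge 0$: one needs $\tfrac1{\tilde r}+\tfrac1{\tilde s}=2-\tfrac{\mu}{N}=\tfrac1r+\tfrac1s+\tfrac{\alpha+\beta}{N}\ge\tfrac1r+\tfrac1s$ together with $\tilde r,\tilde s>1$, and the admissible interval for $\tfrac1{\tilde r}$ is nonempty (in the borderline case $\alpha+\beta=0$ simply take $\tilde r=r$, $\tilde s=s$); the cancellation $N(\tfrac1{\tilde r}-\tfrac1r)+N(\tfrac1{\tilde s}-\tfrac1s)=\alpha+\beta$ and the embedding $\ell^s\hookrightarrow\ell^{r'}$ from $\alpha+\beta+\mu\le N$ are as you state, provided you take $h_j$ on the slightly enlarged annulus $2^{j-1}\le|y|<2^{j+2}$ and invoke bounded overlap. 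On the off-diagonal regions, $(N/r'-\alpha-\mu)+(N/s'-\beta)=N\bigl(2-\tfrac1r-\tfrac1s-\tfrac{\alpha+\beta+\mu}{N}\bigr)=0$ is exactly the balance relation, $\tau=N/s'-\beta>0$ is equivalent to $\tfrac{\alpha}{N}>1-\tfrac1r-\tfrac{\mu}{N}$, and $\alpha<N/r'$ plays the symmetric role for $B$, so Hölder in $(\ell^r,\ell^{r'})$ plus Young with the geometric kernel closes the estimate; note that once $\tau>0$ the kernel is already in $\ell^1$ and $s\le r'$ suffices, so the $\ell^t$ discussion is only needed to see where $\alpha+\beta+\mu\le N$ enters. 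The one sentence I would not lean on is the historical aside: deducing the full range from the single case $r=s'$ by Marcinkiewicz interpolation is not as immediate as you suggest, but since your main argument is self-contained this does not affect the proof. So: correct approach, genuinely different from the paper only in the trivial sense that the paper outsources the proof to \cite{SW}, while yours reduces \eqref{WHLS} to classical HLS and discrete Young by an explicit annular decomposition.
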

\noindent
 In 1983, Lieb \cite{LE1} proved the existence of sharp constant for the case that one of $r$ and $s$ equals to $2$ or $r=s$. For $1<r,s<\infty$ with $\frac1r+\frac1s=1$, the sharp constant is given by Beckner in \cite{B1, B2}. Dou and Zhu \cite{DZ} established the reversed HLS inequality and classified the extremal functions for the best constant.

Concerning the weighted Hardy-Littlewood-Sobolev inequality \eqref{WHLS}, if $\alpha=\beta$ and $s=r$, the integral
$$
\int_{\mathbb{R}^{N}}\int_{\mathbb{R}^{N}}\frac{|u(x)|^{q}|u(y)|^{q}}{|x|^{\alpha}|x-y|^{\mu}|y|^{\alpha}}dxdy
$$
is well defined if
$$
2-\frac{2\alpha+\mu}{N}\leq q\leq\frac{2N-2\alpha-\mu}{N-2}.
$$
In this sense, we call $2_{\ast \alpha, \mu}=2-\frac{2\alpha+\mu}{N}$ the lower critical exponent and $2_{\alpha, \mu}^{\ast}=\frac{2N-2\alpha-\mu}{N-2}$ the upper critical exponent in the sense of the weighted Hardy-Littlewood-Sobolev inequality. Generally, for $\alpha=\beta\geq0$ and $0<2\alpha+\mu\leq N$, the limit embedding for the upper critical exponent leads to the inequality
$$
\left(\int_{\mathbb{R}^{N}}\int_{\mathbb{R}^{N}}\frac{|u(x)|^{2_{\alpha, \mu}^{\ast}}|u(y)|^{2_{\alpha, \mu}^{\ast}}}{|x|^{\alpha}|x-y|^{\mu}|y|^{\alpha}}dxdy\right)^{\frac{1}{2_{\alpha, \mu}^{\ast}}}\leq C\int_{\mathbb{R}^N}|\nabla u|^{2}dx.
$$
We define
$$
\|u\|_{\alpha, \mu}^{2\cdot2_{\alpha, \mu}^{\ast}}:=\int_{\mathbb{R}^N}\int_{\mathbb{R}^N}
\frac{|u(x)|^{2_{\alpha, \mu}^{\ast}}|u(y)|^{2_{\alpha, \mu}^{\ast}}}
{|x|^{\alpha}|x-y|^{\mu}|y|^{\alpha}}dxdy
$$
and use $S_{\alpha, \mu}$ to denote the best constant
\begin{equation}\label{MiniPro}
S_{\alpha, \mu}:=\inf\limits_{u\in D^{1,2}(\mathbb{R}^N)\backslash\{{0}\},\|u\|_{\alpha}=1} \int_{\mathbb{R}^N}|\nabla u|^{2}dx.
\end{equation}
From the weighted Hardy-Littlewood-Sobolev inequality, for all $u\in D^{1,2}(\mathbb{R}^N)$ we know
$$
\|u\|_{\alpha,\mu}^{2}\leq C(N,\mu,\alpha)^{\frac{1}{2_{\alpha, \mu}^{\ast}}}|u|_{2^{\ast}}^{2}.
$$
Then
$$
S_{\alpha, \mu}\geq\frac{S}{C(N,\mu,\alpha)^{\frac{1}{2_{\alpha, \mu}^{\ast}}}}>0,
$$
where $S$ is the best Sobolev constant. Obviously, the study of the best constant $S_{\alpha, \mu}$ is related to the nonlocal Euler-Lagrange equation:
\begin{equation}\label{Intcase}
\displaystyle-\Delta u
=\frac{1}{|x|^{\alpha}}\left(\int_{\mathbb{R}^{N}}\frac{|u(y)|^{2_{\alpha, \mu}^{\ast}}}{|x-y|^{\mu}|y|^{\alpha}}dy\right)
|u|^{2_{\alpha, \mu}^{\ast}-2}u,~~~x\in\mathbb{R}^{N},
\end{equation}
where the convolutionary nonlinearity is called the Stein-Weiss type.

For the case $\alpha=\beta=0$ in Proposition \ref{prop1}, Lieb \cite{LE1} proved that the best constant for the classical Hardy-Littlewood-Sobolev inequality can be achieved by some extremals, that is
\begin{Prop}\label{prop1}
If $\alpha=\beta=0$ and $s=r=2N/(2N-\mu)$, then there is equality in \eqref{WHLS} if and only if $f\equiv(const.)h$ and
$$
h(x)=A(\gamma^{2}+|x-a|^{2})^{-(2N-\mu)/2}
$$
for some $A\in \mathbb{C}$, $0\neq\gamma\in\mathbb{R}$ and $a\in \mathbb{R}^{N}$.
\end{Prop}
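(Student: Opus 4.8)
The plan is Lieb's: reduce the two-function problem to maximizing a single quotient over symmetric-decreasing functions, prove the maximum is attained modulo the conformal group, and then identify the maximizer. For $0<\mu<N$ the Riesz kernel $|x|^{-\mu}$ has Fourier transform a positive constant times $|\xi|^{\mu-N}$, so the bilinear form $B(f,g):=\int_{\R^{N}}\int_{\R^{N}}|x-y|^{-\mu}f(x)\,g(y)\,dx\,dy$ is an inner product on its form domain. Applied to $|f|,|h|$ this gives $\int_{\R^N}\int_{\R^N}|x-y|^{-\mu}f(x)h(y)\,dx\,dy\le B(|f|,|h|)\le B(|f|,|f|)^{1/2}B(|h|,|h|)^{1/2}$, with equality in the last step precisely when $|h|$ is a nonnegative multiple of $|f|$. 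Hence the proposition reduces to maximizing $Q(u):=B(u,u)/|u|_{r}^{2}$ with $r=2N/(2N-\mu)$ over nonnegative $u$, and classifying the maximizers. By Riesz's rearrangement inequality $B(u,u)\le B(u^{*},u^{*})$ (where $u^{*}$ is the symmetric-decreasing rearrangement of $u$) while $|u^{*}|_{r}=|u|_{r}$, so one may restrict to radial nonincreasing $u$, and the equality case of Riesz's inequality shows every maximizer is such a function up to a translation.

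\textbf{Existence of a maximizer --- the crux.} The difficulty is that $Q$ is invariant under translations and dilations, and, after stereographic projection $\mathcal S\colon S^{N}\to\R^{N}$, under the whole noncompact conformal group of $S^{N}$: writing $u(x)=\bigl(\tfrac{2}{1+|x|^{2}}\bigr)^{(2N-\mu)/2}U(\mathcal S^{-1}x)$ turns $Q(u)$ into $\int_{S^{N}}\int_{S^{N}}|\xi-\eta|^{-\mu}U(\xi)U(\eta)\,d\sigma(\xi)\,d\sigma(\eta)\big/|U|_{L^{r}(S^{N})}^{2}$, which is invariant under rotations and one further ``inversion'' symmetry $V$ (conjugate on $\R^{N}$ to $x\mapsto x/|x|^{2}$). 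I would use the competing-symmetries argument of Carlen and Loss: starting from any nonnegative $u_{0}$ with $|u_{0}|_{r}=1$, the iterates $u_{n+1}:=R\,V u_{n}$, with $R$ the symmetric-decreasing rearrangement, satisfy $Q(u_{n+1})\ge Q(u_{n})$, converge in $L^{r}(\R^{N})$, and the limit is forced to be the unique conformally invariant profile $u_{\ast}(x)=c\,(1+|x|^{2})^{-(2N-\mu)/2}$. This delivers at once the existence of a maximizer, its explicit shape, and --- by tracking the equality cases of the rearrangement and Cauchy--Schwarz steps along the iteration --- its uniqueness modulo the symmetry group. (A concentration-compactness argument on $S^{N}$ is an alternative: after the reduction to radial functions, rearrangement excludes vanishing and dichotomy for a maximizing sequence, forcing convergence after a rotation.)

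\textbf{Euler--Lagrange equation and conclusion.} A normalized maximizer $u\ge0$ obeys $\lambda\,u(x)^{r-1}=\int_{\R^{N}}|x-y|^{-\mu}u(y)\,dy$ with $r-1=\mu/(2N-\mu)$ and $\lambda>0$; since the Riesz potential on the right is everywhere positive and (by a standard bootstrap) smooth, $u>0$ and $u\in C^{\infty}$. One may now either invoke $u=u_{\ast}$ from the previous step, or --- connecting with the technique used later in this paper --- apply the moving-plane method in integral form to this equation (as in Chen--Li--Ou) to force $u$ to be radially symmetric and strictly decreasing about some point $a$, reducing the equation to an ODE whose positive solutions are exactly $u(x)=A\,(\gamma^{2}+|x-a|^{2})^{-(2N-\mu)/2}$. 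For the converse, that this $u$ realizes equality in \eqref{WHLS} is a direct computation, cleanest on $S^{N}$ where $u$ corresponds to a constant function. Unwinding the reductions: equality in \eqref{WHLS} forces $f(x)h(y)$ to have a.e.\ constant argument and $|h|$ to be a positive multiple of $|f|$, hence $f\equiv(\mathrm{const.})h$ with the constant complex, and forces $|f|$ to be a translate-dilate of the profile, hence $h(x)=A(\gamma^{2}+|x-a|^{2})^{-(2N-\mu)/2}$ with $A\in\C$, $0\ne\gamma\in\R$, $a\in\R^{N}$. The entire weight of the proof sits in the second step, namely overcoming the noncompactness created by the conformal invariance.
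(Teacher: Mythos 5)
The paper does not prove this proposition at all: it is quoted as Lieb's sharp Hardy--Littlewood--Sobolev theorem with the citation \cite{LE1}, and it is used only as motivation (the open problem \eqref{Int}) and to bound $S_{\alpha,\mu}$ from below. So your proposal is not competing with an argument in the paper but reconstructing the literature proof, and it does so along the standard route: positive definiteness of $|x|^{-\mu}$ plus Cauchy--Schwarz in the bilinear form to reduce the diagonal case $r=s=2N/(2N-\mu)$ to a one-function quotient (this is exactly why the sharp bilinear and quadratic constants coincide when $r=s$), Riesz rearrangement and its equality case to reduce to symmetric-decreasing functions, the Carlen--Loss competing-symmetries iteration (or Lieb's original argument, or concentration--compactness on $S^{N}$) to defeat the conformal noncompactness and simultaneously produce the profile $c\,(1+|x|^{2})^{-(2N-\mu)/2}$, and finally the unwinding of the equality cases to get $f\equiv(\mathrm{const.})h$ with a complex constant. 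This is correct in outline and is essentially how the result is proved in \cite{LE1} and in Carlen--Loss, with the Chen--Li--Ou moving-plane classification \cite{CLO1} giving the stronger statement (all positive solutions of the Euler--Lagrange integral equation, not just maximizers) that the present paper later uses.

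Two caveats you should be aware of. First, the whole weight of the argument sits in imported theorems --- the strict equality case of Riesz's rearrangement inequality for the strictly decreasing kernel, the Carlen--Loss convergence theorem, and the Chen--Li--Ou classification --- so as written this is a roadmap rather than a self-contained proof; that is acceptable for a result of this depth, but those are precisely the nontrivial steps. Second, the phrase ``reducing the equation to an ODE whose positive solutions are exactly'' the bubbles overstates what radial symmetry gives you for the \emph{integral} equation $\lambda u^{r-1}=|x|^{-\mu}\ast u$: symmetry and monotonicity about a point do not by themselves identify the profile, and one needs either the method of moving spheres or the full Chen--Li--Ou argument (or, if you only want maximizers, the identification already delivered by competing symmetries) to pin down $h(x)=A(\gamma^{2}+|x-a|^{2})^{-(2N-\mu)/2}$.
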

\noindent
Then he posed the classification of the solutions of
\begin{equation}\label{Int}
u(x)=\int_{\mathbb{R}^{N}}\frac{u(y)^{\frac{N+\tau}{N-\tau}}}{|x-y|^{N-\tau}}dy,~~x\in\mathbb{R}^{N}
\end{equation}
as an open problem. In fact, equation \eqref{Int} arises as an Euler-Lagrange equation for a functional under a constraint in the
context of the Hardy-Littlewood-Sobolev inequality and is closely related to the well-known fractional equation
\begin{equation}\label{Frac}
(-\Delta)^{\frac{\tau}{2}}u=u^{\frac{N+\tau}{N-\tau}},~~x\in\mathbb{R}^{N}.
\end{equation}
When $N\geq3$, $\tau=2$, equation \eqref{Frac} goes back to
\begin{equation}\label{lcritical}
-\Delta u=u^{\frac{N+2}{N-2}},~~x\in\mathbb{R}^{N}.
\end{equation}
The classification of the solutions of equation \eqref{lcritical} and the related best Sobolev constant play an important role in the Yamabe problem, the prescribed scalar curvature problem on Riemannian manifolds and the priori estimates in nonlinear equations. Aubin \cite{A}, Talenti \cite{Ta} proved that the best Sobolev constant $S$ can be achieved by a two-parameter solution of the form
\begin{equation}\label{U0}
 U_0(x):=[N(N-2)]^{\frac{N-2}{4}}\Big(\frac{t}{t^2+|x-\xi|^{2}}\Big)^{\frac{N-2}{2}}.
 \end{equation}
 Equation \eqref{lcritical} is related to the Euler-Lagrange equation
of the extremal functions of the Sobolev inequality  and is a special case of the Lane-Emden equation
\begin{equation}\label{LE}
-\Delta u=u^{p},~~x\in\mathbb{R}^{N}.
\end{equation}
It is well known that, for $1\leq p<\frac{N+2}{N-2}$, Gidas and Spruck \cite{GS} proved that equation \eqref{LE} has no positive solutions. This
result is optimal in the sense that for any $p\geq\frac{N+2}{N-2}$, there are infinitely many positive solutions
to \eqref{LE}. Gidas, Ni and Nirenberg \cite{GNN}, Caffarelli, Gidas and Spruck \cite{CGS} proved the symmetry and uniqueness of the positive solutions respectively. Chen and Li \cite{CL}, Li \cite{LC} simplified the results above as an application of the moving plane method.  Wei and Xu \cite{WX} generalized the classification of the solutions of the more general equation \eqref{Frac} with $\tau$ being any even number between $0$ and $N$. Later, Chen, Li and Ou \cite{CLO1} developed the method of moving planes in integral forms to prove that any critical points of the functional was radially symmetric and assumed the unique form and gave a positive answer to Lieb's open problem involving \eqref{Int}. Li \cite{Li} also studied the regularity of the locally integrable solution for \eqref{Int} and used moving sphere method to prove the classification. In \cite{CLO3, Li}, the authors considered the nonnegative solutions of the integral equation
$$
u(x)=\int_{\mathbb{R}^{N}}\frac{|u(y)|^{\frac{2\tau}{N-\tau}}u(y)}{|x-y|^{N-\tau}}dy,
$$
and proved that $u\in  C^{\infty}(\mathbb{R}^{N})$.
As a generalization of equation \eqref{Int}, Lu and Zhu \cite{LZ} recently studied the symmetry and
regularity of extremals of the following weighted integral equation:
\begin{equation}\label{Int2}
u(x)=\int_{\mathbb{R}^{N}}\frac{u(y)^{\frac{(N+\tau-2s)}{N-\tau}}}{|y|^s|x-y|^{N-\tau}}dy,~~x\in\mathbb{R}^{N},
\end{equation}
which is related to the fractional singular case
\begin{equation}\label{Frac2}
(-\Delta)^{\frac{\tau}{2}}u=\frac{u^{\frac{N+\tau-2s}{N-\tau}}}{|x|^s},~~x\in\mathbb{R}^{N}.
\end{equation}
If $\tau=2$ and $0<s<2$, equation \eqref{Frac2} is closely related to the Euler-Lagrange equation
of the extremal functions of the Hardy-Sobolev inequality which says that there exists a constant $C>0$ such that
\begin{equation}\label{HS}
\left(\int_{\mathbb{R}^{N}}\frac{u^{\frac{2(N-s)}{N-\tau}}}{|x|^s}dx\right)^{\frac{N-\tau}{N-s}}\leq C \int_{\mathbb{R}^{N}}|\nabla u|^{2}dx.
\end{equation}
Lieb \cite{LE1} proved that the best constant in \eqref{HS} is achieved and the extremal
function is identified by
$$\frac{1}{(1+|x|^{2-t})^{\frac{N-2}{2-t}}}.$$

Now we are interested in the existence of extremal functions for the best constant $S_{\alpha, \mu}$ and to classify the positive solutions for equation \eqref{Intcase}. The best constant for the weighted Hardy-Littlewood-Sobolev inequality has already been proved by Lieb in \cite{LE1}. Chen and Li \cite{CLP} proved the uniqueness of the solutions for the following system, which is related to the Euler-Lagrange equation for the of inequality \eqref{WHLS},
\begin{equation}\label{PDE}
	\left\lbrace
	\begin{aligned}
		-\Delta(|x|^{\al}u(x))&=\frac{v^{q}(x)}{|x|^{\beta}},\\
		-\Delta(|x|^{\beta}v(x))&=\frac{u^{p}(x)}{|x|^{\al}},
	\end{aligned}
	\right.
\end{equation}
they also classified the solutions and obtained the best constant in the corresponding weighted HLS inequality when $\al=\beta$ and $p=q$. The existence of extremals for the best constant $S_{\alpha, \mu}$ is totally different and new. For the special case $\al=\beta=0$,
Liu\cite{Liu}, Lei \cite{YL2},  Du and Yang \cite{DY} studied equation \eqref{Intcase} and discussed the regularity, symmetry and classification of the positive solutions of equation \eqref{Intcase}. Furthermore, Du and Yang \cite{DY} obtained the nondegeneracy of the unique solutions for the equation when
$\mu$ close to $N$.

 As we all know, to study the best constant for critical minimizing problem, P.L. Lions established the well known Concentration-compactness principles  \cite{Ls1, Ls2, Ls3, Ls4}. Particularly, the second concentration-compactness principle \cite{Ls1} has also been developed to study the limit case involving the Hardy-Littlewood-Sobolev inequality. Inspired by \cite{Ls1}, we will prove a version of the second concentration-compactness principle for the weighted critical nonlocal problem and prove the existence of minimizers for problem \eqref{MiniPro}. The first result is the following theorem.
\begin{thm}\label{thm1.2}
Let $\{u_{n}\}\subset D^{1,2}(\mathbb{R}^N)$ be a minimizing sequence satisfying \eqref{MiniPro}. Then there exists a sequence $\{\tau_{n}\}\subset (0,+\infty)$ such that $\{u_{n}^{(\tau_n)}:=\tau^{\frac{N-2}{2}}u_{n}(\tau_n x)\}$ contains a convergent subsequence. In particular, there exists a minimizer for $S_{\alpha, \mu}$.
\end{thm}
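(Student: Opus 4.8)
The plan is to run a concentration--compactness argument in the spirit of P.-L.\ Lions, adapted to the dilation-invariant weighted nonlocal functional, the analytic engine being the nonlocal version of the second concentration--compactness principle. First I exploit the scaling invariance. For $v\in D^{1,2}(\R^N)$ write $v^{(\tau)}(x)=\tau^{(N-2)/2}v(\tau x)$ and $d\nu_v=\big(\int_{\R^N}\frac{|v(y)|^{2^{\ast}_{\alpha,\mu}}}{|x-y|^{\mu}|y|^{\alpha}}\,dy\big)\frac{|v(x)|^{2^{\ast}_{\alpha,\mu}}}{|x|^{\alpha}}\,dx$, so that $\nu_v(\R^N)=\|v\|_{\alpha,\mu}^{2\cdot 2^{\ast}_{\alpha,\mu}}$; a change of variables using $(N-2)2^{\ast}_{\alpha,\mu}=2N-2\alpha-\mu$ gives $\int|\nabla v^{(\tau)}|^2=\int|\nabla v|^2$, $\nu_{v^{(\tau)}}(x)=\tau^{N}\nu_v(\tau x)$, hence $\|v^{(\tau)}\|_{\alpha,\mu}=\|v\|_{\alpha,\mu}$ and $\nu_{v^{(\tau)}}(B_1)=\nu_v(B_\tau)$. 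For the minimizing sequence (with $\|u_n\|_{\alpha,\mu}=1$) the map $\tau\mapsto\nu_{u_n}(B_\tau)$ is continuous and increases from $0$ to $1$, so I may choose $\tau_n>0$ with $\nu_{u_n^{(\tau_n)}}(B_1)=1/2$ and rename $u_n:=u_n^{(\tau_n)}$; this is still a minimizing sequence, bounded in $D^{1,2}(\R^N)$, carrying exactly half of its nonlocal mass inside the unit ball.

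Next, up to a subsequence $u_n\rightharpoonup u$ in $D^{1,2}(\R^N)$, $u_n\to u$ a.e., and $|\nabla u_n|^2\,dx\rightharpoonup^{\ast}\mu$, $\nu_{u_n}\,dx\rightharpoonup^{\ast}\nu$ weakly-$\ast$ as finite measures; put $\mu_\infty=\lim_{R\to\infty}\limsup_n\int_{|x|>R}|\nabla u_n|^2$ and $\nu_\infty=\lim_{R\to\infty}\limsup_n\int_{|x|>R}d\nu_{u_n}$. The nonlocal concentration--compactness principle supplies an at most countable set of points $x_j\ne0$ and nonnegative reals with $\nu=\nu_u+\nu_0\delta_0+\sum_j\nu_j\delta_{x_j}$ and $\mu\ge|\nabla u|^2\,dx+\mu_0\delta_0+\sum_j\mu_j\delta_{x_j}$. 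The atoms at $x_j\ne0$ must vanish: there the singular weights are comparable to the positive constant $|x_j|^{-\alpha}$, and for a bubble $w_\varepsilon(x)=\varepsilon^{-(N-2)/2}w((x-x_j)/\varepsilon)$ one finds $\nu_{w_\varepsilon}(\R^N)\sim|x_j|^{-2\alpha}\varepsilon^{2\alpha}$ against a constant Dirichlet energy, so the local critical constant at $x_j$ is $+\infty$ (this is where $\alpha>0$ is used; for $\alpha=0$ the problem is translation invariant and corresponds to the critical Choquard equation of \cite{DY}); hence $\nu=\nu_u+\nu_0\delta_0$. At the origin and at infinity the relevant constant is the dilation-invariant weighted one, giving $\mu_0\ge S_{\alpha,\mu}\,\nu_0^{1/2^{\ast}_{\alpha,\mu}}$, $\mu_\infty\ge S_{\alpha,\mu}\,\nu_\infty^{1/2^{\ast}_{\alpha,\mu}}$, while $\int|\nabla u|^2\ge S_{\alpha,\mu}\|u\|_{\alpha,\mu}^2=S_{\alpha,\mu}\,\big(\nu_u(\R^N)\big)^{1/2^{\ast}_{\alpha,\mu}}$ directly from the definition of $S_{\alpha,\mu}$.

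Passing the identities to the limit and using $\nu_{u_n}(\R^N)=1$, $\int|\nabla u_n|^2\to S_{\alpha,\mu}$ yields $\nu_u(\R^N)+\nu_0+\nu_\infty=1$ and $S_{\alpha,\mu}\ge\int|\nabla u|^2+\mu_0+\mu_\infty$; combining with the three reverse inequalities and dividing by $S_{\alpha,\mu}>0$,
$$1\ \ge\ \big(\nu_u(\R^N)\big)^{1/2^{\ast}_{\alpha,\mu}}+\nu_0^{1/2^{\ast}_{\alpha,\mu}}+\nu_\infty^{1/2^{\ast}_{\alpha,\mu}}\ \ge\ \nu_u(\R^N)+\nu_0+\nu_\infty\ =\ 1,$$
the middle inequality using $t^{1/2^{\ast}_{\alpha,\mu}}\ge t$ on $[0,1]$ (since $2^{\ast}_{\alpha,\mu}>1$), with equality only at $t\in\{0,1\}$. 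Hence exactly one of $\nu_u(\R^N),\nu_0,\nu_\infty$ equals $1$. The normalization $\nu_{u_n}(B_1)=1/2$ rules out $\nu_0=1$ (which would force $\nu_{u_n}(B_\varepsilon)\to1$ for all $\varepsilon>0$) and $\nu_\infty=1$ (which would force $\nu_{u_n}(B_1)\to0$); therefore $\nu_u(\R^N)=1$, i.e.\ $\|u\|_{\alpha,\mu}=1$ and $u\ne0$, and then $S_{\alpha,\mu}\le\int|\nabla u|^2\le S_{\alpha,\mu}$, so $u$ attains $S_{\alpha,\mu}$. Finally, $u_n\rightharpoonup u$ in the Hilbert space $D^{1,2}(\R^N)$ together with $\int|\nabla u_n|^2\to\int|\nabla u|^2$ upgrades to $u_n\to u$ strongly, which is the assertion of the theorem.

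I expect the real difficulty to be the nonlocal concentration--compactness principle of the second step. The density $\nu_{u_n}$ is a product of a Riesz-type convolution with two singular weights and does not localize the way a single power $|u_n|^{p}$ does, so identifying the atomic part and, above all, pinning down the \emph{sharp} constant $S_{\alpha,\mu}$ at the origin and at infinity requires combining the weighted Hardy--Littlewood--Sobolev inequality \eqref{WHLS} with a delicate splitting of the double integral and uniform control of the cross terms as the cut-off radii shrink; a subsidiary point is the scaling computation ruling out nonlocal mass at points $x_j\ne0$.
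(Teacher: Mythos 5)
Your overall architecture --- rescale so that a fixed fraction of the nonlocal mass sits at unit scale, invoke a nonlocal concentration--compactness principle, and run the dichotomy $1\ge a^{1/2_{\alpha,\mu}^{\ast}}+b^{1/2_{\alpha,\mu}^{\ast}}+c^{1/2_{\alpha,\mu}^{\ast}}\ge a+b+c=1$ --- is the same as the paper's, but your normalization and your mechanism for excluding single-point concentration differ, and this is where the gaps lie. You fix $\tau_n$ by requiring the mass of $\nu_{u_n^{(\tau_n)}}$ in the ball $B_1$ \emph{centered at the origin} to equal $1/2$, and you then discard atoms of $\nu$ at points $x_j\neq0$ by a bubble-scaling heuristic valid only for $\alpha>0$. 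Two problems. First, the computation $\nu_{w_\varepsilon}(\R^N)\sim\varepsilon^{2\alpha}$ only shows that one particular concentrating profile carries no limiting mass at $x_j\neq0$; it does not prove that the defect measure $\nu$ has no atom there. (For $\alpha>0$ this can be repaired: near $x_j\neq 0$ the weights are bounded, the unweighted HLS inequality controls the localized nonlocal energy by an $L^{s}$-norm with $s=2_{\alpha,\mu}^{\ast}\cdot\frac{2N}{2N-\mu}<2^{\ast}$, and Rellich compactness kills the atom --- but this argument must be made and is not part of Lemma \ref{CCP1}.) Second, for $\alpha=0$, which Theorem \ref{thm1.2} covers, atoms may sit anywhere, and your origin-centered normalization genuinely fails to exclude the scenario $\nu=\delta_{z_0}$ with $|z_0|=1$: weak-$\ast$ convergence yields no contradiction with $\nu_{u_n^{(\tau_n)}}(B_1)=1/2$ when the concentration point lies exactly on $\partial B_1$; deferring this case to \cite{DY} does not prove the statement as it stands. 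The paper avoids both issues at once by normalizing with the L\'evy concentration function $\sup_{z\in\R^N}\int_{B(z,1)}d\nu_{u_n^{(\tau_n)}}=\tfrac12$, so that in the alternative ``$\nu$ concentrated at a single point $z_0$'' the contradiction $\tfrac12\ge\int_{B(z_0,1)}d\nu_{u_n^{(\tau_n)}}\to1$ works for every location of $z_0$ and every $\alpha\ge0$, with no need to classify where atoms can occur.

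A second, smaller gap: you attribute to the concentration--compactness principle the clean inequality $\mu_\infty\ge S_{\alpha,\mu}\,\nu_\infty^{1/2_{\alpha,\mu}^{\ast}}$ at infinity. Lemma \ref{CCP1} only provides the product form \eqref{cp71}, $S_{\alpha,\mu}^{2}\nu_\infty^{2/2_{\alpha,\mu}^{\ast}}\le\omega_\infty\bigl(\int_{\R^N}d\omega+\omega_\infty\bigr)$, precisely because the tail of the nonlocal density interacts with the bulk of $u_n$. Your clean form can in fact be established (the cross term over $|x|\ge R$, $|y|\le R/2$ is $O(R^{-\mu})$ uniformly in $n$, so $\nu_\infty$ is asymptotically a tail--tail quantity to which the definition of $S_{\alpha,\mu}$ applies via $\psi_R u_n$), or one can argue as the paper does by combining \eqref{cp71} with $\int_{\R^N}d\omega+\omega_\infty\le S_{\alpha,\mu}$; either way this step needs an argument that your write-up does not supply.
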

After rescaling, we know there exists $u$ satisfies the nonlocal Euler-Lagrange equation \eqref{Intcase}.
Since we are working with equation \eqref{Intcase} in $D^{1,2}(\mathbb{R}^N)$, we know $u$ must be in $L^{2^*}(\mathbb{R}^{N}), 2^*=\frac{2N}{N-2}$. With this assumption, applying regularity lifting lemma by contracting operators, we can show that the positive solution $u$ possesses higher regularity if the parameters $\alpha, \mu$ are in suitable range.
\begin{thm}\label{QWEQ}
Assume that $N\geq3$, $\alpha\geq0$, $0<\mu<N$, $0<2\alpha+\mu\leq N$. Let $u\in D^{1,2}(\mathbb{R}^N)$ be a positive solution of equation \eqref{Intcase},

$(C1)$.~If~$N=3,4,5,6$ and $N-2\leq2\alpha+\mu\leq\min\{N,4\}$, then $u\in L^{p}(\mathbb{R}^{N})$ with
$$
p\in\left(\frac{N}{N-2},+\infty\right).
$$

$(C2)$.~If~$N=5,6$ and $4<2\alpha+\mu\leq N$ while $N\geq7$ and $N-2\leq2\alpha+\mu\leq N$, then $u\in L^{p}(\mathbb{R}^{N})$ with
$$
p\in\left(\frac{N}{N-2},\frac{2N}{2\alpha+\mu-4}\right).
$$

$(C3)$.~If~$N=3,4,5,6$ and $0<2\alpha+\mu<N-2$ while $N\geq7$ and $0\leq2\alpha+\mu\leq 4$ or $\frac{N+2}{2}\leq2\alpha+\mu<N-2$, then $u\in L^{p}(\mathbb{R}^{N})$ with
$$
p\in\left(\frac{2N}{N-2+2\alpha+\mu},\frac{2N}{N-2-2\alpha-\mu}\right).
$$

$(C4)$.~If~$N\geq7$ and $4<2\alpha+\mu<\frac{N+2}{2}$, then $u\in L^{p}(\mathbb{R}^{N})$ with
$$
p\in\left(\frac{2N}{N-2+2\alpha+\mu},\frac{2N}{2\alpha+\mu-4}\right).
$$
\end{thm}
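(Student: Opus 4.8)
The plan is to recast equation \eqref{Intcase} in integral form and run a bootstrap argument based on the regularity lifting lemma by contracting operators. For a positive solution $u\in D^{1,2}(\mathbb{R}^N)$ set
$$
W(x):=\frac{1}{|x|^{\alpha}}\int_{\mathbb{R}^N}\frac{|u(y)|^{2_{\alpha,\mu}^{\ast}}}{|x-y|^{\mu}|y|^{\alpha}}\,dy,
\qquad
f(x):=W(x)\,|u(x)|^{2_{\alpha,\mu}^{\ast}-2}u(x).
$$
First I would use the Stein--Weiss inequality \eqref{WHLS}, with exponents matched to $u\in L^{2^{\ast}}(\mathbb{R}^N)$, to pin down the Lebesgue spaces to which $W$ belongs; the admissible range is constrained by the weight $|x|^{-\alpha}$ near the origin and at infinity, and this is precisely what will eventually split the conclusion into the four cases (C1)--(C4). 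Then I would verify that $f\in L^{1}_{\mathrm{loc}}$ decays enough that $u$ is represented by the Newtonian potential,
$$
u(x)=c_N\int_{\mathbb{R}^N}\frac{f(y)}{|x-y|^{N-2}}\,dy .
$$

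Next I would freeze $u$ in the coefficient and regard the integral equation as $u=K_u u$, where
$$
K_u v(x)=c_N\int_{\mathbb{R}^N}\frac{W(y)\,|u(y)|^{2_{\alpha,\mu}^{\ast}-2}v(y)}{|x-y|^{N-2}}\,dy
$$
is linear in $v$. For a parameter $A>0$ decompose the domain of integration into $\Omega_A:=\{y:|u(y)|>A\}$ and its complement, writing $K_u=K_{u,A}+K_u^{A}$ accordingly (the roles of the two regions are interchanged according to the sign of $2_{\alpha,\mu}^{\ast}-2$, i.e. according to whether $2\alpha+\mu\lessgtr4$). On the ``bad'' region the relevant multiplier has arbitrarily small norm in the pertinent Lebesgue space once $A$ is large, since $\operatorname{meas}(\Omega_A)\to0$ as $A\to\infty$; combining Hölder's inequality, the Hardy--Littlewood--Sobolev inequality for the Riesz kernel $|x|^{-(N-2)}$, and a Hardy-type estimate to absorb $|x|^{-\alpha}$, the operator $K_{u,A}$ becomes a contraction on the target space $Z=L^{p}(\mathbb{R}^N)$ for the $p$ at hand. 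On the complement the multiplier is pointwise dominated by a fixed constant times $W$, so $K_u^{A}u\in Z$ by the integrability of $W$. Since $(I-K_{u,A})u=K_u^{A}u$ and $I-K_{u,A}$ is invertible on $Z$, we conclude $u\in Z$.

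Finally I would iterate: starting from $u\in L^{2^{\ast}}(\mathbb{R}^N)$ and having lifted $u$ into one improved space by the contraction step, an ordinary bootstrap — at each stage estimating $f=W|u|^{2_{\alpha,\mu}^{\ast}-2}u\in L^{r}$ by Hölder from the current integrability of $u$ and the fixed integrability of $W$, then gaining regularity by convolving with $|x|^{-(N-2)}$ via Sobolev/HLS together with a Hardy inequality near the origin — raises the exponent in finitely many steps, and passing to the limit covers the whole interval claimed in each of (C1)--(C4). I expect the main obstacle to be the careful bookkeeping of the singular weights: the map $v\mapsto|x|^{-\alpha}v$ is bounded $L^{s}\to L^{r}$ only for $r$ in a restricted window (losing integrability at $0$, gaining it at $\infty$), and the inner convolution against $|x|^{-\alpha}|x-y|^{-\mu}|y|^{-\alpha}$ forces the use of the full strength of \eqref{WHLS} rather than plain Hardy--Littlewood--Sobolev; reconciling these constraints is exactly what produces the thresholds $N-2$, $4$, and $\frac{N+2}{2}$ on $2\alpha+\mu$ and decides whether the upper endpoint of the admissible $L^{p}$-range is finite or $+\infty$.
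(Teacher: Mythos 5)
Your overall strategy — rewrite \eqref{Intcase} as an integral equation, truncate with a parameter $A$, and lift integrability through a contraction plus HLS/Stein--Weiss bookkeeping — is the same philosophy as the paper, but two of your concrete steps fail. First, you linearize the single equation as $u=K_uu$ with the frozen coefficient $W|u|^{2_{\alpha,\mu}^{\ast}-2}$, where $W=v$ is the nonlocal factor. Since $2_{\alpha,\mu}^{\ast}-2=\frac{4-2\alpha-\mu}{N-2}$, this exponent is $\leq 0$ precisely when $2\alpha+\mu\geq4$, i.e.\ in (C2), (C4), part of (C3) for $N\geq7$, and the endpoint of (C1): there $|u|^{2_{\alpha,\mu}^{\ast}-2}$ is a negative power of $u$, large exactly where $u$ is small (in particular near infinity), and it is not controlled in any Lebesgue norm by $u\in L^{2^{\ast}}$; no H\"older pairing with $W\in L^{\frac{2N}{2\alpha+\mu}}$ makes sense, and ``interchanging the roles'' of $\{|u|>A\}$ and its complement produces no small norm, because on $\{|u|\leq A\}$ the coefficient has no useful bound at all. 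Moreover you keep $W$ at its ``fixed integrability'' $L^{\frac{2N}{2\alpha+\mu}}$ throughout the bootstrap, whereas the stated ranges, and the thresholds $N-2$, $4$, $\frac{N+2}{2}$, come from lifting the nonlocal term together with $u$. The paper avoids both issues by passing to the equivalent system \eqref{ASD} in the pair $(u,v)$: the frozen coefficient is $u_A^{2_{\alpha,\mu}^{\ast}-1}$, whose exponent $\frac{N+2-2\alpha-\mu}{N-2}$ is positive in every case, the linear slots are occupied by $v$ and $u$ respectively, and the contraction runs on $L^{p}\times L^{q}$ with the coupled exponents $\frac1p-\frac1q=\frac{N-2-2\alpha-\mu}{2N}$; it is this coupling together with the admissibility conditions of \eqref{WHLS} that generates the four cases.

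Second, your treatment of the ``good'' part is incorrect as written. Bounding the multiplier on $\{|u|\leq A\}$ by a constant times $W$ gives $K_u^Au\lesssim_A W\ast|x|^{2-N}$, and since the only known exponent of $W$ is $\frac{2N}{2\alpha+\mu}\geq\frac N2$ whenever $2\alpha+\mu\leq4$, the HLS inequality does not place this convolution in any $L^{p}(\mathbb{R}^N)$ (the target exponent would satisfy $\frac1t=\frac{2\alpha+\mu-4}{2N}\leq0$); and when $2\alpha+\mu>4$ it yields only the single exponent $t=\frac{2N}{2\alpha+\mu-4}$, not the claimed range. Discarding the integrability of $u$ on the good set loses exactly what is needed. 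In the paper the truncation cuts in both $|u|$ \emph{and} $|x|$, so the good piece $u_B$ is bounded and compactly supported, and the estimate of $(F_{u_B}(v),G_{u_B}(u))$ retains the factors $v\in L^{\frac{2N}{2\alpha+\mu}}$ and $u\in L^{2^{\ast}}$; this is where the constraint $2N>p(2\alpha+\mu-4)$, hence the finite upper endpoints in (C2) and (C4), actually comes from, and it also covers the lower part $p<2^{\ast}$ of the ranges, which your good-part bound cannot reach. A smaller point: to conclude $u\in Z$ from $(I-K_{u,A})u=K_u^Au$ you need the contraction (equivalently, uniqueness) on both the initial space $L^{2^{\ast}}$ and the target space, which is exactly the two-space regularity lifting lemma (Theorem \ref{ABC1}) the paper invokes; invertibility on $Z$ alone does not suffice, since the identity is only known a priori in the initial space.
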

We can improve part of the results to $L^{\infty}$ integrability in the following theorem.
\begin{thm}\label{ZX}
Assume that $N=3,4,5,6$, $0\leq\alpha<2$, $0<\mu<N$ and $N-2\leq2\alpha+\mu\leq\min\{N,4\}$. Let $u\in D^{1,2}(\mathbb{R}^N)$ be a positive solution of equation \eqref{Intcase}, then $u\in L^{\infty}(\mathbb{R}^{N})$.
\end{thm}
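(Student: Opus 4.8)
The plan is to bootstrap the integrability furnished by Theorem~\ref{QWEQ}$(C1)$ --- which applies verbatim under the present hypotheses $N=3,4,5,6$, $N-2\le 2\alpha+\mu\le\min\{N,4\}$, so that $u\in L^{p}(\R^{N})$ for every $p\in(\tfrac{N}{N-2},+\infty)$, and also $u\in D^{1,2}(\R^{N})\subset L^{2^{\ast}}(\R^{N})$ --- up to an $L^{\infty}$ bound, by reading \eqref{Intcase} as a \emph{linear} Schr\"odinger-type equation $-\Delta u=Vu$ with a potential that lies locally, uniformly in the base point, in $L^{q}$ for some $q>N/2$. Observe first that $2^{\ast}_{\alpha,\mu}-2=\tfrac{4-2\alpha-\mu}{N-2}\ge 0$ by hypothesis, so the nonlinearity is a nonnegative power of the positive function $u$ and we may write
\begin{equation*}
-\Delta u=V(x)\,u,\qquad V(x):=\ca(x)\,|x|^{-\alpha}\,u(x)^{2^{\ast}_{\alpha,\mu}-2},\qquad
\ca(x):=\int_{\R^{N}}\frac{u(y)^{2^{\ast}_{\alpha,\mu}}}{|x-y|^{\mu}\,|y|^{\alpha}}\,dy .
\end{equation*}

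I would first show $\ca\in L^{\infty}(\R^{N})$. Split the domain of the $y$-integral into a neighbourhood of the singularity $y=x$, a neighbourhood of $y=0$, and the tail $\{|y|\ge 1,\ |x-y|\ge 1\}$. On the first two pieces, H\"older's inequality pairs $u^{2^{\ast}_{\alpha,\mu}}$ with $|x-y|^{-\mu}$, resp.\ $|y|^{-\alpha}$, which lie in $L^{a'}$ of a unit ball as soon as $a'<N/\mu$, resp.\ $a'<N/\alpha$; on the tail one uses $|x-y|^{-\mu}|y|^{-\alpha}\le |x-y|^{-\mu-\alpha}+|y|^{-\mu-\alpha}$ and the integrability of $|z|^{-\mu-\alpha}$ over $\{|z|\ge 1\}$, valid for $a'>\tfrac{N}{\mu+\alpha}$. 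In every case the companion factor is only required to satisfy $u^{2^{\ast}_{\alpha,\mu}}\in L^{a}$ for some $a>\tfrac{N}{2N-2\alpha-\mu}$, which holds for \emph{all} such $a$ by $(C1)$; and the hypotheses $0<\mu<N$ and $2\alpha+\mu\le N$ (whence $2\alpha<N$ and $\mu+\alpha<N$) are exactly what makes the relevant ranges of $a$ nonempty. All resulting bounds are uniform in $x$, so $\ca$ is bounded. (When $\alpha=0$ the same splitting applies with the singularity at $y=0$ absent.)

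Since $\ca$ is now bounded, $|V|\le C\,|x|^{-\alpha}u^{2^{\ast}_{\alpha,\mu}-2}$. Near the origin $|x|^{-\alpha}\in L^{m}(B_{5})$ for every $m<N/\alpha$, and because $\alpha<2$ this interval meets $(N/2,+\infty)$; pairing it via H\"older with $u^{2^{\ast}_{\alpha,\mu}-2}\in L^{m'}_{\mathrm{loc}}$ (available for all $m'$ by $(C1)$) gives $V\in L^{q}(B_{5})$ for some $q>N/2$, which may moreover be chosen with $q(2^{\ast}_{\alpha,\mu}-2)\le 2^{\ast}$ (vacuous when $2\alpha+\mu=4$; otherwise possible since $\tfrac{2N}{4-2\alpha-\mu}>\tfrac N2$ for $2\alpha+\mu>0$). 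Away from the origin $|x|^{-\alpha}$ is bounded, so on any ball $B_{2}(x_{0})$ with $|x_{0}|\ge 3$ one has $\|V\|_{L^{q}(B_{2}(x_{0}))}\le C\,\|u\|_{L^{q(2^{\ast}_{\alpha,\mu}-2)}(B_{2}(x_{0}))}^{2^{\ast}_{\alpha,\mu}-2}\le C\,\|u\|_{L^{2^{\ast}}(\R^{N})}^{2^{\ast}_{\alpha,\mu}-2}$ by H\"older, using $q(2^{\ast}_{\alpha,\mu}-2)\le 2^{\ast}$. Hence $\sup_{x_{0}\in\R^{N}}\|V\|_{L^{q}(B_{2}(x_{0}))}<\infty$ with $q>N/2$, and the De Giorgi--Nash--Moser local boundedness estimate for $-\Delta u=Vu$ gives
\begin{equation*}
\sup_{B_{1}(x_{0})}u\ \le\ C\big(N,q,\|V\|_{L^{q}(B_{2}(x_{0}))}\big)\,\|u\|_{L^{2^{\ast}}(B_{2}(x_{0}))}\ \le\ C\,\|u\|_{L^{2^{\ast}}(\R^{N})}
\end{equation*}
with $C$ independent of $x_{0}$; taking the supremum over $x_{0}$ yields $u\in L^{\infty}(\R^{N})$ (and even $u(x)\to 0$ as $|x|\to\infty$, since $\|u\|_{L^{2^{\ast}}(B_{2}(x_{0}))}\to 0$). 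Equivalently one can invoke $W^{2,q}_{\mathrm{loc}}$-regularity together with $W^{2,q}\hookrightarrow C^{0,\gamma}$ for $2q>N$.

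The crux is twofold. First, that $\ca$ is genuinely \emph{bounded} rather than merely in some Lebesgue space: this is what forces the careful tripartite splitting and the exponent bookkeeping, and it is here that $2\alpha+\mu\le N$ and $\mu<N$ enter. Second, that the weight $|x|^{-\alpha}$ is an $L^{q}_{\mathrm{loc}}$ singularity with some $q>N/2$, which is exactly the threshold that allows Moser iteration to run and which fails for $\alpha\ge 2$; this is precisely where the hypothesis $0\le\alpha<2$ is used. The remaining restriction $2\alpha+\mu\le\min\{N,4\}$ is needed both to ensure $2^{\ast}_{\alpha,\mu}\ge 2$ (so the nonlinearity is a nonnegative power of $u$) and to invoke Theorem~\ref{QWEQ}$(C1)$.
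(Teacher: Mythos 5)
Your argument is correct, and its second half takes a genuinely different route from the paper's. The first step coincides: your $\ca(x)$ is exactly $|x|^{\alpha}v(x)$ for the system \eqref{ASD}, and the paper's proof of Theorem \ref{ZX2} likewise begins by showing $|x|^{\alpha}v\in L^{\infty}(\R^{N})$ via the same kind of Hölder splitting fed by the integrability of Theorem \ref{QWEQ}$(C1)$. One caveat there: when $|x|$ is small the "ball around $y=x$'' and the "ball around $y=0$'' overlap, so pairing $u^{2^{\ast}_{\alpha,\mu}}$ with $|x-y|^{-\mu}$ alone (treating $|y|^{-\alpha}$ as harmless) is not legitimate; you should run the whole splitting with the pointwise bound $|x-y|^{-\mu}|y|^{-\alpha}\le |x-y|^{-\mu-\alpha}+|y|^{-\mu-\alpha}$, which you state only for the tail but which is valid everywhere and, together with $\mu+\alpha<N$, is precisely how the paper handles $x\in B_{2r}(0)$ — so this is a presentational tightening, not a missing idea. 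After that the routes diverge: the paper stays inside the integral framework, writing $u(x)\le \||x|^{\alpha}v\|_{L^{\infty}}\int_{\R^{N}}|u(y)|^{2^{\ast}_{\alpha,\mu}-1}|x-y|^{2-N}|y|^{-\alpha}dy$ and showing this integral is finite uniformly in $x$ by another Hölder decomposition (this is where $0\le\alpha<2$ enters for the paper, through exponents such as $k<\tfrac{N}{N-2+\alpha}$), whereas you recast \eqref{Intcase} as $-\Delta u=Vu$ with $V=\ca\,|x|^{-\alpha}u^{2^{\ast}_{\alpha,\mu}-2}$, check $\sup_{x_{0}}\|V\|_{L^{q}(B_{2}(x_{0}))}<\infty$ for some $q>N/2$ (here $\alpha<2$ and $2\alpha+\mu\le 4$ enter for you), and conclude by De Giorgi--Nash--Moser; your exponent bookkeeping for $q$ is consistent. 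Your version buys independence from the equivalence of \eqref{Intcase} with the integral system \eqref{ASD} at this stage (needed by the paper to write $u$ as a Riesz potential), a uniform bound with decay $u(x)\to 0$ as $|x|\to\infty$ as a free byproduct, and a statement that localizes naturally; the cost is importing the elliptic local-boundedness machinery, while the paper's purely integral estimates are self-contained and mesh directly with its subsequent $C^{\infty}$ argument in Theorem \ref{DF2}.
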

Furthermore, we will establish $C^{\infty}$ regularity of the solution away from the origin, that is
\begin{thm}\label{DF}
Assume that $N=3,4,5,6$, $0\leq\alpha<2$, $0<\mu<N$ and $N-2\leq2\alpha+\mu\leq\min\{N,4\}$. Let $u\in D^{1,2}(\mathbb{R}^N)$ be a positive solution of equation \eqref{Intcase}, then $u\in C^{\infty}(\mathbb{R}^{N}-\{0\})$.
\end{thm}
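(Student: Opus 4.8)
The idea is to upgrade the $L^{\infty}$ bound of Theorem~\ref{ZX} to $C^{\infty}$ regularity off the origin by an elliptic bootstrap, treating the nonlocal factor in \eqref{Intcase} as a potential and controlling it with the classical smoothing properties of the Riesz kernel $|x|^{-\mu}$. Write the equation as $-\Delta u=f$ with
$$
f(x)=\frac{V(x)}{|x|^{\alpha}}\,u(x)^{2_{\alpha,\mu}^{\ast}-1},\qquad
V(x):=\int_{\R^{N}}\frac{u(y)^{2_{\alpha,\mu}^{\ast}}}{|x-y|^{\mu}\,|y|^{\alpha}}\,dy .
$$
Since $f\ge 0$, the strong maximum principle gives $u>0$ on $\R^{N}$, so (as $u>0$) the nonlinearity equals $u^{2_{\alpha,\mu}^{\ast}-1}$ with positive exponent, because $2_{\alpha,\mu}^{\ast}\ge \tfrac{N}{N-2}>1$ from $2\alpha+\mu\le N$. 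Moreover $0\le\alpha<2<N$, so $x\mapsto|x|^{-\alpha}$ is $C^{\infty}$ on $\R^{N}\setminus\{0\}$, while $t\mapsto t^{2_{\alpha,\mu}^{\ast}}$ and $t\mapsto t^{2_{\alpha,\mu}^{\ast}-1}$ are $C^{\infty}$ on $(0,\infty)$. On any compact $K\subset\R^{N}\setminus\{0\}$ the function $u$ is bounded above by Theorem~\ref{ZX} and, being positive and continuous, bounded below by a positive constant, so these three factors never cost regularity once $u$ is continuous on $K$. Thus the whole problem reduces to the regularity of $V$ on $\R^{N}\setminus\{0\}$.

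First I would show $V\in L^{\infty}_{loc}(\R^{N}\setminus\{0\})$. Fix $x_{0}\ne 0$ and $B=B(x_{0},r)$ with $0\notin\overline{4B}$, and put $g(y):=u(y)^{2_{\alpha,\mu}^{\ast}}|y|^{-\alpha}$. For $x\in B$ split $V(x)=\int_{4B}\tfrac{g(y)}{|x-y|^{\mu}}dy+\int_{\R^{N}\setminus 4B}\tfrac{g(y)}{|x-y|^{\mu}}dy$. On $4B$ the density $g$ is bounded by Theorem~\ref{ZX}, so the first term is $\le C\int_{|z|<8r}|z|^{-\mu}dz<\infty$ (as $\mu<N$), uniformly for $x\in B$. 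In the second term $|x-y|^{-\mu}\le(\tfrac43)^{\mu}|x_{0}-y|^{-\mu}$ is bounded; its contribution over $\{|y|\le 1\}$ is $\le C\int_{|y|\le1}|y|^{-\alpha}dy<\infty$ (as $\alpha<N$), and the tail over $\{|y|>1\}$ is handled by Hölder's inequality together with Theorem~\ref{QWEQ}: case $(C1)$ applies under the present hypotheses, so $u\in L^{p}(\R^{N})$ for every $p>\tfrac{N}{N-2}$, hence $u^{2_{\alpha,\mu}^{\ast}}\in L^{q}$ for every $q>\tfrac{N}{2N-2\alpha-\mu}$; since $\alpha<N/2$ here (for $N\ge 4$ because $\alpha<2$, for $N=3$ because $2\alpha+\mu\le3$ and $\mu>0$), one can choose $q$ with $\tfrac{N}{2N-2\alpha-\mu}<q<\tfrac{N}{N-\mu}$, which makes $|x_{0}-y|^{-\mu}\in L^{q'}(\{|y|>1\})$ and the tail finite, locally uniformly in $x$. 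Hence $V\in L^{\infty}_{loc}(\R^{N}\setminus\{0\})$, so $f\in L^{\infty}_{loc}(\R^{N}\setminus\{0\})$, and interior $L^{p}$ estimates plus Sobolev embedding give $u\in C^{1,\gamma}_{loc}(\R^{N}\setminus\{0\})$ for every $\gamma\in(0,\min\{1,N-\mu\})$; fix such a $\gamma$. The same two estimates, using $\alpha+\mu<N$ (which follows from $2\alpha+\mu\le N$ and $\alpha\ge 0$) in place of $\alpha<N/2$, also show $g\in L^{1}(\R^{N})$.

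Next comes the bootstrap. Assume $u\in C^{k,\gamma}_{loc}(\R^{N}\setminus\{0\})$. On a ball $B=B(x_{0},r)$ with $0\notin\overline{3B}$, choose $\phi\in C_{c}^{\infty}(\R^{N})$ with $\phi\equiv 1$ on $2B$ and $\operatorname{supp}\phi\subset 3B$, and write $V=|x|^{-\mu}\ast(\phi g)+|x|^{-\mu}\ast((1-\phi)g)$ on $B$. For $x\in B$ the second convolution runs over $\{|x-y|\ge r\}$ against an $L^{1}$ density, so differentiation under the integral sign shows it is $C^{\infty}$ on $B$. The first convolution is the Riesz potential of $\phi g$, and $\phi g\in C^{k,\gamma}_{c}(\R^{N})$ because $u\in C^{k,\gamma}$ and $u>0$ on $\operatorname{supp}\phi$, while $\phi$ and $|x|^{-\alpha}$ are smooth there; by the classical smoothing property of the kernel $|x|^{-\mu}$, $0<\mu<N$ (moving the derivatives up to order $k$ through the convolution and applying a Hölder estimate at top order, legitimate since $\gamma\le N-\mu$), one gets $|x|^{-\mu}\ast(\phi g)\in C^{k,\gamma}(\R^{N})$. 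Hence $V\in C^{k,\gamma}_{loc}(\R^{N}\setminus\{0\})$, so $f=|x|^{-\alpha}V u^{2_{\alpha,\mu}^{\ast}-1}\in C^{k,\gamma}_{loc}(\R^{N}\setminus\{0\})$, and interior Schauder estimates give $u\in C^{k+2,\gamma}_{loc}(\R^{N}\setminus\{0\})$. Starting from $k=1$ and iterating (no order is skipped because $C^{k+2,\gamma}\subset C^{k+1,\gamma}$) yields $u\in C^{\infty}(\R^{N}\setminus\{0\})$.

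The step I expect to be the main obstacle is the control of $V$ in the first part. The near part is immediate from Theorem~\ref{ZX}, but the far part forces one to absorb the tail $\int_{|y|\ \mathrm{large}}u^{2_{\alpha,\mu}^{\ast}}|x-y|^{-\mu}dy$, and here the bare membership $u\in L^{2^{\ast}}$ coming from $D^{1,2}$ does not suffice: one genuinely needs the improved integrability $u\in L^{p}$, $p>N/(N-2)$, provided by Theorem~\ref{QWEQ}, combined with $\alpha<N/2$. A secondary technical point is the Riesz-kernel smoothing in the bootstrap, where one must split off the singular near part and keep the Hölder exponent in the admissible range $\gamma\le N-\mu$. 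The remaining ingredients --- positivity via the maximum principle, smoothness of $|x|^{-\alpha}$ and of positive powers of $u$ away from the origin, and the interior $L^{p}$ and Schauder estimates --- are routine.
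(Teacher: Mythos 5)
Your proof is correct, but it follows a genuinely different route from the paper's. The paper passes to the equivalent integral system \eqref{ASD} and bootstraps through the Newtonian-potential representation $u(x)=\int_{\R^N}|x-y|^{2-N}v(y)u(y)^{2^{\ast}_{\alpha,\mu}-1}dy$: the part of the integral over $B_{2r}(x)$ is $C^{\delta}$ ($\delta<2$) by the kernel estimates of \cite[Chapter 10]{LL}, the part over the complement is shown to be $C^{\infty}$ by differentiating under the integral sign (using the global bounds $|x|^{\alpha}v,\,u\in L^{\infty}$ from Theorem \ref{ZX2}), and then the classical bootstrap of \cite{LL} upgrades $C^{\delta}$ to $C^{\infty}$ away from the origin. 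You instead stay with the PDE, write $-\Delta u=|x|^{-\alpha}V\,u^{2^{\ast}_{\alpha,\mu}-1}$ with $V=|x|^{-\mu}\ast\bigl(|y|^{-\alpha}u^{2^{\ast}_{\alpha,\mu}}\bigr)$, and run an elliptic bootstrap: interior Calder\'on--Zygmund estimates start the iteration and Schauder estimates propagate it, the only nonstandard point being that $V$ inherits the $C^{k,\gamma}$ regularity of $u$ off the origin, which you obtain by the same cutoff near/far splitting and differentiation under the integral that the paper applies to the kernel $|x-y|^{2-N}$, only applied to $|x-y|^{-\mu}$. What each route buys: yours replaces the integral-equation machinery by standard interior $L^{p}$ and Schauder theory and makes the higher-order regularity of the nonlocal factor fully explicit (the paper compresses this into an appeal to the ``classical bootstrap technique''); on the other hand you must control the tails of $V$ and show $g=|y|^{-\alpha}u^{2^{\ast}_{\alpha,\mu}}\in L^{1}$ via Theorem \ref{QWEQ}(C1) together with $\alpha<N/2$ and $\alpha+\mu<N$, where the paper simply quotes $|x|^{\alpha}v\in L^{\infty}(\R^N)$ already established in Theorem \ref{ZX2} (you could do the same and shorten the first step). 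Two harmless inessentials: the restriction $\gamma\le N-\mu$ is not needed, since convolving a compactly supported $C^{0,\gamma}$ function with the locally integrable kernel $|x|^{-\mu}$ preserves the H\"older exponent for any $\gamma\in(0,1)$; and for $\alpha=0$ your H\"older-duality justification of $g\in L^{1}$ degenerates, but the conclusion still holds because $2^{\ast}_{\alpha,\mu}>\tfrac{N}{N-2}$ puts $u^{2^{\ast}_{\alpha,\mu}}$ directly in $L^{1}$ by Theorem \ref{QWEQ}(C1).
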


Next we are interested in the symmetry of the solutions of equation \eqref{Intcase}, we have the following results:

\begin{thm}\label{thm1.3}
Assume that $N\geq3$, $\alpha\geq0$, $0<\mu<N$, $0<2\alpha+\mu\leq\min\{N,4\}$.
Let $u\in  D^{1,2}(\mathbb{R}^N)$ be a positive solution of \eqref{Intcase},
then $u$ must be radially symmetric about origin.
\end{thm}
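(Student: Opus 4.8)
The plan is to prove Theorem~\ref{thm1.3} by the \emph{method of moving planes in integral form} of Chen--Li--Ou \cite{CLO1}, in the spirit of the treatment of the unweighted critical Choquard equation by Du--Yang \cite{DY}; the new feature is that the singular weights $|x|^{-\al}$ break translation invariance, so the planes can only be slid up to hyperplanes through the origin. Note that $2\al+\mu\le\min\{4,N\}$ forces $2_{\al,\mu}^{\ast}\ge2$, so $|u|^{2_{\al,\mu}^{\ast}-2}u$ is a genuine power, which is what the moving-plane estimates require. The first task is to show that a positive solution $u\in D^{1,2}(\mathbb{R}^{N})$ of \eqref{Intcase} satisfies
\begin{equation}\label{pl:IE}
u(x)=c_{N}\int_{\mathbb{R}^{N}}\int_{\mathbb{R}^{N}}\frac{u(z)^{2_{\al,\mu}^{\ast}-1}\,u(y)^{2_{\al,\mu}^{\ast}}}{|x-z|^{N-2}\,|z|^{\al}\,|z-y|^{\mu}\,|y|^{\al}}\,dy\,dz,\qquad x\in\mathbb{R}^{N},
\end{equation}
where $c_{N}|x|^{2-N}$ is the fundamental solution of $-\Delta$ in $\mathbb{R}^{N}$: the inner integral is finite and locally bounded by Proposition~\ref{prop1} together with $u\in L^{2^{\ast}}(\mathbb{R}^{N})$ and the higher integrability of Theorems~\ref{QWEQ}--\ref{ZX}; and since the right-hand side of \eqref{Intcase} is nonnegative, its Newtonian potential is a superharmonic function vanishing at infinity, which a Liouville-type argument identifies with $u$. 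The integrability of $u$ also supplies the smallness of tails that replaces pointwise decay in what follows.

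\emph{The moving-plane estimate.} Fix a direction, say $e_{1}$, and for $\lambda\le0$ put $\Sigma_{\lambda}=\{x_{1}<\lambda\}$, $x^{\lambda}=(2\lambda-x_{1},x_{2},\dots,x_{N})$ and $u_{\lambda}(x)=u(x^{\lambda})$. Evaluating \eqref{pl:IE} at $x^{\lambda}$, substituting $z\mapsto z^{\lambda}$ and $y\mapsto y^{\lambda}$, then splitting $\mathbb{R}^{N}=\Sigma_{\lambda}\cup\Sigma_{\lambda}^{c}$ and folding $\Sigma_{\lambda}^{c}$ onto $\Sigma_{\lambda}$, one obtains for $x\in\Sigma_{\lambda}$
\[
u(x)-u_{\lambda}(x)=c_{N}\int_{\Sigma_{\lambda}}\left(\frac{1}{|x-z|^{N-2}}-\frac{1}{|x^{\lambda}-z|^{N-2}}\right)\big(g(z)-g_{\lambda}(z)\big)\,dz,
\]
where $g(z)=|z|^{-\al}u(z)^{2_{\al,\mu}^{\ast}-1}\int_{\mathbb{R}^{N}}|z-y|^{-\mu}|y|^{-\al}u(y)^{2_{\al,\mu}^{\ast}}\,dy$ and $g_{\lambda}$ is its reflected analogue, obtained by replacing $|z|^{\al},|y|^{\al}$ with $|z^{\lambda}|^{\al},|y^{\lambda}|^{\al}$ and $u$ with $u_{\lambda}$. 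The kernel is nonnegative since $|x-z|\le|x^{\lambda}-z|$ for $x,z\in\Sigma_{\lambda}$, and the decisive point is that for $\lambda\le0$ and $z\in\Sigma_{\lambda}$ one has $|z|\ge|z^{\lambda}|$, hence $|z|^{-\al}\le|z^{\lambda}|^{-\al}$; this monotonicity, together with one more folding inside the nonlocal term, bounds $\big(g(z)-g_{\lambda}(z)\big)^{+}$ on $\Sigma_{\lambda}^{+}:=\{z\in\Sigma_{\lambda}:u(z)>u_{\lambda}(z)\}$ by a finite sum of terms proportional to $(u-u_{\lambda})^{+}$, via $a^{q}-b^{q}\le q\,a^{q-1}(a-b)$ with $q=2_{\al,\mu}^{\ast}$ and $q=2_{\al,\mu}^{\ast}-1$ (both $\ge1$). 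Plugging this into the identity and using the weighted Hardy--Littlewood--Sobolev and H\"older inequalities yields
\[
\big\|(u-u_{\lambda})^{+}\big\|_{L^{s}(\Sigma_{\lambda})}\le C\,\eta(\lambda)\,\big\|(u-u_{\lambda})^{+}\big\|_{L^{s}(\Sigma_{\lambda})}
\]
for a suitable exponent $s$, with $\eta(\lambda)\to0$ as $\lambda\to-\infty$ because $u\in L^{2^{\ast}}(\mathbb{R}^{N})$; hence $u\le u_{\lambda}$ in $\Sigma_{\lambda}$ for all sufficiently negative $\lambda$.

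\emph{Sliding the plane.} Let $\lambda_{0}:=\sup\{\lambda\le0:\ u\le u_{\lambda'}\ \text{in}\ \Sigma_{\lambda'}\ \text{for all}\ \lambda'\le\lambda\}$; by continuity $u\le u_{\lambda_{0}}$ in $\Sigma_{\lambda_{0}}$. If $\lambda_{0}<0$, the identity above shows that either $u<u_{\lambda_{0}}$ throughout $\Sigma_{\lambda_{0}}$ --- in which case the smallness of $\eta(\lambda)$ near $\lambda_{0}$ lets one push the plane past $\lambda_{0}$, contradicting maximality --- or $u\equiv u_{\lambda_{0}}$ in $\Sigma_{\lambda_{0}}$, which is impossible because it would make \eqref{Intcase} force $|x|^{-\al}$ to be symmetric about a hyperplane not passing through the origin. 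Hence $\lambda_{0}=0$ and $u\le u_{0}$ in $\Sigma_{0}$; running the same argument from $\lambda=+\infty$ downward yields the reverse inequality $u\ge u_{0}$ in $\Sigma_{0}$, so $u(x)=u(x^{0})$, i.e.\ $u$ is symmetric in $x_{1}$. Since the direction $e_{1}$ was arbitrary, $u$ is symmetric with respect to every hyperplane through the origin, hence radially symmetric about the origin, which proves the theorem.

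The hard part will be the interplay of the singular weights with the reflection: translation invariance being lost, one can only obtain symmetry about hyperplanes \emph{through} the origin, and one must exploit exactly the monotonicity $|z|\ge|z^{\lambda}|$ on $\Sigma_{\lambda}$ for $\lambda\le0$ to keep the additional weight factors on the good side when estimating $g-g_{\lambda}$. The second, more technical, obstacle is to make the contraction estimate close uniformly up to the singularities $\{z=0\}$ and $\{y=0\}$: this is where the restriction $0<2\al+\mu\le\min\{4,N\}$ --- which in particular guarantees $2_{\al,\mu}^{\ast}\ge2$ --- together with the integrability of $u$ from Theorems~\ref{QWEQ}--\ref{ZX}, ensures that all the weighted integrals that appear are finite and that $\eta(\lambda)\to0$.
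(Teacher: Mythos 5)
Your proposal is correct and follows essentially the same route as the paper: rewrite \eqref{Intcase} via the Riesz potential (the paper invokes the equivalence with the integral system \eqref{ASD} rather than your Liouville-type derivation, and keeps the auxiliary function $v$ explicit instead of composing the two kernels into one identity), then run the moving planes in integral form, exploiting exactly the monotonicity $|z|\geq|z^{\lambda}|$ on $\Sigma_{\lambda}$ for $\lambda\leq0$, mean value theorem estimates, the weighted Hardy--Littlewood--Sobolev and H\"older inequalities to get the contraction for $\lambda$ very negative, sliding the plane to $0$ (ruling out $u\equiv u_{\lambda_{0}}$ with $\lambda_{0}\neq0$ by the strict asymmetry of the weights), repeating from $+\infty$, and using arbitrariness of the direction. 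The only point worth noting is that the paper treats $2\alpha+\mu=4$ separately (Lemma \ref{lemABC}) since then $2^{\ast}_{\alpha,\mu}-2=0$, a borderline case your uniform sketch glosses over but which requires only the obvious modification of the exponents.
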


From Theorem \ref{thm1.3}, we know that the positive minimizer obtained in Theorem \ref{thm1.2} is radially symmetric about origin and decreasing. Thus we can characterize the asymptotic behavior at infinity.
\begin{thm}\label{thm1.4}
Assume that $N\geq3$, $\alpha\geq0$, $0<\mu<N$, $0<2\alpha+\mu\leq\min\{N,4\}$, then any nonnegative minimizer of $S_{\alpha, \mu}$ satisfies
\begin{equation}\label{1.6}
u(|x|)\leq\left[\frac{(N-\alpha)^{2}2^{\mu}}{\omega^{2}_{N-1}}\right]^{\frac{1}{2\cdot2^{\ast}_{\alpha,\mu}}}\left(\frac{1}{|x|}\right)^{\frac{N-2}{2}},~~x\neq0.
\end{equation}
Where $\omega_{N-1}$ is the area of the unit sphere in $\mathbb{R}^{N}$.
\end{thm}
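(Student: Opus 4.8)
\noindent\emph{Proof sketch.} The argument I would use rests on only two facts already available: first, as observed in the discussion preceding the statement (and consistently with Theorem~\ref{thm1.3}), every nonnegative minimizer $u$ of $S_{\alpha,\mu}$ may be taken radially symmetric about the origin and nonincreasing in $r=|x|$; second, by the very definition of $S_{\alpha,\mu}$, such a minimizer is normalized so that
\[
\|u\|_{\alpha,\mu}^{2\cdot 2^{\ast}_{\alpha,\mu}}=\int_{\mathbb{R}^N}\int_{\mathbb{R}^N}\frac{|u(x)|^{2^{\ast}_{\alpha,\mu}}|u(y)|^{2^{\ast}_{\alpha,\mu}}}{|x|^{\alpha}|x-y|^{\mu}|y|^{\alpha}}\,dx\,dy=1 .
\]
Write $u(r)$ for the value of the radially nonincreasing representative at radius $r>0$; this is finite for every such $r$, since an $L^{2^{\ast}}$ radially nonincreasing function cannot be infinite on a ball of positive measure. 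The plan is to bound the left-hand side of the identity above from below by throwing away everything outside $B_r(0)\times B_r(0)$.

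On $B_r(0)\times B_r(0)$ monotonicity gives $u(x)^{2^{\ast}_{\alpha,\mu}}\ge u(r)^{2^{\ast}_{\alpha,\mu}}$ and likewise in $y$ (using $2^{\ast}_{\alpha,\mu}>0$, which follows from $2\alpha+\mu\le N$ and $N\ge3$), while the triangle inequality gives $|x-y|\le 2r$, hence $|x-y|^{-\mu}\ge(2r)^{-\mu}$. Keeping the Stein--Weiss weights $|x|^{-\alpha}$ and $|y|^{-\alpha}$ in place, this yields
\[
1\ \ge\ \frac{u(r)^{2\cdot 2^{\ast}_{\alpha,\mu}}}{2^{\mu}r^{\mu}}\left(\int_{B_r(0)}\frac{dx}{|x|^{\alpha}}\right)^{2}.
\]
Since $0<\mu<N$ and $2\alpha+\mu\le N$ force $\alpha<N$, the weight $|x|^{-\alpha}$ is locally integrable and a one-line computation in polar coordinates gives $\int_{B_r(0)}|x|^{-\alpha}\,dx=\dfrac{\omega_{N-1}}{N-\alpha}\,r^{N-\alpha}$. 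Substituting and solving for $u(r)$ then produces
\[
u(r)^{2\cdot 2^{\ast}_{\alpha,\mu}}\ \le\ \frac{2^{\mu}(N-\alpha)^{2}}{\omega_{N-1}^{2}}\,r^{\,\mu+2\alpha-2N}.
\]

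To finish, I would only recognize the exponent: $\mu+2\alpha-2N=-(2N-2\alpha-\mu)=-(N-2)\,2^{\ast}_{\alpha,\mu}$, so raising both sides to the power $1/(2\cdot 2^{\ast}_{\alpha,\mu})$ turns the last display into precisely the bound \eqref{1.6}. The one place that actually uses the machinery of the paper is the reduction to the radially nonincreasing representative (i.e.\ invoking the symmetry/monotonicity of the minimizer established via Theorem~\ref{thm1.3} and the Schwarz symmetrization behind Theorem~\ref{thm1.2}); after that, the estimate is entirely elementary, and I expect no analytic obstacle beyond bookkeeping the exponent identity and noting that $\alpha<N$ is exactly what makes $\int_{B_r(0)}|x|^{-\alpha}\,dx$ converge.
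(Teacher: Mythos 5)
Your argument is correct and is essentially the paper's own proof: both use the normalization $\|u\|_{\alpha,\mu}^{2\cdot 2^{\ast}_{\alpha,\mu}}=1$ together with the radial monotonicity of the minimizer, restrict the double integral to $B_R(0)\times B_R(0)$ with $|x-y|^{-\mu}\geq (2R)^{-\mu}$, bound $\int_{B_R(0)}|x|^{-\alpha}u^{2^{\ast}_{\alpha,\mu}}dx\geq u(R)^{2^{\ast}_{\alpha,\mu}}\frac{\omega_{N-1}}{N-\alpha}R^{N-\alpha}$, and solve for $u(R)$. The exponent bookkeeping $\mu+2\alpha-2N=-(N-2)\,2^{\ast}_{\alpha,\mu}$ matches the paper's computation, so no gap.
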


The second part of this paper is to study the subcritical case
\begin{equation}\label{wSCC}
-\Delta u+u
=\frac{1}{|x|^{\alpha}}\left(\int_{\mathbb{R}^{N}}\frac{|u(y)|^{p}}
{|x-y|^{\mu}|y|^{\alpha}}dy\right)|u|^{p-2}u\hspace{4.14mm}\mbox{in}\hspace{1.14mm} \mathbb{R}^{N},
\end{equation}
where $N\geq3$, $0<\mu<N$, $\alpha\geq0$, $0<2\alpha+\mu< N$ and $2-\frac{2\alpha+\mu}{N}< p<\frac{2N-2\alpha-\mu}{N-2}$. Equation \eqref{wSCC} is closely related to the nonlocal Choquard equation. For $N=3$, $\alpha=\beta=0$, $p=2$ and $\mu=1$, equation \eqref{wSCC}
was introduced in mathematical physics by Pekar \cite{P1} to study the quantum theory of a polaron at rest. It was mentioned in \cite{LE2} that Choquard applied it as approximation to Hartree-Fock theory of one-component plasma. This equation was also proposed by Penrose in \cite{Pen} as a model of selfgravitating matter and was known as the Schr\"odinger-Newton equation. Mathematically, Lieb \cite{LE2} and
Lions \cite{Ls} studied the existence  and uniqueness of
positive solutions to equation \eqref{wSCC}. Moroz and Van
Schaftingen \cite{MS1} obtained the existence of radial symmetric ground state and they also considered in \cite{MS2} the existence of ground states under the assumption of Berestycki-Lions type. If the periodic potential $V(x)$ changes sign and $0$ lies in the gap of the spectrum of $-\Delta +V$, then the energy functional associated to the problem is strongly indefinite indeed. For this case, the existence of solution for $p=2$ was considered in \cite{BJS} and the authors developed reduction argument to obtain the existence of weak solution. Still for the strongly indefinite case, Ackermann \cite{AC} established the splitting lemma for the nonlocal nonlinearities and proved the existence of infinitely many geometrically distinct weak solutions. Alves and Yang investigated the quasilinear case in \cite{AY1, AY2}. The critical case in the sense of the classical Hardy-Littlewood-Sobolev inequality, Gao and Yang \cite{GY} studied the Brezis-Nirenberg problem on bounded domain. Alves et al. \cite{AGSY} considered the existence and concentration of the semiclassical solutions. For the strongly indefinite problem with critical term, Gao and Yang \cite{GY2} obtained the existence of ground state by generalized linking arguments.

Here we are interested in the subcritical weighted case, that is $\alpha=\beta\neq0$, we have the following existence result.
\begin{thm}\label{Existencesub}
 Assume that $N\geq3$, $\alpha\geq0$, $0<\mu<N$, $0<2\alpha+\mu\leq N$ and $2-\frac{2\alpha+\mu}{N}< p<\frac{2N-2\alpha-\mu}{N-2}$. Then there exists a ground state solution $u$ for \eqref{wSCC} in $H^{1}(\mathbb{R}^N)$.
\end{thm}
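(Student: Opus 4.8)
The plan is to obtain the ground state by minimizing the Euler--Lagrange functional over its Nehari manifold, restoring the compactness lost to dilations by means of Schwarz symmetrization, as the abstract indicates. First I would set up the $C^1$ functional
$$
I(u)=\frac12\int_{\mathbb{R}^N}\bigl(|\nabla u|^2+u^2\bigr)\,dx-\frac1{2p}\int_{\mathbb{R}^N}\int_{\mathbb{R}^N}\frac{|u(x)|^p|u(y)|^p}{|x|^{\alpha}|x-y|^{\mu}|y|^{\alpha}}\,dx\,dy
$$
on $H^1(\mathbb{R}^N)$. Writing $t:=2N/(2N-2\alpha-\mu)$, the hypotheses $0<2\alpha+\mu\le N$ and $2-\tfrac{2\alpha+\mu}{N}<p<\tfrac{2N-2\alpha-\mu}{N-2}$ give $1<t<\infty$ and place the exponent $tp$ strictly inside $(2,2^{*})$, $2^{*}=\tfrac{2N}{N-2}$; the side condition $1-\tfrac1t-\tfrac\mu N<\tfrac\alpha N<1-\tfrac1t$ needed in Proposition \ref{prop1} holds automatically since $\mu>0$. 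Combining the weighted Hardy--Littlewood--Sobolev inequality (Proposition \ref{prop1}, with $r=s=t$, $\beta=\alpha$) with the Sobolev embedding $H^1(\mathbb{R}^N)\hookrightarrow L^{tp}(\mathbb{R}^N)$ shows $I$ is well defined and $C^1$. Since $2\alpha+\mu\le N$ forces $p>1$, so $2p>2$, the functional has the mountain-pass geometry, and I would set
$$
c:=\inf_{u\in\mathcal{N}}I(u),\qquad \mathcal{N}:=\bigl\{u\in H^1(\mathbb{R}^N)\setminus\{0\}:\langle I'(u),u\rangle=0\bigr\}.
$$
For each $u\neq0$ the fibering map $\tau\mapsto I(\tau u)$ has a unique maximum at some $\tau_u>0$, so $c=\inf_{u\neq0}\max_{\tau>0}I(\tau u)$, which is positive by the mountain-pass geometry, and a standard Lagrange-multiplier argument shows that every minimizer of $I$ over $\mathcal{N}$ is a critical point of $I$, hence a ground state of \eqref{wSCC}.

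The decisive reduction is to radially symmetric functions. If $u^{*}$ denotes the Schwarz symmetrization of $|u|$, then $\int(|\nabla u^{*}|^2+(u^{*})^2)\le\int(|\nabla u|^2+u^2)$ by the P\'olya--Szeg\H{o} inequality and mass conservation, while the Brascamp--Lieb--Luttinger rearrangement inequality --- applied with the five functions $|u|^p,\ |u|^p,\ |\cdot|^{-\alpha},\ |\cdot|^{-\alpha},\ |\cdot|^{-\mu}$ and the linear forms $x,\ y,\ x,\ y,\ x-y$, after truncating the singular weights and passing to the limit by monotone convergence --- yields
$$
\int_{\mathbb{R}^N}\int_{\mathbb{R}^N}\frac{|u(x)|^p|u(y)|^p}{|x|^{\alpha}|x-y|^{\mu}|y|^{\alpha}}\,dx\,dy\le\int_{\mathbb{R}^N}\int_{\mathbb{R}^N}\frac{(u^{*}(x))^p(u^{*}(y))^p}{|x|^{\alpha}|x-y|^{\mu}|y|^{\alpha}}\,dx\,dy .
$$
Hence $\max_{\tau>0}I(\tau u^{*})\le\max_{\tau>0}I(\tau u)$, so $c$ is unchanged if the infimum is restricted to nonnegative, radially nonincreasing functions, and I may choose a minimizing sequence $\{u_n\}\subset\mathcal{N}$ of such functions.

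Then I would run the compactness step. On $\mathcal{N}$ one has $I(u_n)=\tfrac{p-1}{2p}\|u_n\|_{H^1}^2\to c$, so $\{u_n\}$ is bounded in $H^1_{\mathrm{rad}}(\mathbb{R}^N)$ and $\|u_n\|_{H^1}^2\to\tfrac{2p}{p-1}c>0$; combining $\|u_n\|_{H^1}^2=D(u_n)$ (with $D$ the nonlocal term) and the estimate $D(u_n)\le C\|u_n\|_{tp}^{2p}$ shows $\|u_n\|_{tp}$ is bounded away from $0$. Since $2<tp<2^{*}$, the radial embedding $H^1_{\mathrm{rad}}(\mathbb{R}^N)\hookrightarrow L^{tp}(\mathbb{R}^N)$ is compact, so along a subsequence $u_n\rightharpoonup u$ in $H^1$ and $u_n\to u$ in $L^{tp}$; therefore $u\not\equiv0$, and by the boundedness of the weighted Hardy--Littlewood--Sobolev bilinear form on $L^t\times L^t$, $D(u_n)\to D(u)$. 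Then $\|u\|_{H^1}^2\le\liminf\|u_n\|_{H^1}^2=D(u)$, so there is $\tau_u\in(0,1]$ with $\tau_u u\in\mathcal{N}$, and
$$
c\le I(\tau_u u)=\tfrac{p-1}{2p}\tau_u^2\|u\|_{H^1}^2\le\tfrac{p-1}{2p}\liminf\|u_n\|_{H^1}^2=c ,
$$
which forces $\tau_u=1$, $u\in\mathcal{N}$, $I(u)=c$ and $\|u_n\|_{H^1}\to\|u\|_{H^1}$. Thus $u$ minimizes $I$ over $\mathcal{N}$ and is a ground state. Finally $u\ge0$ as a strong $L^{tp}$-limit of nonnegative functions, and since $u$ weakly solves $-\Delta u+u=Wu^{p-1}$ with $W=|x|^{-\alpha}\bigl(\int_{\mathbb{R}^N}|u(y)|^p|x-y|^{-\mu}|y|^{-\alpha}\,dy\bigr)\ge0$, the strong maximum principle --- after the elementary bootstrap already used for the regularity statements above --- gives $u>0$.

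The step I expect to be the main obstacle is precisely the compactness: the embedding $H^1(\mathbb{R}^N)\hookrightarrow L^{tp}(\mathbb{R}^N)$ is not compact, and although the weights $|x|^{-\alpha}$ break translation invariance they leave dilation invariance intact, so a direct Lions concentration--compactness analysis would still have to rule out mass escaping through dilations. Schwarz symmetrization resolves this in the subcritical regime --- a radially nonincreasing function with bounded $H^1$-norm cannot spread its mass to infinity, and $tp$ lying strictly below $2^{*}$ then yields genuine compactness --- but the argument must be supported by two technical checks: the validity of the Brascamp--Lieb--Luttinger inequality in the presence of the three non-integrable weights (handled by truncation and monotone convergence) and the verification, from the constraints on $N,\alpha,\mu,p$, that $tp\in(2,2^{*})$ strictly.
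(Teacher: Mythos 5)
Your proposal is correct, and its core is the same as the paper's: minimize over the Nehari manifold $\mathcal{N}$, use the P\'olya--Szeg\H{o} inequality together with the Riesz/Brascamp--Lieb--Luttinger rearrangement inequality (the weights $|x|^{-\alpha}$, $|y|^{-\alpha}$, $|x-y|^{-\mu}$ being symmetric decreasing) to pass to a nonnegative, radially nonincreasing minimizing sequence, and exploit subcriticality, $2<tp<2^{*}$ with $t=2N/(2N-2\alpha-\mu)$, to recover compactness. Where you diverge is the endgame: the paper first runs Ekeland's variational principle to get a Palais--Smale sequence on $\mathcal{N}$, obtains $I'(u_0)=0$ for the weak limit by density, rules out $u_0=0$ by arguing that otherwise the nonlocal term would vanish in the limit (forcing $\|u_n\|\to0$, impossible on $\mathcal{N}$), and closes with Fatou's lemma applied to $I(u_n)-\tfrac12\langle I'(u_n),u_n\rangle$; you instead avoid the PS machinery altogether, using the compact embedding $H^1_{rad}(\mathbb{R}^N)\hookrightarrow L^{tp}(\mathbb{R}^N)$ to get $D(u_n)\to D(u)\neq0$ directly, projecting the weak limit back onto $\mathcal{N}$ with a factor $\tau_u\in(0,1]$, squeezing $I(\tau_u u)=c$ by weak lower semicontinuity, and then invoking the natural-constraint (Lagrange multiplier) property to get criticality. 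Both finishes are standard and valid; yours is somewhat more self-contained in that it makes explicit the Strauss-type compactness which the paper uses only implicitly when it asserts the convergence of the nonlocal term in the case $u_0=0$, while the paper's Ekeland--Fatou route spares you from verifying strong convergence of the nonlocal functional and the nondegeneracy of $\mathcal{N}$ (which you correctly check via $\langle J'(u),u\rangle=(2-2p)\|u\|^2<0$, using $p>1$ from $2\alpha+\mu\le N$). Your preliminary verifications (the Stein--Weiss side conditions for $r=s=t$, the strict inclusion $tp\in(2,2^{*})$, and the truncation argument for the non-integrable weights in the rearrangement step) are exactly the technical points that need checking and are handled correctly.
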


Notice that equation \eqref{wSCC} is non-periodic, so we can not use the Mountain-Pass Theorem and Lion's vanishing-nonvanishing arguments to obtain the existence of ground states solutions directly. In order to prove the existence of such solutions, we will use the rearrangement arguments and the Nehari manifold methods.

We can also establish a non-existence result for $p\geq\frac{2N-2\alpha-\mu}{N-2}$ or $p\leq\frac{2N-2\alpha-\mu}{N}$, which means the existence result in Theorem \eqref{Existencesub} is optimal. This result will be proved by establishing a Poho\v{z}aev identity.

\begin{thm}\label{Po}
Assume that $N\geq3$, $\alpha\geq0$, $0<\mu<N$, $0<\mu+2\alpha\leq N$ and $u\in W^{2,2}_{loc}(\mathbb{R}^{N})\cap L^{\frac{2Np}{2N-2\al-\mu}}_{loc}(\mathbb{R}^{N})$ is a solution of \eqref{wSCC}.
If $p\geq\frac{2N-2\alpha-\mu}{N-2}$ or $p\leq\frac{2N-2\alpha-\mu}{N}$, then $u\equiv0$.
\end{thm}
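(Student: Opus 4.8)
The plan is to derive two integral identities satisfied by a solution $u$ of \eqref{wSCC} --- a Nehari identity, obtained by pairing the equation with $u$, and a Poho\v{z}aev identity, obtained by pairing with the dilation field $x\cdot\nabla u$ --- and then to read off $u\equiv 0$ from the fact that, in the stated ranges of $p$, these two \emph{linear} relations among the three nonnegative quantities
$$A:=\int_{\mathbb{R}^N}|\nabla u|^2\,dx,\qquad B:=\int_{\mathbb{R}^N}u^2\,dx,\qquad C:=\int_{\mathbb{R}^N}\int_{\mathbb{R}^N}\frac{|u(x)|^p|u(y)|^p}{|x|^\alpha|x-y|^\mu|y|^\alpha}\,dx\,dy$$
force $A=0$ or $B=0$. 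Before doing so I would first upgrade a $W^{2,2}_{loc}$ solution to one for which $A,B,C$ are finite and $u,\nabla u$ decay fast enough at infinity (and are suitably integrable against the weights $|x|^{-\alpha}$, $|x-y|^{-\mu}$) for all the boundary terms below to vanish; this is done by a Calder\'on--Zygmund/weighted Hardy--Littlewood--Sobolev bootstrap together with the exponential decay estimates for $-\Delta u+u=g$, in the same spirit as the regularity theorems of this paper. All identities are then obtained rigorously by inserting a cut-off $\varphi_R(x)=\varphi(x/R)$ and letting $R\to\infty$.

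Pairing \eqref{wSCC} with $u$ and integrating by parts gives the Nehari identity $A+B=C$. For the Poho\v{z}aev identity I test \eqref{wSCC} with $\varphi_R\,(x\cdot\nabla u)$ and pass to the limit. The two local terms are classical: $\int_{\mathbb{R}^N}(-\Delta u)(x\cdot\nabla u)\,dx=-\tfrac{N-2}{2}A$ and $\int_{\mathbb{R}^N}u\,(x\cdot\nabla u)\,dx=\tfrac12\int_{\mathbb{R}^N}x\cdot\nabla(u^2)\,dx=-\tfrac N2 B$. The nonlocal term is the crux. Using $|u|^{p-2}u\,(x\cdot\nabla u)=\tfrac1p\,x\cdot\nabla_x(|u|^p)$ and symmetrizing the double integral in $x\leftrightarrow y$, it equals
$$\frac{1}{2p}\int_{\mathbb{R}^N}\int_{\mathbb{R}^N}K(x,y)\,\big(x\cdot\nabla_x+y\cdot\nabla_y\big)\big[\,|u(x)|^p|u(y)|^p\,\big]\,dx\,dy,\qquad K(x,y):=\frac{1}{|x|^\alpha|x-y|^\mu|y|^\alpha}.$$
Integrating by parts in $(x,y)\in\mathbb{R}^{2N}$ transfers the dilation field onto $K$; since $K$ is homogeneous of degree $-(2\alpha+\mu)$ on $\mathbb{R}^{2N}$, Euler's identity gives $x\cdot\nabla_xK+y\cdot\nabla_yK=-(2\alpha+\mu)K$, so the nonlocal term becomes $-\tfrac{1}{2p}(2N-2\alpha-\mu)\,C$. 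Collecting the three contributions, the Poho\v{z}aev identity reads
$$(N-2)\,A+N\,B=\frac{2N-2\alpha-\mu}{p}\,C.$$

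To conclude, substitute $C=A+B$ from Nehari and set $q:=\tfrac{2N-2\alpha-\mu}{p}$; then $(N-2-q)\,A+(N-q)\,B=0$ with $A,B\ge 0$. If $p\le\tfrac{2N-2\alpha-\mu}{N}$ then $q\ge N$, so $N-2-q\le -2<0$ and $N-q\le 0$, which forces $A=0$ (and then also $B=0$ when $q>N$). If $p\ge\tfrac{2N-2\alpha-\mu}{N-2}$ then $q\le N-2$, so $N-2-q\ge 0$ and $N-q\ge 2>0$, which forces $B=0$. In either case $\int_{\mathbb{R}^N}|\nabla u|^2\,dx=0$ or $\int_{\mathbb{R}^N}u^2\,dx=0$, hence $u\equiv 0$. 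I expect the only genuine difficulty to be the rigorous justification of the Poho\v{z}aev identity --- establishing the decay and integrability of $u$ needed to kill the cut-off boundary terms and to legitimize Fubini and the integration by parts against the doubly singular kernel $K$ near $\{x=0\}$, $\{y=0\}$ and the diagonal --- while the concluding algebraic dichotomy is immediate once the two identities are in hand. (The same computation also explains optimality: for $\tfrac{2N-2\alpha-\mu}{N}<p<\tfrac{2N-2\alpha-\mu}{N-2}$ the coefficients $N-2-q$ and $N-q$ have opposite signs, so the two identities are compatible with $A,B>0$, consistently with Theorem \ref{Existencesub}.)
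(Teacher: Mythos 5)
Your overall strategy is exactly the paper's: test with $u$ to get the Nehari relation $A+B=C$, test with a cut-off dilation field $\varphi(\lambda x)\,x\cdot\nabla u$ to get the Poho\v{z}aev identity $\tfrac{N-2}{2}A+\tfrac{N}{2}B=\tfrac{2N-2\alpha-\mu}{2p}C$, and conclude by the sign dichotomy, which you carry out correctly. Where you genuinely diverge is in the treatment of the weighted nonlocal term inside the Poho\v{z}aev identity. You symmetrize the double integral in $x\leftrightarrow y$, integrate by parts in $\mathbb{R}^{2N}$, and use Euler's identity for the kernel $K(x,y)=|x|^{-\alpha}|x-y|^{-\mu}|y|^{-\alpha}$, homogeneous of degree $-(2\alpha+\mu)$, to produce the coefficient $-\tfrac{2N-2\alpha-\mu}{2p}$ directly. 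The paper instead makes the substitution $v(x)=|u(x)|/|x|^{\alpha/p}$, which converts the weighted term into the unweighted Riesz-potential term for $v$ plus a correction $\tfrac{\alpha}{p}\int(\,|x|^{-\mu}\ast|v|^p)|v|^p\varphi(\lambda x)\,dx$, and then quotes the known Poho\v{z}aev computation of Moroz--Van Schaftingen (their Proposition 3.1) for the unweighted convolution; this outsources the delicate limit $\lambda\to0$ and the handling of the singularity of $|x-y|^{-\mu}$ to an existing result, at the cost of the substitution step, while your route is more self-contained and more transparent about where the exponent $2N-2\alpha-\mu$ comes from, but requires you to justify by hand the Fubini/integration-by-parts near $\{x=0\}$, $\{y=0\}$ and the diagonal, and the vanishing of the cut-off boundary terms --- exactly the point you flag, and which the paper's citation is designed to avoid. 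Two small remarks: the decay/integrability bootstrap you invoke is consistent with the paper's regularity section, which works under the same implicit finiteness of $A,B,C$ as the stated hypothesis $u\in W^{2,2}_{loc}$ (so you are not weaker than the paper here); and in the borderline case $p=\tfrac{2N-2\alpha-\mu}{N}$ your dichotomy only gives $A=0$, so to conclude $u\equiv0$ you should add the one-line observation that a nonzero constant is incompatible with the finiteness of $B$ (or with solving the equation), a point the paper also leaves implicit.
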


The qualitative properties of solutions of the Choquard equation had also attracted a lot of interests. The uniqueness and non-degeneracy of the ground states were proved by Lenzmann in \cite{Len}, Wei
and Winter in \cite{WW}.  Ma and Zhao \cite{MZ} classified the positive solutions of
\begin{eqnarray}\label{limit equation3}
-\Delta u+u=\Big(\int_{\R^N} \frac{|u(y)|^p}{|x-y|^\mu}dy\Big)|u|^{p-2}u,
\end{eqnarray}
and solved a longstanding open problem concerning the classification of all positive solutions to the nonlinear stationary Choquard equation. Moreover, they removed the restriction that those solutions minimize an
energy. Since the nonlinear term with a convolution is difficult to handle, the authors introduced an equivalent integral system. Namely, according to the properties of the Riesz and
the Bessel potentials, equation \eqref{limit equation3} is equivalent to
\begin{equation}\label{Intsubcri}
\begin{cases}
u(x)=\displaystyle\int_{\mathbb{R}^{N}}g_{2}(x-y)v(y)u(y)^{p-1}dy,\vspace{2mm}\\
v(x)=\displaystyle\int_{\mathbb{R}^{N}}\frac{u(y)^{p}}{|x-y|^{\mu}}dy,
\end{cases}
\end{equation}
where $g_{2}(x)$ is the Bessel kernel. Then they applied the method of moving planes in integral forms by Chen, Li and Ou \cite{CLO2} to obtain the radial symmetry of positive
solutions of \eqref{limit equation3}. Combining the Lieb's result in \cite{LE2}, they proved that such solutions are
unique. The regularity of solution of the Choquard equation with fractional operator was studied by Lei \cite{LY}. By using two regularity lifting lemmas introduced by Chen and Li
\cite{CL2}, the author was able to obtain the regularity for integrable solutions $u$ under some restrictions on the exponent $p$. In \cite{MS1}, Moroz and Van
Schaftingen obtained the existence of radial symmetric ground state and completely investigated the the regularity, decay behavior of solutions of \eqref{limit equation3}.

We will continue to study the qualitative properties of the subcritical weighted Choquard equation. Firstly we are going to investigate the regularity of the solutions of equation \eqref{wSCC}.
\begin{thm}\label{SRegularity}
 Assume that $N\geq3$, $\alpha\geq0$, $0<\mu<N$, $0<2\alpha+\mu\leq N$ and $2-\frac{2\alpha+\mu}{N}< p<\frac{2N-2\alpha-\mu}{N-2}$. Let $u\in H^{1}(\mathbb{R}^N)$ be a positive solution of equation \eqref{wSCC},
then $u\in C^\infty(\R^N-\{0\})$.
\end{thm}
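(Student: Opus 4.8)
The plan is to establish smoothness of a positive $H^1$-solution $u$ of \eqref{wSCC} away from the origin by a bootstrap argument, exploiting the subcriticality of $p$ together with the regularizing effect of the Riesz-type convolution with the singular weight $|y|^{-\alpha}$. First I would reformulate \eqref{wSCC} as a Choquard-type equation by setting
\begin{equation*}
V(x):=\int_{\mathbb{R}^{N}}\frac{|u(y)|^{p}}{|x-y|^{\mu}|y|^{\alpha}}\,dy,
\end{equation*}
so that $-\Delta u+u=|x|^{-\alpha}V(x)|u|^{p-2}u$. Since $u\in H^1(\mathbb{R}^N)$ and $2-\frac{2\alpha+\mu}{N}<p<\frac{2N-2\alpha-\mu}{N-2}$, the weighted Hardy-Littlewood-Sobolev inequality (Proposition \ref{prop1}) together with the Sobolev embedding $H^1\hookrightarrow L^{q}$ for $q\in[2,2^{*}]$ guarantees that $V\in L^{s}_{loc}$ for a range of $s$, and in fact $V$ is finite a.e.; the key point is that the exponent conditions make the integral defining $V$ convergent. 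The plan is then to improve the integrability of $u$ step by step: starting from $u\in L^{2^*}$, one deduces (using HLS and Hölder, treating the singularity at the origin separately from the behavior at infinity) that $|x|^{-\alpha}V|u|^{p-2}u\in L^{r}_{loc}$ for some $r$, hence by elliptic $L^r$-estimates (Calderón–Zygmund theory applied to $-\Delta u+u$) $u\in W^{2,r}_{loc}$, and Sobolev embedding raises the exponent. Iterating, on any region bounded away from $\{0\}$ one reaches $u\in L^\infty_{loc}(\mathbb{R}^N\setminus\{0\})$; a parallel argument using decay estimates (or the integral representation $u=g_2*(|x|^{-\alpha}V|u|^{p-2}u)$ with the Bessel kernel $g_2$, as in \cite{MZ}) controls $u$ near infinity as well, so in fact $u\in L^\infty(\mathbb{R}^N)$ and $u$ decays.

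Once $u\in L^\infty_{loc}(\mathbb{R}^N\setminus\{0\})$ and $u\in H^1\cap L^\infty$, I would show that $V$ is locally Hölder continuous away from the origin. Indeed, split $V(x)=\int_{|y|<1}+\int_{|y|\ge1}$; the far piece is smooth in $x$ by differentiation under the integral sign wherever $|x-y|$ is bounded below, and the near piece is an integral of the bounded density $|u|^p$ against the kernel $|x-y|^{-\mu}|y|^{-\alpha}$, which is locally Hölder in $x$ for $x\ne 0$ by standard potential-theoretic estimates (the weight $|y|^{-\alpha}$ is locally integrable since $\alpha\le(2\alpha+\mu)/2<N/2<N$, and $\mu<N$). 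Therefore the right-hand side $|x|^{-\alpha}V(x)|u(x)|^{p-2}u(x)$ is $C^{0,\gamma}_{loc}(\mathbb{R}^N\setminus\{0\})$, so by Schauder estimates $u\in C^{2,\gamma}_{loc}(\mathbb{R}^N\setminus\{0\})$, in particular $u\in C^1$ there.

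The final step is the $C^\infty$ bootstrap: with $u\in C^{1}$ away from the origin, the density $|u|^{p}$ is $C^{1}$ there (if $p\ge 2$; for general $p>2-\frac{2\alpha+\mu}{N}$ one uses that $u>0$ so $u^p$ is as smooth as $u$, and near the origin $u$ is merely $H^1\cap L^\infty$, which is all we need since we are away from $0$), whence $V\in C^{1,\gamma}_{loc}(\mathbb{R}^N\setminus\{0\})$ by the same potential estimate, hence the right-hand side of the PDE is $C^{1,\gamma}_{loc}$ and Schauder gives $u\in C^{3,\gamma}_{loc}$. Repeating this indefinitely yields $u\in C^{\infty}(\mathbb{R}^N\setminus\{0\})$. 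The main obstacle I expect is the bookkeeping in the first bootstrap: one must carefully track how the two singular weights $|x|^{-\alpha}$ (local, at the origin) and $|x-y|^{-\mu}$ (nonlocal) interact with the range of admissible exponents in the weighted HLS inequality, and ensure each iteration strictly increases the integrability exponent (or that the gain does not stall) — this requires splitting all integrals into a neighborhood of the origin, a neighborhood of infinity, and a compact region, and choosing the Hölder exponents so that the hypotheses $0<2\alpha+\mu\le N$ and $p<\frac{2N-2\alpha-\mu}{N-2}$ are exactly what makes the scheme close; the behavior precisely at $x=0$ is deliberately excluded from the conclusion, which sidesteps the genuinely singular point.
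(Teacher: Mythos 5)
Your proposal follows essentially the same route as the paper (Lemma \ref{INT22}): a weighted HLS plus Calder\'on--Zygmund bootstrap to raise the integrability of $u$ until $u\in W^{2,r}$ for all $r>1$ (hence bounded and $C^{1,\lambda}_{loc}$), then Hölder/smoothness of the convolution term away from the origin by splitting the integral and differentiating under the integral sign, and finally a Schauder iteration giving $C^\infty(\mathbb{R}^N\setminus\{0\})$. The only cosmetic difference is that the paper runs the first bootstrap globally on $\mathbb{R}^N$ (keeping the $|x|^{-\alpha}$ weight inside the weighted HLS operator estimate and iterating as in Moroz--Van Schaftingen), rather than localizing away from the origin and treating infinity separately, but the substance of the argument is the same.
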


As a generalization of the results obtained in \cite{MZ}, we have the symmetry property of the positive solutions for the weighted equation \eqref{wSCC}.
\begin{thm}\label{thm}
Assume $N\geq3$, $\alpha\geq0$, $0<\mu<N$, $2\alpha+\mu\leq3$ if $N=3$ while $2\alpha+\mu<4$ if $N\geq4$ and $2\leq p<\frac{2N-2\alpha-\mu}{N-2}$. Let $u\in H^{1}(\mathbb{R}^{N})$ be a positive solution of equation \eqref{wSCC},
then $u$ must be radially symmetric about origin.
\end{thm}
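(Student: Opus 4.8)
The plan is to follow the strategy of Ma and Zhao as adapted in this paper: first pass from the differential equation \eqref{wSCC} to an equivalent integral system, then run the method of moving planes in integral form. Since $u\in H^1(\mathbb{R}^N)$ is a positive solution, by Theorem \ref{SRegularity} we know $u\in C^\infty(\mathbb{R}^N-\{0\})$, and the decay of $H^1$ solutions of such Choquard-type problems gives $u(x)\to 0$ as $|x|\to\infty$. Setting
$$
v(x):=\int_{\mathbb{R}^N}\frac{u(y)^p}{|x-y|^\mu |y|^\alpha}\,dy,
$$
the operator $-\Delta+1$ has Green's function the Bessel kernel $g_2$, so $u$ solves the system
\begin{equation*}
\begin{cases}
u(x)=\displaystyle\int_{\mathbb{R}^N} g_2(x-y)\,\frac{v(y)u(y)^{p-1}}{|y|^\alpha}\,dy,\\[2mm]
v(x)=\displaystyle\int_{\mathbb{R}^N}\frac{u(y)^p}{|x-y|^\mu |y|^\alpha}\,dy.
\end{cases}
\end{equation*}
The Bessel kernel is positive, radially decreasing, and decays exponentially, which is what makes the integral comparisons in the moving plane argument work; note that the weight $|y|^{-\alpha}$ is radial about the origin, so reflecting the plane through the origin is compatible with the singular weight — this is the structural reason the symmetry is about the origin.

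Next I would set up the moving planes. For $\lambda\in\mathbb{R}$ let $T_\lambda=\{x_1=\lambda\}$, $\Sigma_\lambda=\{x_1<\lambda\}$, $x^\lambda$ the reflection of $x$ across $T_\lambda$, and $u_\lambda(x)=u(x^\lambda)$, $v_\lambda(x)=v(x^\lambda)$. The goal is to show that moving the plane from $\lambda=-\infty$, it can be pushed all the way to $\lambda=0$ and that the solution is symmetric there; doing the same from $+\infty$ forces symmetry about $T_0$ in the $x_1$-direction, and since the direction $e_1$ was arbitrary among directions, combined with the fact that the weight forces the hyperplane to pass through $0$, one concludes radial symmetry about the origin. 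The analytic core is a pair of integral inequalities on $\Sigma_\lambda$: writing $w^u_\lambda=(u_\lambda-u)^+$ and $w^v_\lambda=(v_\lambda-v)^+$, one uses the Green's representation together with the elementary inequality comparing $\int_{\Sigma_\lambda}(K(x-y)-K(x^\lambda-y))$ terms (valid because $|x^\lambda-y|\ge|x-y|$ for $x,y\in\Sigma_\lambda$ and $K$ decreasing) to derive, on the set where $u_\lambda>u$,
$$
w^u_\lambda(x)\le C\int_{\Sigma_\lambda\cap\{u_\lambda>u\}}\!\!g_2(x-y)\Big(\tfrac{v_\lambda u_\lambda^{p-2}}{|y|^\alpha}+\tfrac{v\,\cdot(\text{mean value})}{|y|^\alpha}\Big)w^u_\lambda\,dy + C\int g_2(x-y)\tfrac{u^{p-1}}{|y|^\alpha}w^v_\lambda\,dy,
$$
and similarly for $w^v_\lambda$ with the kernel $|x-y|^{-\mu}$. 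Applying the weighted Hardy–Littlewood–Sobolev inequality (Proposition \ref{prop1}) and Hölder's inequality, these become
$$
\|w^u_\lambda\|_{L^{q}(\Sigma_\lambda)}+\|w^v_\lambda\|_{L^{r}(\Sigma_\lambda)}\le C\big(\|u\|_{L^{s}(\Sigma_\lambda)}^{p-1}+\dots\big)\big(\|w^u_\lambda\|_{L^{q}(\Sigma_\lambda)}+\|w^v_\lambda\|_{L^{r}(\Sigma_\lambda)}\big),
$$
where the hypotheses $2\le p<\frac{2N-2\alpha-\mu}{N-2}$ and $2\alpha+\mu\le 3$ (for $N=3$) or $<4$ (for $N\ge4$) are exactly what is needed to choose admissible exponents in Proposition \ref{prop1}. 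The assumption $p\ge2$ guarantees $u_\lambda^{p-2}$ and $v_\lambda^{p-2}$ cause no trouble at points where the relevant quantity vanishes (it keeps the nonlinearity $C^1$ with locally bounded derivative), and the subcriticality makes the constant small when $|\Sigma_\lambda|$ is small (for $\lambda$ near $-\infty$) by absolute continuity of the integral, forcing $w^u_\lambda\equiv w^v_\lambda\equiv0$ — this starts the procedure.

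Then I would define $\lambda_0=\sup\{\lambda\le0: u_\mu\le u \text{ on } \Sigma_\mu \text{ for all }\mu\le\lambda\}$ and show by a contradiction/continuity argument that $\lambda_0=0$: if $\lambda_0<0$ one uses the strong form (the integral inequality forces strict ordering unless identically equal, plus decay at infinity) to find that the smallness of the constant persists for slightly larger $\lambda$, contradicting maximality; one also needs the symmetric statement obtained by moving from $+\infty$, and the two together force $u=u_0=u(\,\cdot^0\,)$, i.e. symmetry about $T_0$. Since this holds for every direction and every such hyperplane must pass through the origin because of the weight $|y|^{-\alpha}$, $u$ is radially symmetric about the origin. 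I expect the main obstacle to be the exponent bookkeeping in the Hardy–Littlewood–Sobolev and Hölder estimates: one must verify that under the precise ranges $2\le p<2^\ast_{\alpha,\mu}$ and $2\alpha+\mu\le3$ ($N=3$) or $<4$ ($N\ge4$) there really exist exponents $q,r,s,\dots$ simultaneously satisfying the scaling relation $\frac1r+\frac1s+\frac{\alpha+\beta+\mu}{N}=2$ and the side conditions of Proposition \ref{prop1}, together with the integrability of $u$ coming from $u\in H^1\cap C^\infty(\mathbb{R}^N-\{0\})$ and its decay; handling the singularity of the weights at the origin inside $\Sigma_\lambda$ (when $0\in\Sigma_\lambda$) requires splitting the integrals near $0$ and using that $\alpha<2$, or more precisely $2\alpha+\mu<4$, to keep the local pieces finite. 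A secondary technical point is justifying the Green's representation $u=g_2\ast(\text{RHS})$ for an $H^1$ solution, which follows from Theorem \ref{SRegularity} and standard potential estimates but should be stated carefully.
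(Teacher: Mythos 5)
Your overall route coincides with the paper's: pass to the equivalent Bessel-kernel integral system \eqref{B}, use the regularity/integrability of $u$ (Lemma \ref{INT22}), run the method of moving planes in integral form with the weighted Hardy--Littlewood--Sobolev inequality, and let the weight $|y|^{-\alpha}$ pin the limiting hyperplane at the origin. However, there is a concrete gap in your setup: the comparison is oriented the wrong way. For $\lambda<0$ and $x\in\Sigma_\lambda=\{x_1<\lambda\}$ one has $|x^\lambda|^2-|x|^2=4\lambda(\lambda-x_1)<0$, so reflected points lie \emph{closer} to the origin; the statement one can (and must) prove for $\lambda\ll0$ is $u\le u_\lambda$ and $v\le v_\lambda$ on $\Sigma_\lambda$ (Lemma \ref{lem6}). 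Your claim that $w^u_\lambda=(u_\lambda-u)^+\equiv0$ on $\Sigma_\lambda$, i.e.\ $u_\lambda\le u$ there, is false for the actual (radially decreasing) solution, and your threshold $\lambda_0=\sup\{\lambda\le0:\ u_\mu\le u\ \text{on}\ \Sigma_\mu\ \text{for all}\ \mu\le\lambda\}$ is therefore the wrong quantity. This is not merely a sign convention: in your integral inequality the coefficients are $v_\lambda u_\lambda^{p-2}$, values of the solution at reflected points near the origin, for which no smallness is available; the paper's Lemma \ref{lem5} is arranged, via the one-sided Mean Value Theorem estimates on the four subregions and the sign of $\tfrac{1}{|y|^{\alpha}}-\tfrac{1}{|y^{\lambda}|^{\alpha}}$, so that only norms of $u$ over $\Sigma^u_\lambda,\Sigma^v_\lambda\subset\Sigma_\lambda$ appear. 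Moreover the smallness that starts the plane does not come from $|\Sigma_\lambda|$ being small (it is an infinite half-space, so your ``absolute continuity of the integral'' argument does not apply); it comes from $u\in L^{s}(\mathbb{R}^N)$ for every $s>1$ (Lemma \ref{INT22}), which makes the tails $\|u\|_{L^{s}(\Sigma_\lambda)}$ vanish as $\lambda\to-\infty$.

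A second, smaller point: the closing step needs the precise mechanism rather than the slogan ``the weight forces the plane through the origin.'' If the planes coming from $\mp\infty$ stop at $\lambda_1=\lambda_1'\neq0$, the continuity argument (Lemma \ref{lem7}) yields $u\equiv u_{\lambda_1}$ and $v\equiv v_{\lambda_1}$ on $\Sigma_{\lambda_1}$, and this contradicts identity \eqref{D}, because the term $\bigl(\tfrac{1}{|y|^{\alpha}}-\tfrac{1}{|y^{\lambda_1}|^{\alpha}}\bigr)$ then forces a strict inequality; that strict-sign computation is what actually closes the argument. With the comparison reoriented as above and the starting smallness justified through the $L^s$-integrability of $u$, your sketch becomes the paper's proof.
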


An outline of the paper is as follows. In section 2 we mainly focus on the critical case due to the weighted Hardy-Littlewood-Sobolev inequality. By establishing a nonlocal version of concentration-compactness principle, we are able to obtain the existence result. Translating the equation into an equivalent integral system, we apply a regularity lifting lemma by contracting mapping arguments to obtain the regularity of the solutions. In the final part of this section we will use the moving plane methods in integral form to study the symmetry of the positive solutions. In section 3 we will study the subcritical case. First we will establish the existence of ground states by Riesz rearrangement and Nehari manifold arguments. Then by establishing a Poho\v{z}aev identity we prove a non-existence result. In this part we will prove the regularity of the solutions by some iterative arguments and singular integral analysis. Finally we prove the symmetry of solutions by using the moving plane method in integral forms again.

\section{The critical case}
\subsection{Concentration-compactness principle}
To prove the  $(PS)$ condition, we prove a nonlocal version of the Br\'{e}zis-Lieb convergence lemma (see \cite{BL1}). The proof is analogous to that of Lemma 3.5 in \cite{AC} or Lemma 2.4 in \cite{MS1}, but we write it here for completeness. First, we recall that pointwise convergence of a bounded sequence implies weak convergence, see \cite[Proposition 5.4.7]{Wi2}.

\begin{lem}\label{BLN1} Let $N\geq3$, $q\in(1,+\infty)$ and $\{u_{n}\}$ is a bounded sequence in $L^{q}(\mathbb{R}^N)$. If $u_{n}\rightarrow u$ a.e. in $\mathbb{R}^N$ as $n\rightarrow\infty$, then $u_{n}\rightharpoonup u$ weakly in $L^{q}(\mathbb{R}^N)$.
\end{lem}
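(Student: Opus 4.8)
The statement to prove is Lemma~\ref{BLN1}: a bounded sequence in $L^q(\mathbb{R}^N)$, $q\in(1,\infty)$, that converges a.e.\ must converge weakly in $L^q$. This is a standard functional-analytic fact, and the plan is to combine reflexivity of $L^q$ (since $1<q<\infty$) with the uniqueness of limits.

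**Main steps.** First I would invoke reflexivity: because $1<q<\infty$, the space $L^q(\mathbb{R}^N)$ is reflexive, so the bounded sequence $\{u_n\}$ has a weakly convergent subsequence $u_{n_k}\rightharpoonup w$ for some $w\in L^q(\mathbb{R}^N)$. The task is then to identify $w=u$. To do this, I would test weak convergence against indicator functions of sets of finite measure: for any measurable $E\subset\mathbb{R}^N$ with $|E|<\infty$, the function $\mathbf{1}_E$ lies in $L^{q'}(\mathbb{R}^N)$ (where $\frac1q+\frac1{q'}=1$), hence
$$
\int_E u_{n_k}\,dx \longrightarrow \int_E w\,dx.
$$
On the other hand, I would apply Vitali's convergence theorem (or Egorov plus uniform integrability): the sequence $\{u_{n_k}\}$ restricted to $E$ is uniformly integrable because it is bounded in $L^q(E)$ with $|E|<\infty$ and $q>1$ (by the de la Vall\'ee-Poussin criterion), and it converges a.e.\ to $u$; hence $\int_E u_{n_k}\,dx\to\int_E u\,dx$. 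Comparing the two limits gives $\int_E u\,dx = \int_E w\,dx$ for every set $E$ of finite measure, which forces $u=w$ a.e.\ on $\mathbb{R}^N$ (write $\mathbb{R}^N$ as a countable union of finite-measure sets and use that $u-w$ has vanishing integral over every finite-measure subset).

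**Conclusion and the delicate point.** Once $w=u$ is established for the subsequence, I would finish by a standard subsequence argument: the same reasoning applies to \emph{every} subsequence of $\{u_n\}$, so every subsequence has a further subsequence converging weakly to the \emph{same} limit $u$; therefore the full sequence $u_n\rightharpoonup u$ weakly in $L^q(\mathbb{R}^N)$. The only step requiring genuine care is the passage from a.e.\ convergence to convergence of integrals over finite-measure sets: pointwise convergence alone does not give this, and one must exploit $q>1$ to secure uniform integrability (this is exactly where the hypothesis $q\in(1,\infty)$, rather than $q=1$, is used, and also where boundedness in $L^q$ is essential). Everything else is routine measure theory and the definition of weak convergence.
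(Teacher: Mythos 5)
Your proposal is correct, but note that the paper does not actually prove this lemma: it simply recalls the fact and cites \cite[Proposition 5.4.7]{Wi2} (Willem's book), so there is no in-paper argument to compare against. Your argument supplies a complete and standard proof of that cited fact: reflexivity of $L^{q}$ for $1<q<\infty$ gives a weakly convergent subsequence $u_{n_k}\rightharpoonup w$; testing against $\mathbf{1}_{E}\in L^{q'}$ for $|E|<\infty$ and using Vitali's theorem (uniform integrability on $E$ follows directly from H\"older, $\int_{A}|u_{n_k}|\leq C|A|^{1/q'}$, so de la Vall\'ee-Poussin is not even needed) identifies $w=u$ a.e.; and the subsequence-of-subsequence argument upgrades this to weak convergence of the full sequence, since weak convergence is tested against fixed $g\in L^{q'}$ and scalar sequences converge iff every subsequence has a further subsequence with the same limit. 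The only point worth making explicit is that $u\in L^{q}(\mathbb{R}^N)$ (by Fatou's lemma, $\|u\|_{q}\leq\liminf\|u_{n}\|_{q}<\infty$), which ensures $u-w$ is locally integrable so that vanishing of $\int_{E}(u-w)\,dx$ over all finite-measure sets indeed forces $u=w$ a.e.; this is implicit in your Vitali step but deserves a sentence. With that remark, your proof is a legitimate self-contained substitute for the external citation.
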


\begin{lem} \label{BLN}Let $N\geq3$, $\alpha\geq0$, $0<\mu<N$, $2\alpha+\mu\leq N$ and $1\leq p\leq\frac{2N-2\alpha-\mu}{N-2}$. If $\{u_{n}\}$ is a bounded sequence in $L^{\frac{2Np}{2N-2\alpha-\mu}}(\mathbb{R}^N)$ such that $u_{n}\rightarrow u$ a.e. in $\mathbb{R}^N$ as $n\rightarrow\infty$, then the following hold:
$$
\int_{\mathbb{R}^N}
\int_{\mathbb{R}^N}\frac{|u_{n}(x)|^{p}|u_{n}(y)|^{p}}
{|x|^{\alpha}|x-y|^{\mu}|y|^{\alpha}}dxdy
-\int_{\mathbb{R}^N}
\int_{\mathbb{R}^N}\frac{|(u_{n}-u)(x)|^{p}|(u_{n}-u)(y)|^{p}}
{|x|^{\alpha}|x-y|^{\mu}|y|^{\alpha}}dxdy
$$
$$
\rightarrow\int_{\mathbb{R}^N}
\int_{\mathbb{R}^N}\frac{|u(x)|^{p}|u(y)|^{p}}
{|x|^{\alpha}|x-y|^{\mu}|y|^{\alpha}}dxdy
$$
as $n\rightarrow\infty$.
\end{lem}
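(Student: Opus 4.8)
The plan is to prove this nonlocal Brézis--Lieb lemma by treating the double integral as a bilinear form and exploiting the weighted Hardy--Littlewood--Sobolev inequality (Proposition~\ref{prop1}) together with the classical (pointwise) Brézis--Lieb lemma. Write $F_n(x):=|u_n(x)|^p$ and $F(x):=|u(x)|^p$, and abbreviate the bilinear form $B(f,g):=\int_{\mathbb{R}^N}\int_{\mathbb{R}^N}\frac{f(x)g(y)}{|x|^{\alpha}|x-y|^{\mu}|y|^{\alpha}}\,dx\,dy$. Setting $r=s=\frac{2N}{2N-2\alpha-\mu}$ one checks $\frac1r+\frac1s+\frac{2\alpha+\mu}{N}=2$, so \eqref{WHLS} gives $|B(f,g)|\leq C|f|_r|g|_r$. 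Since $\{u_n\}$ is bounded in $L^{\frac{2Np}{2N-2\alpha-\mu}}(\mathbb{R}^N)$, the sequence $\{F_n\}$ is bounded in $L^{r}(\mathbb{R}^N)$, and $F_n\to F$ a.e.; hence by Lemma~\ref{BLN1} (pointwise convergence of a bounded sequence yields weak convergence) we have $F_n\rightharpoonup F$ weakly in $L^r(\mathbb{R}^N)$.

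Next I would expand the difference algebraically. Write $G_n:=|(u_n-u)|^p$. The quantity to control is
\begin{equation*}
B(F_n,F_n)-B(G_n,G_n)-B(F,F)
= B(F_n-G_n-F,\,F_n)+B(F,\,F_n-F)+B(G_n,\,F-G_n)+B(F,F)-B(F,F),
\end{equation*}
so after regrouping it suffices to show three things: (i) $B(F,\,F_n-F)\to 0$, (ii) $B(G_n,\,F-G_n)\to 0$, and (iii) $B(F_n-G_n-F,\,F_n)\to 0$. For (i), the function $y\mapsto \int \frac{F(x)}{|x|^\alpha|x-y|^\mu}\,dx\cdot|y|^{-\alpha}$ lies in the dual space $L^{r'}(\mathbb{R}^N)$ by \eqref{WHLS} (viewed as a bounded linear functional acting on the second slot), so weak convergence $F_n\rightharpoonup F$ gives (i) directly. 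For (ii), note $G_n\to 0$ a.e.\ and $\{G_n\}$ is bounded in $L^r$, hence $G_n\rightharpoonup 0$ weakly in $L^r$ by Lemma~\ref{BLN1}; pairing against the fixed $L^{r'}$ functional determined by $F$ kills the term.

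The crux is the classical Brézis--Lieb step controlling $|F_n-G_n-F|$ in $L^r$-norm, which then handles (iii) via $|B(F_n-G_n-F,F_n)|\le C|F_n-G_n-F|_r\,|F_n|_r$ and the boundedness of $|F_n|_r$. Concretely: with $t=\frac{2Np}{2N-2\alpha-\mu}\ge 2$ and the algebraic identity $\big||a|^p-|a-b|^p\big|\le \varepsilon|a-b|^p+C_\varepsilon|b|^p$, one shows $\int_{\mathbb{R}^N}\big||u_n|^p-|u_n-u|^p-|u|^p\big|^r\,dx\to 0$ by the standard Brézis--Lieb argument (Fatou applied to $\big(\text{pointwise-convergent truncated differences}\big)$, with the $\varepsilon$-term absorbed using $\sup_n|u_n-u|_t^{t}<\infty$ and $C_\varepsilon|u|^p\in L^r$). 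This is exactly the part that mirrors Lemma~3.5 in \cite{AC} / Lemma~2.4 in \cite{MS1}, and is where all the genuine work sits; the weighted kernel plays no role here since it has been quarantined inside the HLS estimate. Assembling (i)--(iii) gives the claimed convergence.
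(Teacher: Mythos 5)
Your overall strategy --- quarantining the weight inside the weighted HLS bound, proving the Br\'ezis--Lieb convergence $|F_n-G_n-F|_{r}\to 0$, and finishing with weak convergence of $F_n$ and $G_n$ --- is exactly the paper's, but your bookkeeping contains a genuine error. Expanding your regrouping identity, the right-hand side equals $B(F_n,F_n)-B(G_n,G_n)-B(F,F)-B(G_n,F_n-F)$, so the cross term $B(G_n,F_n-F)$ has been dropped; test with $u=0$, where the left side is $0$ while your right side is $-B(F_n,F_n)$. More seriously, claim (ii) is false as stated: $B(G_n,F-G_n)=B(G_n,F)-B(G_n,G_n)$, and while the linear pairing $B(G_n,F)\to 0$ by weak convergence against the fixed $L^{r'}$ functional, the quadratic term $B(G_n,G_n)$ need not vanish. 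Indeed, take $u=0$ and $u_n(x)=n^{N/t}\varphi(nx)$ with $\varphi\in \mathcal{C}^{\infty}_{0}(\mathbb{R}^N)$, $t=\frac{2Np}{2N-2\alpha-\mu}$: then $u_n\to 0$ a.e., $\{u_n\}$ is bounded in $L^{t}$, and a change of variables shows $B(G_n,G_n)=B(|\varphi|^{p},|\varphi|^{p})>0$ for every $n$. Your justification of (ii) only treats the functional determined by $F$ and silently discards $-B(G_n,G_n)$, which is precisely the possible loss of compactness the lemma must keep track of.

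The repair is the symmetric bilinear identity the paper uses in \eqref{b3}: with $H_n:=F_n-G_n$ one has $B(F_n,F_n)-B(G_n,G_n)=B(H_n,H_n)+2B(H_n,G_n)$. Then $B(H_n,H_n)\to B(F,F)$ because $|H_n-F|_{r}\to 0$ (your Br\'ezis--Lieb step, which is correct and where the real work lies) and $B$ is bounded on $L^{r}\times L^{r}$ by the weighted HLS inequality; and $B(H_n,G_n)=B(H_n-F,G_n)+B(F,G_n)\to 0$ using $|H_n-F|_{r}\to 0$ together with the boundedness of $|G_n|_{r}$ for the first piece, and $G_n\rightharpoonup 0$ in $L^{r}$ (Lemma \ref{BLN1}) for the second. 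With this corrected decomposition your argument coincides with the paper's proof. A minor further point: your assertion $t\geq 2$ fails when $p$ is close to $1$ (only $t>1$ is guaranteed under $2\alpha+\mu\leq N$), but it is never actually used in the Br\'ezis--Lieb step.
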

\begin{proof}
Since $\{u_{n}\}$ is a bounded sequence in $L^{\frac{2Np}{2N-2\alpha-\mu}}(\mathbb{R}^N)$ such that $u_{n}\rightarrow u$ almost everywhere in $\mathbb{R}^N$ as $n\rightarrow\infty$, then similar to the proof of the Br\'{e}zis-Lieb Lemma \cite{BL1}, we know that
\begin{equation}\label{b1}
|u_{n}-u|^{p}-|u_{n}|^{p}\rightarrow|u|^{p}
\end{equation}
in  $L^{\frac{2N}{2N-2\alpha-\mu}}(\mathbb{R}^N)$ as $n\rightarrow\infty$.
Let $$Tf(x)=\displaystyle\int_{\mathbb{R}^{N}}\frac{f(y)}
{|x|^{\alpha}|x-y|^{\mu}|y|^{\beta}}dy,$$ the weighted Hardy-Littlewood-Sobolev inequality can be written in the form of
$$
|Tf|_{l}=\sup_{|h|_{s}=1}\langle Tf,h\rangle\leq C|f|_{r},
$$
where $\frac{1}{r}+\frac{\alpha+\beta+\mu}{N}=1+\frac{1}{l}$, $\frac{1}{l}+\frac{1}{s}=1$.
It implies that
\begin{equation}\label{b2}
\displaystyle\int_{\mathbb{R}^{N}}\frac{|u_{n}|^{p}}
{|x|^{\alpha}|x-y|^{\mu}|y|^{\alpha}}dy-\displaystyle\int_{\mathbb{R}^{N}}\frac{|u_{n}-u|^{p}}
{|x|^{\alpha}|x-y|^{\mu}|y|^{\alpha}}dy  \rightarrow\displaystyle\int_{\mathbb{R}^{N}}\frac{|u|^{p}}
{|x|^{\alpha}|x-y|^{\mu}|y|^{\alpha}}dy
\end{equation}
in $L^{\frac{2N}{2\alpha+\mu}}(\mathbb{R}^N)$ as $n\rightarrow\infty$. On the other hand, we notice that
\begin{equation}\label{b3}
\aligned
&\int_{\mathbb{R}^N}\left(|x|^{-\mu}\ast \left(\frac{1}{|x|^{\alpha}}|u_{n}|^{2_{\alpha, \mu}^{\ast}}\right)\right)\frac{1}{|x|^{\alpha}}|u_{n}|^{2_{\alpha, \mu}^{\ast}}dx
-\int_{\mathbb{R}^N}\left(|x|^{-\mu}\ast \left(\frac{1}{|x|^{\alpha}}|u_{n}-u|^{2_{\alpha, \mu}^{\ast}}\right)\right)\frac{1}{|x|^{\alpha}}|u_{n}-u|^{2_{\alpha, \mu}^{\ast}}dx\\
&=\int_{\mathbb{R}^N}\left(|x|^{-\mu}\ast \left(\frac{1}{|x|^{\alpha}}|u_{n}|^{2_{\alpha, \mu}^{\ast}}-\frac{1}{|x|^{\alpha}}|u_{n}-u|^{2_{\alpha, \mu}^{\ast}}\right)\right)\frac{1}{|x|^{\alpha}}
(|u_{n}|^{2_{\alpha, \mu}^{\ast}}-|u_{n}-u|^{2_{\alpha, \mu}^{\ast}})dx\\
&\hspace{4mm}+2\int_{\mathbb{R}^N}\left(|x|^{-\mu}\ast \left(\frac{1}{|x|^{\alpha}}|u_{n}|^{2_{\alpha, \mu}^{\ast}}-\frac{1}{|x|^{\alpha}}|u_{n}-u|^{2_{\alpha, \mu}^{\ast}}\right)\right)
\frac{1}{|x|^{\alpha}}|u_{n}-u|^{2_{\alpha, \mu}^{\ast}}dx.\\
\endaligned
\end{equation}
By Lemma \ref{BLN1}, we have that
\begin{equation}\label{b4}
|u_{n}-u|^{2_{\alpha, \mu}^{\ast}}\rightharpoonup0
\end{equation}
in $L^{\frac{2N}{2N-2\alpha-\mu}}(\mathbb{R}^N)$ as $n\rightarrow\infty$. From \eqref{b1}-\eqref{b4}, we know that the result holds.
\end{proof}

To describe the lack of compactness of the injection from $D^{1,2}(\mathbb{R}^N)$ to $L^{2^*}(\mathbb{R}^N)$, P.L. Lions established the well known Concentration-compactness principles  \cite{Ls1, Ls2, Ls3, Ls4}. Here we would like to recall the second concentration-compactness principle \cite{Ls1} for the convenience of the readers.
\begin{lem}\label{Concentration-compactness principle}
Let $\{u_{n}\}$ be a bounded sequence in $D^{1,2}(\mathbb{R}^N)$ converging weakly and a.e. to some $u_0\in D^{1,2}(\mathbb{R}^N)$. $|\nabla u_{n}|^{2}\rightharpoonup \omega$, $|u_{n}|^{2^*}\rightharpoonup \zeta$ weakly in the sense of measures where $\omega$ and $\zeta$ are bounded non-negative measures on $\mathbb{R}^N$. Then we have:\\
(1). there exists some at most countable set $I$, a family $\{z_i:i\in I\}$ of distinct points in $\mathbb{R}^N$, and a family $\{\zeta_i:i\in I\}$ of positive numbers such that
$$
\zeta=|u_0|^{2^*}+\sum_{i\in I}\zeta_i\delta_{z_i},
$$
where $\delta_{x}$ is the Dirac-mass of mass 1 concentrated at $x\in\mathbb{R}^N$.\\
(2). In addition we have
$$
\omega\geq|\nabla u_0|^{2}+\sum_{i\in I}\omega_i\delta_{z_i}
$$
for some family $\{\omega_i:i\in I\}$, $\omega_i>0$ satisfying
$$
S\zeta_i^{\frac{2}{2^*}}\leq\omega_i,\ \ \mbox{for all } i\in I.
$$
In particular, $\sum_{i\in I}\zeta_i^{\frac{2}{2^*}}<\infty$.
\end{lem}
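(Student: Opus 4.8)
The plan is to reproduce P.L. Lions' classical argument in four stages: a reduction to $u_0=0$, a localized Sobolev inequality passed to the measure limits, a measure-theoretic extraction of the atoms, and finally undoing the reduction. For the first stage, set $v_n:=u_n-u_0$, so that $v_n\rightharpoonup0$ in $D^{1,2}(\mathbb{R}^N)$ and $v_n\to0$ a.e. Writing $|\nabla u_n|^2=|\nabla v_n|^2+2\nabla v_n\cdot\nabla u_0+|\nabla u_0|^2$ and using $\nabla v_n\rightharpoonup0$ in $L^2(\mathbb{R}^N)$, the cross term tends to $0$ when tested against any $\varphi\in C_c(\mathbb{R}^N)$, so $|\nabla v_n|^2\rightharpoonup\tilde\omega:=\omega-|\nabla u_0|^2\geq0$ in the sense of measures; by the Br\'ezis--Lieb lemma $|u_n|^{2^{\ast}}-|v_n|^{2^{\ast}}\to|u_0|^{2^{\ast}}$ in $L^1(\mathbb{R}^N)$, hence $|v_n|^{2^{\ast}}\rightharpoonup\tilde\zeta:=\zeta-|u_0|^{2^{\ast}}\geq0$. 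It then suffices to prove $\tilde\zeta=\sum_{i\in I}\zeta_i\delta_{z_i}$ and $\tilde\omega\geq\sum_{i\in I}\omega_i\delta_{z_i}$ with $S\zeta_i^{2/2^{\ast}}\leq\omega_i$, since adding $|u_0|^{2^{\ast}}$ and $|\nabla u_0|^2$ back recovers the claimed decompositions.

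For the second stage, fix $\varphi\in C_c^\infty(\mathbb{R}^N)$ and apply the Sobolev inequality to $\varphi v_n$:
\[
\Big(\int_{\mathbb{R}^N}|\varphi v_n|^{2^{\ast}}\Big)^{2/2^{\ast}}\leq S^{-1}\int_{\mathbb{R}^N}|\nabla(\varphi v_n)|^2 .
\]
Expanding $\nabla(\varphi v_n)=\varphi\nabla v_n+v_n\nabla\varphi$ and using the local strong convergence $v_n\to0$ in $L^2_{\mathrm{loc}}(\mathbb{R}^N)$ (Rellich), the terms involving $v_n\nabla\varphi$ vanish in the limit, while $\int|\varphi v_n|^{2^{\ast}}\to\int|\varphi|^{2^{\ast}}\,d\tilde\zeta$ and $\int\varphi^2|\nabla v_n|^2\to\int\varphi^2\,d\tilde\omega$. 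This gives the reverse H\"older inequality between measures
\[
\Big(\int_{\mathbb{R}^N}|\varphi|^{2^{\ast}}\,d\tilde\zeta\Big)^{1/2^{\ast}}\leq S^{-1/2}\Big(\int_{\mathbb{R}^N}\varphi^2\,d\tilde\omega\Big)^{1/2}\qquad\text{for all }\varphi\in C_c^\infty(\mathbb{R}^N),
\]
and, approximating indicators of compact sets and using inner/outer regularity of Radon measures, $\tilde\zeta(E)^{2/2^{\ast}}\leq S^{-1}\tilde\omega(E)$ for every Borel set $E$.

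For the third stage I would extract the atoms. The last inequality shows $\tilde\zeta\ll\tilde\omega$, so by Radon--Nikodym $\tilde\zeta=f\tilde\omega$ with $f\geq0$, $f\in L^1(\tilde\omega)$. From $\tilde\zeta(B_r(x))\leq S^{-2^{\ast}/2}\tilde\omega(B_r(x))^{2^{\ast}/2}$ and the Besicovitch differentiation theorem, for $\tilde\omega$-a.e.\ $x$,
\[
f(x)=\lim_{r\to0}\frac{\tilde\zeta(B_r(x))}{\tilde\omega(B_r(x))}\leq S^{-2^{\ast}/2}\lim_{r\to0}\tilde\omega(B_r(x))^{2^{\ast}/2-1}.
\]
Since $2^{\ast}/2-1>0$ for $N\geq3$, the right-hand side is $0$ whenever $x$ is not an atom of $\tilde\omega$; hence $f$ is concentrated on the at most countable atom set $\{z_i\}$ of $\tilde\omega$, and $\tilde\zeta=\sum_i\tilde\zeta(\{z_i\})\delta_{z_i}=:\sum_{i\in I}\zeta_i\delta_{z_i}$ with $I:=\{i:\zeta_i>0\}$. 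Testing $\tilde\zeta(E)^{2/2^{\ast}}\leq S^{-1}\tilde\omega(E)$ with $E=\{z_i\}$ gives $S\zeta_i^{2/2^{\ast}}\leq\tilde\omega(\{z_i\})=:\omega_i$, and $\tilde\omega\geq\sum_{i\in I}\omega_i\delta_{z_i}$ is immediate since the $z_i$ are distinct; summing yields $\sum_{i\in I}\zeta_i^{2/2^{\ast}}\leq S^{-1}\tilde\omega(\mathbb{R}^N)<\infty$. Undoing the reduction of the first stage finishes the proof.

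The step I expect to be the main obstacle is the passage from the measure inequality to the atomic decomposition. Because $2/2^{\ast}<1$, the inequality does \emph{not} say $\tilde\zeta\leq C\tilde\omega$, and it is precisely the Radon--Nikodym--plus--Besicovitch-differentiation argument that converts the superadditive scaling $\tilde\omega(B_r)^{2^{\ast}/2}$ into the vanishing of the density off the atoms. One must also take care that the localized Sobolev inequality genuinely survives the weak limits, which rests on the strong local convergence $v_n\to0$ in $L^2_{\mathrm{loc}}$; the rest is routine.
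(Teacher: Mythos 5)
Your proposal is correct. Note, however, that the paper does not prove this lemma at all: it is recalled verbatim from P.L. Lions' paper \cite{Ls1} for the reader's convenience, and the paper only proves its weighted nonlocal analogue (Lemma \ref{CCP1}), which follows the same scheme of localized inequalities plus Lions' Lemma 1.2. Your argument is essentially the classical Lions proof that the citation points to: the reduction to $v_n=u_n-u_0$ via the cross-term cancellation and the Br\'ezis--Lieb lemma, the localized Sobolev inequality $\bigl(\int|\varphi|^{2^{\ast}}d\tilde\zeta\bigr)^{1/2^{\ast}}\leq S^{-1/2}\bigl(\int\varphi^{2}d\tilde\omega\bigr)^{1/2}$ obtained through Rellich compactness, and the Radon--Nikodym/Besicovitch differentiation step showing the density of $\tilde\zeta$ with respect to $\tilde\omega$ vanishes off the (countable) atoms of $\tilde\omega$ because $2^{\ast}/2-1>0$; only cosmetic points remain (use $\limsup_{r\to0}$ rather than $\lim$ for $\tilde\omega(B_r(x))^{2^{\ast}/2-1}$, and record that the measure inequality passes from $C_c^{\infty}$ test functions to Borel sets by regularity), so the proof is complete as written.
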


 However, the second concentration-compactness principle, roughly
speaking, is only concerned with a possible concentration of a weakly convergent sequence at finite points and it does not provide any information about the loss
of mass of a sequence at infinity. The following concentration-compactness principle at infinity was developed by Chabrowski \cite{C} which provided some quantitative information about the loss of mass of a sequence at infinity.

\begin{lem}\label{Concentration-compactness principle2}
Let $\{u_{n}\}\subset D^{1,2}(\mathbb{R}^N)$ be a sequence in Lemma \ref{Concentration-compactness principle} and define
$$
\omega_{\infty}:=\lim_{R\rightarrow\infty}\overline{\lim}_{n\rightarrow\infty}\int_{|x|\geq R}|\nabla u_{n}|^{2}dx,\ \ \
\zeta_{\infty}:=\lim_{R\rightarrow\infty}\overline{\lim}_{n\rightarrow\infty}\int_{|x|\geq R}| u_{n}|^{2^{\ast}}dx.
$$
Then it follows that
$$
S\zeta_{\infty}^{\frac{2}{2^{\ast}}}\leq \omega_{\infty},
$$
$$
\overline{\lim}_{n\rightarrow\infty}|\nabla u_{n}|_{2}^{2}=\int_{\mathbb{R}^N}d\omega+\omega_{\infty},
$$
$$
\overline{\lim}_{n\rightarrow\infty}|u_{n}|_{2^{\ast}}^{2^{\ast}}=\int_{\mathbb{R}^N}d\zeta+\zeta_{\infty}.
$$
\end{lem}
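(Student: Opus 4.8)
Here is a proof plan.

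The plan is to capture the portion of the mass of $\{u_{n}\}$ that escapes to spatial infinity by testing the Sobolev inequality against cut-offs of $u_{n}$ localized in the region $\{|x|\ge R\}$, taking the limits in the order $n\to\infty$ first and $R\to\infty$ afterwards. I fix once and for all a function $\psi\in C^{\infty}(\mathbb{R}^N)$ with $0\le\psi\le1$, $\psi\equiv0$ on the ball $B_{1}$ and $\psi\equiv1$ on $\mathbb{R}^N\setminus B_{2}$, and set $\psi_{R}(x):=\psi(x/R)$, so that $0\le\psi_{R}\le1$, $\psi_{R}\le\chi_{\{|x|\ge R\}}$, $\|\nabla\psi_{R}\|_{\infty}\le C/R$ and $\nabla\psi_{R}$ is supported in the annulus $\{R\le|x|\le 2R\}$. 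Throughout I use that $t\mapsto t^{2/2^{*}}$ is continuous and nondecreasing (hence commutes with $\overline{\lim}_{n}$), that $R\mapsto\overline{\lim}_{n}\int_{\{|x|\ge R\}}f$ is nonincreasing, and that a finite Borel measure $\nu$ satisfies $\nu(\{|x|\ge R\})\to0$ as $R\to\infty$.

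\emph{Step 1 (the inequality $S\zeta_{\infty}^{2/2^{*}}\le\omega_{\infty}$).} Apply the Sobolev inequality to $\psi_{R}u_{n}\in D^{1,2}(\mathbb{R}^N)$ and expand:
$$
S\Big(\int_{\mathbb{R}^N}|\psi_{R}u_{n}|^{2^{*}}\,dx\Big)^{2/2^{*}}\le\int_{\mathbb{R}^N}\psi_{R}^{2}|\nabla u_{n}|^{2}\,dx+2\int_{\mathbb{R}^N}\psi_{R}u_{n}\,\nabla\psi_{R}\cdot\nabla u_{n}\,dx+\int_{\mathbb{R}^N}u_{n}^{2}|\nabla\psi_{R}|^{2}\,dx .
$$
On the left, $\int|\psi_{R}u_{n}|^{2^{*}}\ge\int_{\{|x|\ge 2R\}}|u_{n}|^{2^{*}}$, whose iterated $\overline{\lim}$ equals $\zeta_{\infty}$; the first term on the right is $\le\int_{\{|x|\ge R\}}|\nabla u_{n}|^{2}$, whose iterated $\overline{\lim}$ equals $\omega_{\infty}$. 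After showing that the last integral and the cross term vanish in the iterated limit (the crux, treated below), I let $n\to\infty$ and then $R\to\infty$ to obtain $S\zeta_{\infty}^{2/2^{*}}\le\omega_{\infty}$.

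\emph{Step 2 (the two mass identities).} I pick in addition $\varphi_{R}\in C_{c}(\mathbb{R}^N)$ with $0\le\varphi_{R}\le1$, $\varphi_{R}\equiv1$ on $B_{R}$ and $\operatorname{supp}\varphi_{R}\subset B_{2R}$, and split $\int_{\mathbb{R}^N}|\nabla u_{n}|^{2}=\int_{\mathbb{R}^N}(1-\psi_{R})|\nabla u_{n}|^{2}+\int_{\mathbb{R}^N}\psi_{R}|\nabla u_{n}|^{2}$. Since $1-\psi_{R}\in C_{c}(\mathbb{R}^N)$ and $|\nabla u_{n}|^{2}\rightharpoonup\omega$, the first term tends (as $n\to\infty$) to $\int(1-\psi_{R})\,d\omega$, while $\int\psi_{R}|\nabla u_{n}|^{2}\le\int_{\{|x|\ge R\}}|\nabla u_{n}|^{2}$; taking $\overline{\lim}_{n}$ and then $R\to\infty$ (dominated convergence, $1-\psi_{R}\uparrow1$, and the definition of $\omega_{\infty}$) gives $\overline{\lim}_{n}|\nabla u_{n}|_{2}^{2}\le\int d\omega+\omega_{\infty}$. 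For the reverse inequality I use the pointwise estimate $|\nabla u_{n}|^{2}\ge\varphi_{R}|\nabla u_{n}|^{2}+\chi_{\{|x|\ge 2R\}}|\nabla u_{n}|^{2}$ (disjoint supports), take $\overline{\lim}_{n}$ (the $\varphi_{R}$-term converges to $\int\varphi_{R}\,d\omega$ because $\varphi_{R}\in C_{c}$), and let $R\to\infty$; this yields $\overline{\lim}_{n}|\nabla u_{n}|_{2}^{2}=\int d\omega+\omega_{\infty}$. The identity $\overline{\lim}_{n}|u_{n}|_{2^{*}}^{2^{*}}=\int d\zeta+\zeta_{\infty}$ is proved verbatim, with $|\nabla u_{n}|^{2},\omega,\omega_{\infty}$ replaced by $|u_{n}|^{2^{*}},\zeta,\zeta_{\infty}$ and using $|u_{n}|^{2^{*}}\rightharpoonup\zeta$.

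\emph{Main obstacle.} The delicate point is that the boundary terms in Step 1 genuinely disappear in the iterated limit. For the last integral, H\"older's inequality gives $\int u_{n}^{2}|\nabla\psi_{R}|^{2}\le C R^{-2}\big(\int_{\{R\le|x|\le 2R\}}|u_{n}|^{2^{*}}\big)^{2/2^{*}}\,|\{R\le|x|\le 2R\}|^{2/N}$, and because $|\{R\le|x|\le 2R\}|^{2/N}=c\,R^{2}$ this collapses to $\int u_{n}^{2}|\nabla\psi_{R}|^{2}\le C\big(\int_{\{R\le|x|\le 2R\}}|u_{n}|^{2^{*}}\big)^{2/2^{*}}$; a Portmanteau-type upper bound for the weak-$\ast$ limit (valid even with mass escaping to infinity, since $\chi_{\{R\le|x|\le 2R\}}$ can be squeezed below continuous compactly supported functions decreasing to it) then gives $\overline{\lim}_{n}\int_{\{R\le|x|\le 2R\}}|u_{n}|^{2^{*}}\le\zeta(\{|x|\ge R\})\to0$ as $R\to\infty$. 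The cross term is handled by Cauchy--Schwarz, $\big|\int\psi_{R}u_{n}\,\nabla\psi_{R}\cdot\nabla u_{n}\big|\le|\nabla u_{n}|_{2}\big(\int u_{n}^{2}|\nabla\psi_{R}|^{2}\big)^{1/2}$, together with $\sup_{n}|\nabla u_{n}|_{2}<\infty$ (boundedness in $D^{1,2}(\mathbb{R}^N)$), so it is dominated by the square root of the previous bound and also tends to $0$. With these two facts Step 1 closes, and the proof is complete.
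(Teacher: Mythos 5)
The paper itself gives no proof of this lemma: it is quoted as a known tool from Chabrowski \cite{C}, so there is no internal argument to compare against. Your proof is correct and follows the standard Chabrowski-type route — Sobolev inequality applied to $\psi_{R}u_{n}$, with the cross term and the $\int u_{n}^{2}|\nabla\psi_{R}|^{2}$ term killed via H\"older (the $R^{-2}$ from $|\nabla\psi_{R}|^{2}$ cancelling $|\{R\le|x|\le2R\}|^{2/N}$) together with the portmanteau bound $\overline{\lim}_{n}\int_{\{R\le|x|\le2R\}}|u_{n}|^{2^{\ast}}dx\le\zeta(\{|x|\ge R\})\to0$, and the two mass identities obtained by splitting with $1-\psi_{R}\in C_{c}(\mathbb{R}^N)$ and $\varphi_{R}$ and using that $\overline{\lim}$ is additive when one summand converges — so it can stand as a self-contained proof of the cited result.
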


 The concentration-compactness principles \cite{Ls1, Ls2, Ls3, Ls4}
 help not only to investigate
 the behaviour of the weakly convergent sequences in Sobolev spaces where
the lack of compactness occurs either due to the appearance of a critical Sobolev
exponent or due to the unboundedness of a domain and but also to find level sets of a given variational functional for which the
Palais-Smale condition holds.

\begin{lem}\label{CCP1} Let $N\geq3$, $\alpha\geq0$, $0<\mu<N$, $2\alpha+\mu\leq N$, $\{u_{n}\}$ be a bounded sequence in $D^{1,2}(\mathbb{R}^N)$ converging weakly and a.e. to some $u_{0}$ and $\omega,  \omega_{\infty}, \zeta, \zeta_{\infty}$ be the bounded nonnegative measures in Lemma \ref{Concentration-compactness principle}  and Lemma \ref{Concentration-compactness principle2}. Assume that
$$
\Big(\int_{\mathbb{R}^N}\frac{
|u_{n}(y)|^{2_{\alpha, \mu}^{\ast}}}{|x-y|^{\mu}|y|^{\alpha}}dy\Big)
\frac{|u_{n}(x)|^{2_{\alpha, \mu}^{\ast}}}{|x|^{\alpha}}
\rightharpoonup \nu$$
weakly in the sense of measure where $\nu$ is a bounded positive measure on $\mathbb{R}^N$ and define
$$\aligned
\nu_{\infty}&:=\lim_{R\rightarrow\infty}\overline{\lim}_{n\rightarrow\infty}\int_{|x|\geq R}\Big(\int_{\mathbb{R}^N}\frac{
|u_{n}(y)|^{2_{\alpha, \mu}^{\ast}}}{|x-y|^{\mu}|y|^{\alpha}}dy\Big)
\frac{|u_{n}(x)|^{2_{\alpha, \mu}^{\ast}}}{|x|^{\alpha}}dx.
\endaligned$$
Then there exists a countable sequence of points $\{z_{i}\}_{i\in I}\subset \mathbb{R}^N $ and families  of positive numbers $\{\nu_i:i\in I\}$, $\{\zeta_i:i\in I\}$ and $\{\omega_i:i\in I\}$ such that
\begin{equation}\label{cp5}
\nu=\Big(\int_{\mathbb{R}^N}\frac{
|u_{0}(y)|^{2_{\alpha, \mu}^{\ast}}}{|x-y|^{\mu}|y|^{\alpha}}dy\Big)
\frac{|u_{0}(x)|^{2_{\alpha, \mu}^{\ast}}}{|x|^{\alpha}}+ \Sigma_{i\in I}\nu_{i}\delta_{z_{i}},\ \  \Sigma_{i\in I}\nu_{i}^{\frac{1}{2_{\alpha, \mu}^{\ast}}}<\infty,
\end{equation}
\begin{equation}\label{cp51}
\omega\geq|\nabla u_0|^{2}+\sum_{i\in I}\omega_i\delta_{z_i},
\end{equation}
\begin{equation}\label{cp52}
\zeta\geq|u_0|^{2^*}+\sum_{i\in I}\zeta_i\delta_{z_i},
\end{equation}
and
\begin{equation}\label{cp6}
 \ S_{\alpha, \mu}\nu_{i}^{\frac{1}{2_{\alpha, \mu}^{\ast}}}\leq\omega_{i},\ \  \nu_{i}^{\frac{N}{2N-2\alpha-\mu}}\leq C(N,\mu,\alpha)^{\frac{N}{2N-2\alpha-\mu}}\zeta_{i},
\end{equation}
where $\delta_{x}$ is the Dirac-mass of mass 1 concentrated at $x\in\mathbb{R}^N$.

For the energy at infinity, we have
\begin{equation}\label{cp51}
\overline{\lim}_{n\rightarrow\infty}\int_{\mathbb{R}^N}\int_{\mathbb{R}^N}
\frac{|u_{n}(x)|^{2_{\alpha, \mu}^{\ast}}
|u_{n}(y)|^{2_{\alpha, \mu}^{\ast}}}{|x|^{\alpha}|x-y|^{\mu}|y|^{\alpha}}dxdy
=\nu_{\infty}+\int_{\mathbb{R}^N}d\nu,
\end{equation}
and
\begin{equation}\label{cp71}
C(N,\mu,\alpha)^{\frac{-2N}{2N-2\alpha-\mu}}\nu_{\infty}^{\frac{2N}{2N-2\alpha-\mu}}\leq \zeta_{\infty}\left(\int_{\mathbb{R}^N}d\zeta+\zeta_{\infty}\right),\  \  S_{\alpha, \mu}^{2}\nu_{\infty}^{\frac{2}{2_{\alpha, \mu}^{\ast}}}\leq \omega_{\infty}\left(\int_{\mathbb{R}^N}d\omega+\omega_{\infty}\right).
\end{equation}
Moreover, if $u_0=0$ and $\displaystyle\int_{\mathbb{R}^N}d\omega
=S_{\alpha, \mu}\left(\int_{\mathbb{R}^N}d\nu\right)^{\frac{1}{2_{\alpha, \mu}^{\ast}}}$, then $\nu$ is concentrated at a single point.
\end{lem}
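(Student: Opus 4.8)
The plan is to follow P.\ L.\ Lions' proof of the second concentration--compactness principle (Lemma~\ref{Concentration-compactness principle}) and Chabrowski's refinement at infinity (Lemma~\ref{Concentration-compactness principle2}), with the Sobolev inequality replaced by the weighted Hardy--Littlewood--Sobolev inequality \eqref{WHLS} and the Br\'ezis--Lieb lemma replaced by its nonlocal form, Lemma~\ref{BLN}. Applying Lemmas~\ref{Concentration-compactness principle}--\ref{Concentration-compactness principle2} directly to $\{u_{n}\}$ already yields the countable set $\{z_{i}\}$, the numbers $\omega_{i}:=\omega(\{z_{i}\})$, $\zeta_{i}:=\zeta(\{z_{i}\})$ with $S\zeta_{i}^{2/2^{\ast}}\le\omega_{i}$ and $S\zeta_{\infty}^{2/2^{\ast}}\le\omega_{\infty}$, the inequalities $\omega\ge|\nabla u_{0}|^{2}+\sum_{i}\omega_{i}\delta_{z_{i}}$ and $\zeta\ge|u_{0}|^{2^{\ast}}+\sum_{i}\zeta_{i}\delta_{z_{i}}$, and the identities $\overline{\lim}_{n}|\nabla u_{n}|_{2}^{2}=\int d\omega+\omega_{\infty}$, $\overline{\lim}_{n}|u_{n}|_{2^{\ast}}^{2^{\ast}}=\int d\zeta+\zeta_{\infty}$; only the statements involving $\nu$ and $\nu_{\infty}$ remain. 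Write $Tf(x)=\int_{\mathbb{R}^{N}}f(y)|x|^{-\alpha}|x-y|^{-\mu}|y|^{-\alpha}dy$ and $g_{n}(x)=(T|u_{n}|^{2_{\alpha, \mu}^{\ast}})(x)\,|u_{n}(x)|^{2_{\alpha, \mu}^{\ast}}$, so $g_{n}\rightharpoonup\nu$; by \eqref{WHLS}, $T$ is bounded from $L^{r}(\mathbb{R}^{N})$ to $L^{r'}(\mathbb{R}^{N})$ with $r=\tfrac{2N}{2N-2\alpha-\mu}$, and $\{|u_{n}|^{2_{\alpha, \mu}^{\ast}}\}$ is bounded in $L^{r}$ since $2_{\alpha, \mu}^{\ast}r=2^{\ast}$.

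First I would reduce to $u_{0}=0$: setting $v_{n}=u_{n}-u_{0}\rightharpoonup0$, Lemma~\ref{BLN} gives $|u_{n}|^{2_{\alpha, \mu}^{\ast}}=|v_{n}|^{2_{\alpha, \mu}^{\ast}}+|u_{0}|^{2_{\alpha, \mu}^{\ast}}+o(1)$ in $L^{r}$; expanding $g_{n}$ and using $|v_{n}|^{2_{\alpha, \mu}^{\ast}}\rightharpoonup0$ in $L^{r}$ (Lemma~\ref{BLN1}) together with the weak--weak continuity of $T$, the cross terms vanish weakly-$*$, so $g_{n}-\tilde g_{n}\rightharpoonup(T|u_{0}|^{2_{\alpha, \mu}^{\ast}})|u_{0}|^{2_{\alpha, \mu}^{\ast}}$, where $\tilde g_{n}$ is the density built from $v_{n}$. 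For $v_{n}\rightharpoonup0$: Lemma~\ref{Concentration-compactness principle} gives $|v_{n}|^{2^{\ast}}\rightharpoonup\sum_{i}\zeta_{i}\delta_{z_{i}}$, hence $v_{n}\to0$ in $L^{2^{\ast}}_{loc}(\mathbb{R}^{N}\setminus\{z_{i}\})$, hence $|v_{n}|^{2_{\alpha, \mu}^{\ast}}\to0$ in $L^{r}_{loc}(\mathbb{R}^{N}\setminus\{z_{i}\})$; since then $\langle T(\varphi|v_{n}|^{2_{\alpha, \mu}^{\ast}}),|v_{n}|^{2_{\alpha, \mu}^{\ast}}\rangle\to0$ for every $\varphi\in C_{c}^{\infty}(\mathbb{R}^{N}\setminus\{z_{i}\})$, the limit $\tilde\nu$ of $\tilde g_{n}$ is supported on $\{z_{i}\}$, i.e.\ $\tilde\nu=\sum_{i}\nu_{i}\delta_{z_{i}}$ with $\nu_{i}\ge0$, and combined with the previous step this gives \eqref{cp5}. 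The two estimates \eqref{cp6} come from localization with $\varphi_{i,\delta}(x)=\varphi((x-z_{i})/\delta)$, $\varphi\in C_{c}^{\infty}(B_{2})$, $\varphi\equiv1$ on $B_{1}$: applying \eqref{WHLS} with $f=h=\chi_{B_{2\delta}(z_{i})}|u_{n}|^{2_{\alpha, \mu}^{\ast}}$, letting $n\to\infty$ then $\delta\to0$, and showing the off--diagonal piece $\int_{B_{2\delta}(z_{i})}\int_{B_{2\delta}(z_{i})^{c}}$ of $\int_{B_{2\delta}(z_{i})}g_{n}$ is negligible, gives $\nu_{i}\le C(N,\mu,\alpha)\zeta_{i}^{(2N-2\alpha-\mu)/N}$, which is $\nu_{i}^{N/(2N-2\alpha-\mu)}\le C(N,\mu,\alpha)^{N/(2N-2\alpha-\mu)}\zeta_{i}$; applying the inequality $S_{\alpha, \mu}\|w\|_{\alpha, \mu}^{2}\le\int_{\mathbb{R}^{N}}|\nabla w|^{2}$ from \eqref{MiniPro} to $w=\varphi_{i,\delta}u_{n}$, together with $\|\varphi_{i,\delta}u_{n}\|_{\alpha, \mu}^{2\cdot 2_{\alpha, \mu}^{\ast}}\ge\int_{B_{\delta}(z_{i})}\int_{B_{\delta}(z_{i})}(\cdots)$ and the same off--diagonal estimate, gives $S_{\alpha, \mu}\nu_{i}^{1/2_{\alpha, \mu}^{\ast}}\le\omega_{i}$; finally $\sum_{i}\nu_{i}^{1/2_{\alpha, \mu}^{\ast}}<\infty$ follows from $\sum_{i}\zeta_{i}^{2/2^{\ast}}<\infty$.

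For the behaviour at infinity, the energy identity $\overline{\lim}_{n}\int_{\mathbb{R}^{N}}\int_{\mathbb{R}^{N}}\frac{|u_{n}(x)|^{2_{\alpha, \mu}^{\ast}}|u_{n}(y)|^{2_{\alpha, \mu}^{\ast}}}{|x|^{\alpha}|x-y|^{\mu}|y|^{\alpha}}dxdy=\nu_{\infty}+\int d\nu$ is the standard splitting $\int_{\mathbb{R}^{N}}g_{n}=\int_{B_{R}}g_{n}+\int_{B_{R}^{c}}g_{n}$ with $n\to\infty$ then $R\to\infty$. For \eqref{cp71} I would use the Cauchy--Schwarz inequality for the nonnegative bilinear form $B(f,h)=\int_{\mathbb{R}^{N}}\int_{\mathbb{R}^{N}}f(x)h(y)|x|^{-\alpha}|x-y|^{-\mu}|y|^{-\alpha}dxdy$ (nonnegative because the Riesz kernel $|x-y|^{-\mu}$ is positive definite): with a cut--off $\psi_{R}$ vanishing near the origin and $\equiv1$ off $B_{2R}$, one has $\int_{\{|x|\ge 2R\}}g_{n}=B(\chi_{\{|x|\ge 2R\}}|u_{n}|^{2_{\alpha, \mu}^{\ast}},|u_{n}|^{2_{\alpha, \mu}^{\ast}})\le B(\chi_{\{|x|\ge 2R\}}|u_{n}|^{2_{\alpha, \mu}^{\ast}},\chi_{\{|x|\ge 2R\}}|u_{n}|^{2_{\alpha, \mu}^{\ast}})^{1/2}\,B(|u_{n}|^{2_{\alpha, \mu}^{\ast}},|u_{n}|^{2_{\alpha, \mu}^{\ast}})^{1/2}$; bounding the first factor by $\big(S_{\alpha, \mu}^{-1}|\nabla(\psi_{R}u_{n})|_{2}^{2}\big)^{2_{\alpha, \mu}^{\ast}/2}$ and noting $B(|u_{n}|^{2_{\alpha, \mu}^{\ast}},|u_{n}|^{2_{\alpha, \mu}^{\ast}})=\|u_{n}\|_{\alpha, \mu}^{2\cdot 2_{\alpha, \mu}^{\ast}}\le\big(S_{\alpha, \mu}^{-1}|\nabla u_{n}|_{2}^{2}\big)^{2_{\alpha, \mu}^{\ast}}$, then letting $n\to\infty$, $R\to\infty$ (with $\overline{\lim}_{n}|\nabla(\psi_{R}u_{n})|_{2}^{2}\to\omega_{\infty}$) yields $S_{\alpha, \mu}^{2}\nu_{\infty}^{2/2_{\alpha, \mu}^{\ast}}\le\omega_{\infty}(\int d\omega+\omega_{\infty})$; bounding instead both factors of the Cauchy--Schwarz product by \eqref{WHLS} gives $C(N,\mu,\alpha)^{-2N/(2N-2\alpha-\mu)}\nu_{\infty}^{2N/(2N-2\alpha-\mu)}\le\zeta_{\infty}(\int d\zeta+\zeta_{\infty})$.

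For the last assertion, when $u_{0}=0$ we have $\nu=\sum_{i}\nu_{i}\delta_{z_{i}}$, so using $\omega\ge\sum_{i}\omega_{i}\delta_{z_{i}}$, the first inequality of \eqref{cp6}, and the subadditivity of $t\mapsto t^{1/2_{\alpha, \mu}^{\ast}}$ (valid since $0<1/2_{\alpha, \mu}^{\ast}<1$),
$$
\int_{\mathbb{R}^{N}}d\omega\ \ge\ \sum_{i}\omega_{i}\ \ge\ S_{\alpha, \mu}\sum_{i}\nu_{i}^{1/2_{\alpha, \mu}^{\ast}}\ \ge\ S_{\alpha, \mu}\Big(\sum_{i}\nu_{i}\Big)^{1/2_{\alpha, \mu}^{\ast}}=S_{\alpha, \mu}\Big(\int_{\mathbb{R}^{N}}d\nu\Big)^{1/2_{\alpha, \mu}^{\ast}},
$$
so $\int d\omega=S_{\alpha, \mu}(\int d\nu)^{1/2_{\alpha, \mu}^{\ast}}$ forces equality in $\sum_{i}\nu_{i}^{1/2_{\alpha, \mu}^{\ast}}=(\sum_{i}\nu_{i})^{1/2_{\alpha, \mu}^{\ast}}$, which by the strict concavity of $t\mapsto t^{1/2_{\alpha, \mu}^{\ast}}$ is impossible unless at most one $\nu_{i}$ is nonzero; hence $\nu$ is a single Dirac mass. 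The main obstacle throughout is the genuinely nonlocal character of the two localization arguments: because the kernel $|x-y|^{-\mu}$ couples $B_{\delta}(z_{i})$ (and $\{|x|\ge R\}$) to its complement, one must control these off--diagonal interactions, splitting the double integral into the part where $|x-y|$ is small (handled because $\zeta$ has vanishing mass on thin annuli shrinking to $z_{i}$) and the part where $|x-y|$ is bounded below (handled because the kernel is then bounded and, after the reduction to $u_{0}=0$, the $L^{2_{\alpha, \mu}^{\ast}}$--mass of $u_{n}$ outside a small neighbourhood of $z_{i}$ tends to $0$); this is also the reason Lemma~\ref{BLN} and the passage through $v_{n}$ are essential. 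A secondary point, which forces the use of the weighted rather than the unweighted inequality, is that a concentration point may sit at the origin, where $|x|^{-\alpha}$ is itself singular.
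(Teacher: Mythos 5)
Your overall architecture is the same as the paper's (pass to $v_{n}=u_{n}-u_{0}$ via the nonlocal Br\'ezis--Lieb Lemma \ref{BLN}, localize with cutoffs, use \eqref{WHLS} and the definition of $S_{\alpha,\mu}$, split at infinity), but the step that actually produces \eqref{cp5} has a genuine gap. You argue that, after reducing to $v_{n}\rightharpoonup 0$, the limit $\tilde\nu$ of the densities built from $v_{n}$ is ``supported on $\{z_{i}\}$, i.e. $\tilde\nu=\sum_{i}\nu_{i}\delta_{z_{i}}$'', by testing against $\varphi\in C_{c}^{\infty}(\mathbb{R}^{N}\setminus\{z_{i}\})$. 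But the index set $I$ may be infinite and $\{z_{i}\}$ need not be closed (the atoms of $\zeta$ may accumulate), so $\mathbb{R}^{N}\setminus\{z_{i}\}$ need not be open and your test-function argument only shows that $\tilde\nu$ vanishes off the closure $\overline{\{z_{i}\}}$; a finite measure carried by the closure of a countable set need not be purely atomic, so the equality in \eqref{cp5} does not follow. This is precisely where the paper's proof does different work: it first proves the commutator claim \eqref{cp0} (the cutoff can be moved inside the convolution up to $o(1)$), deduces from it the reverse H\"older inequalities \eqref{cp9} and \eqref{cp8} between the limit measures for every $\phi\in\mathcal{C}_{0}^{\infty}(\mathbb{R}^{N})$, and then invokes Lemma 1.2 of \cite{Ls1}, which yields the atomic structure of $\kappa$ and, with $\phi=\chi_{\{z_{i}\}}$, the constants in \eqref{cp6}. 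Without that lemma (or a substitute derived from your localization estimates) your proof of \eqref{cp5} is incomplete, and the gap propagates to your final strict-concavity argument, which uses $\int_{\mathbb{R}^{N}}d\nu=\sum_{i}\nu_{i}$ when $u_{0}=0$, i.e. exactly the atomicity you have not established.

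A secondary flaw: in your localization for \eqref{cp6} you justify discarding the far off-diagonal interaction by saying that ``after the reduction to $u_{0}=0$, the $L^{2^{\ast}}$-mass of $u_{n}$ outside a small neighbourhood of $z_{i}$ tends to $0$''; this is false in general, since other atoms $z_{j}$ and mass escaping to infinity persist. The estimate itself can be repaired, but for a different reason: for $x\in B_{2\delta}(z_{i})$ and $|y-z_{i}|\geq\rho$ the inner integral is bounded uniformly in $n$, and the remaining factor $\int_{B_{2\delta}(z_{i})}|u_{n}|^{2_{\alpha, \mu}^{\ast}}|x|^{-\alpha}dx$ tends to $0$ as $\delta\to 0$ by H\"older because $|x|^{-\alpha}\in L^{2N/(2\alpha+\mu)}_{loc}$, while the intermediate annulus needs \eqref{WHLS} together with the vanishing of $\zeta(\overline{B_{\rho}(z_{i})}\setminus\{z_{i}\})$ as $\rho\to 0$, i.e. a three-parameter limit rather than ``$n\to\infty$ then $\delta\to 0$''. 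On the positive side, your treatment of the infinity inequalities via Cauchy--Schwarz for the positive semidefinite bilinear form (then \eqref{WHLS}, respectively the definition of $S_{\alpha,\mu}$ applied to $\psi_{R}u_{n}$) is correct and matches the paper's conclusions, and your strict-concavity argument for single-point concentration is cleaner than the paper's reverse H\"older self-comparison --- but both of these sit on top of \eqref{cp5} and \eqref{cp6}, whose proofs as proposed are not complete.
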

\begin{proof}
Since $\{u_{n}\}$ is a bounded sequence in $D^{1,2}(\mathbb{R}^N)$ converging weakly to $u_0$, denote by $v_{n}:=u_{n}-u_0$, we have $v_{n}(x)\rightarrow0$ a.e. in $\mathbb{R}^N$  and $v_{n}$ converges weakly to $0$ in $D^{1,2}(\R^N)$. Applying Lemma \ref{BLN}, in the sense of measure, we have
\begin{center}
$|\nabla v_{n}|^{2}\rightharpoonup\varpi:=\omega-|\nabla u_0|^{2}$,
\end{center}
\begin{center}
$\Big(\displaystyle\int_{\mathbb{R}^N}\frac{
|v_{n}(y)|^{2_{\alpha, \mu}^{\ast}}}{|x-y|^{\mu}|y|^{\alpha}}dy\Big)
\frac{|v_{n}(x)|^{2_{\alpha, \mu}^{\ast}}}{|x|^{\alpha}}\rightharpoonup\kappa:
=\nu-\Big(\displaystyle\int_{\mathbb{R}^N}\frac{
|u_{0}(y)|^{2_{\alpha, \mu}^{\ast}}}{|x-y|^{\mu}|y|^{\alpha}}dy\Big)
\frac{|u_{0}(x)|^{2_{\alpha, \mu}^{\ast}}}{|x|^{\alpha}}$,
\end{center}
\begin{center}
$|v_{n}|^{2^{\ast}}\rightharpoonup\varsigma:=\zeta-|u_0|^{2^{\ast}}$.
\end{center}

To prove the possible concentration at finite points, we first claim that
\begin{equation}\label{cp0}
\Big|\int_{\mathbb{R}^N}\Big(\int_{\mathbb{R}^N}\frac{
|\phi v_{n}(y)|^{2_{\alpha, \mu}^{\ast}}}{|x-y|^{\mu}|y|^{\alpha}}dy\Big)
\frac{|\phi v_{n}(x)|^{2_{\alpha, \mu}^{\ast}}}{|x|^{\alpha}}dx-\int_{\mathbb{R}^N}\Big(\int_{\mathbb{R}^N}\frac{
| v_{n}(y)|^{2_{\alpha, \mu}^{\ast}}}{|x-y|^{\mu}|y|^{\alpha}}dy\Big)
\frac{|\phi (x)|^{2_{\alpha, \mu}^{\ast}}|\phi v_{n}(x)|^{2_{\alpha, \mu}^{\ast}}}{|x|^{\alpha}}dx\Big|\rightarrow0,
\end{equation}
where $\phi\in \mathcal{C}_{0}^{\infty}(\mathbb{R}^N)$.

In fact, we denote
$$
\Phi_{n}(x):=\Big(\int_{\mathbb{R}^N}\frac{
|\phi v_{n}(y)|^{2_{\alpha, \mu}^{\ast}}}{|x-y|^{\mu}|y|^{\alpha}}dy\Big)
\frac{|\phi v_{n}(x)|^{2_{\alpha, \mu}^{\ast}}}{|x|^{\alpha}}-\Big(\int_{\mathbb{R}^N}\frac{
| v_{n}(y)|^{2_{\alpha, \mu}^{\ast}}}{|x-y|^{\mu}|y|^{\alpha}}dy\Big)
\frac{|\phi (x)|^{2_{\alpha, \mu}^{\ast}}|\phi v_{n}(x)|^{2_{\alpha, \mu}^{\ast}}}{|x|^{\alpha}}.
$$
Since $\phi\in \mathcal{C}_{0}^{\infty}(\mathbb{R}^N)$, we have for every $\delta>0$ there exists $M>0$ such that
\begin{equation}\label{cp00}
\int_{|x|\geq M}|\Phi_{n}(x)|dx<\delta \ \ \ (\forall n\geq1).
\end{equation}
Since the Riesz potential defines a linear operator, from the fact that $v_n(x)\to 0$ a.e. in $\mathbb{R}^N$ we know that
$$
\int_{\mathbb{R}^N}\frac{
| v_{n}(y)|^{2_{\alpha, \mu}^{\ast}}}{|x-y|^{\mu}|y|^{\alpha}}dy\rightarrow 0
$$
a.e. in $\mathbb{R}^N$ and so we have $\Phi_{n}(x)\rightarrow0$ a.e. in $\mathbb{R}^N$. Notice that
$$
\aligned
\Phi_{n}(x)&=\int_{\mathbb{R}^N}
\frac{(|\phi (y)|^{2_{\alpha, \mu}^{\ast}}-|\phi (x)|^{2_{\alpha, \mu}^{\ast}})|v_{n}(y)|^{2_{\alpha, \mu}^{\ast}}}{|x-y|^{\mu}|y|^{\alpha}}dy\frac{|\phi v_{n}(x)|^{2_{\alpha, \mu}^{\ast}}}{|x|^{\alpha}}\\
:&=\int_{\mathbb{R}^N}
L(x,y)|v_{n}(y)|^{2_{\alpha, \mu}^{\ast}}dy\frac{|\phi v_{n}(x)|^{2_{\alpha, \mu}^{\ast}}}{|x|^{\alpha}}.
\endaligned
$$
For almost all $x$, there exists $R>0$ large enough such that
$$
\int_{\mathbb{R}^N}
L(x,y)|v_{n}(y)|^{2_{\alpha, \mu}^{\ast}}dy
=\int_{|y|\leq R}
L(x,y)|v_{n}(y)|^{2_{\alpha, \mu}^{\ast}}dy
-|\phi (x)|^{2_{\alpha, \mu}^{\ast}}\int_{|y|\geq R}
\frac{|v_{n}(y)|^{2_{\alpha, \mu}^{\ast}}}{|x-y|^{\mu}|y|^{\alpha}}dy.
$$
 As observed in \cite{Ls2} that $L(x,y)\in L^{r}(B_{R})$ for each $x$, where $r<\frac{N}{\mu+\alpha-1}$ if $\mu>1$, $r<\frac{N}{\alpha}$ if $\mu\leq1$. By the Young inequality, there exists $s>\frac{2N}{\mu}$ such that $$
\Big(\int_{B_{M}}\Big(\int_{B_{R}}
L(x,y)|v_{n}(y)|^{2_{\alpha, \mu}^{\ast}}dy\Big)^{s}dx\Big)^{\frac{1}{s}}\leq C_{\phi}|L(x,y)|_{r}||v_{n}|^{2_{\alpha, \mu}^{\ast}}|_{\frac{2N}{2N-2\alpha-\mu}}\leq C_{\phi}',
$$
where $M$ is given in \eqref{cp00}. It is easy to see that for $R>0$ large enough
$$
\Big(\int_{B_{M}}
\Big(|\phi (x)|^{2_{\alpha, \mu}^{\ast}}\int_{|y|\geq R}
\frac{|v_{n}(y)|^{2_{\alpha, \mu}^{\ast}}}{|x-y|^{\mu}|y|^{\alpha}}dy\Big)^{s}dx\Big)^{\frac{1}{s}}\leq C
$$
and so we have
$$
\Big(\int_{B_{M}}\Big(\int_{\mathbb{R}^N}
L(x,y)|v_{n}(y)|^{2_{\alpha, \mu}^{\ast}}dy\Big)^{s}dx\Big)^{\frac{1}{s}}\leq C_{\phi}''.
$$
Denote $\tau:=\frac{1}{2}\cdot\frac{s\mu-2N}{2N+2Ns-s\mu}$, then we can get
$$
\int_{B_{M}}|\Phi_{n}(x)|^{1+\tau}dx\leq \Big(\int_{B_{M}}\Big(\int_{\mathbb{R}^N}
L(x,y)|v_{n}(y)|^{2_{\alpha, \mu}^{\ast}}dy\Big)^{s}dx\Big)^{\frac{1+\tau}{s}}
\Big(\int_{B_{M}}|\phi v_{n}|^{2^{\ast}}dx\Big)^{\frac{(2N-2\alpha-\mu)(1+\tau)}{2N}}
$$
$$
\Big(\int_{B_{M}}\frac{1}{|x|^{\frac{2Ns\alpha(1+\tau)}{2Ns-2N(1+\tau)-s(2N-2\alpha-\mu)(1+\tau)}}}dx
\Big)^{\frac{2Ns-2N(1+\tau)-s(2N-2\alpha-\mu)(1+\tau)}{2Ns}}\leq C_{\phi}'',
$$
thanks to $\frac{2Ns\alpha(1+\tau)}{2Ns-2N(1+\tau)-s(2N-2\alpha-\mu)(1+\tau)}<N$. Combining this and $\Phi_{n}(x)\rightarrow0$ a.e. in $\mathbb{R}^N$, we can get
$$
\int_{B_{M}}|\Phi_{n}(x)|dx\rightarrow0 \ \ \ (n\rightarrow\infty).
$$
By this fact and \eqref{cp00},  we have
$$
\int_{\mathbb{R}^N}|\Phi_{n}(x)|dx\rightarrow0,
$$
the claim \eqref{cp0} is thus proved.

Now for all $\phi\in \mathcal{C}_{0}^{\infty}(\mathbb{R}^N)$, by the weighted Hardy-Littlewood-Sobolev inequality, we have
$$
\int_{\mathbb{R}^N}\Big(\int_{\mathbb{R}^N}\frac{
|\phi v_{n}(y)|^{2_{\alpha, \mu}^{\ast}}}{|x-y|^{\mu}|y|^{\alpha}}dy\Big)
\frac{|\phi v_{n}(x)|^{2_{\alpha, \mu}^{\ast}}}{|x|^{\alpha}}dx
\leq C(N,\mu,\alpha)|\phi v_{n}|_{2^{\ast}}^{2\cdot2_{\alpha, \mu}^{\ast}}.
$$
And so \eqref{cp0} implies that
$$
\int_{\mathbb{R}^N}|\phi(x)|^{2\cdot2_{\alpha, \mu}^{\ast}}\Big(\int_{\mathbb{R}^N}\frac{
| v_{n}(y)|^{2_{\alpha, \mu}^{\ast}}}{|x-y|^{\mu}|y|^{\alpha}}dy\Big)
\frac{| v_{n}(x)|^{2_{\alpha, \mu}^{\ast}}}{|x|^{\alpha}}dx
\leq C(N,\mu,\alpha)|\phi v_{n}|_{2^{\ast}}^{2\cdot2_{\alpha, \mu}^{\ast}}+o(1).
$$
Passing to the limit as $n\rightarrow+\infty$ we obtain
\begin{equation}\label{cp9}
\int_{\mathbb{R}^N}|\phi(x)|^{2\cdot2_{\alpha, \mu}^{\ast}}d\kappa
\leq C(N,\mu,\alpha)\Big(\int_{\mathbb{R}^N}|\phi|^{2^{\ast}}d\varsigma\Big)^{\frac{2N-2\alpha-\mu}{N}}.
\end{equation}
Applying Lemma 1.2 in \cite{Ls1} we know \eqref{cp52} holds.

Taking $\phi=\chi_{\{z_{i}\}}$, $i\in I$, in \eqref{cp9}, we get
$$
\nu_{i}^{\frac{N}{2N-2\alpha-\mu}}\leq C(N,\mu,\alpha)^{\frac{N}{2N-2\alpha-\mu}}\zeta_{i}, \ \forall i \in I.
$$
By the definition of $S_{\alpha, \mu}$, we also have
$$
\left(\int_{\mathbb{R}^N}\Big(\int_{\mathbb{R}^N}\frac{
|\phi v_{n}(y)|^{2_{\alpha, \mu}^{\ast}}}{|x-y|^{\mu}|y|^{\alpha}}dy\Big)
\frac{|\phi v_{n}(x)|^{2_{\alpha, \mu}^{\ast}}}{|x|^{\alpha}}dx\right)^{\frac{N-2}{2N-2\alpha-\mu}}
S_{\alpha, \mu}\leq\displaystyle\int_{\mathbb{R}^N}|\nabla (\phi v_{n})|^{2}dx.
$$
By \eqref{cp0} and $v_{n}\rightarrow0$ in $L_{loc}^{2}(\mathbb{R}^N)$, we have
$$
\left(\int_{\mathbb{R}^N}|\phi(x)|^{2\cdot2_{\alpha, \mu}^{\ast}}\Big(\int_{\mathbb{R}^N}\frac{
| v_{n}(y)|^{2_{\alpha, \mu}^{\ast}}}{|x-y|^{\mu}|y|^{\alpha}}dy\Big)
\frac{| v_{n}(x)|^{2_{\alpha, \mu}^{\ast}}}{|x|^{\alpha}}dx\right)^{\frac{N-2}{2N-2\alpha-\mu}}
S_{\alpha, \mu}\leq\displaystyle\int_{\mathbb{R}^N}\phi^{2}|\nabla v_{n}|^{2}dx+o(1).
$$
Passing to the limit as $n\rightarrow+\infty$ we obtain
\begin{equation}\label{cp8}
\left(\int_{\mathbb{R}^N}|\phi(x)|^{2\cdot2_{\alpha, \mu}^{\ast}}d\kappa\right)^{\frac{N-2}{2N-2\alpha-\mu}}
S_{\alpha, \mu}\leq\displaystyle\int_{\mathbb{R}^N}\phi^{2}d\varpi.
\end{equation}
Applying Lemma 1.2 in \cite{Ls1} again we know \eqref{cp51} holds.
Now by taking $\phi=\chi_{\{z_{i}\}}$, $i\in I$, in \eqref{cp8}, we get
$$
S_{\alpha, \mu}\nu_{i}^{\frac{1}{2_{\alpha, \mu}^{\ast}}}\leq\omega_{i}, \ \forall i \in I.
$$
Thus we proved \eqref{cp5} and \eqref{cp6}.

Next we are going to prove the possible loss of mass at infinity. For $R>1$, let $\psi_{R}\in \mathcal{C}^{\infty}(\mathbb{R}^N)$ be such that $\psi_{R}(x)=1$ for $|x|>R+1$, $\psi_{R}(x)=0$ for $|x|<R$ and $0\leq\psi_{R}(x)\leq1$ on $\mathbb{R}^N$. For every $R>1$, we have

$$\aligned
&\overline{\lim}_{n\rightarrow\infty}\int_{\mathbb{R}^N}\int_{\mathbb{R}^N}
\frac{|u_{n}(y)|^{2_{\alpha, \mu}^{\ast}}|u_{n}(x)|^{2_{\alpha, \mu}^{\ast}}}{|x|^{\alpha}|x-y|^{\mu}|y|^{\alpha}}
dydx\\
&=\overline{\lim}_{n\rightarrow\infty}\Big(\int_{\mathbb{R}^N}\int_{\mathbb{R}^N}
\frac{|u_{n}(y)|^{2_{\alpha, \mu}^{\ast}}|u_{n}(x)|^{2_{\alpha, \mu}^{\ast}}\psi_{R}(x)}
{|x|^{\alpha}|x-y|^{\mu}|y|^{\alpha}}
dydx+\int_{\mathbb{R}^N}\int_{\mathbb{R}^N}
\frac{|u_{n}(y)|^{2_{\alpha, \mu}^{\ast}}|u_{n}(x)|^{2_{\alpha, \mu}^{\ast}}(1-\psi_{R}(x))}
{|x|^{\alpha}|x-y|^{\mu}|y|^{\alpha}}
dydx\Big)\\
&=\overline{\lim}_{n\rightarrow\infty}\int_{\mathbb{R}^N}\int_{\mathbb{R}^N}
\frac{|u_{n}(y)|^{2_{\alpha, \mu}^{\ast}}|u_{n}(x)|^{2_{\alpha, \mu}^{\ast}}\psi_{R}(x)}{|x|^{\alpha}|x-y|^{\mu}|y|^{\alpha}}
dydx+\int_{\mathbb{R}^N}(1-\psi_{R})d\nu.
\endaligned$$
When $R\rightarrow\infty$, we obtain, by Lebesgue's theorem,
$$
\overline{\lim}_{n\rightarrow\infty}\int_{\mathbb{R}^N}\int_{\mathbb{R}^N}
\frac{|u_{n}(y)|^{2_{\alpha, \mu}^{\ast}}|u_{n}(x)|^{2_{\alpha, \mu}^{\ast}}}{|x|^{\alpha}|x-y|^{\mu}|y|^{\alpha}}
dydx=\nu_{\infty}+\int_{\mathbb{R}^N}d\nu.
$$
By the weighted Hardy-Littlewood-Sobolev inequality, we have
$$\aligned
\nu_{\infty}&=\lim_{R\rightarrow\infty}
\overline{\lim}_{n\rightarrow\infty}
\int_{\mathbb{R}^N}
\Big(\int_{\mathbb{R}^N}
\frac{|u_{n}(y)|^{2_{\alpha, \mu}^{\ast}}}{|x-y|^{\mu}|y|^{\alpha}}dy\Big)\frac{|\psi_{R} u_{n}(x)|^{2_{\alpha, \mu}^{\ast}}}{|x|^{\alpha}}dx\\
&\leq C(N,\mu,\alpha)\lim_{R\rightarrow\infty}\overline{\lim}_{n\rightarrow\infty}
\Big(\int_{\mathbb{R}^N}|u_{n}|^{2^{\ast}}dx
\int_{\mathbb{R}^N}|\psi_{R}u_{n}|^{2^{\ast}}dx\Big)^{\frac{2N-2\alpha-\mu}{2N}}\\
&= C(N,\mu,\alpha)\left(\zeta_{\infty}\left(\int_{\mathbb{R}^N}d\zeta+\zeta_{\infty}\right)\right)^{\frac{2N-2\alpha-\mu}{2N}},
\endaligned$$
which means
$$
C(N,\mu,\alpha)^{\frac{-2N}{2N-2\alpha-\mu}}\nu_{\infty}^{\frac{2N}{2N-2\alpha-\mu}}\leq \zeta_{\infty}\left(\int_{\mathbb{R}^N}d\zeta+\zeta_{\infty}\right).
$$
Similarly, by the definition of $S_{\alpha, \mu}$ and $\nu_{\infty}$, we have
$$
\aligned
\nu_{\infty}&=\lim_{R\rightarrow\infty}
\overline{\lim}_{n\rightarrow\infty}
\int_{\mathbb{R}^N}
\Big(\int_{\mathbb{R}^N}
\frac{|u_{n}(y)|^{2_{\alpha, \mu}^{\ast}}}{|x-y|^{\mu}|y|^{\alpha}}dy\Big)\frac{|\psi_{R} u_{n}(x)|^{2_{\alpha, \mu}^{\ast}}}{|x|^{\alpha}}dx\\
&\leq C(N,\mu,\alpha)\lim_{R\rightarrow\infty}\overline{\lim}_{n\rightarrow\infty}
\Big(\int_{\mathbb{R}^N}|u_{n}|^{2^{\ast}}dx
\int_{\mathbb{R}^N}|\psi_{R}u_{n}|^{2^{\ast}}dx\Big)^{\frac{2N-2\alpha-\mu}{2N}}\\
&\leq S_{\alpha, \mu}^{-2_{\alpha, \mu}^{\ast}}\lim_{R\rightarrow\infty}\overline{\lim}_{n\rightarrow\infty}
\Big(\int_{\mathbb{R}^N}|\nabla u_{n}|^{2}dx
\int_{\mathbb{R}^N}|\nabla(\psi_{R}u_{n})|^{2}dx\Big)^{\frac{2_{\alpha, \mu}^{\ast}}{2}}\\
&= S_{\alpha, \mu}^{-2_{\alpha, \mu}^{\ast}}\left(\omega_{\infty}\left(\int_{\mathbb{R}^N}d\omega
+\omega_{\infty}\right)\right)^{\frac{2_{\alpha, \mu}^{\ast}}{2}},
\endaligned$$
which means
$$
 S_{\alpha, \mu}^{2}\nu_{\infty}^{\frac{2}{2_{\alpha, \mu}^{\ast}}}\leq \omega_{\infty}\left(\int_{\mathbb{R}^N}d\omega+\omega_{\infty}\right).
$$

If $u_0=0$ then $\kappa=\nu$ and $\varpi=\omega$. The H\"{o}lder inequality and \eqref{cp8} imply that, for $\phi\in \mathcal{C}_{0}^{\infty}(\mathbb{R}^N)$,
$$
\left(\int_{\mathbb{R}^N}|\phi(x)|^{2\cdot2_{\alpha, \mu}^{\ast}}d\nu\right)^{\frac{N-2}{2N-2\alpha-\mu}}
S_{\alpha, \mu}\leq\left(\int_{\mathbb{R}^N}d\omega\right)^{\frac{N-2\alpha-\mu+2}{2N-2\alpha-\mu}}
\left(\int_{\mathbb{R}^N}\phi^{2\cdot2_{\alpha, \mu}^{\ast}}d\omega\right)^{\frac{N-2}{2N-2\alpha-\mu}}.
$$
Thus we can deduce that $\nu=S_{\alpha, \mu}^{-2_{\alpha, \mu}^{\ast}}
\left(\displaystyle\int_{\mathbb{R}^N}d\omega\right)^{\frac{N-2\alpha-\mu+2}{N-2}}\omega$. It follows from \eqref{cp8} that, for $\phi\in \mathcal{C}_{0}^{\infty}(\mathbb{R}^N)$,
$$
\left(\int_{\mathbb{R}^N}|\phi(x)|^{2\cdot2_{\alpha, \mu}^{\ast}}d\nu\right)^{\frac{N-2}{2N-2\alpha-\mu}}
\left(\int_{\mathbb{R}^N}d\nu\right)^{\frac{N-2\alpha-\mu+2}{2N-2\alpha-\mu}}\leq
\int_{\mathbb{R}^N}|\phi|^{2}d\nu.
$$
And then for each open set $\Omega$,
$$
\nu(\Omega)^{\frac{N-2}{2N-2\alpha-\mu}}\nu(\mathbb{R}^N)^{\frac{N-2\alpha-\mu+2}{2N-2\alpha-\mu}}
\leq\nu(\Omega).
$$
It follows that $\nu$ is concentrated at a single point.
\end{proof}

\subsection{Existence of Extremal functions} In this subsection we are going to prove the existence of extremal functions of the minimizing problem by the second concentration-compactness lemma established in Subsection 2.1.

{\bf{Proof of Theorem \ref{thm1.2}}. }
Define the L\'{e}vy concentration functions
$$
Q_{n}(\tau):=\sup_{z\in\mathbb{R}^N}\int_{B(z,\tau)}\Big(\int_{\mathbb{R}^N}\frac{
|u_{n}(y)|^{2_{\alpha, \mu}^{\ast}}}{|x-y|^{\mu}|y|^{\alpha}}dy\Big)
\frac{|u_{n}(x)|^{2_{\alpha, \mu}^{\ast}}}{|x|^{\alpha}}dx.
$$
Since for every $n$,
$$
\lim_{\tau\rightarrow0^{+}}Q_{n}(\tau)=0, \ \ \ \lim_{\tau\rightarrow\infty}Q_{n}(\tau)=1,
$$
there exists $\tau_{n}>0$ such that $Q_{n}(\tau_{n})=\frac{1}{2}$. Let us define $v_{n}:=u_{n}^{(\tau_{n})}=\tau^{\frac{N-2}{2}}_nu_{n}(\tau_n x)$. Hence
$$
\int_{\mathbb{R}^N}\int_{\mathbb{R}^N}
\frac{|v_{n}(y)|^{2_{\alpha, \mu}^{\ast}}|v_{n}(x)|^{2_{\alpha, \mu}^{\ast}}}{|x|^{\alpha}|x-y|^{\mu}|y|^{\alpha}}
dydx=1, \ \ \ \int_{\mathbb{R}^N}|\nabla v_{n}|^{2}dx\rightarrow S_{\alpha, \mu}
$$
and
\begin{equation}\label{c0}
\frac{1}{2}=\sup_{z\in\mathbb{R}^N}\int_{B(z,1)}\Big(\int_{\mathbb{R}^N}\frac{
|u_{n}(y)|^{2_{\alpha, \mu}^{\ast}}}{|x-y|^{\mu}|y|^{\alpha}}dy\Big)
\frac{|u_{n}(x)|^{2_{\alpha, \mu}^{\ast}}}{|x|^{\alpha}}dx.
\end{equation}
Since $\{v_{n}\}$ is a bounded sequence in $D^{1,2}(\mathbb{R}^N)$, we may assume, going if necessary to a subsequence,
\begin{center}
$v_{n}\rightharpoonup v$ in $D^{1,2}(\mathbb{R}^N)$,
 \end{center}
\begin{center}
$|\nabla v_{n}|^{2}\rightharpoonup \omega+|\nabla v|^{2}$,
\end{center}
\begin{center}
$\Big(\displaystyle\int_{\mathbb{R}^N}\frac{
|v_{n}(y)|^{2_{\alpha, \mu}^{\ast}}}{|x-y|^{\mu}|y|^{\alpha}}dy\Big)
\frac{|v_{n}(x)|^{2_{\alpha, \mu}^{\ast}}}{|x|^{\alpha}}\rightharpoonup
\nu+\Big(\displaystyle\int_{\mathbb{R}^N}\frac{
|v(y)|^{2_{\alpha, \mu}^{\ast}}}{|x-y|^{\mu}|y|^{\alpha}}dy\Big)
\frac{|v(x)|^{2_{\alpha, \mu}^{\ast}}}{|x|^{\alpha}}$,
\end{center}
\begin{center}
$v_{n}\rightarrow v$ a.e. on $\mathbb{R}^N$.
\end{center}
By Lemma \ref{CCP1},
\begin{equation}\label{c1}
S_{\alpha, \mu}=\lim_{n\rightarrow\infty}\int_{\mathbb{R}^N}|\nabla v_{n}|^{2}dx=\int_{\mathbb{R}^N}|\nabla v|^{2}dx+\int_{\mathbb{R}^N}d\omega+\omega_{\infty},
\end{equation}
\begin{equation}\label{c2}
1=\int_{\mathbb{R}^N}\int_{\mathbb{R}^N}
\frac{|v_{n}(y)|^{2_{\alpha, \mu}^{\ast}}|v_{n}(x)|^{2_{\alpha, \mu}^{\ast}}}{|x|^{\alpha}|x-y|^{\mu}|y|^{\alpha}}
dydx=\int_{\mathbb{R}^N}\int_{\mathbb{R}^N}
\frac{|v(y)|^{2_{\alpha, \mu}^{\ast}}|v(x)|^{2_{\alpha, \mu}^{\ast}}}{|x|^{\alpha}|x-y|^{\mu}|y|^{\alpha}}
dydx+\int_{\mathbb{R}^N}d\nu+\nu_{\infty},
\end{equation}
where
$$
\nu_{\infty}=\lim_{R\rightarrow\infty}\overline{\lim}_{n\rightarrow\infty}\int_{|x|\geq R}\Big(\int_{\mathbb{R}^N}\frac{
|v_{n}(y)|^{2_{\alpha, \mu}^{\ast}}}{|x-y|^{\mu}|y|^{\alpha}}dy\Big)
\frac{|v_{n}(x)|^{2_{\alpha, \mu}^{\ast}}}{|x|^{\alpha}}dx,
$$
and
$$
\omega_{\infty}=\lim_{R\rightarrow\infty}\overline{\lim}_{n\rightarrow\infty}\int_{|x|\geq R}|\nabla v_{n}|^{2}dx.
$$
We deduce from \eqref{c1}, \eqref{cp6}, \eqref{cp71} and the definition of $S_{\alpha, \mu}$ that
$$
S_{\alpha, \mu}\geq S_{\alpha, \mu}\Big(\int_{\mathbb{R}^N}\int_{\mathbb{R}^N}
\frac{|v(y)|^{2_{\alpha, \mu}^{\ast}}|v(x)|^{2_{\alpha, \mu}^{\ast}}}{|x|^{\alpha}|x-y|^{\mu}|y|^{\alpha}}
dydx\Big)^{\frac{1}{2_{\alpha, \mu}^{\ast}}}+S_{\alpha, \mu}\Big(\int_{\mathbb{R}^N}d\nu\Big)^{\frac{1}{2_{\alpha, \mu}^{\ast}}}
+S_{\alpha, \mu}\nu_{\infty}^{\frac{2}{2_{\alpha, \mu}^{\ast}}}.
$$
It follows from \eqref{c2} that $\displaystyle\int_{\mathbb{R}^N}\int_{\mathbb{R}^N}
\frac{|v(y)|^{2_{\alpha, \mu}^{\ast}}|v(x)|^{2_{\alpha, \mu}^{\ast}}}{|x|^{\alpha}|x-y|^{\mu}|y|^{\alpha}}
dydx$, $\displaystyle\int_{\mathbb{R}^N}d\nu$ and $\nu_{\infty}$ are equal either to 0 or to 1. By \eqref{c0}, $\nu_{\infty}\leq\frac{1}{2}$ so that $\nu_{\infty}=0$. If $\displaystyle\int_{\mathbb{R}^N}d\nu=1$ then $v=0$ and $\displaystyle\int_{\mathbb{R}^N}d\omega
\leq S_{\alpha, \mu}\left(\int_{\mathbb{R}^N}d\nu\right)^{\frac{1}{2_{\alpha, \mu}^{\ast}}}$. The Lemma \ref{CCP1} implies that $\nu$ is concentrated at a single point $z_{0}$. We deduce from \eqref{c0} the contradiction
$$
\aligned
\frac{1}{2}&=\sup_{z\in\mathbb{R}^N}\int_{B(z,1)}\Big(\int_{\mathbb{R}^N}\frac{
|u_{n}(y)|^{2_{\alpha, \mu}^{\ast}}}{|x-y|^{\mu}|y|^{\alpha}}dy\Big)
\frac{|u_{n}(x)|^{2_{\alpha, \mu}^{\ast}}}{|x|^{\alpha}}dx\\
&\geq\int_{B(z_{0},1)}\Big(\int_{\mathbb{R}^N}\frac{
|u_{n}(y)|^{2_{\alpha, \mu}^{\ast}}}{|x-y|^{\mu}|y|^{\alpha}}dy\Big)
\frac{|u_{n}(x)|^{2_{\alpha, \mu}^{\ast}}}{|x|^{\alpha}}dx\rightarrow\int_{\mathbb{R}^N}d\nu=1.
\endaligned
$$
Thus
$$
\int_{\mathbb{R}^N}\int_{\mathbb{R}^N}
\frac{|v_{n}(y)|^{2_{\alpha, \mu}^{\ast}}|v_{n}(x)|^{2_{\alpha, \mu}^{\ast}}}{|x|^{\alpha}|x-y|^{\mu}|y|^{\alpha}}
dydx=\int_{\mathbb{R}^N}\int_{\mathbb{R}^N}
\frac{|v(y)|^{2_{\alpha, \mu}^{\ast}}|v(x)|^{2_{\alpha, \mu}^{\ast}}}{|x|^{\alpha}|x-y|^{\mu}|y|^{\alpha}}
dydx=1
$$
and so
$$
S_{\alpha, \mu}=\lim_{n\rightarrow\infty}\int_{\mathbb{R}^N}|\nabla v_{n}|^{2}dx=\int_{\mathbb{R}^N}|\nabla v|^{2}dx.
$$
$\hfill{} \Box$

\subsection{Regularity}

By Theorem 4.5 of \cite{CLO1} we know that equation \eqref{Intcase} is equivalent to the following integral system
\begin{equation}\label{ASD}
\begin{cases}
\displaystyle u(x)=\int_{\mathbb{R}^{N}}\frac{v(y)u(y)^{{2}^{\ast}_{\alpha,\mu}-1}}{|x-y|^{N-2}}dy,\vspace{3mm}\\
\displaystyle v(x)=\int_{\mathbb{R}^{N}}\frac{u(y)^{2^{\ast}_{\alpha, \mu}}}{|x|^{\alpha}|x-y|^{\mu}|y|^{\alpha}}dy.
\end{cases}
\end{equation}
Similar Hardy-Littlewood-Sobolev integral system with $\alpha=0$ has been studied in other papers. Li and Ma \cite{LM} studied the integral system
\begin{equation}\label{SimilarSys2}
\begin{cases}
\displaystyle u(x)=\int_{\mathbb{R}^{N}}\frac{u^p(y)v^q(y)}{|x-y|^{N-\tau}}dy,\vspace{3mm}\\
\displaystyle v(x)=\int_{\mathbb{R}^{N}}\frac{u^q(y)v^p(y)}{|x-y|^{N-\tau}}dy.
\end{cases}
\end{equation}
The authors proved the radial symmetry of positive solutions and obtained the uniqueness results. Jin and Li \cite{JL} obtained the
optimal integral intervals for the solutions of system
\begin{equation}\label{SimilarSys}
\begin{cases}
\displaystyle u(x)=\int_{\mathbb{R}^{N}}\frac{v^q(y)}{|x-y|^{N-\tau}}dy,\vspace{3mm}\\
\displaystyle v(x)=\int_{\mathbb{R}^{N}}\frac{u^p(y)}{|x-y|^{N-\tau}}dy,
\end{cases}
\end{equation}
which is associated with the study of the sharp constant of the Hardy-Littlewood-Sobolev inequality, Chen, Li and Ou used the method of moving planes in integral forms to prove the radial symmetry of $u, v$. Later
Lei \cite{Le} studied
\begin{equation}\label{SimilarSys2}
\begin{cases}
\displaystyle u(x)=\int_{\mathbb{R}^{N}}\frac{v(y)u(y)}{|x-y|^{N-\tau}}dy,\vspace{3mm}\\
\displaystyle v(x)=\int_{\mathbb{R}^{N}}\frac{u(y)^{2}}{|x-y|^{\mu}}dy.
\end{cases}
\end{equation}
He obtained the integrability
result for the integrable solution $u$ of integral systems and proved that the
integrable solution decays quickly and the nonintegrable bounded solution decays almost slowly. For special $\tau$, the result implies the corresponding results of the positive solutions of the static Hartree equation. For the weighted integral system,
 Chen et al. \cite{CJLL} considered
\begin{equation}\label{ASD2}
\begin{cases}
\displaystyle u(x)=\int_{\mathbb{R}^{N}}\frac{v^q(y)}{|x|^{\beta}|x-y|^{\mu}|y|^{\alpha}}dy,\vspace{3mm}\\
\displaystyle v(x)=\int_{\mathbb{R}^{N}}\frac{u^p(y)}{|x|^{\alpha}|x-y|^{\mu}|y|^{\beta}}dy.
\end{cases}
\end{equation}
They obtained the symmetry, monotonicity and regularity of solutions of \eqref{ASD2}.
Chen, Liu and Lu \cite{CLL} considered
\begin{equation}\label{ASD3}
\begin{cases}
\displaystyle u(x)=\int_{\mathbb{R}^{N}}\frac{f_1(u,v)}{|x|^{\beta}|x-y|^{\mu}|y|^{\alpha}}dy,\vspace{3mm}\\
\displaystyle v(x)=\int_{\mathbb{R}^{N}}\frac{f_2(u,v)}{|x|^{\alpha}|x-y|^{\mu}|y|^{\beta}}dy,
\end{cases}
\end{equation}
where
$$
f_1(u,v)=\lambda_1u^{p_1}+\mu_1u^{q_1}+\nu_1u^{\alpha_1}v^{\beta_1},\\
f_2(u,v)=\lambda_2u^{p_2}+\mu_2u^{q_2}+\nu_2u^{\alpha_2}v^{\beta_2}.
$$
Under $p_i,q_i>1$, $i=1,2$ and additional assumptions $\alpha_1,\beta_2>0$, $\alpha_2,\beta_1>1$ they proved the integrability, even $C^{\infty}$ regularity of $u, v$.  Furthermore, if $p_i,q_i,\alpha_i,\beta_i>1$, $i=1,2$, then the solutions are symmetric and strictly decreasing about the origin.

In the following we will study the integral system \eqref{ASD}, that is $\alpha\neq0$. Although the integral system with weights has been studied in \cite{CLL, CJLL}, but the results there do not include \eqref{ASD} as a special case. In fact, different from the equations \eqref{ASD2} and  \eqref{ASD3} where the assumptions of the integrability of $u,v$ are independent, the initial integrability of $v$ relies on $u$ and depends on the parameters $N, \mu, \al$ greatly. The integrability and $C^{\infty}$ regularity results in Theorems \ref{QWEQ}, \ref{ZX} and \ref{DF} can be rewritten in the following equivalent form.
\begin{thm}\label{QWE2}
Assume that $N\geq3$, $\alpha\geq0$, $0<\mu<N$, $0<2\alpha+\mu\leq N$. Let $(u,v)\in L^{\frac{2N}{N-2}}(\mathbb{R}^{N})\times L^{\frac{2N}{2\alpha+\mu}}(\mathbb{R}^{N})$ be a pair of positive solutions of system \eqref{ASD}.

$(C1)$.~If~$N=3,4,5,6$ and $N-2\leq2\alpha+\mu\leq\min\{N,4\}$, then $(u,v)\in L^{p}(\mathbb{R}^{N})\times L^{q}(\mathbb{R}^{N})$ with
$$
p\in\left(\frac{N}{N-2},+\infty\right)~~and~~q\in\left(\frac{2N}{N-2+2\alpha+\mu},\frac{2N}{2+2\alpha+\mu-N}\right).
$$

$(C2)$.~If~$N=5,6$ and $4<2\alpha+\mu\leq N$ while $N\geq7$ and $N-2\leq2\alpha+\mu\leq N$, then $(u,v)\in L^{p}(\mathbb{R}^{N})\times L^{q}(\mathbb{R}^{N})$ with
$$
p\in\left(\frac{N}{N-2},\frac{2N}{2\alpha+\mu-4}\right)~~and~~q\in\left(\frac{2N}{N+2\alpha+\mu-2},\frac{2N}{2(2\alpha+\mu)-N-2}\right).
$$

$(C3)$.~If~$N=3,4,5,6$ and $0<2\alpha+\mu<N-2$ while $N\geq7$ and $0\leq2\alpha+\mu\leq 4$ or $\frac{N+2}{2}\leq2\alpha+\mu<N-2$, then $(u,v)\in L^{p}(\mathbb{R}^{N})\times L^{q}(\mathbb{R}^{N})$ with
$$
p\in\left(\frac{2N}{N-2+2\alpha+\mu},\frac{2N}{N-2-2\alpha-\mu}\right)~~and~~q\in\left(\frac{N}{2\alpha+\mu},+\infty\right).
$$

$(C4)$.~If~$N\geq7$ and $4<2\alpha+\mu<\frac{N+2}{2}$, then $(u,v)\in L^{p}(\mathbb{R}^{N})\times L^{q}(\mathbb{R}^{N})$ with
$$
p\in\left(\frac{2N}{N-2+2\alpha+\mu},\frac{2N}{2\alpha+\mu-4}\right)~~and~~q\in\left(\frac{N}{2\alpha+\mu},\frac{2N}{2(2\alpha+\mu)-N-2}\right).
$$
\end{thm}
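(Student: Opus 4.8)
The plan is to run a two-equation bootstrap on the system \eqref{ASD}, organized through a regularity-lifting argument via contracting operators (cf.\ \cite{CL2}). The starting integrability $u\in L^{2N/(N-2)}(\mathbb{R}^N)$ and $v\in L^{2N/(2\alpha+\mu)}(\mathbb{R}^N)$ is part of the hypothesis, and it is consistent: feeding $u^{2^{\ast}_{\alpha,\mu}}\in L^{2N/(2N-2\alpha-\mu)}$ into the second line of \eqref{ASD} and applying the weighted Hardy-Littlewood-Sobolev inequality \eqref{WHLS} (with both weights equal to $|x|^{-\alpha}$) returns exactly $v\in L^{2N/(2\alpha+\mu)}$. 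From this base point I want to show successively that $u$ and $v$ lie in the $L^{p}$, $L^{q}$ windows claimed in $(C1)$-$(C4)$.

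For the contraction step, fix a large level $A>0$ and a small $\delta>0$ and split the source terms. In the $v$-equation write $u^{2^{\ast}_{\alpha,\mu}}=u^{2^{\ast}_{\alpha,\mu}}\chi_E+u^{2^{\ast}_{\alpha,\mu}}\chi_{E^c}$, where $E=\{|u|>A\}\cup B_\delta(0)\cup(\mathbb{R}^N\setminus B_{1/\delta}(0))$; do the analogous splitting of $vu^{2^{\ast}_{\alpha,\mu}-1}$ in the $u$-equation. The piece over $E^c$ is a bounded function supported on an annulus away from the origin, so it lies in $L^{p}$ for all $p$ in a wide interval and plays the role of the ``good'' term $g$ in a decomposition $f=Tf+g$. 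The piece over $E$ defines a linear operator whose $L^{p}\to L^{q}$ bounds come from \eqref{WHLS} for the $v$-equation and from the classical Riesz-potential estimate $\tfrac1q=\tfrac1\rho-\tfrac2N$ for the kernel $|x-y|^{-(N-2)}$ in the $u$-equation; by absolute continuity of the integral, the decay of $u$ at infinity, and the fact that $|x|^{-\alpha}$ is locally in the right $L^{r}$ and integrable at infinity against the relevant weights, its operator norm can be made strictly less than $1$ by choosing $A$ large and $\delta$ small. The regularity-lifting lemma then transfers the integrability of $g$ to $u$ and to $v$.

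The precise windows in $(C1)$-$(C4)$ come from iterating. If $u\in L^{s}$ then $u^{2^{\ast}_{\alpha,\mu}}\in L^{s/2^{\ast}_{\alpha,\mu}}$, so \eqref{WHLS} gives $v\in L^{q}$ for $q$ tied to $s$ through $\tfrac1r+\tfrac{2\alpha+\mu}{N}=1+\tfrac1q$ subject to the admissibility constraint $1-\tfrac1r-\tfrac\mu N<\tfrac\alpha N<1-\tfrac1r$; feeding $(u,v)$ back into the first line of \eqref{ASD} through H\"older and the Riesz estimate improves $s$ to some $s'$. One alternates these two moves, checking at each stage that the exponents remain inside the admissible ranges, until the reachable interval stabilizes; the stable interval is exactly the one recorded in the theorem. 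The regime split $N\in\{3,4,5,6\}$ versus $N\ge7$ and the thresholds $2\alpha+\mu$ against $N-2$, $4$ and $\tfrac{N+2}{2}$ are what decide whether the iteration pushes the upper endpoint all the way to $+\infty$ (case $(C1)$, and the $q$-side of $(C3)$) or caps it at a finite exponent such as $\tfrac{2N}{2\alpha+\mu-4}$; the finite caps encode the genuine singularity of $v$ at the origin forced by the factor $|x|^{-\alpha}$.

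The main obstacle is the bookkeeping around the two singular weights. One must keep the iterated pair of exponents simultaneously inside the Stein-Weiss admissible range at \emph{every} step --- the lower bound $1-\tfrac1r-\tfrac\mu N<\tfrac\alpha N$ is the delicate one, and it is there that the hypothesis $0<2\alpha+\mu\le N$ (and, in the sharper statements, $2\alpha+\mu\le\min\{4,N\}$) enters --- and one must pin down the correct finite upper endpoints, which requires coupling the operator decomposition with a local estimate near $x=0$ of the form $v(x)\lesssim|x|^{-\alpha}$. The other technical point is to verify that the contraction constant is genuinely below $1$ uniformly along the whole iteration rather than merely bounded; this is handled by the choice of the cut level $A$ and the radii $\delta$, $1/\delta$ together with dominated convergence.
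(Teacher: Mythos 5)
Your overall skeleton---truncate $u$ at a level $A$, treat the truncated part as the coefficient of a linear operator acting on the pair $(u,v)$, and invoke the regularity-lifting lemma (Theorem \ref{ABC1})---is the same as the paper's (Lemma \ref{ABCDE} and the proof of Theorem \ref{QWE2}). The gap is in how you obtain the precise exponent windows. After the splitting, the term playing the role of $g$ in $f=Tf+g$ is not the truncated source $vu^{2^{\ast}_{\alpha,\mu}-1}\chi_{E^c}$ itself but its image under the kernels, namely $F_{u_B}(v)=\int_{\mathbb{R}^N}\frac{u_B^{2^{\ast}_{\alpha,\mu}-1}v(y)}{|x-y|^{N-2}}\,dy$ and $G_{u_B}(u)$; these contain the full, untruncated $v$ and $u$ and the weights $|x|^{-\alpha}$, $|y|^{-\alpha}$, so they are neither bounded nor supported in an annulus. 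Estimating them using only the hypotheses $v\in L^{2N/(2\alpha+\mu)}$ and $u\in L^{2N/(N-2)}$ is exactly where the constraints $2N>p(2\alpha+\mu-4)$ and $2N>p(N-2-2\alpha-\mu)$, combined with the linking relation $\frac{1}{p}-\frac{1}{q}=\frac{N-2-2\alpha-\mu}{2N}$ forced by the two kernel estimates, come from; these constraints are the source of every finite endpoint in (C1)--(C4). Your claim that $g$ lies in $L^{p}$ ``for all $p$ in a wide interval'' because it is bounded with compact support would erase precisely those caps and prove more than the theorem asserts.

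Second, the mechanism you propose for pinning down the windows does not function in this critical setting. The alternating bootstrap (weighted HLS on the $v$-line, Riesz potential on the $u$-line) maps the only known exponent $s=\frac{2N}{N-2}$ back to itself, so no ``stabilizing iteration'' can generate the window; in the paper there is no iteration at all---the lifting lemma is applied once, directly at each admissible pair $(p,q)$, and (C1)--(C4) are pure bookkeeping of the constraint system above. Moreover, the auxiliary bound $v(x)\lesssim|x|^{-\alpha}$ that you invoke to determine the finite caps is not available at this stage: it is Theorem \ref{ZX2}, whose proof uses the higher integrability of $u$ established by the present theorem, so relying on it here is circular. (A minor point: smallness of the bad operator requires no pointwise decay of $u$ at infinity; one only needs $\|u_A\|_{L^{2^{\ast}}}\to0$ as $A\to\infty$, which follows from $u\in L^{2^{\ast}}$ by dominated convergence, and then the contraction constant is controlled once and for all, with no uniformity-along-an-iteration issue.)
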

\begin{thm}\label{ZX2}
Assume that $N=3,4,5,6$, $0\leq\alpha<2$, $0<\mu<N$ and $N-2\leq2\alpha+\mu\leq\min\{N,4\}$. Let $(u,v)\in L^{\frac{2N}{N-2}}(\mathbb{R}^{N})\times L^{\frac{2N}{2\alpha+\mu}}(\mathbb{R}^{N})$ be a pair of positive solutions of system \eqref{ASD}, then $|x|^{\alpha}v(x)\in L^{\infty}(\mathbb{R}^{N})$ and $u(x)\in L^{\infty}(\mathbb{R}^{N})$.
\end{thm}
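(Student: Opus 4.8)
The plan is a two–step bootstrap starting from the integrability already produced by Theorem~\ref{QWE2}$(C1)$: since $N-2\le 2\alpha+\mu\le\min\{N,4\}$, that theorem gives $u\in L^{p}(\mathbb{R}^{N})$ for every $p\in(\frac{N}{N-2},+\infty)$, and $v\in L^{q}(\mathbb{R}^{N})$ on a nonempty interval. Two elementary consequences of the standing hypotheses will be used throughout: $2\alpha+\mu\le 4$ forces $2^{\ast}_{\alpha,\mu}-1=\frac{N+2-2\alpha-\mu}{N-2}\ge 1$, and $\alpha+\mu<N$ (since $\alpha+\mu\le 2\alpha+\mu\le N$, with equality only when $\alpha=0$, in which case $\mu<N$). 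I write $a\lesssim b$ for $a\le Cb$ with $C$ independent of $x$.

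\textbf{Step 1: $|x|^{\alpha}v\in L^{\infty}(\mathbb{R}^{N})$.} Put $w(x):=|x|^{\alpha}v(x)=\int_{\mathbb{R}^{N}}u(y)^{2^{\ast}_{\alpha,\mu}}|x-y|^{-\mu}|y|^{-\alpha}\,dy$ and split $\mathbb{R}^{N}=\{|x-y|\ge 1\}\cup\{|x-y|<1\}$. On $\{|x-y|\ge1\}$ the kernel $|x-y|^{-\mu}\le1$, so that contribution is at most $\int_{\mathbb{R}^{N}}u(y)^{2^{\ast}_{\alpha,\mu}}|y|^{-\alpha}\,dy$, which is finite: near the origin pair $|y|^{-\alpha}\in L^{r}_{\mathrm{loc}}$, $r<N/\alpha$, with a large power of $u$, and away from the origin use $u\in L^{2^{\ast}_{\alpha,\mu}}$ — or, in the endpoint case $2\alpha+\mu=N$ where necessarily $\alpha>0$, keep the decay $|y|^{-\alpha}$ at infinity. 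On $\{|x-y|<1\}$: if $|x|\ge2$ then $|y|\ge1$ there, so $|y|^{-\alpha}\le1$ and a two–factor Hölder inequality with $|x-y|^{-\mu}\in L^{r}(B_{1})$ ($\mu<N$) and a large power of $u$ closes the estimate uniformly in $x$; if $|x|<2$ then $B(x,1)\subset B_{3}$ and a three–factor Hölder inequality with $|x-y|^{-\mu}$, $|y|^{-\alpha}$ and a large power of $u$ closes it — possible precisely because $\frac{\mu}{N}+\frac{\alpha}{N}<1$. Hence $w\in L^{\infty}(\mathbb{R}^{N})$.

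\textbf{Step 2: $u\in L^{\infty}(\mathbb{R}^{N})$.} The first equation of \eqref{ASD} writes $u$ as a Riesz potential of order $2$,
$$u(x)=\int_{\mathbb{R}^{N}}|x-y|^{-(N-2)}g(y)\,dy,\qquad g(y):=w(y)\,u(y)^{2^{\ast}_{\alpha,\mu}-1}|y|^{-\alpha}.$$
Since $u\in\bigcap_{p>N/(N-2)}L^{p}$ and $w\in L^{\infty}$, one checks that $g\in L^{s}(\mathbb{R}^{N})$ exactly for $s\in(\frac{N}{N+2-2\alpha-\mu},\frac{N}{\alpha})$ (read $N/\alpha=+\infty$ if $\alpha=0$): the lower bound comes from $|g|\lesssim u^{2^{\ast}_{\alpha,\mu}-1}$ at infinity, the upper bound from $|g|\lesssim|y|^{-\alpha}u^{2^{\ast}_{\alpha,\mu}-1}$ near the origin. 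This interval is nonempty because $3\alpha+\mu=(2\alpha+\mu)+\alpha<\min\{N,4\}+2\le N+2$ — here $\alpha<2$ is used — and when $2\alpha+\mu<N$ it contains $N/2$ in its interior, because $\alpha<2\Leftrightarrow N/\alpha>N/2$. Choosing $s_{1}<N/2<s_{2}$ in this interval and splitting $\{|x-y|<1\}\cup\{|x-y|\ge1\}$, a Hölder inequality with $g\in L^{s_{2}}$ and $|z|^{-(N-2)}\in L^{s_{2}'}(B_{1})$ (legal as $s_{2}>N/2$) bounds the first piece, and with $g\in L^{s_{1}}$ and $|z|^{-(N-2)}\in L^{s_{1}'}(\{|z|\ge1\})$ (legal as $s_{1}<N/2$) bounds the second, both uniformly in $x$; hence $u\in L^{\infty}$. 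In the borderline case $2\alpha+\mu=N$ (so $N\in\{3,4\}$ and necessarily $\alpha>0$) the range degenerates to $s>N/2$, and one instead retains the weight $|y|^{-\alpha}$ on the region $\{|y|\ge 2\max(|x|,1)\}$, where $|x-y|\sim|y|$, turning the far kernel into $|y|^{-(\alpha+N-2)}$, which lies in $L^{s'}$ at infinity for suitable $s<N/(2-\alpha)$ — once more $\alpha<2$ — and pairs with $u^{2^{\ast}_{\alpha,\mu}-1}$. Finally $v=|x|^{-\alpha}w$, which proves the theorem.

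\textbf{Main obstacle.} The delicate point is Step~2. The kernel $|y|^{-\alpha}|x-y|^{-(N-2)}$ carries two singularities, at $y=0$ and at $y=x$, which may be arbitrarily close; moreover $|x-y|^{-(N-2)}$ does \emph{not} decay fast enough to be integrable at infinity on its own, so one must borrow the decay of $|y|^{-\alpha}$ there, and this borrowed decay is summable at infinity exactly when $\alpha<2$. One must choose the domain decomposition so that at most one singularity is active on each piece and, simultaneously, a single Hölder exponent for which the local singular kernels are locally integrable, the product kernel decays summably against $u^{2^{\ast}_{\alpha,\mu}-1}$ at infinity, and the resulting power of $u$ stays in $(\frac{N}{N-2},\infty)$ where integrability is known; balancing these three requirements is precisely what forces the quantitative restrictions $\alpha<2$, $2\alpha+\mu\le 4$ and $N\le 6$.
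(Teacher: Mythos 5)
Your overall strategy is the same as the paper's: start from the integrability supplied by Theorem \ref{QWE2}(C1), prove $|x|^{\alpha}v\in L^{\infty}$ by splitting off the singular regions and applying H\"older with a large power of $u$, and then bound $u(x)\le \||x|^{\alpha}v\|_{\infty}\int_{\mathbb{R}^N}u(y)^{2^{\ast}_{\alpha,\mu}-1}|x-y|^{-(N-2)}|y|^{-\alpha}\,dy$ and show the last integral is bounded uniformly in $x$. Your Step 1 is complete, and your Step 2 in the generic case $2\alpha+\mu<N$ is correct and in fact a little cleaner than the paper's region-by-region analysis: packaging the right-hand side as $g=w\,u^{2^{\ast}_{\alpha,\mu}-1}|y|^{-\alpha}\in L^{s}$ for all $s\in(\tfrac{N}{N+2-2\alpha-\mu},\tfrac{N}{\alpha})$ and playing the two sides of the threshold $N/2$ against the near and far parts of the kernel is exactly where $\alpha<2$ and $2\alpha+\mu\le 4$ enter.

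There is, however, a genuine gap in your treatment of the borderline case $2\alpha+\mu=N$ (which is allowed by the hypotheses, e.g.\ $N=3$, $2\alpha+\mu=3$ or $N=4$, $2\alpha+\mu=4$). On the far set $\{|x-y|\ge 1\}$ you only retain the weight on $\{|y|\ge 2\max(|x|,1)\}$; the complementary region $\{|x-y|\ge 1,\ |y|<2\max(|x|,1)\}$, which grows with $|x|$, is left untreated, and neither of your two estimates covers it. Indeed, dropping the weight there and using H\"older forces $\||x-y|^{-(N-2)}\|_{L^{s'}(1\le|x-y|\lesssim|x|)}$, which is uniformly bounded only for $s'>\tfrac{N}{N-2}$, i.e.\ $s<N/2$; but in the borderline case $g\in L^{s}$ is available only for $s>N/2$ (since $2^{\ast}_{\alpha,\mu}-1=\tfrac{2}{N-2}$ and $u\in L^{p}$ is known only for $p>\tfrac{N}{N-2}$), so this pairing is not at your disposal, and with $s>N/2$ the kernel norm over that $x$-dependent annulus blows up as $|x|\to\infty$. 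The paper avoids this by keeping the weight on the \emph{whole} far region, using the pointwise bound $\frac{1}{|x-y|^{N-2}|y|^{\alpha}}\le\frac{1}{|y|^{N-2+\alpha}}+\frac{1}{|x-y|^{N-2+\alpha}}$ and then H\"older for each term with an exponent $k$ satisfying $\frac{N}{N-2+\alpha}<k<\frac{N}{2\alpha+\mu-2}$ (a nonempty range because $\alpha+\mu<N$, and exactly here $\alpha>0$ saves the borderline case); this yields a bound independent of $x$. Your argument can be repaired the same way — retain $|y|^{-\alpha}$ on all of $\{|x-y|\ge1\}$, split off $\{|y|\le1\}$, and verify a uniform-in-$x$ estimate for the two-singularity kernel $|x-y|^{-(N-2)s'}|y|^{-\alpha s'}$ with $\tfrac{N}{2}<s<\tfrac{N}{2-\alpha}$ — but as written the borderline case is not proved.
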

\begin{thm}\label{DF2}
Assume that $N=3,4,5,6$, $0\leq\alpha<2$, $0<\mu<N$ and $N-2\leq2\alpha+\mu\leq\min\{N,4\}$. Let $(u,v)\in L^{\frac{2N}{N-2}}(\mathbb{R}^{N})\times L^{\frac{2N}{2\alpha+\mu}}(\mathbb{R}^{N})$ be a pair of positive solutions of system \eqref{ASD}, then $u(x)\in C^{\infty}(\mathbb{R}^{N}-\{0\})$.
\end{thm}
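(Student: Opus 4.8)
The plan is to run a bootstrap on the equivalent integral system \eqref{ASD}, taking the $L^{\infty}$-bounds of Theorem \ref{ZX2} as the point of departure. By Theorem \ref{ZX2} we have $u\in L^{\infty}(\R^N)$ and $|x|^{\alpha}v(x)\in L^{\infty}(\R^N)$, so $0<v(x)\le C|x|^{-\alpha}$ and both $u$ and $v$ are bounded on every compact subset of $\R^N\setminus\{0\}$. The first equation of \eqref{ASD} reads $u=|x|^{2-N}*(v\,u^{2^{\ast}_{\alpha,\mu}-1})$; since $|x|^{2-N}$ is a constant multiple of the fundamental solution of $-\Delta$ on $\R^N$, $u$ is a distributional solution of $-\Delta u=\kappa_{N}\,v\,u^{2^{\ast}_{\alpha,\mu}-1}$. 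The source $F:=v\,u^{2^{\ast}_{\alpha,\mu}-1}$ belongs to $L^{\infty}_{loc}(\R^N\setminus\{0\})$ (note $2^{\ast}_{\alpha,\mu}-1=(N+2-2\alpha-\mu)/(N-2)>0$), so interior Calder\'on--Zygmund estimates give $u\in W^{2,p}_{loc}(\R^N\setminus\{0\})$ for every $p<\infty$, hence $u\in C^{1,\gamma}_{loc}(\R^N\setminus\{0\})$ for every $\gamma\in(0,1)$.

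Next I would upgrade $v$. Away from the origin the weight $|x|^{-\alpha}$ is smooth, so writing $v(x)=\kappa\,|x|^{-\alpha}w(x)$ with $w(x)=\int_{\R^N}|x-y|^{-\mu}g(y)\,dy$ and $g(y):=|y|^{-\alpha}u(y)^{2^{\ast}_{\alpha,\mu}}$, it suffices to show $w\in C^{1,\gamma}_{loc}(\R^N\setminus\{0\})$. Fix $x_{0}\neq0$ and $r>0$ with $\overline{B_{2r}(x_{0})}\subset\R^N\setminus\{0\}$, pick $\eta\in\mathcal{C}_{0}^{\infty}(B_{2r}(x_{0}))$ with $\eta\equiv1$ on $B_{r}(x_{0})$, and split $w=|x|^{-\mu}*(\eta g)+|x|^{-\mu}*((1-\eta)g)$. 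On $B_{r/2}(x_{0})$ the second summand is smooth: for $x$ there and $y\in\operatorname{supp}((1-\eta)g)$ one has $|x-y|\ge r/2$, the derived kernels $|x-y|^{-\mu-k}$ are then locally bounded and dominated by $|x-y|^{-\mu}$ on $\{|x-y|\ge1\}$, so differentiation under the integral sign is justified (using $g\in L^{1}_{loc}(\R^N)$ — recall $\alpha<2<N$ — together with the integrability afforded by Theorem \ref{QWE2}). For the first summand, $\eta g\in C^{1,\gamma}_{c}(\R^N)$: here one uses that $u$ is strictly positive and $C^{1,\gamma}$ on $\operatorname{supp}\eta\subset\R^N\setminus\{0\}$, hence $t\mapsto t^{2^{\ast}_{\alpha,\mu}}$ is smooth on the range of $u$ there; then the classical smoothing property of the Riesz potential — convolution with $|x|^{-\mu}$, $0<\mu<N$, sends $C^{k,\gamma}_{c}(\R^N)$ into $C^{k,\gamma}(\R^N)$, by moving derivatives onto the compactly supported factor and using that the Riesz potential of a bounded compactly supported function is H\"older continuous — gives $|x|^{-\mu}*(\eta g)\in C^{1,\gamma}_{loc}$. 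Therefore $w\in C^{1,\gamma}_{loc}(\R^N\setminus\{0\})$ and $v\in C^{1,\gamma}_{loc}(\R^N\setminus\{0\})$.

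One then iterates. Suppose inductively that $u,v\in C^{k,\gamma}_{loc}(\R^N\setminus\{0\})$ for all $\gamma\in(0,1)$, for some $k\ge1$. Because $u>0$ is continuous, it is bounded away from $0$ on compact subsets of $\R^N\setminus\{0\}$, so the nonlinearities $t\mapsto t^{2^{\ast}_{\alpha,\mu}-1}$ and $t\mapsto t^{2^{\ast}_{\alpha,\mu}}$ preserve the $C^{k,\gamma}_{loc}$-scale there; hence $F=v\,u^{2^{\ast}_{\alpha,\mu}-1}\in C^{k,\gamma}_{loc}(\R^N\setminus\{0\})$, and interior Schauder estimates for $-\Delta u=\kappa_{N}F$ give $u\in C^{k+2,\gamma}_{loc}(\R^N\setminus\{0\})$. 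Then $g=|y|^{-\alpha}u^{2^{\ast}_{\alpha,\mu}}\in C^{k+2,\gamma}_{loc}(\R^N\setminus\{0\})$, and the cutoff argument of the previous paragraph (with $C^{k+2,\gamma}$ in place of $C^{1,\gamma}$) yields $w,v\in C^{k+2,\gamma}_{loc}(\R^N\setminus\{0\})$. Starting from the base case $k=1$ established above, the induction gives $u\in C^{k}_{loc}(\R^N\setminus\{0\})$ for every $k$, that is, $u\in C^{\infty}(\R^N\setminus\{0\})$.

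The step I expect to be the main obstacle is the Riesz-potential estimate for $w$: one must ensure that convolution against the singular kernel $|x-y|^{-\mu}$ near $y=x$ does not cost regularity — which is exactly what the cutoff decomposition localizes, transferring the derivatives onto the locally smooth density $\eta g$ and invoking the smoothing of $|x|^{-\mu}$ — while also checking that the complementary ``far'' piece is genuinely $C^{\infty}$ near $x_{0}$, for which one needs sufficient decay of $g$; this is where the a priori integrability from Theorem \ref{QWE2} enters. A secondary, essentially bookkeeping, point is that the power nonlinearities $t^{2^{\ast}_{\alpha,\mu}}$ and $t^{2^{\ast}_{\alpha,\mu}-1}$ preserve the H\"older scale, which uses nothing beyond the positivity and continuity of $u$.
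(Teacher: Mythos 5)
Your argument is correct, and it starts from the same place as the paper (the bounds $u\in L^{\infty}(\R^{N})$ and $|x|^{\alpha}v\in L^{\infty}(\R^{N})$ of Theorem \ref{ZX2}), but it proceeds by a genuinely different route. The paper never passes to the differential equation: it splits the integral representation of $u$ itself into $\int_{B_{2r}(x)}+\int_{\R^{N}\setminus B_{2r}(x)}$, quotes \cite[Chapter 10]{LL} to get $C^{\delta}$, $\delta<2$, for the near piece, shows by a difference-quotient and dominated-convergence argument that the far piece is $C^{1}$, hence $C^{\infty}$, on $\R^{N}\setminus\{0\}$, and then concludes with the classical bootstrap of \cite[Chapter 10]{LL}. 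You instead read the first equation as $-\Delta u=c_{N}\,v\,u^{2^{\ast}_{\alpha,\mu}-1}$ away from the origin and run interior Calder\'on--Zygmund and then Schauder estimates, while the regularity of $v$ is handled separately through the cutoff decomposition of $w=|x|^{-\mu}\ast g$, moving derivatives onto $\eta g$; your key claim that convolution with the locally integrable kernel $|x|^{-\mu}$ preserves local H\"older classes of compactly supported densities is correct, since for $h\in C^{0,\gamma}_{c}$ one has $|w(x)-w(x')|\le[h]_{\gamma}\,|x-x'|^{\gamma}\int_{K}|z|^{-\mu}\,dz$ with $K$ a fixed compact set. What your route buys is an explicit account of how $v$ gains regularity at each stage, which the paper leaves implicit in the word ``bootstrap''; what the paper's route buys is independence from elliptic $W^{2,p}$ and Schauder theory, everything being done with direct kernel estimates. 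Two minor bookkeeping points: the convergence of the far integrals and of your dominating functions already follows from $|x|^{\alpha}v=w\in L^{\infty}$ (Theorem \ref{ZX2}), so the appeal to Theorem \ref{QWE2} is superfluous; and in the Calder\'on--Zygmund step the interior $W^{2,p}$ regularity of the distributional solution should be justified in the standard way (Newtonian potential of the localized right-hand side plus a harmonic remainder, using $u\in L^{\infty}$), which is routine.
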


  We First introduce the regularity lifting lemma (see \cite{MCL}).
\begin{thm}\label{ABC1}
Let $V$ be a topological vector space and $X:=\{v\in V:\|v\|_{X}<\infty\}$, $Y:=\{v\in V:\|v\|_{Y}<\infty\}$. Assume $T$ be a contraction map from $X\rightarrow X$ and $Y\rightarrow Y$, $f\in X$ and there exists a function $g\in X\cap Y$ such that $f=Tf+g$ in $X$. Then $f\in X\cap Y$.
\end{thm}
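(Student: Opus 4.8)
The plan is to build a single sequence of approximants, starting from $g$, that converges to $f$ in the $X$-norm and simultaneously converges in the $Y$-norm to some element of $Y$; identifying the two limits inside the ambient space $V$ then forces $f\in Y$. Implicit in the setup (and needed below) is that $(Y,\|\cdot\|_{Y})$ is complete and that convergence in either of the norms $\|\cdot\|_{X}$, $\|\cdot\|_{Y}$ entails convergence in the topology of $V$.

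First I would fix contraction constants $\theta_{X},\theta_{Y}\in(0,1)$ for $T$ on $X$ and on $Y$ respectively, and define the Picard iterates $w_{0}:=g$ and $w_{k+1}:=Tw_{k}+g$. A trivial induction gives $w_{k}\in X\cap Y$ for all $k$: indeed $w_{0}=g\in X\cap Y$, and if $w_{k}\in Y$ then $Tw_{k}\in Y$ since $T$ maps $Y$ into $Y$, whence $w_{k+1}=Tw_{k}+g\in Y$ because $g\in Y$; the same reasoning keeps all $w_{k}$ in $X$.

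Next, using the relation $f=Tf+g$ one computes $f-w_{k+1}=Tf-Tw_{k}$, so that $\|f-w_{k}\|_{X}\le\theta_{X}^{\,k}\|f-g\|_{X}\to0$; thus $w_{k}\to f$ in $\|\cdot\|_{X}$. On the other hand $\|w_{k+1}-w_{k}\|_{Y}=\|Tw_{k}-Tw_{k-1}\|_{Y}\le\theta_{Y}\|w_{k}-w_{k-1}\|_{Y}$, so $\{w_{k}\}$ is a Cauchy sequence in $Y$ and, by completeness, $w_{k}\to\tilde f$ in $\|\cdot\|_{Y}$ for some $\tilde f\in Y$.

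Finally, both of these convergences take place in the topology of $V$, so by uniqueness of limits in $V$ we get $f=\tilde f\in Y$, and hence $f\in X\cap Y$. The step that carries the whole argument is precisely this last identification, and what makes it work is that the \emph{same} sequence $\{w_{k}\}$, anchored at $g\in X\cap Y$, is forced to converge in both norms; beyond that, the proof is a routine double application of the contraction estimate, so I do not anticipate any real obstacle.
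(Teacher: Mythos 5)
The paper itself offers no proof of this statement: it is the regularity-lifting lemma quoted from \cite{MCL}, so there is nothing in-paper to compare against line by line. Your argument is correct and is, in substance, the standard proof in Picard-iteration form: the iterates $w_{k+1}=Tw_k+g$ stay in $X\cap Y$, the identity $f-w_{k+1}=Tf-Tw_k$ forces $w_k\to f$ in $\|\cdot\|_X$ (note this part needs no completeness at all), and the $Y$-contraction estimate makes $\{w_k\}$ Cauchy in $Y$. The proof in the cited source is organized slightly differently: one puts the norm $\|\cdot\|_X+\|\cdot\|_Y$ on $Z:=X\cap Y$, notes that $Sv:=Tv+g$ is a contraction of $Z$ and of $X$, produces a fixed point $h\in Z$ by the Banach fixed point theorem, and concludes $f=h$ because a contraction has at most one fixed point in $X$. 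Your final identification of the $X$-limit and the $Y$-limit inside $V$ plays exactly the role that completeness of $Z$ (i.e.\ agreement of $X$- and $Y$-limits) plays there, and in both versions one needs precisely the hypotheses you flagged but which the statement omits: completeness of $Y$ (resp.\ of $Z$) and the fact that convergence in either norm implies convergence in the Hausdorff topology of $V$. So your proof is sound and is a mild variant of the standard one, trading the uniqueness-of-fixed-point step for uniqueness of limits in $V$, and it even economizes slightly by never invoking completeness of $X$.
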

For any constant $A>0$, we define
\begin{equation}\nonumber
u_{A}(x)=
\begin{cases}
u(x),~~|u(x)|>A~or~|x|>A,\\
0,~~~~~~|u(x)|\leq A~and~|x|\leq A,
\end{cases}
\end{equation}
and $u_{B}(x)=u(x)-u_{A}(x)$. Define the functions
$$
F_{u}(t)=\int_{\mathbb{R}^{N}}\frac{u(y)^{{2}^{\ast}_{\alpha,\mu}-1}t(y)}{|x-y|^{N-2}}dy,~~~
G_{u}(s)=\int_{\mathbb{R}^{N}}\frac{u(y)^{2^{\ast}_{\alpha, \mu}-1}s(y)}{|x|^{\alpha}|x-y|^{\mu}|y|^{\alpha}}dy
$$
and an operator $T_{A}:L^{p}\times L^{q}\rightarrow L^{p}\times L^{q}$,
$$
T_{A}(s,t)=(F_{u_{A}}(t),G_{u_{A}}(s))
$$
with $||(s,t)||_{L^{p}\times L^{q}}=||s||_{L^{p}}+||t||_{L^{q}}$, then there holds
$$
(u,v)=T_{A}(u,v)+(F_{u_{B}}(v),G_{u_{B}}(u)).
$$
\begin{lem}\label{ABCDE}
Assume that $p,q$ satisfy $p\in\left(\frac{N}{N-2},+\infty\right)\cap\left(\frac{2N}{N-2+2\alpha+\mu},+\infty\right)$, $2N>p(N-2-2\alpha-\mu)$ and $\frac{1}{p}-\frac{1}{q}=\frac{N-2-2\alpha-\mu}{2N}$. For $A$ sufficiently large, $T_{A}$ is a contraction map from $L^{p}(\mathbb{R}^{N})\times L^{q}(\mathbb{R}^{N})$ to itself.
\end{lem}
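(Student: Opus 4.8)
The plan is to bound the two components of $T_A$ separately: for the first component $F_{u_A}$ I will use the mapping properties of the Riesz potential of order $2$ (equivalently, the unweighted Hardy--Littlewood--Sobolev inequality for the kernel $|x-y|^{-(N-2)}$), and for the second component $G_{u_A}$ the weighted Hardy--Littlewood--Sobolev inequality \eqref{WHLS}, written in the operator form recalled right after Lemma \ref{BLN}. In both cases I will peel off the factor $u_A^{2^{\ast}_{\alpha,\mu}-1}$ by Hölder's inequality and absorb it into $\|u_A\|_{L^{2^{\ast}}}^{2^{\ast}_{\alpha,\mu}-1}$; note $2^{\ast}_{\alpha,\mu}-1>0$ since $2\alpha+\mu\le N<N+2$. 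The whole point is then that, $u_A$ being $u$ restricted to $\{|u|>A\}\cup\{|x|>A\}$ and $u\in L^{2^{\ast}}(\R^N)$, dominated convergence gives $\|u_A\|_{L^{2^{\ast}}}\to0$ as $A\to\infty$, so the resulting operator norm of $T_A$ becomes $<1$.

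For the bookkeeping I will set $a:=\frac{2N}{N+2-2\alpha-\mu}$, which is exactly the exponent making $a(2^{\ast}_{\alpha,\mu}-1)=2^{\ast}$ and hence $\|u_A^{2^{\ast}_{\alpha,\mu}-1}\|_{L^{a}}=\|u_A\|_{L^{2^{\ast}}}^{2^{\ast}_{\alpha,\mu}-1}$. For $F_{u_A}(t)$ I will use that the Riesz potential of order $2$ maps $L^{m}(\R^N)$ into $L^{p}(\R^N)$ whenever $\frac1m=\frac1p+\frac2N$, which requires $m\in(1,\frac N2)$ and is guaranteed by $p>\frac{N}{N-2}$, and then Hölder with $\frac1m=\frac1a+\frac1q$; the two formulas for $m$ coincide exactly because $\frac1p-\frac1q=\frac{N-2-2\alpha-\mu}{2N}$, yielding
$$
\|F_{u_A}(t)\|_{L^{p}}\le C\,\|u_A\|_{L^{2^{\ast}}}^{2^{\ast}_{\alpha,\mu}-1}\,\|t\|_{L^{q}}.
$$
For $G_{u_A}(s)$ I will apply \eqref{WHLS} in operator form with $f=u_A^{2^{\ast}_{\alpha,\mu}-1}s$ and $\beta=\alpha$, giving $\|G_{u_A}(s)\|_{L^{q}}\le C\,\|u_A^{2^{\ast}_{\alpha,\mu}-1}s\|_{L^{r}}$ with $\frac1r=1+\frac1q-\frac{2\alpha+\mu}{N}$, and then Hölder with $\frac1r=\frac1a+\frac1p$ — consistent, once more, precisely by the identity $\frac1p-\frac1q=\frac{N-2-2\alpha-\mu}{2N}$ — to obtain
$$
\|G_{u_A}(s)\|_{L^{q}}\le C\,\|u_A\|_{L^{2^{\ast}}}^{2^{\ast}_{\alpha,\mu}-1}\,\|s\|_{L^{p}}.
$$

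Adding the two estimates and using that $T_A$ is linear, I get
$$
\|T_A(s_1,t_1)-T_A(s_2,t_2)\|_{L^{p}\times L^{q}}\le C\,\|u_A\|_{L^{2^{\ast}}}^{2^{\ast}_{\alpha,\mu}-1}\,\big(\|s_1-s_2\|_{L^{p}}+\|t_1-t_2\|_{L^{q}}\big),
$$
with $C=C(N,\mu,\alpha,p)$ independent of $A$; choosing $A$ so large that $C\|u_A\|_{L^{2^{\ast}}}^{2^{\ast}_{\alpha,\mu}-1}<1$ makes $T_A$ a contraction on $L^{p}(\R^N)\times L^{q}(\R^N)$, which in particular maps this space into itself.

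The part I expect to be delicate is checking that all the exponents fall in the admissible ranges. Concretely: $m=(\frac1p+\frac2N)^{-1}\in(1,\frac N2)$ (from $p>\frac N{N-2}$); $q\in(1,\infty)$, with $q<\infty$ coming from $2N>p(N-2-2\alpha-\mu)$ and $q>\frac N{2\alpha+\mu}$ coming from $p>\frac{2N}{N-2+2\alpha+\mu}$; $a\in(1,\infty)$ and $r=(1+\frac1q-\frac{2\alpha+\mu}{N})^{-1}\in(1,\infty)$; and, crucially, that the Stein--Weiss range restriction on the weight $\alpha$ in \eqref{WHLS} holds for the chosen pair $(r,q)$, which has to be extracted from $2\alpha+\mu\le N$ together with the above bounds on $q$. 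Once these inclusions are in place, the contraction property follows at once from the smallness of $\|u_A\|_{L^{2^{\ast}}}$ for $A$ large and the linearity of $T_A$.
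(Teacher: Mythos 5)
Your proposal follows essentially the same route as the paper's proof: bound $F_{u_A}$ by the Riesz potential (unweighted HLS) estimate and $G_{u_A}$ by the weighted HLS in operator form, peel off $u_A^{2^{\ast}_{\alpha,\mu}-1}$ with H\"older at the exponent $\frac{2^{\ast}}{2^{\ast}_{\alpha,\mu}-1}=\frac{2N}{N+2-2\alpha-\mu}$, and conclude via the smallness of $\|u_A\|_{L^{2^{\ast}}}$ for large $A$ together with the linearity of $T_A$; your intermediate exponents agree exactly with the paper's $\frac{Np}{N+2p}$ and $\frac{2Np}{2N+p(N+2-2\alpha-\mu)}$. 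Your remark that the Stein--Weiss admissibility ranges still need checking is a finer point the paper also leaves implicit, so there is no substantive difference between the two arguments.
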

\begin{proof}
Actually, there exists constant $C_{1}>0$,
\begin{equation}\nonumber
\begin{aligned}
\|F_{u_{A}}(t)\|_{L^{p}}&=\left\|\int_{\mathbb{R}^{N}}\frac{u_{A}^{2^{\ast}_{\alpha, \mu}-1}t}{|x-y|^{N-2}}dy\right\|_{L^{p}}\\
&\leq C_{1}\left\|u_{A}^{2^{\ast}_{\alpha, \mu}-1}t\right\|_{L^{\frac{Np}{N+2p}}}\\
%&\leq C_{1}\|u_{A}^{{2}^{\ast}_{\alpha,\mu}-1}\|_{L^{\frac{2^{\ast}}{{2}^{\ast}_{\alpha,\mu}-1}}}\|t\|_{L^{q}}\\
&\leq C_{1}\|u_{A}\|^{{2}^{\ast}_{\alpha,\mu}-1}_{L^{2^{\ast}}}\|t\|_{L^{q}},
\end{aligned}
\end{equation}
since $p>\frac{N}{N-2}$, $2N>p(N-2-2\alpha-\mu)$ and $\frac{1}{p}-\frac{1}{q}=\frac{N-2-2\alpha-\mu}{2N}$.

Similarly, there exists constant $C_{2}>0$,
\begin{equation}\nonumber
\begin{aligned}
\|G_{u_{A}}(s)\|_{L^{q}}&=\left\|\int_{\mathbb{R}^{N}}\frac{u_{A}^{{2}^{\ast}_{\alpha,\mu}-1}s}{|x|^{\alpha}|x-y|^{\mu}|y|^{\alpha}}dy\right\|_{L^{q}}\\
&\leq C_{2}\left\|u_{A}^{{2}^{\ast}_{\alpha,\mu}-1}s\right\|_{L^{\frac{2Np}{2N+p(N+2-2\alpha-\mu)}}}\\
%&\leq C_{2}\|u_{A}^{{2}^{\ast}_{\alpha,\mu}-1}\|_{L^{\frac{2^{\ast}}{{2}^{\ast}_{\alpha,\mu}-1}}}\|s\|_{L^{p}}\\
&\leq C_{2}\|u_{A}\|^{{2}^{\ast}_{\alpha,\mu}-1}_{L^{{2}^{\ast}}}\|s\|_{L^{p}},
\end{aligned}
\end{equation}
since $p>\frac{2N}{N-2+2\alpha+\mu}$.

Since $u\in L^{{2}^{\ast}}(\mathbb{R}^{N})$, we can choose $A$ is sufficiently large and constant $C=\max\{C_{1},C_{2}\}$ such that $C||u_{A}\|^{{2}^{\ast}_{\alpha}-1}_{L^{{2}^{\ast}}}\leq\frac{1}{2}$,
\begin{equation}\nonumber
\begin{aligned}
||T_{A}(s,t)||_{L^{p}\times L^{q}}
&=||F_{u_{A}}(t)||_{L^{p}}+||G_{u_{A}}(s)||_{L^{q}}\\
&\leq\frac{1}{2}(\|t\|_{L^{q}}+\|s\|_{L^{p}})\\
&=\frac{1}{2}\|(s,t)\|_{L^{p}\times L^{q}}.
\end{aligned}
\end{equation}
Hence $T_{A}$ is a contraction map from $L^{p}(\mathbb{R}^{N})\times L^{q}(\mathbb{R}^{N})$ to itself.
\end{proof}
{\bf Proof of Theorem \ref{QWE2}.} By Lemma \ref{ABCDE}, we know that, for $A$ is sufficiently large, $T_{A}$ is a contraction map from $L^{p}(\mathbb{R}^{N})\times L^{q}(\mathbb{R}^{N})$ to itself.
Next we show $(F_{u_{B}}(v),G_{u_{B}}(u))\in L^{p}(\mathbb{R}^{N})\times L^{q}(\mathbb{R}^{N})$. It is obvious that $|u_{B}|\leq A$ and $u_{B}=0$ when $|x|>A$, consequently,
\begin{equation}\nonumber
\begin{aligned}
\|F_{u_{B}}(v)\|_{L^{p}}&=\left\|\int_{\mathbb{R}^{N}}\frac{u_{B}^{{2}^{\ast}_{\alpha,\mu}-1}v}{|x-y|^{N-2}}dy\right\|_{L^{p}}\\
&\leq C_{1}\left\|u_{B}^{{2}^{\ast}_{\alpha,\mu}-1}v\right\|_{L^{\frac{Np}{N+2p}}}\\
&\leq C_{1}\left\|u_{B}^{{2}^{\ast}_{\alpha,\mu}-1}\right\|_{L^{\frac{2Np}{2N+p(4-2\alpha-\mu)}}}\|v\|_{L^{\frac{2N}{2\alpha+\mu}}}
\end{aligned}
\end{equation}
and
\begin{equation}\nonumber
\begin{aligned}
\|G_{u_{B}}(u)\|_{L^{q}}
&=\left\|\int_{\mathbb{R}^{N}}\frac{u_{b}^{{2}^{\ast}_{\alpha,\mu}-1}u}{|x|^{\alpha}|x-y|^{\mu}|y|^{\alpha}}dy\right\|_{L^{q}}\\
&\leq C_{2}\left\|u_{B}^{{2}^{\ast}_{\alpha,\mu}-1}u\right\|_{L^{\frac{2Np}{2N+p(N+2-2\alpha-\mu)}}}\\
&\leq C_{2}\left\|u_{B}^{{2}^{\ast}_{\alpha,\mu}-1}\right\|_{L^{\frac{2Np}{2N+p(4-2\alpha-\mu)}}}\|u\|_{L^{\frac{2N}{N-2}}},
\end{aligned}
\end{equation}
since $2N>p(2\alpha+\mu-4)$.

In conclusion,  we can conclude from Theorem \ref{ABC1} that, if $p$ and $q$ satisfy
\begin{equation}\nonumber
\begin{cases}
\displaystyle p>\frac{N}{N-2},\vspace{3mm}\\
\displaystyle p>\frac{2N}{N-2+2\alpha+\mu},\vspace{3mm}\\
\displaystyle 2N>p(N-2-2\alpha-\mu),\vspace{3mm}\\
\displaystyle 2N>p(2\alpha+\mu-4),\vspace{3mm}\\
\displaystyle \frac{1}{p}-\frac{1}{q}=\frac{N-2-2\alpha-\mu}{2N},
\end{cases}
\end{equation}
then $(u,v)\in(L^{\frac{2N}{N-2}}(\mathbb{R}^{N})\times L^{\frac{2N}{2\alpha+\mu}}(\mathbb{R}^{N}))\cap(L^{p}(\mathbb{R}^{N})\times L^{q}(\mathbb{R}^{N}))$.

If $\frac{1}{p}-\frac{1}{q}=\frac{N-2-2\alpha-\mu}{2N}$, then we have the following classification:

$(1)$.~If~$N-2-2\alpha-\mu\leq0$ and $2\alpha+\mu-4\leq0$, then $(u,v)\in L^{p}(\mathbb{R}^{N})\times L^{q}(\mathbb{R}^{N})$ with
$$
p>\frac{N}{N-2}~~\hbox{and}~~\frac{2N}{N-2+2\alpha+\mu}<q<\frac{2N}{2+2\alpha+\mu-N}.$$

$(2)$.~If~$N-2-2\alpha-\mu\leq0$ and $2\alpha+\mu-4>0$, then $(u,v)\in L^{p}(\mathbb{R}^{N})\times L^{q}(\mathbb{R}^{N})$ with
$$
\frac{N}{N-2}<p<\frac{2N}{2\alpha+\mu-4}~~and~~\frac{2N}{N+2\alpha+\mu-2}<q<\frac{2N}{2(2\alpha+\mu)-N-2}.
$$

$(3)$.~If~$N-2-2\alpha-\mu>0$ and $2\alpha+\mu-4\leq0$, then $(u,v)\in L^{p}(\mathbb{R}^{N})\times L^{q}(\mathbb{R}^{N})$ with
$$
\frac{2N}{N-2+2\alpha+\mu}<p<\frac{2N}{N-2-2\alpha-\mu}~~and~~q>\frac{N}{2\alpha+\mu}.
$$

$(4)$.~If~$N-2-2\alpha-\mu>0$, $2\alpha+\mu-4>0$ and $N+2-2(2\alpha+\mu)\leq0$, then $(u,v)\in L^{p}(\mathbb{R}^{N})\times L^{q}(\mathbb{R}^{N})$ with
$$
\frac{2N}{N-2+2\alpha+\mu}<p<\frac{2N}{N-2-2\alpha-\mu}~~and~~q>\frac{N}{2\alpha+\mu}.
$$

$(5)$.~If~$N-2-2\alpha-\mu>0$, $2\alpha+\mu-4>0$ and $N+2-2(2\alpha+\mu)>0$, then $(u,v)\in L^{p}(\mathbb{R}^{N})\times L^{q}(\mathbb{R}^{N})$ with
$$
\frac{2N}{N-2+2\alpha+\mu}<p<\frac{2N}{2\alpha+\mu-4}~~and~~\frac{N}{2\alpha+\mu}<q<\frac{2N}{2(2\alpha+\mu)-N-2}.
$$

More accurately, since that $0<2\alpha+\mu\leq N$, if $\frac{1}{p}-\frac{1}{q}=\frac{N-2-2\alpha-\mu}{2N}$, we have different regularity lifting results for any pair of solution $(u,v)$ of system \eqref{ASD}:

$(C1)$.~If~$N=3,4,5,6$ and $N-2\leq2\alpha+\mu\leq\min\{N,4\}$, then $(u,v)\in L^{p}(\mathbb{R}^{N})\times L^{q}(\mathbb{R}^{N})$ with
$$
p\in\left(\frac{N}{N-2},+\infty\right)~~and~~q\in\left(\frac{2N}{N-2+2\alpha+\mu},\frac{2N}{2+2\alpha+\mu-N}\right).
$$

$(C2)$.~If~$N=5,6$ and $4<2\alpha+\mu\leq N$ while $N\geq7$ and $N-2\leq2\alpha+\mu\leq N$, then $(u,v)\in L^{p}(\mathbb{R}^{N})\times L^{q}(\mathbb{R}^{N})$ with
$$
p\in\left(\frac{N}{N-2},\frac{2N}{2\alpha+\mu-4}\right)~~and~~q\in\left(\frac{2N}{N+2\alpha+\mu-2},\frac{2N}{2(2\alpha+\mu)-N-2}\right).
$$

$(C3)$.~If~$N=3,4,5,6$ and $0<2\alpha+\mu<N-2$ while $N\geq7$ and $0\leq2\alpha+\mu\leq 4$ or $\frac{N+2}{2}\leq2\alpha+\mu<N-2$, then $(u,v)\in L^{p}(\mathbb{R}^{N})\times L^{q}(\mathbb{R}^{N})$ with
$$
p\in\left(\frac{2N}{N-2+2\alpha+\mu},\frac{2N}{N-2-2\alpha-\mu}\right)~~and~~q\in\left(\frac{N}{2\alpha+\mu},+\infty\right).
$$

$(C4)$.~If~$N\geq7$ and $4<2\alpha+\mu<\frac{N+2}{2}$, then $(u,v)\in L^{p}(\mathbb{R}^{N})\times L^{q}(\mathbb{R}^{N})$ with
$$
p\in\left(\frac{2N}{N-2+2\alpha+\mu},\frac{2N}{2\alpha+\mu-4}\right)~~and~~q\in\left(\frac{N}{2\alpha+\mu},\frac{2N}{2(2\alpha+\mu)-N-2}\right).
$$
$\hfill{} \Box$

{\bf Proof of Theorem \ref{ZX2}.}
Firstly, we prove that $|x|^{\alpha}v(x)\in L^{\infty}(\mathbb{R}^{N})$. By \eqref{ASD},
$$
|x|^{\alpha}v(x)=\int_{\mathbb{R}^{N}}\frac{u(y)^{2^{\ast}_{\alpha, \mu}}}{|x-y|^{\mu}|y|^{\alpha}}dy,
$$
then for any $r>0$, we have
\begin{equation}\label{UJM}
||x|^{\alpha}v(x)|\leq\int_{B_{r}(0)}\frac{|u(y)|^{2^{\ast}_{\alpha, \mu}}}{|x-y|^{\mu}|y|^{\alpha}}dy+\int_{\mathbb{R}^{N}-B_{r}(0)}\frac{|u(y)|^{2^{\ast}_{\alpha, \mu}}}{|x-y|^{\mu}|y|^{\alpha}}dy.
\end{equation}

On the one hand, for $x\in\mathbb{R}^{N}-B_{2r}(0)$, we have $|x-y|>|y|$,
\begin{equation}\nonumber
\begin{aligned}
\int_{B_{r}(0)}\frac{|u(y)|^{2^{\ast}_{\alpha, \mu}}}{|x-y|^{\mu}|y|^{\alpha}}dy
&<\int_{B_{r}(0)}\frac{|u(y)|^{2^{\ast}_{\alpha, \mu}}}{|y|^{\mu+\alpha}}dy\\
&\leq\|u\|^{2^{\ast}_{\alpha, \mu}}_{L^{\frac{2^{\ast}_{\alpha, \mu}k}{k-1}}(B_{r}(0))}
\left\|\frac{1}{|y|^{\mu+\alpha}}\right\|_{L^{k}(B_{r}(0))}<\infty,
\end{aligned}
\end{equation}
where $1<k<\frac{N}{\mu+\alpha}$.
%($\frac{2^{\ast}_{\alpha, \mu}k}{k-1}>\frac{N(2N-2\alpha-\mu)}{(N-2)(N-\mu-\alpha)}>\frac{N}{N-2}$)
For $x\in B_{2r}(0)$,
\begin{equation}\nonumber
\begin{aligned}
\int_{B_{r}(0)}\frac{|u(y)|^{2^{\ast}_{\alpha, \mu}}}{|x-y|^{\mu}|y|^{\alpha}}dy
&\leq\int_{B_{r}(0)}\frac{|u(y)|^{2^{\ast}_{\alpha, \mu}}}{|y|^{\mu+\alpha}}dy
+\int_{B_{3r}(x)}\frac{|u(y)|^{2^{\ast}_{\alpha, \mu}}}{|x-y|^{\mu+\alpha}}dy<\infty.
\end{aligned}
\end{equation}
Hence we can deduce that
\begin{equation}\label{EDC}
\int_{B_{r}(0)}\frac{|u(y)|^{2^{\ast}_{\alpha, \mu}}}{|x-y|^{\mu}|y|^{\alpha}}dy<\infty.
\end{equation}

On the other hand,
$$
\int_{\mathbb{R}^{N}-B_{r}(0)}\frac{|u(y)|^{2^{\ast}_{\alpha, \mu}}}{|x-y|^{\mu}|y|^{\alpha}}dy
=\int_{(\mathbb{R}^{N}-B_{r}(0))\cap B_{r}(x)}\frac{|u(y)|^{2^{\ast}_{\alpha, \mu}}}{|x-y|^{\mu}|y|^{\alpha}}dy
+\int_{(\mathbb{R}^{N}-B_{r}(0))\cap (\mathbb{R}^{N}-B_{r}(x))}\frac{|u(y)|^{2^{\ast}_{\alpha, \mu}}}{|x-y|^{\mu}|y|^{\alpha}}dy.
$$
As the preceding estimates, we know
\begin{equation}\nonumber
\begin{aligned}
\int_{(\mathbb{R}^{N}-B_{r}(0))\cap B_{r}(x)}\frac{|u(y)|^{2^{\ast}_{\alpha, \mu}}}{|x-y|^{\mu}|y|^{\alpha}}dy
&\leq\frac{1}{r^{\alpha}}\int_{(\mathbb{R}^{N}-B_{r}(0))\cap B_{r}(x)}\frac{|u(y)|^{2^{\ast}_{\alpha, \mu}}}{|x-y|^{\mu}}dy\\
&\leq\frac{1}{r^{\alpha}}\int_{B_{r}(x)}\frac{|u(y)|^{2^{\ast}_{\alpha, \mu}}}{|x-y|^{\mu}}dy<\infty.\\
%&\leq\frac{1}{r^{\alpha}}\|u^{2^{\ast}_{\alpha, \mu}}\|_{L^{\frac{k}{k-1}}(B_{r}(x))}
%\left\|\frac{1}{|x-y|^{\mu}}\right\|_{L^{k}(B_{r}(x))}\\
%&=\frac{1}{r^{\alpha}}\|u\|^{2^{\ast}_{\alpha, \mu}}_{L^{\frac{2^{\ast}_{\alpha, \mu}k}{k-1}}(B_{r}(x))}
%\left(\int_{B_{r}(x)}\frac{1}{|x-y|^{k\mu}}dy\right)^{\frac{1}{k}}<\infty.
\end{aligned}
\end{equation}
%Where $1<k<\frac{N}{\mu}$($\frac{2^{\ast}_{\alpha, \mu}k}{k-1}>\frac{N(2N-2\alpha-\mu)}{(N-2)(N-\mu)}>\frac{N}{N-2}$).
We also estimate the other part on unbounded domain as
\begin{equation}\nonumber
\begin{aligned}
\int_{(\mathbb{R}^{N}-B_{r}(0))\cap (\mathbb{R}^{N}-B_{r}(x))}\frac{|u(y)|^{2^{\ast}_{\alpha, \mu}}}{|x-y|^{\mu}|y|^{\alpha}}dy
%&=\frac{1}{r^{\mu}}\int_{(\mathbb{R}^{N}-B_{r}(0))\cap (\mathbb{R}^{N}-B_{r}(x))}\frac{|u(y)|^{2^{\ast}_{\alpha, \mu}}}{|y|^{\alpha}}dy\\
&\leq\frac{1}{r^{\mu}}\int_{\mathbb{R}^{N}-B_{r}(0)}\frac{|u(y)|^{2^{\ast}_{\alpha, \mu}}}{|y|^{\alpha}}dy\\
&\leq\frac{1}{r^{\mu}}\|u\|^{2^{\ast}_{\alpha, \mu}}_{L^{\frac{2^{\ast}_{\alpha, \mu}k}{k-1}}(\mathbb{R}^{N}-B_{r}(0))}
\left\|\frac{1}{|y|^{\alpha}}\right\|_{L^{k}(\mathbb{R}^{N}-B_{r}(0))}<\infty,
\end{aligned}
\end{equation}
where $k\geq\frac{N}{\alpha}$.
%($\frac{N}{N-2}<\frac{k2^{\ast}_{\alpha, \mu}}{k-1}
%\leq\frac{N(2N-2\alpha-\mu)}{(N-2)(N-\alpha)}$)
Therefore we can conclude that
\begin{equation}\label{YHN}
\int_{\mathbb{R}^{N}-B_{r}(0)}\frac{|u(y)|^{2^{\ast}_{\alpha, \mu}}}{|x-y|^{\mu}|y|^{\alpha}}dy<\infty.
\end{equation}
Through \eqref{UJM}, \eqref{EDC}, \eqref{YHN}, we can obtain that
\begin{equation}\label{SDF}
|x|^{\alpha}v(x)\in L^{\infty}(\mathbb{R}^{N}).
\end{equation}

Secondly, we claim that $u(x)\in L^{\infty}(\mathbb{R}^{N})$. From \eqref{ASD} and \eqref{SDF}, we have
\begin{equation}\label{AS}
|u(x)|\leq\int_{\mathbb{R}^{N}}\frac{|y|^{\alpha}|v(y)||u(y)|^{{2}^{\ast}_{\alpha,\mu}-1}}{|x-y|^{N-2}|y|^{\alpha}}dy
\leq |||x|^{\alpha}v(x)||_{L^{\infty}(\mathbb{R}^{N})}\int_{\mathbb{R}^{N}}\frac{|u(y)|^{{2}^{\ast}_{\alpha,\mu}-1}}{|x-y|^{N-2}|y|^{\alpha}}dy.
\end{equation}
For any $r>0$, we decompose that
\begin{equation}\label{NM}
\int_{\mathbb{R}^{N}}\frac{|u(y)|^{{2}^{\ast}_{\alpha,\mu}-1}}{|x-y|^{N-2}|y|^{\alpha}}dy
=\int_{B_{r}(0)}\frac{|u(y)|^{2^{\ast}_{\alpha, \mu}-1}}{|x-y|^{N-2}|y|^{\alpha}}dy
+\int_{\mathbb{R}^{N}-B_{r}(0)}\frac{|u(y)|^{2^{\ast}_{\alpha, \mu}-1}}{|x-y|^{N-2}|y|^{\alpha}}dy.
\end{equation}
To estimate the first term, for $x\in\mathbb{R}^{N}-B_{2r}(0)$,
\begin{equation}\nonumber
\begin{aligned}
\int_{B_{r}(0)}\frac{|u(y)|^{2^{\ast}_{\alpha, \mu}-1}}{|x-y|^{N-2}|y|^{\alpha}}dy
&<\int_{B_{r}(0)}\frac{|u(y)|^{2^{\ast}_{\alpha, \mu}-1}}{|y|^{N-2+\alpha}}dy\\
&\leq\|u\|^{2^{\ast}_{\alpha, \mu}-1}_{L^{\frac{(2^{\ast}_{\alpha, \mu}-1)k}{k-1}}(B_{r}(0))}
\left\|\frac{1}{|y|^{N-2+\alpha}}\right\|_{L^{k}(B_{r}(0))}<\infty,
\end{aligned}
\end{equation}
where $1<k<\frac{N}{N-2+\alpha}$.
%($\frac{(2^{\ast}_{\alpha, \mu}-1)k}{k-1}>\frac{N(N+2-2\alpha-\mu)}{(N-2)(2-\alpha)}>\frac{N}{N-2}$)
While for $x\in B_{2r}(0)$,
\begin{equation}\nonumber
\begin{aligned}
\int_{B_{r}(0)}\frac{|u(y)|^{2^{\ast}_{\alpha, \mu}-1}}{|x-y|^{N-2}|y|^{\alpha}}dy
&\leq\int_{B_{r}(0)}\frac{|u(y)|^{2^{\ast}_{\alpha, \mu}-1}}{|y|^{N-2+\alpha}}dy
+\int_{B_{3r}(x)}\frac{|u(y)|^{2^{\ast}_{\alpha, \mu}-1}}{|x-y|^{N-2+\alpha}}dy<\infty.
\end{aligned}
\end{equation}
Consequently we have
\begin{equation}\label{XC}
\int_{B_{r}(0)}\frac{|u(y)|^{2^{\ast}_{\alpha, \mu}-1}}{|x-y|^{N-2}|y|^{\alpha}}dy<\infty.
\end{equation}
To estimate the second term,,
\begin{equation}\label{VB}
\begin{aligned}
&\int_{\mathbb{R}^{N}-B_{r}(0)}\frac{|u(y)|^{2^{\ast}_{\alpha, \mu}-1}}{|x-y|^{N-2}|y|^{\alpha}}dy\\
&=\int_{(\mathbb{R}^{N}-B_{r}(0))\cap B_{r}(x)}\frac{|u(y)|^{2^{\ast}_{\alpha, \mu}-1}}{|x-y|^{N-2}|y|^{\alpha}}dy
+\int_{(\mathbb{R}^{N}-B_{r}(0))\cap (\mathbb{R}^{N}-B_{r}(x))}\frac{|u(y)|^{2^{\ast}_{\alpha, \mu}-1}}{|x-y|^{N-2}|y|^{\alpha}}dy.
\end{aligned}
\end{equation}
It is obvious that
\begin{equation}\label{VB}
\begin{aligned}
\int_{(\mathbb{R}^{N}-B_{r}(0))\cap B_{r}(x)}\frac{|u(y)|^{2^{\ast}_{\alpha, \mu}-1}}{|x-y|^{N-2}|y|^{\alpha}}dy
%&\leq\frac{1}{r^{\alpha}}\int_{(\mathbb{R}^{N}-B_{r}(0))\cap B_{r}(x)}\frac{|u(y)|^{2^{\ast}_{\alpha, \mu}-1}}{|x-y|^{N-2}}dy\\
&\leq\frac{1}{r^{\alpha}}\int_{B_{r}(x)}\frac{|u(y)|^{2^{\ast}_{\alpha, \mu}-1}}{|x-y|^{N-2}}dy<\infty.
\end{aligned}
\end{equation}
Meanwhile, for the other integral in \eqref{VB},
\begin{equation}\nonumber
\begin{aligned}
\int_{(\mathbb{R}^{N}-B_{r}(0))\cap (\mathbb{R}^{N}-B_{r}(x))}\frac{|u(y)|^{2^{\ast}_{\alpha, \mu}-1}}{|x-y|^{N-2}|y|^{\alpha}}dy
%&\leq\int_{(\mathbb{R}^{N}-B_{r}(0))\cap (\mathbb{R}^{N}-B_{r}(x))}\frac{|u(y)|^{2^{\ast}_{\alpha, \mu}-1}}{|y|^{N-2+\alpha}}dy
%+\int_{(\mathbb{R}^{N}-B_{r}(0))\cap (\mathbb{R}^{N}-B_{r}(x))}\frac{|u(y)|^{2^{\ast}_{\alpha, \mu}-1}}{|x-y|^{N-2+\alpha}}dy\\
&\leq\int_{\mathbb{R}^{N}-B_{r}(0)}\frac{|u(y)|^{2^{\ast}_{\alpha, \mu}-1}}{|y|^{N-2+\alpha}}dy
+\int_{\mathbb{R}^{N}-B_{r}(x)}\frac{|u(y)|^{2^{\ast}_{\alpha, \mu}-1}}{|x-y|^{N-2+\alpha}}dy.
\end{aligned}
\end{equation}
Since
\begin{equation}\nonumber
\begin{aligned}
\int_{\mathbb{R}^{N}-B_{r}(0)}\frac{|u(y)|^{2^{\ast}_{\alpha, \mu}-1}}{|y|^{N-2+\alpha}}dy
&\leq\|u\|^{2^{\ast}_{\alpha, \mu}-1}_{L^{\frac{(2^{\ast}_{\alpha, \mu}-1)k}{k-1}}(\mathbb{R}^{N}-B_{r}(0))}
\left\|\frac{1}{|y|^{N-2+\alpha}}\right\|_{L^{k}(\mathbb{R}^{N}-B_{r}(0))}<\infty,
\end{aligned}
\end{equation}
where  $k\geq\frac{N}{N-2+\alpha}$ if $0<2\alpha+\mu\leq2$
%($\frac{N}{N-2}<\frac{(2^{\ast}_{\alpha, \mu}-1)k}{k-1}<\frac{N(N+2-2\alpha-\mu)}{(N-2)(2-\alpha)}$)
and  $\frac{N}{N-2+\alpha}<k<\frac{N}{2\alpha+\mu-2}$ if $2\alpha+\mu>2$,
%($\frac{N}{N-2}<\frac{(2^{\ast}_{\alpha, \mu}-1)k}{k-1}<\frac{N(N+2-2\alpha-\mu)}{(N-2)(2-\alpha)}$)
we conclude that
\begin{equation}\label{CV}
\int_{(\mathbb{R}^{N}-B_{r}(0))\cap (\mathbb{R}^{N}-B_{r}(x))}\frac{|u(y)|^{2^{\ast}_{\alpha, \mu}-1}}{|x-y|^{N-2}|y|^{\alpha}}dy<\infty.
\end{equation}
Combining with \eqref{VB} and \eqref{CV}, we deduce that
\begin{equation}\label{BN}
\int_{\mathbb{R}^{N}-B_{r}(0)}\frac{|u(y)|^{2^{\ast}_{\alpha, \mu}-1}}{|x-y|^{N-2}|y|^{\alpha}}dy<\infty.
\end{equation}
Through \eqref{AS}, \eqref{NM}, \eqref{XC}, \eqref{BN}, we know that
$$
u(x)\in L^{\infty}(\mathbb{R}^{N}).
$$
$\hfill{} \Box$

{\bf Proof of Theorem \ref{DF2}.}
For any $x\in\mathbb{R}^{N}-\{0\}$, there exists $r<\frac{|x|}{2}$ such that
$$
u(x)=\int_{B_{2r}(x)}\frac{v(y)u(y)^{{2}^{\ast}_{\alpha,\mu}-1}}{|x-y|^{N-2}}dy+\int_{\mathbb{R}^{N}-B_{2r}(x)}\frac{v(y)u(y)^{{2}^{\ast}_{\alpha,\mu}-1}}{|x-y|^{N-2}}dy.
$$
One can see \cite[Chapter 10]{LL}, that for any $\delta<2$,
\begin{equation}\label{CDE}
\int_{B_{2r}(x)}\frac{v(y)u(y)^{{2}^{\ast}_{\alpha,\mu}-1}}{|x-y|^{N-2}}dy\in C^{\delta}(\mathbb{R}^{N}-\{0\}).
\end{equation}
Next we need to show that
$$
\int_{\mathbb{R}^{N}-B_{2r}(x)}\frac{v(y)u(y)^{{2}^{\ast}_{\alpha,\mu}-1}}{|x-y|^{N-2}}dy\in C^{\infty}(\mathbb{R}^{N}-\{0\}).
$$
Denote by
$$
\psi(x)=\int_{\mathbb{R}^{N}}\frac{v(y)u(y)^{{2}^{\ast}_{\alpha,\mu}-1}}{|x-y|^{N-2}}\chi_{\{\mathbb{R}^{N}-B_{2r}(x)\}}dy,
$$
we claim that $$\psi(x)\in C^{1}(\mathbb{R}^{N}-\{0\}).$$

In fact, for any small $t<r$, $0<\theta<1$ and $e_{i}$ is the unit $i$th vector,
\begin{equation}\nonumber
\begin{aligned}
\left|\frac{\psi(x+te_{i})-\psi(x)}{t}\right|
&\leq\frac{1}{|t|}\int_{\mathbb{R}^{N}}\left|\frac{v(y)u(y)^{{2}^{\ast}_{\alpha,\mu}-1}}{|x+te_{i}-y|^{N-2}}\chi_{\{\mathbb{R}^{N}-B_{2r}(x+te_{i})\}}
-\frac{v(y)u(y)^{{2}^{\ast}_{\alpha,\mu}-1}}{|x-y|^{N-2}}\chi_{\{\mathbb{R}^{N}-B_{2r}(x)\}}\right|dy\\
&\leq C\int_{\mathbb{R}^{N}}\frac{|v(y)||u(y)|^{{2}^{\ast}_{\alpha,\mu}-1}}{|x+\theta te_{i}-y|^{N-1}}\chi_{\{\mathbb{R}^{N}-B_{2r}(x+\theta te_{i})\}}dy\\
&\leq C\int_{B_{r}(0)}\frac{|v(y)||u(y)|^{{2}^{\ast}_{\alpha,\mu}-1}}{|x-y|^{N-1}}dy
+C\int_{\mathbb{R}^{N}-B_{r}(x)- B_{r}(0)}\frac{|v(y)||u(y)|^{{2}^{\ast}_{\alpha,\mu}-1}}{|x-y|^{N-1}}dy.\\
\end{aligned}
\end{equation}
The boundedness of $|x|^{\alpha}v(x)$ and $u(x)$ implies that
\begin{equation}\label{ER}
\begin{aligned}
\int_{B_{r}(0)}\frac{|v(y)||u(y)|^{{2}^{\ast}_{\alpha,\mu}-1}}{|x-y|^{N-1}}dy
%&=\int_{B_{r}(0)}\frac{|y|^{\alpha}|v(y)||u(y)|^{{2}^{\ast}_{\alpha,\mu}-1}}{|x-y|^{N-1}|y|^{\alpha}}dy\\
&\leq \frac{1}{r^{N-1}}|||x|^{\alpha}v||_{L^{\infty}(\mathbb{R}^{N})}||u||^{{2}^{\ast}_{\alpha,\mu}-1}_{L^{\infty}(\mathbb{R}^{N})}\int_{B_{r}(0)}\frac{1}{|y|^{\alpha}}dy<\infty.
\end{aligned}
\end{equation}
We also have
\begin{equation}\nonumber
\begin{aligned}
&\int_{\mathbb{R}^{N}-B_{r}(x)- B_{r}(0)}\frac{|v(y)||u(y)|^{{2}^{\ast}_{\alpha,\mu}-1}}{|x-y|^{N-1}}dy\\
%&=\int_{\mathbb{R}^{N}-B_{r}(x)- B_{r}(0)}\frac{|y|^{\alpha}|v(y)||u(y)|^{{2}^{\ast}_{\alpha}-1}}{|x-y|^{N-1}|y|^{\alpha}}dy\\
&\leq|||x|^{\alpha}v||_{L^{\infty}(\mathbb{R}^{N})}
\int_{\mathbb{R}^{N}-B_{r}(0)}\frac{|u(y)|^{{2}^{\ast}_{\alpha,\mu}-1}}{|y|^{N-1+\alpha}}dy
+|||x|^{\alpha}v||_{L^{\infty}(\mathbb{R}^{N})}
\int_{\mathbb{R}^{N}-B_{r}(x)}\frac{|u(y)|^{{2}^{\ast}_{\alpha,\mu}-1}}{|x-y|^{N-1+\alpha}}dy.
\end{aligned}
\end{equation}
For the following four cases:
(1). If $0<2\alpha+\mu\leq2$, $\alpha<1$, then $k>\frac{N}{N-1+\alpha}$;
%($\frac{N}{N-2}<\frac{(2^{\ast}_{\alpha, \mu}-1)k}{k-1}<\frac{N(N+2-2\alpha-\mu)}{(N-2)(1-\alpha)}$)
(2).  If $0<2\alpha+\mu\leq2$, $1\leq\alpha<2$, then $k>1$;
%($\frac{N}{N-2}<\frac{(2^{\ast}_{\alpha, \mu}-1)k}{k-1}$)
(3).  If $2\alpha+\mu>2$, $\alpha<1$, then $\frac{N}{N-1+\alpha}<k<\frac{N}{2\alpha+\mu-2}$;
%($\frac{N}{N-2}<\frac{(2^{\ast}_{\alpha, \mu}-1)k}{k-1}<\frac{N(N+2-2\alpha-\mu)}{(N-2)(1-\alpha)}$)
(4).  If $2\alpha+\mu>2$, $1\leq\alpha<2$, then $1<k<\frac{N}{2\alpha+\mu-2}$,
%($\frac{N}{N-2}<\frac{(2^{\ast}_{\alpha, \mu}-1)k}{k-1}<\frac{N(N+2-2\alpha-\mu)}{(N-2)(1-\alpha)}$)
we have
\begin{equation}\nonumber
\begin{aligned}
\int_{\mathbb{R}^{N}-B_{r}(0)}\frac{|u(y)|^{2^{\ast}_{\alpha, \mu}-1}}{|y|^{N-1+\alpha}}dy
&\leq\|u(y)\|^{2^{\ast}_{\alpha, \mu}-1}_{L^{\frac{(2^{\ast}_{\alpha, \mu}-1)k}{k-1}}(\mathbb{R}^{N}-B_{r}(0))}
\left\|\frac{1}{|y|^{N-1+\alpha}}\right\|_{L^{k}(\mathbb{R}^{N}-B_{r}(0))}<\infty.
\end{aligned}
\end{equation}
Therefore we can conclude that
\begin{equation}\label{WE}
\int_{\mathbb{R}^{N}-B_{r}(x)- B_{r}(0)}\frac{|v(y)||u(y)|^{{2}^{\ast}_{\alpha,\mu}-1}}{|x-y|^{N-1}}dy<\infty.
\end{equation}
Hence by the \eqref{ER}, \eqref{WE} and Lebesgue's Dominated Convergence Theorem, we claim that $\psi(x)\in C^{1}(\mathbb{R}^{N}-\{0\})$.
Continuing this process, we can improve $\psi(x)\in C^{\infty}(\mathbb{R}^{N}-\{0\})$. Combining with \eqref{CDE}, we conclude that $u(x)\in C^{\delta}(\mathbb{R}^{N}-\{0\})$. By the classical bootstrap technique \cite[Chapter 10]{LL}, we prove that $u(x)\in C^{\infty}(\mathbb{R}^{N}-\{0\})$.$\hfill{} \Box$

\subsection{Symmetry}
In this subsection, we will use the moving plane method in integral forms to prove Theorem \ref{thm1.3}. The moving plane method was invented by
Alexandrov in the 1950s and has been further developed by many people. Among the existing results, Chen
et al. \cite{CLO1} applied the moving plane method to integral equations and obtained the symmetry,
monotonicity and nonexistence properties of the solutions.
 By virtue of the HLS inequality or
the weighted HLS inequality, moving plane method in integral form can be used to explore various specific features
of the integral equation itself without the help of the maximum principle of
differential equation. We may refer the readers to \cite{CLL1, CLZ} for recent progress of the moving plane methods. Furthermore, the qualitative analysis of the solutions for nonlocal equations has been studied in \cite{CLO3, CL3, LY, Le, LLM}.

To investigate the symmetry of solutions for equation \eqref{Intcase}, we continue to study the equivalent integral system \eqref{ASD} and introduce the following symbols:
$$
x^{\lambda}=(2\lambda-x_{1},...,x_{N});~~u(x^{\lambda})=u_{\lambda}(x);~~v(x^{\lambda})=v_{\lambda}(x).
$$
Based on the symbols, we specify some planes:
$$
\Sigma_{\lambda}=\{x\in\mathbb{R}^{N}:x_{1}<\lambda\};~~T_{\lambda}=\{x\in\mathbb{R}^{N}:x_{1}=\lambda\};
$$
$$
\Sigma^{u}_{\lambda}=\{x\in\Sigma_{\lambda}|u(x)>u(x^{\lambda})\};~~\Sigma^{v}_{\lambda}=\{x\in\Sigma_{\lambda}|v(x)>v(x^{\lambda})\}.
$$
Next, we introduce two equality which are the basics of moving planes.
\begin{lem}\label{lem31}
For any pair of solution $(u,v)$ of system \eqref{ASD}, we have
\begin{equation}\label{C1}
u(x)-u_{\lambda}(x)=\int_{\Sigma_{\lambda}}\left[\frac{1}{|x-y|^{N-2}}-\frac{1}{|x^{\lambda}-y|^{N-2}}\right]\left[v(y)u(y)^{2_{\alpha, \mu}^{\ast}-1}-v(y^{\lambda})u(y^{\lambda})^{2_{\alpha, \mu}^{\ast}-1}\right]dy
\end{equation}
and
\begin{equation}\label{D1}
\begin{aligned}
&v(x)-v_{\lambda}(x)\\
&=\int_{\Sigma_{\lambda}}\frac{1}{|x-y|^{\mu}}\left(\frac{u(y)^{2_{\alpha, \mu}^{\ast}}}{|x|^{\alpha}|y|^{\alpha}}-\frac{u(y^{\lambda})^{2_{\alpha, \mu}^{\ast}}}{|x^{\lambda}|^{\alpha}|y^{\lambda}|^{\alpha}}\right)
+\frac{1}{|x^{\lambda}-y|^{\mu}}\left(\frac{u(y^{\lambda})^{2_{\alpha, \mu}^{\ast}}}{|x|^{\alpha}|y^{\lambda}|^{\alpha}}-\frac{u(y)^{2_{\alpha, \mu}^{\ast}}}{|x^{\lambda}|^{\alpha}|y|^{\alpha}}\right)dy.
\end{aligned}
\end{equation}
\end{lem}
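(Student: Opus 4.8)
The plan is to derive both identities by the standard reflection device: split each defining integral in \eqref{ASD} along the hyperplane $T_\lambda$ and change variables by $y\mapsto y^\lambda$ on the half outside $\Sigma_\lambda$. I would start with \eqref{C1}. Writing $u(x)$ via the first line of \eqref{ASD} as $\int_{\Sigma_\lambda}+\int_{\R^N\setminus\Sigma_\lambda}$, and applying to the second term the measure-preserving involution $y\mapsto y^\lambda$ (which maps $\R^N\setminus\Sigma_\lambda$ onto $\Sigma_\lambda$ and satisfies $|x-y^\lambda|=|x^\lambda-y|$, $u(y^\lambda)=u_\lambda(y)$), one obtains
$$
u(x)=\int_{\Sigma_\lambda}\left[\frac{v(y)u(y)^{2^{\ast}_{\alpha,\mu}-1}}{|x-y|^{N-2}}+\frac{v(y^\lambda)u(y^\lambda)^{2^{\ast}_{\alpha,\mu}-1}}{|x^\lambda-y|^{N-2}}\right]dy.
$$
Replacing $x$ by $x^\lambda$ and using $(x^\lambda)^\lambda=x$ gives the analogous formula for $u_\lambda(x)=u(x^\lambda)$ with the kernels $|x-y|^{2-N}$ and $|x^\lambda-y|^{2-N}$ interchanged; subtracting the two and factoring recombines the four terms into $\bigl[|x-y|^{2-N}-|x^\lambda-y|^{2-N}\bigr]\bigl[v(y)u(y)^{2^{\ast}_{\alpha,\mu}-1}-v(y^\lambda)u(y^\lambda)^{2^{\ast}_{\alpha,\mu}-1}\bigr]$, which is exactly \eqref{C1}.

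For \eqref{D1} I would repeat the argument with the second line of \eqref{ASD}, first pulling out the factor $|x|^{-\alpha}$. The one extra point is that the reflection $y\mapsto y^\lambda$ does not preserve $|y|$ when $\lambda\neq 0$, so the weight $|y|^{-\alpha}$ turns into $|y^\lambda|^{-\alpha}$ (while $|x-y^\lambda|^{-\mu}$ turns into $|x^\lambda-y|^{-\mu}$ and $u(y^\lambda)=u_\lambda(y)$). This yields
$$
v(x)=\frac{1}{|x|^{\alpha}}\int_{\Sigma_\lambda}\left[\frac{u(y)^{2^{\ast}_{\alpha,\mu}}}{|x-y|^{\mu}|y|^{\alpha}}+\frac{u(y^\lambda)^{2^{\ast}_{\alpha,\mu}}}{|x^\lambda-y|^{\mu}|y^\lambda|^{\alpha}}\right]dy,
$$
and the companion formula for $v_\lambda(x)=v(x^\lambda)$ carries the prefactor $|x^\lambda|^{-\alpha}$ with $|x-y|^{-\mu}$ and $|x^\lambda-y|^{-\mu}$ swapped. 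Subtracting and grouping the terms according to the two kernels $|x-y|^{-\mu}$ and $|x^\lambda-y|^{-\mu}$ produces precisely the bracketed expression in \eqref{D1}.

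All of these steps are purely algebraic once the integrals in \eqref{ASD} and their reflected versions over $\Sigma_\lambda$ are known to converge absolutely for a.e.\ $x$, so the only substantive point to verify — and the main (mild) obstacle — is this finiteness, together with keeping straight how the three singular factors and the radial weights transform under $y\mapsto y^\lambda$. For the convergence I would invoke the integrability ranges of Theorem \ref{QWE2}: since $u\in L^{2N/(N-2)}(\R^N)$, $v\in L^{2N/(2\alpha+\mu)}(\R^N)$ (plus the extra Lebesgue exponents supplied there), the weighted Hardy–Littlewood–Sobolev inequality and the classical Hardy–Littlewood–Sobolev inequality guarantee that each such integral is finite, which legitimizes the domain splitting and the change of variables. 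No delicate estimate is needed beyond this.
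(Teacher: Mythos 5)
Your proposal is correct and follows essentially the same route as the paper's proof: split each integral in \eqref{ASD} over $\Sigma_\lambda$ and its reflected complement, change variables $y\mapsto y^\lambda$, use $|x-y^\lambda|=|x^\lambda-y|$ and $|x^\lambda-y^\lambda|=|x-y|$, and subtract the formula for $u_\lambda$ (resp.\ $v_\lambda$) obtained by replacing $x$ with $x^\lambda$. The extra remark on absolute convergence is harmless but not needed in the paper, since the integrands are positive and the splitting is automatic.
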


\begin{proof}
Direct computing shows
\begin{equation}\nonumber
\begin{aligned}
u(x)&=\int_{\mathbb{R}^{N}}\frac{v(y)u(y)^{2_{\alpha, \mu}^{\ast}-1}}{|x-y|^{N-2}}dy\\
&=\int_{\Sigma_{\lambda}}\frac{v(y)u(y)^{2_{\alpha, \mu}^{\ast}-1}}{|x-y|^{N-2}}dy
+\int_{\mathbb{R}^{N}-\Sigma_{\lambda}}\frac{v(y)u(y)^{2_{\alpha, \mu}^{\ast}-1}}{|x-y|^{N-2}}dy\\
&=\int_{\Sigma_{\lambda}}\frac{v(y)u(y)^{2_{\alpha, \mu}^{\ast}-1}}{|x-y|^{N-2}}dy
+\int_{\Sigma_{\lambda}}\frac{v(y^{\lambda})u(y^{\lambda})^{2_{\alpha, \mu}^{\ast}-1}}{|x-y^{\lambda}|^{N-2}}dy
\end{aligned}
\end{equation}
and
$$
u_{\lambda}(x)=\int_{\Sigma_{\lambda}}\frac{v(y)u(y)^{2_{\alpha, \mu}^{\ast}-1}}{|x-y^{\lambda}|^{N-2}}dy
+\int_{\Sigma_{\lambda}}\frac{v(y^{\lambda})u(y^{\lambda})^{2_{\alpha, \mu}^{\ast}-1}}{|x-y^{\lambda}|^{N-2}}dy.
$$
Since $|x^{\lambda}-y^{\lambda}|=|x-y|$ and $|x-y^{\lambda}|=|x^{\lambda}-y|$, hence it's obvious that
$$
u(x)-u_{\lambda}(x)=\int_{\Sigma_{\lambda}}\left[\frac{1}{|x-y|^{N-2}}-\frac{1}{|x^{\lambda}-y|^{N-2}}\right]\left[v(y)u(y)^{2_{\alpha, \mu}^{\ast}-1}-v(y^{\lambda})u(y^{\lambda})^{2_{\alpha, \mu}^{\ast}-1}\right]dy.
$$
Then we have a similiar result for $v(x)$:
$$
v(x)=\int_{\Sigma_{\lambda}}\frac{u(y)^{2_{\alpha, \mu}^{\ast}}}{|x|^{\alpha}|x^{\lambda}-y|^{\mu}|y|^{\alpha}}dy
+\int_{\Sigma_{\lambda}}\frac{u(y^{\lambda})^{2_{\alpha, \mu}^{\ast}}}{|x|^{\alpha}|x^{\lambda}-y^{\lambda}|^{\mu}|y^{\lambda}|^{\alpha}}dy
$$
and
$$
v_{\lambda}(x)=\int_{\Sigma_{\lambda}}\frac{u(y)^{2_{\alpha, \mu}^{\ast}}}{|x^{\lambda}|^{\alpha}|x^{\lambda}-y|^{\mu}|y|^{\alpha}}dy
+\int_{\Sigma_{\lambda}}\frac{u(y^{\lambda})^{2_{\alpha, \mu}^{\ast}}}{|x^{\lambda}|^{\alpha}|x^{\lambda}-y^{\lambda}|^{\mu}|y^{\lambda}|^{\alpha}}dy.
$$
There holds
$$
v(x)-v_{\lambda}(x)=\int_{\Sigma_{\lambda}}\frac{1}{|x-y|^{\mu}}\left(\frac{u(y)^{2_{\alpha, \mu}^{\ast}}}{|x|^{\alpha}|y|^{\alpha}}-\frac{u(y^{\lambda})^{2_{\alpha, \mu}^{\ast}}}{|x^{\lambda}|^{\alpha}|y^{\lambda}|^{\alpha}}\right)
+\frac{1}{|x^{\lambda}-y|^{\mu}}\left(\frac{u(y^{\lambda})^{2_{\alpha, \mu}^{\ast}}}{|x|^{\alpha}|y^{\lambda}|^{\alpha}}-\frac{u(y)^{2_{\alpha, \mu}^{\ast}}}{|x^{\lambda}|^{\alpha}|y|^{\alpha}}\right)dy.
$$
\end{proof}
Consider the case $2\alpha+\mu\neq4$, we can get the following estimate:
\begin{lem}\label{lem51}
Suppose that $\alpha\geq0$, $0<\mu<N$, $2\alpha+\mu\leq3$ if $N=3$ while $2\alpha+\mu<4$ if $N\geq4$. For any $\lambda<0$, there exists a constant $C>0$ such that:
$$
\|u-u_{\lambda}\|_{L^{2^{\ast}}(\Sigma^{u}_{\lambda})}\leq C\left[||u||^{2_{\alpha, \mu}^{\ast}}_{L^{2^{\ast}}(\mathbb{R}^{N})}
||u||^{2_{\alpha, \mu}^{\ast}-2}_{L^{2^{\ast}}(\Sigma^{u}_{\lambda})}
+||u||^{2_{\alpha, \mu}^{\ast}-1}_{L^{2^{\ast}}(\Sigma^{v}_{\lambda})}||u||^{2_{\alpha, \mu}^{\ast}-1}_{L^{2^{\ast}}(\Sigma^{u}_{\lambda})}\right]||u-u_{\lambda}||_{L^{2^{\ast}}(\Sigma^{u}_{\lambda})}.
$$
\end{lem}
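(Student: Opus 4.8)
The plan is to run the usual scheme of the method of moving planes in integral form: turn the representation formulas of Lemma \ref{lem31} into pointwise inequalities supported on $\Sigma^{u}_{\lambda}$ and $\Sigma^{v}_{\lambda}$, and then close the estimate with the (weighted) Hardy--Littlewood--Sobolev inequality and Hölder's inequality. Write $p:=2^{\ast}_{\alpha,\mu}$; the hypothesis ($2\alpha+\mu\le3$ for $N=3$, $2\alpha+\mu<4$ for $N\ge4$) is used precisely to force $p=\tfrac{2N-2\alpha-\mu}{N-2}\ge2$, so that the elementary inequality $(a^{q}-b^{q})^{+}\le q\,a^{q-1}(a-b)^{+}$ ($a,b\ge0$) is available both for $q=p-1\ge1$ and for $q=p$.

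First I would derive the pointwise inequalities. From \eqref{C1}, split $v(y)u(y)^{p-1}-v(y^{\lambda})u(y^{\lambda})^{p-1}=u(y)^{p-1}\bigl(v(y)-v_{\lambda}(y)\bigr)+v_{\lambda}(y)\bigl(u(y)^{p-1}-u_{\lambda}(y)^{p-1}\bigr)$; since for $x,y\in\Sigma_{\lambda}$ the kernel $|x-y|^{2-N}-|x^{\lambda}-y|^{2-N}$ is nonnegative and $\le|x-y|^{2-N}$, and $(v-v_{\lambda})^{+}$, $(u^{p-1}-u_{\lambda}^{p-1})^{+}$ are supported in $\Sigma^{v}_{\lambda}$, $\Sigma^{u}_{\lambda}$, keeping the positive parts gives on $\Sigma^{u}_{\lambda}$
\begin{equation*}
0<u(x)-u_{\lambda}(x)\le C\int_{\Sigma^{v}_{\lambda}}\frac{u(y)^{p-1}\bigl(v(y)-v_{\lambda}(y)\bigr)}{|x-y|^{N-2}}\,dy+C\int_{\Sigma^{u}_{\lambda}}\frac{v_{\lambda}(y)\,u(y)^{p-2}\bigl(u(y)-u_{\lambda}(y)\bigr)}{|x-y|^{N-2}}\,dy .
\end{equation*}
For $\lambda<0$ and $x\in\Sigma_{\lambda}$ one also has $|x^{\lambda}|\le|x|$ and $|y^{\lambda}|\le|y|$; writing $v(x)=|x|^{-\alpha}w(x)$ with $w(x)=\int_{\R^{N}}u(y)^{p}|x-y|^{-\mu}|y|^{-\alpha}\,dy\ge0$, using $|x|^{-\alpha}w(x)-|x^{\lambda}|^{-\alpha}w(x^{\lambda})\le|x^{\lambda}|^{-\alpha}\bigl(w(x)-w(x^{\lambda})\bigr)$ and then treating $w(x)-w(x^{\lambda})$ as in the unweighted Choquard equation \cite{MZ} (absorbing $|y|^{-\alpha}$ vs.\ $|y^{\lambda}|^{-\alpha}$ via $|y|^{-\alpha}\le|y^{\lambda}|^{-\alpha}$ on $\Sigma_{\lambda}$), one obtains on $\Sigma^{v}_{\lambda}$
\begin{equation*}
0<v(x)-v_{\lambda}(x)\le C\,|x^{\lambda}|^{-\alpha}\int_{\Sigma^{u}_{\lambda}}\frac{u(y)^{p-1}\bigl(u(y)-u_{\lambda}(y)\bigr)}{|x-y|^{\mu}|y|^{\alpha}}\,dy .
\end{equation*}

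Now take $L^{2^{\ast}}(\Sigma^{u}_{\lambda})$ norms of the first inequality. For its second integral the classical Hardy--Littlewood--Sobolev inequality for the Riesz kernel $|x-y|^{2-N}$ bounds the $L^{2^{\ast}}$ norm by $C\,\|v_{\lambda}u^{p-2}(u-u_{\lambda})\|_{L^{2N/(N+2)}}$; a three-factor Hölder inequality (all exponents forced by scaling via $2^{\ast}=\tfrac{2N}{N-2}$, $(p-1)(N-2)=N+2-2\alpha-\mu$, $(p-2)(N-2)=4-2\alpha-\mu$) turns this into $C\,\|v_{\lambda}\|_{L^{2N/(2\alpha+\mu)}(\R^{N})}\|u\|^{p-2}_{L^{2^{\ast}}(\Sigma^{u}_{\lambda})}\|u-u_{\lambda}\|_{L^{2^{\ast}}(\Sigma^{u}_{\lambda})}$, and since $\|v_{\lambda}\|_{L^{2N/(2\alpha+\mu)}(\R^{N})}=\|v\|_{L^{2N/(2\alpha+\mu)}(\R^{N})}\le C\|u\|^{p}_{L^{2^{\ast}}(\R^{N})}$ by the weighted Hardy--Littlewood--Sobolev inequality (Proposition \ref{prop1}) applied to the second line of \eqref{ASD}, this produces the first bracketed term. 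For the first integral, HLS and then Hölder on $\Sigma^{v}_{\lambda}$ give $C\,\|u\|^{p-1}_{L^{2^{\ast}}(\Sigma^{v}_{\lambda})}\|v-v_{\lambda}\|_{L^{2N/(2\alpha+\mu)}(\Sigma^{v}_{\lambda})}$; inserting the second pointwise inequality, changing variable $x\mapsto x^{\lambda}$ (which replaces $|x^{\lambda}|^{-\alpha}$ by $|x|^{-\alpha}$ and sends $\Sigma^{u}_{\lambda},\Sigma^{v}_{\lambda}$ isometrically onto their reflections) and applying the weighted HLS inequality once more bounds $\|v-v_{\lambda}\|_{L^{2N/(2\alpha+\mu)}(\Sigma^{v}_{\lambda})}$ by $C\,\|u\|^{p-1}_{L^{2^{\ast}}(\Sigma^{u}_{\lambda})}\|u-u_{\lambda}\|_{L^{2^{\ast}}(\Sigma^{u}_{\lambda})}$, the second bracketed term. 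Summing gives the claim with $C=C(N,\mu,\alpha)$.

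The main obstacle is the exponent and weight bookkeeping in this last step: one must verify that every Hölder exponent produced is $\ge1$, that each of the exponent triples fed into Proposition \ref{prop1} lies in its admissible range, and that the reflected singular weights appearing when \eqref{D1} is converted into a pointwise inequality can be restored to standard $|\cdot|^{-\alpha}$ form by reflection so that the weighted HLS inequality indeed applies — this is where the standing bounds $0<2\alpha+\mu\le N$ and the condition $p\ge2$ (guaranteed by $2\alpha+\mu<4$, resp.\ $\le3$ if $N=3$) are genuinely needed. Everything else — kernel positivity on $\Sigma_{\lambda}$, the monotonicity $|x^{\lambda}|\le|x|$ for $\lambda<0$, and the isometry invariance of $L^{q}$-norms under $x\mapsto x^{\lambda}$ — is routine.
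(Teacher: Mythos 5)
Your proposal is correct and runs along essentially the same lines as the paper's proof: restrict the representation formulas of Lemma \ref{lem31} to pointwise inequalities supported on $\Sigma^{u}_{\lambda}$ and $\Sigma^{v}_{\lambda}$ (your algebraic splitting of $vu^{2^{\ast}_{\alpha,\mu}-1}-v_{\lambda}u_{\lambda}^{2^{\ast}_{\alpha,\mu}-1}$ plus the mean value theorem is equivalent to the paper's four-region decomposition), then close with the Riesz-kernel HLS inequality, the weighted HLS inequality, and H\"older with exactly the exponents you list; your explicit bound $\|v\|_{L^{2N/(2\alpha+\mu)}}\le C\|u\|^{2^{\ast}_{\alpha,\mu}}_{L^{2^{\ast}}}$ is the step the paper leaves implicit. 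The one place you diverge is the estimate of $v-v_{\lambda}$: you keep the larger outer weight $|x^{\lambda}|^{-\alpha}$ and propose to restore the Stein--Weiss form by reflecting $x$, but as written this does not quite work, since under $x\mapsto x^{\lambda}$ the kernel becomes $|x^{\lambda}-y|^{-\mu}=|x-y^{\lambda}|^{-\mu}$, which couples the new variable with $y^{\lambda}$ rather than $y$; you must reflect both variables and then use that $|w^{\lambda}|\ge|w|$ on the half-space $\{w_{1}>\lambda\}$ (so $|w^{\lambda}|^{-\alpha}\le|w|^{-\alpha}$) to recover the standard weight before applying the weighted HLS inequality. The paper sidesteps this entirely by exploiting $|x^{\lambda}|\le|x|$ already at the level of \eqref{D1}, keeping the smaller weight $|x|^{-\alpha}$ in the pointwise bound $v(x)-v_{\lambda}(x)\le 2^{\ast}_{\alpha,\mu}\int_{\Sigma^{u}_{\lambda}}u(y)^{2^{\ast}_{\alpha,\mu}-1}\bigl(u(y)-u_{\lambda}(y)\bigr)|x|^{-\alpha}|x-y|^{-\mu}|y|^{-\alpha}dy$, so the weighted HLS inequality applies directly; adopting that form would simplify your argument, but your route, once the two-variable reflection is spelled out, is sound.
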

\begin{proof}
In order to estimate $u(x)-u_{\lambda}(x)$ by equation \eqref{C1}, we should divide the domain of integration into four disjoint parts as
$$
\Sigma_{\lambda}=\{\Sigma^{u}_{\lambda}\cap\Sigma^{v}_{\lambda}\}\cup\{\Sigma^{u}_{\lambda}-\Sigma^{v}_{\lambda}\}\cup\{\Sigma^{v}_{\lambda}-\Sigma^{u}_{\lambda}\}\cup
\{\Sigma_{\lambda}-\Sigma^{u}_{\lambda}-\Sigma^{v}_{\lambda}\}.
$$
In $\Sigma^{u}_{\lambda}\cap\Sigma^{v}_{\lambda}$, we use the Mean Value Theorem to get
\begin{equation}\nonumber
\begin{aligned}
vu^{2_{\alpha, \mu}^{\ast}-1}-v_{\lambda}u_{\lambda}^{2_{\alpha, \mu}^{\ast}-1}
%&=v[u^{2_{\alpha, \mu}^{\ast}-1}-u_{\lambda}^{2_{\alpha, \mu}^{\ast}-1}]+(v-v_{\lambda})u_{\lambda}^{2_{\alpha, \mu}^{\ast}-1}\\
&\leq(2_{\alpha, \mu}^{\ast}-1)vu^{2_{\alpha, \mu}^{\ast}-2}(u-u_{\lambda})+u^{2_{\alpha, \mu}^{\ast}-1}(v-v_{\lambda}).
\end{aligned}
\end{equation}
In $\Sigma^{u}_{\lambda}-\Sigma^{v}_{\lambda}$, we use the Mean Value Theorem again to get
\begin{equation}\nonumber
\begin{aligned}
vu^{2_{\alpha, \mu}^{\ast}-1}-v_{\lambda}u_{\lambda}^{2_{\alpha, \mu}^{\ast}-1}
%&\leq vu^{2_{\alpha, \mu}^{\ast}-1}-vu_{\lambda}^{2_{\alpha, \mu}^{\ast}-1}\\
&\leq(2_{\alpha, \mu}^{\ast}-1)vu^{2_{\alpha, \mu}^{\ast}-2}(u-u_{\lambda}).
\end{aligned}
\end{equation}
In $\Sigma^{v}_{\lambda}-\Sigma^{u}_{\lambda}$,
\begin{equation}\nonumber
\begin{aligned}
vu^{2_{\alpha, \mu}^{\ast}-1}-v_{\lambda}u_{\lambda}^{2_{\alpha, \mu}^{\ast}-1}
%&\leq vu^{2_{\alpha, \mu}^{\ast}-1}-v_{\lambda}u^{2_{\alpha, \mu}^{\ast}-1}\\
&\leq u^{2_{\alpha, \mu}^{\ast}-1}(v-v_{\lambda}).
\end{aligned}
\end{equation}
In $\Sigma_{\lambda}-\Sigma^{u}_{\lambda}-\Sigma^{v}_{\lambda}$,
\begin{equation}\nonumber
\begin{aligned}
vu^{2_{\alpha, \mu}^{\ast}-1}-v_{\lambda}u_{\lambda}^{2_{\alpha, \mu}^{\ast}-1}&\leq0.
\end{aligned}
\end{equation}
Hence lemma \ref{lem31} implies when $x\in\Sigma_{\lambda}$,
\begin{equation}\nonumber
\begin{aligned}
u(x)-u_{\lambda}(x)
%&\leq\int_{\Sigma_{\lambda}}\left[\frac{1}{|x-y|^{N-2}}-\frac{1}{|x^{\lambda}-y|^{N-2}}\right]
%\left[(2_{\alpha, \mu}^{\ast}-1)vu^{2_{\alpha, \mu}^{\ast}-2}(u-u_{\lambda})\chi_{\{\Sigma^{u}_{\lambda}\cap\Sigma^{v}_{\lambda}\}}
%+u^{2_{\alpha, \mu}^{\ast}-1}(v-v_{\lambda})\chi_{\{\Sigma^{u}_{\lambda}\cap\Sigma^{v}_{\lambda}\}}\right]dy\\
%&+\int_{\Sigma_{\lambda}}\left[\frac{1}{|x-y|^{N-2}}-\frac{1}{|x^{\lambda}-y|^{N-2}}\right]\left[(2_{\alpha, \mu}^{\ast}-1)vu^{2_{\alpha, \mu}^{\ast}-2}(u-u_{\lambda})\chi_{\{\Sigma^{u}_{\lambda}-\Sigma^{v}_{\lambda}\}}\right]dy\\
%&+\int_{\Sigma_{\lambda}}\left[\frac{1}{|x-y|^{N-2}}-\frac{1}{|x^{\lambda}-y|^{N-2}}\right]u^{2_{\alpha, \mu}^{\ast}-1}(v-v_{\lambda})\chi_{\{\Sigma^{v}_{\lambda}\cap\Sigma^{u}_{\lambda}\}}dy\\
%&\leq\int_{\Sigma_{\lambda}}\left[\frac{1}{|x-y|^{N-2}}-\frac{1}{|x^{\lambda}-y|^{N-2}}\right]
%\left[(2_{\alpha, \mu}^{\ast}-1)vu^{2_{\alpha, \mu}^{\ast}-2}(u-u_{\lambda})\chi_{\{\Sigma^{u}_{\lambda}\}}+u^{2_{\alpha, \mu}^{\ast}-1}(v-v_{\lambda})\chi_{\{\Sigma^{v}_{\lambda}\}}\right]dy\\
%&\leq\int_{\Sigma_{\lambda}}\frac{1}{|x-y|^{N-2}}
%\left[(2_{\alpha, \mu}^{\ast}-1)vu^{2_{\alpha, \mu}^{\ast}-2}(u-u_{\lambda})\chi_{\{\Sigma^{u}_{\lambda}\}}+u^{2_{\alpha, \mu}^{\ast}-1}(v-v_{\lambda})\chi_{\{\Sigma^{v}_{\lambda}\}}\right]dy\\
&\leq(2_{\alpha, \mu}^{\ast}-1)\int_{\Sigma^{u}_{\lambda}}\frac{vu^{2_{\alpha, \mu}^{\ast}-2}(u-u_{\lambda})}{|x-y|^{N-2}}
dy+\int_{\Sigma^{v}_{\lambda}}\frac{u^{2_{\alpha, \mu}^{\ast}-1}(v-v_{\lambda})}{|x-y|^{N-2}}dy.
\end{aligned}
\end{equation}
From equation \eqref{D1} we get that when $x\in\Sigma_{\lambda}$,
\begin{equation}\nonumber
\begin{aligned}
v(x)-v_{\lambda}(x)&=\int_{\Sigma_{\lambda}}\frac{1}{|x-y|^{\mu}}\left(\frac{u(y)^{2_{\alpha, \mu}^{\ast}}}{|x|^{\alpha}|y|^{\alpha}}-\frac{u(y^{\lambda})^{2_{\alpha, \mu}^{\ast}}}{|x^{\lambda}|^{\alpha}|y^{\lambda}|^{\alpha}}\right)
+\frac{1}{|x^{\lambda}-y|^{\mu}}\left(\frac{u(y^{\lambda})^{2_{\alpha, \mu}^{\ast}}}{|x|^{\alpha}|y^{\lambda}|^{\alpha}}-\frac{u(y)^{2_{\alpha, \mu}^{\ast}}}{|x^{\lambda}|^{\alpha}|y|^{\alpha}}\right)dy\\
%&\leq\int_{\Sigma_{\lambda}}\frac{1}{|x|^{\alpha}}\left[\frac{1}{|x-y|^{\mu}}\left(\frac{u(y)^{2_{\alpha, \mu}^{\ast}}}{|y|^{\alpha}}-\frac{u(y^{\lambda})^{2_{\alpha, \mu}^{\ast}}}{|y^{\lambda}|^{\alpha}}\right)
%+\frac{1}{|x^{\lambda}-y|^{\mu}}\left(\frac{u(y^{\lambda})^{2_{\alpha, \mu}^{\ast}}}{|y^{\lambda}|^{\alpha}}-\frac{u(y)^{2_{\alpha, \mu}^{\ast}}}{|y|^{\alpha}}\right)\right]dy\\
&\leq\int_{\Sigma_{\lambda}}\frac{1}{|x|^{\alpha}}\left(\frac{1}{|x-y|^{\mu}}-\frac{1}{|x^{\lambda}-y|^{\mu}}\right)\left(\frac{u(y)^{2_{\alpha, \mu}^{\ast}}}{|y|^{\alpha}}-\frac{u(y^{\lambda})^{2_{\alpha, \mu}^{\ast}}}{|y^{\lambda}|^{\alpha}}\right)dy\\
%&\leq\int_{\Sigma_{\lambda}}\frac{1}{|x|^{\alpha}}\left(\frac{1}{|x-y|^{\mu}}-\frac{1}{|x^{\lambda}-y|^{\mu}}\right)\frac{u(y)^{2_{\alpha, \mu}^{\ast}}-u(y^{\lambda})^{2_{\alpha, \mu}^{\ast}}}{|y|^{\alpha}}dy\\
%&\leq\int_{\Sigma^{u}_{\lambda}}\frac{1}{|x|^{\alpha}}\left(\frac{1}{|x-y|^{\mu}}-\frac{1}{|x^{\lambda}-y|^{\mu}}\right)\frac{u(y)^{2_{\alpha, \mu}^{\ast}}-u(y^{\lambda})^{2_{\alpha, \mu}^{\ast}}}{|y|^{\alpha}}dy\\
&\leq\int_{\Sigma^{u}_{\lambda}}\frac{u(y)^{2_{\alpha, \mu}^{\ast}}-u(y^{\lambda})^{2_{\alpha, \mu}^{\ast}}}{|x|^{\alpha}|x-y|^{\mu}|y|^{\alpha}}dy.
\end{aligned}
\end{equation}
We can use the Mean Value Theorem to show:
\begin{equation}\nonumber
v(x)-v_{\lambda}(x)\leq 2_{\alpha, \mu}^{\ast}\int_{\Sigma^{u}_{\lambda}}\frac{u(y)^{2_{\alpha, \mu}^{\ast}-1}[u(y)-u(y^{\lambda})]}{|x|^{\alpha}|x-y|^{\mu}|y|^{\alpha}}dy.
\end{equation}
Then the H\"{o}lder inequality and the weighted HLS inequality implies
\begin{equation}\label{G1}
\begin{aligned}
||v-v_{\lambda}||_{L^{\frac{2N}{2\alpha+\mu}}(\Sigma^{v}_{\lambda})}
%&\leq C\left\|\int_{\Sigma^{u}_{\lambda}}\frac{u^{2_{\alpha, \mu}^{\ast}-1}(u-u_{\lambda})}{|x|^{\alpha}|x-y|^{\mu}|y|^{\alpha}}dy\right\|_{L^{\frac{2N}{2\alpha+\mu}}(\Sigma^{v}_{\lambda})}\\
%&\leq C||u^{2_{\alpha, \mu}^{\ast}-1}(u-u_{\lambda})||_{L^{\frac{2_{\alpha, \mu}^{\ast}}{2^{\ast}}}(\Sigma^{u}_{\lambda})}\\
&\leq C||u||^{2_{\alpha, \mu}^{\ast}-1}_{L^{2^{\ast}}(\Sigma^{u}_{\lambda})}||u-u_{\lambda}||_{L^{2^{\ast}}(\Sigma^{u}_{\lambda})}.
\end{aligned}
\end{equation}
In virtue of the H\"{o}lder inequality and the HLS inequality, it follows that
\begin{equation}\nonumber
\begin{aligned}
\left\|\int_{\Sigma^{u}_{\lambda}}\frac{vu^{2_{\alpha, \mu}^{\ast}-2}(u-u_{\lambda})}{|x-y|^{N-2}}
dy\right\|_{L^{2^{\ast}}(\Sigma^{u}_{\lambda})}
&\leq C||vu^{2_{\alpha, \mu}^{\ast}-2}(u-u_{\lambda})||_{L^{\frac{2N}{N+2}}(\Sigma^{u}_{\lambda})}\\
%&\leq C||v||_{L^{\frac{2N}{2\alpha+\mu}}(\Sigma^{u}_{\lambda})}
%||u^{2_{\alpha, \mu}^{\ast}-2}||_{L^{\frac{2^{\ast}}{2_{\alpha, \mu}^{\ast}-2}}(\Sigma^{u}_{\lambda})}
%||u-u_{\lambda}||_{L^{2^{\ast}}(\Sigma^{u}_{\lambda})}\\
%&\leq C||u^{2_{\alpha, \mu}^{\ast}}||_{L^{\frac{2^{\ast}}{2_{\alpha, \mu}^{\ast}}}(\mathbb{R}^{N})}
%||u||^{2_{\alpha, \mu}^{\ast}-2}_{L^{2^{\ast}}(\Sigma^{u}_{\lambda})}
%||u-u_{\lambda}||_{L^{2^{\ast}}(\Sigma^{u}_{\lambda})}\\
&\leq C||u||^{2_{\alpha, \mu}^{\ast}}_{L^{2^{\ast}}(\mathbb{R}^{N})}
||u||^{2_{\alpha, \mu}^{\ast}-2}_{L^{2^{\ast}}(\Sigma^{u}_{\lambda})}
||u-u_{\lambda}||_{L^{2^{\ast}}(\Sigma^{u}_{\lambda})}.
\end{aligned}
\end{equation}
On the other hand, the H\"{o}lder inequality and \eqref{G1} implies
\begin{equation}\nonumber
\begin{aligned}
\left\|\int_{\Sigma^{v}_{\lambda}}\frac{u^{2_{\alpha, \mu}^{\ast}-1}(v-v_{\lambda})}{|x-y|^{N-2}}dy\right\|_{L^{2^{\ast}}(\Sigma^{u}_{\lambda})}
&\leq C||u^{2_{\alpha, \mu}^{\ast}-1}(v-v_{\lambda})||_{L^{\frac{2N}{N+2}}(\Sigma^{v}_{\lambda})}\\
%&\leq C||u^{2_{\alpha, \mu}^{\ast}-1}||_{L^{\frac{2^{\ast}}{2_{\alpha, \mu}^{\ast}-1}}(\Sigma^{v}_{\lambda})}
%||v-v_{\lambda}||_{L^{\frac{2N}{2\alpha+\mu}}(\Sigma^{v}_{\lambda})}\\
&\leq C||u||^{2_{\alpha, \mu}^{\ast}-1}_{L^{2^{\ast}}(\Sigma^{v}_{\lambda})}||u||^{2_{\alpha, \mu}^{\ast}-1}_{L^{2^{\ast}}(\Sigma^{u}_{\lambda})}||u-u_{\lambda}||_{L^{2^{\ast}}(\Sigma^{u}_{\lambda})}.
\end{aligned}
\end{equation}
Hence there exists a constant $C>0$,
\begin{equation}\nonumber
\begin{aligned}
||u-u_{\lambda}||_{L^{2^{\ast}}({\Sigma^{u}_{\lambda}})}
%&\leq C\left[\left\|\int_{\Sigma^{u}_{\lambda}}\frac{vu^{2_{\alpha, \mu}^{\ast}-2}(u-u_{\lambda})}{|x-y|^{N-2}}
%dy\right\|_{L^{2^{\ast}}(\Sigma^{u}_{\lambda})}
%+\left\|\int_{\Sigma^{v}_{\lambda}}\frac{u^{2_{\alpha, \mu}^{\ast}-1}(v-v_{\lambda})}{|x-y|^{N-2}}dy\right\|_{L^{2^{\ast}}(\Sigma^{u}_{\lambda})}\right]\\
&\leq C\left[||u||^{2_{\alpha, \mu}^{\ast}}_{L^{2^{\ast}}(\mathbb{R}^{N})}
||u||^{2_{\alpha, \mu}^{\ast}-2}_{L^{2^{\ast}}(\Sigma^{u}_{\lambda})}
+||u||^{2_{\alpha, \mu}^{\ast}-1}_{L^{2^{\ast}}(\Sigma^{v}_{\lambda})}||u||^{2_{\alpha, \mu}^{\ast}-1}_{L^{2^{\ast}}(\Sigma^{u}_{\lambda})}\right]||u-u_{\lambda}||_{L^{2^{\ast}}(\Sigma^{u}_{\lambda})}.
\end{aligned}
\end{equation}
\end{proof}

For the case $2\alpha+\mu=4$, by similar estimates one may find that lemma \ref{lem51} should be replace by
\begin{lem}\label{lemABC}
Suppose that $N\geq4$, $\alpha\geq0$, $0<\mu<N$ and $2\alpha+\mu=4$, then for any $\lambda<0$, there exists a constant $C>0$ such that:
$$
\|u-u_{\lambda}\|_{L^{2^{\ast}}(\Sigma^{u}_{\lambda})}\leq C\left[||v||^{2}_{L^{\frac{N}{2}}(\Sigma^{u}_{\lambda})}
+||u||_{L^{2^{\ast}}(\Sigma^{v}_{\lambda})}||u||_{L^{2^{\ast}}(\Sigma^{u}_{\lambda})}\right]||u-u_{\lambda}||_{L^{2^{\ast}}(\Sigma^{u}_{\lambda})}.
$$
\end{lem}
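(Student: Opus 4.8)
The plan is to repeat the proof of Lemma~\ref{lem51} essentially verbatim, exploiting the fact that the hypothesis $2\alpha+\mu=4$ forces $2_{\alpha, \mu}^{\ast}=\frac{2N-2\alpha-\mu}{N-2}=2$, so that $2_{\alpha, \mu}^{\ast}-1=1$ and $2_{\alpha, \mu}^{\ast}-2=0$; in particular the weight factors $v\,u^{2_{\alpha, \mu}^{\ast}-2}$ and $u^{2_{\alpha, \mu}^{\ast}-1}$ that occur in the proof of Lemma~\ref{lem51} reduce to $v$ and $u$ respectively, and \eqref{ASD} becomes $u(x)=\int_{\mathbb{R}^N}|x-y|^{2-N}v(y)u(y)\,dy$, $v(x)=\int_{\mathbb{R}^N}|x|^{-\alpha}|x-y|^{-\mu}|y|^{-\alpha}u(y)^2\,dy$. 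First I would split $\Sigma_\lambda$ into the four disjoint pieces $\Sigma^{u}_{\lambda}\cap\Sigma^{v}_{\lambda}$, $\Sigma^{u}_{\lambda}\setminus\Sigma^{v}_{\lambda}$, $\Sigma^{v}_{\lambda}\setminus\Sigma^{u}_{\lambda}$ and $\Sigma_\lambda\setminus(\Sigma^{u}_{\lambda}\cup\Sigma^{v}_{\lambda})$, and use, on each piece, the elementary bound $vu-v_\lambda u_\lambda\le v\,(u-u_\lambda)^{+}+u\,(v-v_\lambda)^{+}$ (valid since $u,v\ge 0$) together with the nonnegativity of the kernel difference in \eqref{C1} for $x,y\in\Sigma_\lambda$, to deduce that for $x\in\Sigma_\lambda$
\[
u(x)-u_\lambda(x)\le\int_{\Sigma^{u}_{\lambda}}\frac{v(y)\,(u(y)-u_\lambda(y))}{|x-y|^{N-2}}\,dy+\int_{\Sigma^{v}_{\lambda}}\frac{u(y)\,(v(y)-v_\lambda(y))}{|x-y|^{N-2}}\,dy .
\]

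Next I would bound $v-v_\lambda$ on $\Sigma^{v}_{\lambda}$. Because $\lambda<0$, for $x,y\in\Sigma_\lambda$ one has $|x^\lambda|\le|x|$, $|y|\le|y^\lambda|$ and $|x-y|\le|x^\lambda-y|$, which reduces \eqref{D1} to $v(x)-v_\lambda(x)\le\int_{\Sigma^{u}_{\lambda}}|x|^{-\alpha}|x-y|^{-\mu}|y|^{-\alpha}\bigl(u(y)^2-u(y^\lambda)^2\bigr)\,dy$; combining this with $u^2-u_\lambda^2\le 2u(u-u_\lambda)$ on $\Sigma^{u}_{\lambda}$ and then applying the weighted Hardy--Littlewood--Sobolev inequality \eqref{WHLS} (in operator form, with both weights $\alpha$, target exponent $\frac{2N}{2\alpha+\mu}=\frac{N}{2}$ and source exponent $\frac{N}{N-2}$) followed by H\"older's inequality ($u(u-u_\lambda)=u\cdot(u-u_\lambda)$ with both factors in $L^{2^{\ast}}$) yields
\[
\|v-v_\lambda\|_{L^{N/2}(\Sigma^{v}_{\lambda})}\le C\,\|u\|_{L^{2^{\ast}}(\Sigma^{u}_{\lambda})}\,\|u-u_\lambda\|_{L^{2^{\ast}}(\Sigma^{u}_{\lambda})} .
\]

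Finally I would take the $L^{2^{\ast}}(\Sigma^{u}_{\lambda})$ norm of the pointwise bound for $u-u_\lambda$. For the first term, the classical Hardy--Littlewood--Sobolev inequality (the Riesz kernel $|x-y|^{2-N}$ maps $L^{2N/(N+2)}$ into $L^{2^{\ast}}$) and H\"older's inequality give a bound $C\|v\|_{L^{N/2}(\mathbb{R}^N)}\,\|u-u_\lambda\|_{L^{2^{\ast}}(\Sigma^{u}_{\lambda})}$, and one further application of \eqref{WHLS} to $v=\int_{\mathbb{R}^N}|x|^{-\alpha}|x-y|^{-\mu}|y|^{-\alpha}u^2\,dy$ bounds $\|v\|_{L^{N/2}(\mathbb{R}^N)}$ by $C\|u\|^{2}_{L^{2^{\ast}}(\mathbb{R}^N)}$ (finite since $u\in L^{2^{\ast}}(\mathbb{R}^N)$). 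For the second term, the same classical inequality and H\"older give $C\|u\|_{L^{2^{\ast}}(\Sigma^{v}_{\lambda})}\,\|v-v_\lambda\|_{L^{N/2}(\Sigma^{v}_{\lambda})}$, into which I substitute the estimate of the previous step. Adding the two contributions produces exactly
\[
\|u-u_\lambda\|_{L^{2^{\ast}}(\Sigma^{u}_{\lambda})}\le C\left[\|u\|^{2}_{L^{2^{\ast}}(\mathbb{R}^N)}+\|u\|_{L^{2^{\ast}}(\Sigma^{v}_{\lambda})}\|u\|_{L^{2^{\ast}}(\Sigma^{u}_{\lambda})}\right]\|u-u_\lambda\|_{L^{2^{\ast}}(\Sigma^{u}_{\lambda})} .
\]
There is no genuine obstacle, only exponent bookkeeping: one checks that with $2_{\alpha, \mu}^{\ast}=2$ and $\frac{2N}{2\alpha+\mu}=\frac{N}{2}$ all the Sobolev and Hardy--Littlewood--Sobolev balance conditions close up, and that the Stein--Weiss admissibility conditions for \eqref{WHLS} with the exponents used reduce to $\alpha<2$, which is automatic here since $2\alpha+\mu=4$ and $\mu>0$. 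Note finally that $2\alpha+\mu=4\le N$ is exactly what forces $N\ge4$, consistent with the statement.
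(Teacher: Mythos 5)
Your proposal is correct and is essentially the proof the paper intends: the paper gives no separate argument for this lemma, stating only that ``by similar estimates'' the proof of Lemma \ref{lem51} carries over, which is exactly what you do by setting $2_{\alpha,\mu}^{\ast}=2$, $\frac{2N}{2\alpha+\mu}=\frac{N}{2}$ and rerunning the same decomposition, weighted Hardy--Littlewood--Sobolev and H\"older steps (your exact bilinear bound $vu-v_\lambda u_\lambda\le v(u-u_\lambda)^{+}+u(v-v_\lambda)^{+}$ replacing the mean value theorem is a harmless simplification for $p=2$). One small slip: for $y\in\Sigma_{\lambda}$ with $\lambda<0$ one has $|y^{\lambda}|\le|y|$, not $|y|\le|y^{\lambda}|$, and it is this correct direction that your (correctly stated) reduction of \eqref{D1} actually uses.
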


The integral inequality in Lemma \ref{lem51} and \ref{lemABC} will allows us to carry out the moving plane method in integral forms to prove the main results.

We will first show that, for $\lambda$ is sufficiently negative,
\begin{equation}\label{E1}
\begin{aligned}
u(x)\leq u(x^{\lambda}),~~~v(x)\leq v(x^{\lambda}),~~\forall x\in\Sigma_{\lambda}.
\end{aligned}
\end{equation}
Then we can start moving the plane from near $-\infty$ to the right as long as \eqref{E1} holds.
\begin{lem}\label{lem61}
Under the assumption of Theorem \ref{thm1.3}, there exists $\lambda_{0}<0$ such that for any $\lambda\leq\lambda_{0}$, $u(x)\leq u(x^{\lambda})$ and $v(x)\leq v(x^{\lambda})$ hold in $\Sigma_{\lambda}$.
\end{lem}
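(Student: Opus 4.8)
The plan is to carry out the standard first step of the moving plane method in integral form, namely to show that the sets $\Sigma^{u}_{\lambda}$ and $\Sigma^{v}_{\lambda}$ are Lebesgue null once $\lambda$ is sufficiently negative; the inequalities \eqref{E1} then follow a.e.\ on $\Sigma_{\lambda}$ (and everywhere wherever $u,v$ are continuous, e.g.\ away from the origin). The mechanism is the integral inequality of Lemma \ref{lem51} (and, in the borderline case, Lemma \ref{lemABC}), combined with the absolute continuity of the integral of $|u|^{2^{\ast}}$ over the half-spaces $\{x_{1}<\lambda\}$.

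First I would note that $u\in L^{2^{\ast}}(\mathbb{R}^{N})$ (hence also $u_{\lambda}\in L^{2^{\ast}}(\mathbb{R}^{N})$, so that $\|u-u_{\lambda}\|_{L^{2^{\ast}}(\Sigma^{u}_{\lambda})}<\infty$) and that $v\in L^{2N/(2\alpha+\mu)}(\mathbb{R}^{N})$ by the weighted Hardy-Littlewood-Sobolev inequality applied to the second line of \eqref{ASD}. Since $\Sigma^{u}_{\lambda},\Sigma^{v}_{\lambda}\subset\Sigma_{\lambda}=\{x_{1}<\lambda\}$, the localized norms $\|u\|_{L^{2^{\ast}}(\Sigma^{u}_{\lambda})}$, $\|u\|_{L^{2^{\ast}}(\Sigma^{v}_{\lambda})}$ and $\|v\|_{L^{2N/(2\alpha+\mu)}(\Sigma^{u}_{\lambda})}$ are all dominated by the corresponding integral over $\{x_{1}<\lambda\}$ and hence tend to $0$ as $\lambda\to-\infty$. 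When $2\alpha+\mu<4$ (or $2\alpha+\mu\le3$ if $N=3$) one has $2^{\ast}_{\alpha,\mu}-2>0$, so the bracketed factor on the right-hand side of Lemma \ref{lem51} is a product of such vanishing quantities with the fixed number $\|u\|_{L^{2^{\ast}}(\mathbb{R}^{N})}$, and therefore tends to $0$.

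I would then fix $\lambda_{0}<0$ so small that for every $\lambda\le\lambda_{0}$ the constant $C$ of Lemma \ref{lem51} times this bracketed factor is at most $\tfrac12$. Lemma \ref{lem51} then yields $\|u-u_{\lambda}\|_{L^{2^{\ast}}(\Sigma^{u}_{\lambda})}\le\tfrac12\|u-u_{\lambda}\|_{L^{2^{\ast}}(\Sigma^{u}_{\lambda})}$; since the norm is finite it must vanish, so $|\Sigma^{u}_{\lambda}|=0$, i.e.\ $u\le u_{\lambda}$ a.e.\ on $\Sigma_{\lambda}$. Feeding $\|u-u_{\lambda}\|_{L^{2^{\ast}}(\Sigma^{u}_{\lambda})}=0$ into the estimate \eqref{G1} forces $\|v-v_{\lambda}\|_{L^{2N/(2\alpha+\mu)}(\Sigma^{v}_{\lambda})}=0$, whence $|\Sigma^{v}_{\lambda}|=0$ and $v\le v_{\lambda}$ a.e.\ on $\Sigma_{\lambda}$; this is precisely \eqref{E1} (valid at every point of $\Sigma_{\lambda}$ by continuity of $u,v$ on $\mathbb{R}^{N}\setminus\{0\}$, since $0\notin\Sigma_{\lambda}$).

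The step I expect to be the main obstacle is the borderline exponent $2\alpha+\mu=4$ with $N\ge4$: there $2^{\ast}_{\alpha,\mu}=2$, $2N/(2\alpha+\mu)=N/2$, and the factor $\|u\|^{2}_{L^{2^{\ast}}(\mathbb{R}^{N})}$ appearing in Lemma \ref{lemABC} does not shrink as $\lambda\to-\infty$. To handle it I would not bound the $v$-contribution to $u-u_{\lambda}$ by this global quantity, but instead retain the local norm $\|v\|_{L^{N/2}(\Sigma^{u}_{\lambda})}$ produced by the H\"{o}lder estimate $\|v(u-u_{\lambda})\|_{L^{2N/(N+2)}(\Sigma^{u}_{\lambda})}\le\|v\|_{L^{N/2}(\Sigma^{u}_{\lambda})}\|u-u_{\lambda}\|_{L^{2^{\ast}}(\Sigma^{u}_{\lambda})}$; since $v\in L^{N/2}(\mathbb{R}^{N})$ we have $\|v\|_{L^{N/2}(\Sigma^{u}_{\lambda})}\le\|v\|_{L^{N/2}(\{x_{1}<\lambda\})}\to0$. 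The effective bracket again tends to $0$, so the contraction argument of the previous paragraph applies verbatim. The remaining verifications are routine.
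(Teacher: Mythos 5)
Your argument is correct and is essentially the paper's own proof: use the absolute continuity of $\int|u|^{2^{\ast}}$ over the half-spaces $\{x_{1}<\lambda\}$ to make the bracketed factor in Lemma \ref{lem51} smaller than $1$ for $\lambda\leq\lambda_{0}$, conclude $\|u-u_{\lambda}\|_{L^{2^{\ast}}(\Sigma^{u}_{\lambda})}=0$, and then feed this into \eqref{G1} to kill $\Sigma^{v}_{\lambda}$ as well. The one place where you go beyond the paper is the borderline case $2\alpha+\mu=4$, $N\geq4$ (which is inside the scope of Theorem \ref{thm1.3}): the paper's proof of this lemma only invokes the bracket of Lemma \ref{lem51}, and its substitute Lemma \ref{lemABC} contains the non-shrinking factor $\|u\|^{2}_{L^{2^{\ast}}(\mathbb{R}^{N})}$, so the ``choose $\lambda_{0}$ sufficiently negative'' step does not literally apply there; your modification --- keeping the local norm $\|v\|_{L^{N/2}(\Sigma^{u}_{\lambda})}$ from the H\"older step, which tends to $0$ because $v\in L^{N/2}(\mathbb{R}^{N})$ by the weighted Hardy--Littlewood--Sobolev inequality --- is exactly the repair needed, and it makes your treatment of this case more complete than the paper's.
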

\begin{proof}
Since $u(x)$ and $v(x)$ is integrable, we can choose $\lambda_{0}$ is sufficiently negative, such that for $\lambda\leq\lambda_{0}$, we have
\begin{equation}\label{J1}
C\left[||u||^{2_{\alpha, \mu}^{\ast}}_{L^{2^{\ast}}(\mathbb{R}^{N})}
||u||^{2_{\alpha, \mu}^{\ast}-2}_{L^{2^{\ast}}(\Sigma^{u}_{\lambda})}
+||u||^{2_{\alpha, \mu}^{\ast}-1}_{L^{2^{\ast}}(\Sigma^{v}_{\lambda})}||u||^{2_{\alpha, \mu}^{\ast}-1}_{L^{2^{\ast}}(\Sigma^{u}_{\lambda})}\right]<1.
\end{equation}
Then Lemma \ref{lem51} implies that
$$
||u-u_{\lambda}||_{L^{2^{\ast}}(\Sigma^{u}_{\lambda})}=0.
$$
Therefore we conclude that $\Sigma^{u}_{\lambda}$ must be empty, that is $u(x)\leq u(x^{\lambda})$ in $\Sigma_{\lambda}$, from inequality \eqref{G1}, we find $\Sigma^{v}_{\lambda}$ is also empty, hence $v(x)\leq v(x^{\lambda})$.
\end{proof}
We now move the plane $T_{\lambda}$ to the right as long as \eqref{E1} holds. If we define
$$
\lambda_{1}=\sup\{~\lambda~|~u(x)\leq u(x^{\mu})~,~v(x)\leq v(x^{\mu})~,~x\in\Sigma_{\mu}~,~\mu\leq\lambda\},
$$
then we must have $\lambda_{1}<\infty$ because of applying a similar argument for $\lambda$ near $+\infty$. Since the plane can be moved from $-\infty$, we show that
\begin{lem}\label{lem71}
Under the assumption of Theorem \ref{thm1.3}, then for any $\lambda_{1}<0$, there holds $u(x)\equiv u(x^{\lambda_{1}})$ and $v(x)\equiv v(x^{\lambda_{1}})$ in $\Sigma_{\lambda_{1}}$.
\end{lem}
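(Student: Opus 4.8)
The plan is to run the standard contradiction step of the moving plane method: assuming $\lambda_{1}<0$, I will show that either $u\equiv u_{\lambda_{1}}$ (and hence $v\equiv v_{\lambda_{1}}$) on $\Sigma_{\lambda_{1}}$, or else the plane $T_{\lambda}$ can be pushed strictly past $\lambda_{1}$ while \eqref{E1} still holds, contradicting the definition of $\lambda_{1}$ as a supremum. First, by Lemma \ref{lem61} the set over which $\lambda_{1}$ is taken is nonempty, and for every $\mu<\lambda_{1}$ the inequalities $u\le u_{\mu}$, $v\le v_{\mu}$ hold on $\Sigma_{\mu}$; letting $\mu\uparrow\lambda_{1}$ and using the continuity of $u$ and $v$ on $\mathbb{R}^{N}\setminus\{0\}$ (Theorem \ref{DF2} and the integral representation \eqref{ASD}) one obtains $u(x)\le u(x^{\lambda_{1}})$ and $v(x)\le v(x^{\lambda_{1}})$ for all $x\in\Sigma_{\lambda_{1}}$.

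Second, I would upgrade these to \emph{strict} inequalities on the open half-space, arguing by contradiction: suppose $u\not\equiv u_{\lambda_{1}}$ on $\Sigma_{\lambda_{1}}$. Since for $x,y\in\Sigma_{\lambda_{1}}$ we have $|x-y|<|x^{\lambda_{1}}-y|$, the kernel $|x-y|^{2-N}-|x^{\lambda_{1}}-y|^{2-N}$ is strictly positive a.e., while the factor $v(y)u(y)^{2_{\alpha, \mu}^{\ast}-1}-v(y^{\lambda_{1}})u(y^{\lambda_{1}})^{2_{\alpha, \mu}^{\ast}-1}$ is $\le 0$ everywhere on $\Sigma_{\lambda_{1}}$ by the first step and is $<0$ on the positive-measure set $\{u<u_{\lambda_{1}}\}$. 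Hence \eqref{C1} forces $u(x)<u(x^{\lambda_{1}})$ for every $x\in\Sigma_{\lambda_{1}}$. Feeding this into the estimate of $v-v_{\lambda_{1}}$ from the proof of Lemma \ref{lem51} (which relies on $|x^{\lambda_{1}}|\le|x|$, $|y^{\lambda_{1}}|\le|y|$, valid because $\lambda_{1}\le 0$), the integrand there is strictly negative on a positive-measure set, so $v(x)<v(x^{\lambda_{1}})$ on $\Sigma_{\lambda_{1}}$ as well.

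Third comes the \textbf{narrow region argument}, which I expect to be the main obstacle. Pick $R>0$ so large that $\|u\|_{L^{2^{\ast}}(\mathbb{R}^{N}\setminus B_{R})}$ is as small as desired (possible since $u\in L^{2^{\ast}}(\mathbb{R}^{N})$). On the compact set $\overline{B_{R}}\cap\overline{\Sigma_{\lambda_{1}}}$, the strict inequalities $u<u_{\lambda_{1}}$, $v<v_{\lambda_{1}}$ together with continuity force, for $\lambda$ slightly larger than $\lambda_{1}$, the sets $\Sigma^{u}_{\lambda}\cap B_{R}$ and $\Sigma^{v}_{\lambda}\cap B_{R}$ into a thin slab about $T_{\lambda_{1}}$, hence of arbitrarily small Lebesgue measure; by absolute continuity of the integral, $\|u\|_{L^{2^{\ast}}(\Sigma^{u}_{\lambda})}$ and $\|u\|_{L^{2^{\ast}}(\Sigma^{v}_{\lambda})}$ can be made as small as we wish, uniformly for $\lambda\in(\lambda_{1},\lambda_{1}+\varepsilon_{0})$. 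Then the bracketed coefficient
$$C\Big[\|u\|^{2_{\alpha, \mu}^{\ast}}_{L^{2^{\ast}}(\mathbb{R}^{N})}\|u\|^{2_{\alpha, \mu}^{\ast}-2}_{L^{2^{\ast}}(\Sigma^{u}_{\lambda})}+\|u\|^{2_{\alpha, \mu}^{\ast}-1}_{L^{2^{\ast}}(\Sigma^{v}_{\lambda})}\|u\|^{2_{\alpha, \mu}^{\ast}-1}_{L^{2^{\ast}}(\Sigma^{u}_{\lambda})}\Big]$$
of Lemma \ref{lem51} (respectively the coefficient of Lemma \ref{lemABC} when $2\alpha+\mu=4$) is $<1$ for such $\lambda$, so Lemma \ref{lem51} yields $\|u-u_{\lambda}\|_{L^{2^{\ast}}(\Sigma^{u}_{\lambda})}=0$, i.e. $|\Sigma^{u}_{\lambda}|=0$ and $u\le u_{\lambda}$ on $\Sigma_{\lambda}$; then \eqref{G1} gives $\|v-v_{\lambda}\|_{L^{2N/(2\alpha+\mu)}(\Sigma^{v}_{\lambda})}=0$, i.e. $v\le v_{\lambda}$ on $\Sigma_{\lambda}$. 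This contradicts the maximality of $\lambda_{1}$, so $u\equiv u_{\lambda_{1}}$ on $\Sigma_{\lambda_{1}}$.

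Finally, substituting $u(y)=u(y^{\lambda_{1}})$ back into \eqref{C1} gives $0=u(x)-u_{\lambda_{1}}(x)=\int_{\Sigma_{\lambda_{1}}}\big(|x-y|^{2-N}-|x^{\lambda_{1}}-y|^{2-N}\big)u(y)^{2_{\alpha, \mu}^{\ast}-1}\big(v(y)-v(y^{\lambda_{1}})\big)\,dy$ with a strictly positive kernel, a strictly positive factor $u(y)^{2_{\alpha, \mu}^{\ast}-1}$, and $v(y)-v(y^{\lambda_{1}})\le 0$ on $\Sigma_{\lambda_{1}}$; hence $v\equiv v_{\lambda_{1}}$ on $\Sigma_{\lambda_{1}}$, completing the proof. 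The crux is the uniform-in-$\lambda$ smallness of the critical norms over $\Sigma^{u}_{\lambda}$ and $\Sigma^{v}_{\lambda}$, which combines the thin-slab estimate on $B_{R}$ (from strict monotonicity plus continuity) with control of the tail at infinity (from $L^{2^{\ast}}$-integrability), while making sure the singular weights $|x|^{-\alpha},|y|^{-\alpha}$ in \eqref{D1} do not spoil the sign analysis.
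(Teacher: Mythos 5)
Your overall strategy is the right one and matches the paper's skeleton (pass to the limit to get the non-strict inequalities at $\lambda_{1}$, upgrade to strict inequality via the positive kernel in \eqref{C1}, make the bracketed coefficient of Lemma \ref{lem51} smaller than $1$ for $\lambda$ slightly beyond $\lambda_{1}$ by combining a tail estimate with a narrow-region estimate, contradict the maximality of $\lambda_{1}$, and finally read off $v\equiv v_{\lambda_{1}}$ from \eqref{C1}). The genuine gap is in your implementation of the narrow-region step. You argue by compactness and continuity: on $\overline{B_{R}}\cap\overline{\Sigma_{\lambda_{1}}}$ the strict inequalities plus continuity are said to confine $\Sigma^{u}_{\lambda}\cap B_{R}$ to a thin slab about $T_{\lambda_{1}}$. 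This needs (i) continuity of $u$, which you justify by Theorem \ref{DF2}; but \ref{DF2} requires $N\in\{3,4,5,6\}$, $\alpha<2$ and $N-2\leq2\alpha+\mu$, which is strictly narrower than the hypotheses of Theorem \ref{thm1.3} ($N\geq3$ arbitrary, any $\alpha\geq0$ with $0<2\alpha+\mu\leq\min\{4,N\}$), so under the lemma's actual assumptions you have no continuity to invoke; and (ii) uniform convergence $u_{\lambda}\to u_{\lambda_{1}}$ on the compact piece away from $T_{\lambda_{1}}$, which involves values of $u$ at reflected points: the point $x=(2\lambda_{1},0,\dots,0)\in\Sigma_{\lambda_{1}}$ reflects onto the origin, where $u$ may be singular even when \ref{DF2} applies, so the claimed uniform thin-slab confinement is not justified without excising (and separately estimating) a small neighbourhood of that point. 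The paper avoids all of this by a purely measure-theoretic argument: it splits $\Sigma^{u}_{\lambda}\cap B_{R}$ into $\Sigma^{u}_{\lambda}\cap P_{\varepsilon}$, $Q_{\varepsilon}$ and the slab $\Omega_{\lambda}$, controls the first by Chebyshev's inequality (using only $\|u_{\lambda}-u_{\lambda_{1}}\|_{L^{2^{\ast}}(B_{R})}\to0$), the second by the a.e.\ strict inequality $u<u_{\lambda_{1}}$, and then uses absolute continuity of the integral; this needs nothing beyond $u\in L^{2^{\ast}}$.

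Two smaller points. First, your derivation of the strict inequality $v<v_{\lambda_{1}}$ by "feeding into the estimate of $v-v_{\lambda_{1}}$ from the proof of Lemma \ref{lem51}" does not work as stated: that estimate bounds $v-v_{\lambda}$ by an integral over $\Sigma^{u}_{\lambda}$, which is empty once $u\leq u_{\lambda_{1}}$, so it only returns $v\leq v_{\lambda_{1}}$; strictness would have to come directly from \eqref{D1} using $u<u_{\lambda_{1}}$ and $|x^{\lambda_{1}}|\leq|x|$, $|y^{\lambda_{1}}|\leq|y|$. Second, none of this is actually needed: since $2^{\ast}_{\alpha,\mu}-2>0$ when $2\alpha+\mu<4$, both terms in \eqref{J1} carry a positive power of $\|u\|_{L^{2^{\ast}}(\Sigma^{u}_{\lambda})}$, so smallness of the $u$-set alone suffices (as in the paper), and there is no need to control $\Sigma^{v}_{\lambda}$ separately. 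Your closing step, deducing $v\equiv v_{\lambda_{1}}$ from \eqref{C1} once $u\equiv u_{\lambda_{1}}$, is fine and is exactly what the paper does.
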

\begin{proof}
Suppose $u(x)\not\equiv u(x^{\lambda_{1}})$, from lemma \ref{lem61} and equation \eqref{C1}, we can achieve $u(x)<u_{\lambda}(x)$ in $\Sigma_{\lambda_{1}}$. For any sufficient small $\eta>0$, we can choose $R$ is sufficiently large, such that
$$
C\left[||u||^{2_{\alpha, \mu}^{\ast}}_{L^{2^{\ast}}(\mathbb{R}^{N}-B_{R}(0))}
||u||^{2_{\alpha, \mu}^{\ast}-2}_{L^{2^{\ast}}(\mathbb{R}^{N}-B_{R}(0))}
+||u||^{2_{\alpha, \mu}^{\ast}-1}_{L^{2^{\ast}}(\mathbb{R}^{N}-B_{R}(0))}||u||^{2_{\alpha, \mu}^{\ast}-1}_{L^{2^{\ast}}(\mathbb{R}^{N}-B_{R}(0))}\right]<\eta.
$$
For any fixed $\varepsilon>0$, we set
$$
P_{\varepsilon}=\{x\in\Sigma_{\lambda_{1}}\cap B_{R}(0)|u(x^{\lambda_{1}})-u(x)>\varepsilon\},~~Q_{\varepsilon}=\{x\in\Sigma_{\lambda_{1}}\cap B_{R}(0)|u(x^{\lambda_{1}})-u(x)\leq\varepsilon\}.
$$
For $\lambda$ is sufficiently close to $\lambda_{1}$, we fix a narrow domain $\Omega_{\lambda}=(\Sigma_{\lambda}-\Sigma_{\lambda_{1}})\cap B_{R}(0)$, then for any $x\in\Sigma^{u}_{\lambda}\cap P_{\varepsilon}$, it is obvious that $u(x^{\lambda_{1}})-u(x^{\lambda})>u(x^{\lambda_{1}})-u(x)>\varepsilon$.
However when $\lambda\rightarrow\lambda_{1}$, the Chebyshev inequality implies
$$
\mathcal{L}(\Sigma^{u}_{\lambda}\cap P_{\varepsilon})\leq\mathcal{L}(\{x\in B_{R}(0)|u(x^{\lambda_{1}})-u(x^{\lambda})>\varepsilon\})\rightarrow0.
$$
Where $\mathcal{L}$ is the Lebesgue measure. Let $\varepsilon\rightarrow0$ and $\lambda\rightarrow\lambda_{1}$, we have
$$
\mathcal{L}(\Sigma^{u}_{\lambda}\cap B_{R}(0))\leq\mathcal{L}(\Sigma^{u}_{\lambda}\cap P_{\varepsilon})+\mathcal{L}(Q_{\varepsilon})+\mathcal{L}(\Omega_{\lambda})\rightarrow0.
$$
Therefore there exists $\tau>0$ such that for any $\lambda\in[\lambda_{1},\lambda_{1}+\tau]$, inequality \eqref{J1} holds.
As the preceding proof, we assert that for all $\lambda\in[\lambda_{1},\lambda_{1}+\tau]$, we have \eqref{E1}. This contradicts the definition of $\lambda_{1}$, then \eqref{C1} implies $v(x)\equiv v(x^{\lambda_{1}})$.
\end{proof}
{\flushleft\bf{Proof of Theorem \ref{thm1.3}}.}
Similarly, the plane can be moved from $+\infty$ to left, therefore we denote the corresponding parameter $\lambda_{1}^{\prime}$ satisfying
$$
\lambda^{\prime}_{1}=\inf\{~\lambda~|~u(x)\leq u(x^{\mu})~,~v(x)\leq v(x^{\mu})~,~x\in\Sigma^{\prime}_{\mu}~,~\mu\geq\lambda\},
$$
where $\Sigma^{\prime}_{\lambda}=\{x\in\mathbb{R}^{N}:x_{1}>\lambda\}$. If $\lambda^{\prime}_{1}>0$, $u(x)$ and $v(x)$ are also radially symmetric about $T_{\lambda^{\prime}_{1}}$ in the similar argument. Hence when $\lambda_{1}=\lambda^{\prime}_{1}\neq0$, we can deduce from Lemma \ref{lem71} that $u(x)\equiv u(x^{\lambda_{1}})$ and $v(x)\equiv v(x^{\lambda_{1}})$ in $\Sigma_{\lambda_{1}}$, that contradicts to \eqref{C1} because
\begin{equation}\nonumber
\begin{aligned}
0&=\int_{\Sigma_{\lambda_{1}}}\frac{1}{|x-y|^{\mu}}\left(\frac{u(y)^{2_{\alpha, \mu}^{\ast}}}{|x|^{\alpha}|y|^{\alpha}}-\frac{u(y^{\lambda_{1}})^{2_{\alpha, \mu}^{\ast}}}{|x^{\lambda_{1}}|^{\alpha}|y^{\lambda_{1}}|^{\alpha}}\right)
+\frac{1}{|x^{\lambda_{1}}-y|^{\mu}}\left(\frac{u(y^{\lambda_{1}})^{2_{\alpha, \mu}^{\ast}}}{|x|^{\alpha}|y^{\lambda_{1}}|^{\alpha}}-\frac{u(y)^{2_{\alpha, \mu}^{\ast}}}{|x^{\lambda_{1}}|^{\alpha}|y|^{\alpha}}\right)dy\\
&\leq\int_{\Sigma_{\lambda_{1}}}\frac{u(y)^{2_{\alpha, \mu}^{\ast}}}{|x|^{\alpha}}\left(\frac{1}{|x-y|^{\mu}}-\frac{1}{|x^{\lambda_{1}}-y|^{\mu}}\right)\left(\frac{1}{|y|^{\alpha}}-\frac{1}{|y^{\lambda_{1}}|^{\alpha}}\right)dy<0.
\end{aligned}
\end{equation}
However, this is impossible. Therefore, we have $\lambda_{1}=\lambda^{\prime}_{1}=0$, $u(x)$ and $v(x)$ are radially symmetric about $T_{0}$. In conclusion, $u(x)$ and $v(x)$ are radially symmetric about origin on $x_{1}$ direction. Since the directions are chosen arbitrarily, we deduce $u(x)$ and $v(x)$ are radially symmetric about origin.
$\hfill{} \Box$

Notice that the extremal function $u(x)$ satisfies
$$
\int_{\mathbb{R}^N}|\nabla u|^{2}dx=S_{\alpha, \mu}
$$
and
\begin{equation}\label{5.1}
\int_{\mathbb{R}^{N}}\int_{\mathbb{R}^{N}}\frac{|u(x)|^{2^{\ast}_{\alpha, \mu}}|u(y)|^{2^{\ast}_{\alpha, \mu}}}
{|x|^{\alpha}|x-y|^{\mu}|y|^{\alpha}}dxdy=1.
\end{equation}
In the following we are going to investigate the growth rate of $u(x)$ at infinity.
{\flushleft\bf{Proof of Theorem \ref{thm1.4}}.}
For any $R>0$, we can get
\begin{equation}\nonumber
\begin{aligned}
\int_{B_{R}(0)}\int_{B_{R}(0)}\frac{|u(x)|^{2^{\ast}_{\alpha, \mu}}|u(y)|^{2^{\ast}_{\alpha, \mu}}}
{|x|^{\alpha}|x-y|^{\mu}|y|^{\alpha}}dxdy
&\geq\frac{1}{2^{\mu }R^{\mu}}\int_{B_{R}(0)}\int_{B_{R}(0)}\frac{|u(x)|^{2^{\ast}_{\alpha, \mu}}|u(y)|^{2^{\ast}_{\alpha, \mu}}}{|x|^{\alpha}|y|^{\alpha}}dxdy\\
%&=\frac{1}{2^{\mu }R^{\mu}}\int_{B_{R}(0)}\frac{|u(x)|^{2^{\ast}_{\alpha, \mu}}}{|x|^{\alpha}}dx\int_{B_{R}(0)}\frac{|u(y)|^{2^{\ast}_{\alpha, \mu}}}{|y|^{\alpha}}dy\\
&\geq\frac{1}{2^{\mu}R^{\mu}}\left(\int_{B_{R}(0)}\frac{|u(x)|^{2^{\ast}_{\alpha, \mu}}}{|x|^{\alpha}}dx\right)^{2}.
\end{aligned}
\end{equation}
If $u$ is radially symmetric and decreasing then
\begin{equation}\nonumber
\begin{aligned}
\int_{\mathbb{R}^{N}}\int_{\mathbb{R}^{N}}\frac{|u(x)|^{2^{\ast}_{\alpha, \mu}}|u(y)|^{2^{\ast}_{\alpha, \mu}}}
{|x|^{\alpha}|x-y|^{\mu}|y|^{\alpha}}dxdy&\geq\frac{1}{2^{\mu}R^{\mu}}\left(\int_{B_{R}(0)}\frac{|u(x)|^{2^{\ast}_{\alpha, \mu}}}{|x|^{\alpha}}dx\right)^{2}\\
%&=\frac{1}{2^{\mu}R^{\mu}}\left(\int^{R}_{0}dr\int_{\partial B_{r}(0)}\frac{|u(r)|^{2^{\ast}_{\alpha, \mu}}}{r^{\alpha}}dS_{r}\right)^{2}\\
%&\geq\frac{\omega^{2}_{N-1}|u(R)|^{2\cdot2^{\ast}_{\alpha, \mu}}}{2^{\mu }R^{\mu}}\left(\int^{R}_{0}r^{N-\alpha-1}dr\right)^{2}\\
&\geq\frac{\omega^{2}_{N-1}|u(R)|^{2\cdot2^{\ast}_{\alpha, \mu}}}{(N-\alpha)^{2}2^{\mu}R^{\mu+2\alpha-2N}}.
\end{aligned}
\end{equation}
According to \eqref{5.1}, it is obvious that
$$
u(R)\leq\left[\frac{(N-\alpha)^{2}2^{\mu}R^{\mu+2\alpha-2N}}{\omega^{2}_{N-1}}\right]^{\frac{1}{2\cdot2^{\ast}_{\alpha, \mu}}}=\left[\frac{(N-\alpha)^{2}2^{\mu }}{\omega^{2}_{N-1}}\right]^{\frac{1}{2\cdot2^{\ast}_{\alpha, \mu}}}\left(\frac{1}{R}\right)^{\frac{N-2}{2}}.
$$
Therefore, for any $x\neq0$,
$$
u(|x|)\leq\left[\frac{(N-\alpha)^{2}2^{\mu}}{\omega^{2}_{N-1}}\right]^{\frac{1}{2\cdot2^{\ast}_{\alpha, \mu}}}\left(\frac{1}{|x|}\right)^{\frac{N-2}{2}}.
$$
$\hfill{} \Box$
\section{The subcritical case}

\subsection{Existence of ground states}

 Let $H^{1}(\mathbb{R}^{N})$ be the Sobolev space endowed with the standard norm $\|\cdot\|$.
 We are going to study the subcritical weighted Choquard equation of the form
\begin{equation}\label{SWE}
-\Delta u+u
=\frac{1}{|x|^{\alpha}}\left(\int_{\mathbb{R}^{N}}\frac{|u(y)|^{p}}
{|x-y|^{\mu}|y|^{\alpha}}dy\right)|u|^{p-2}u\hspace{4.14mm}\mbox{in}\hspace{1.14mm} \mathbb{R}^{N},
\end{equation}
where $N\geq3$, $0<\mu<N$, $\alpha\geq0$ and $0<2\alpha+\mu\leq N$, $2_{\alpha, \mu}^{\ast}=(2N-2\alpha-\mu)/(N-2)$ and  $2-\frac{2\alpha+\mu}{N}< p<2_{\alpha, \mu}^{\ast}$.
 Associated to equation \eqref{SWE} we have the $C^{1}$ energy functional $I:H^{1}(\mathbb{R}^{N})\rightarrow\mathbb{R}$ defined by
 \[
 I(u):=\frac{1}{2}\|u\|^{2}-\frac{1}{2p}\int_{\mathbb{R}^N}
\int_{\mathbb{R}^N}\frac{|u(x)|^{p}|u(y)|^{p}}
{|x|^{\alpha}|x-y|^{\mu}|y|^{\alpha}}dxdy.
 \]
In view of the weighted Hardy-Littlewood-Sobolev inequality, the energy functional is well defined. Furthermore, critical points of $I$ are solutions of equation \eqref{SWE}. In this subsection we are concerned with existence of \textit{positive ground state solutions}, that is, a positive solution which has minimal energy among all nontrivial solutions.

 We introduce the Nehari manifold associated to equation \eqref{SWE} by
\[
\mathcal{N}=\Big\{u\in H^{1}(\mathbb{R}^{N})\backslash\{0\}: <I^{\prime}(u), u>=0\Big\}.
\]
{\flushleft\bf{Proof of Theorem \ref{Existencesub}}.}
We can use Ekeland's variational principle  to obtain a sequence $(u_{n})\subset\mathcal{N}$ such that
\begin{equation}\label{paper1j6}
I(u_{n})\rightarrow c_{\mathcal{N}} \quad \mbox{and} \quad  I^{\prime}(u_{n})\rightarrow0.
\end{equation}
Let $H^{1}_{rad}(\mathbb{R}^{N})$ be the space of radial functions in $H^{1}(\mathbb{R}^{N})$. Note that we may suppose the minimizing sequence is positive and it is contained in the radial space. In fact, it follows that there exists a sequence $(t_{n})\subset(0,+\infty)$ such that $t_{n}|u_{n}|\subset\mathcal{N}$. Thus, one has
\[
c_{\mathcal{N}}\leq I(t_{n}|u_{n}|)\leq I(t_{n}u_{n})\leq\max_{t\geq0}I(tu_{n})=I(u_{n})=c_{\mathcal{N}}+o_{n}(1).
\]
Therefore, we may consider a nonnegative minimizing sequence. For the sake of simplicity we also denote $(u_{n})\subset\mathcal{N}$ with $u_{n}\geq0$. Let $(u_{n}^{*})$ be the Schwarz symmetrization sequence associated with $u_{n}$. By Schwarz symmetrization properties we deduce that
\[
\|u_{n}^{*}\|\leq \|u_{n}\|.
\]
Moreover, the nonlocal terms satisfies
\[
\int_{\mathbb{R}^N}
\int_{\mathbb{R}^N}\frac{|u_n(x)|^{p}|u_n(y)|^{p}}
{|x|^{\alpha}|x-y|^{\mu}|y|^{\alpha}}dxdy\leq \int_{\mathbb{R}^N}
\int_{\mathbb{R}^N}\frac{|u^{*}_n(x)|^{p}|u^{*}_n(y)|^{p}}
{|x|^{\alpha}|x-y|^{\mu}|y|^{\alpha}}dxdy.
\]
For more details regarding Schwarz symmetrization we refer the readers to \cite[Chapter 3]{LE1}. Let $(t_{n}^{*})\subset(0,+\infty)$ be such that $t_{n}^{*}u_{n}^{*}\subset\mathcal{N}$. It follows from the above estimates that
\[
c_{\mathcal{N}}\leq I(t_{n}^{*}u_{n}^{*})\leq I(t_{n}^{*}u_{n})\leq \max_{t\geq0}I(tu_{n})=I(u_{n})=c_{\mathcal{N}}+o_{n}(1).
\]
Therefore, we may consider a positive minimizing sequence $t_{n}^{*}u_{n}^{*}=u_{n}\subset\mathcal{N}$ satisfying \eqref{paper1j6}. Moreover, the minimizing sequence is bounded. Thus, up to a subsequence $u_{n}\rightharpoonup u_{0}$ weakly in $H^{1}_{rad}(\mathbb{R}^{N})$. By standard density arguments we may conclude that $I'(u_{0})=0$. If $u_{0}=0$, then
\[
	 \int_{\mathbb{R}^N}
\int_{\mathbb{R}^N}\frac{|u_n(x)|^{p}|u_n(y)|^{p}}
{|x|^{\alpha}|x-y|^{\mu}|y|^{\alpha}}dxdy\rightarrow \int_{\mathbb{R}^N}
\int_{\mathbb{R}^N}\frac{|u_{0}(x)|^{p}|u_{0}(y)|^{p}}
{|x|^{\alpha}|x-y|^{\mu}|y|^{\alpha}}dxdy=0, \quad \mbox{as} \hspace{0,2cm} n\rightarrow\infty.
	 \]
Thus we get that
$\|u_{n}\|\to 0
$
as $n$ to $\infty$, which contradicts with the fact that $u_{n}\subset\mathcal{N}$. Hence  $u_{0}\neq0$
and $u_{0}\in\mathcal{N}$ and $c_{\mathcal{N}}\leq I(u_{0})$. On the other hand, in light of Fatou's Lemma we obtain
\begin{eqnarray*}
	c_{\mathcal{N}}+o_{n}(1) & = & I(u_{n})-\frac{1}{2}<I^{\prime}(u_{n}), u_{n}>\\
	& = & \left(\frac{1}{2}-\frac{1}{2p} \right)\int_{\mathbb{R}^N}
\int_{\mathbb{R}^N}\frac{|u_n(x)|^{p}|u_n(y)|^{p}}
{|x|^{\alpha}|x-y|^{\mu}|y|^{\alpha}}dxdy\\
	& = & \left(\frac{1}{2}-\frac{1}{2p} \right)\int_{\mathbb{R}^N}
\int_{\mathbb{R}^N}\frac{|u_{0}(x)|^{p}|u_{0}(y)|^{p}}
{|x|^{\alpha}|x-y|^{\mu}|y|^{\alpha}}dxdy+o_{n}(1)\\
	& = & I(u_{0})+o_{n}(1),
\end{eqnarray*}
which implies that $u_{0}$ is a positive radial ground state solution for equation~\eqref{SWE}.$\hfill{} \Box$

\subsection{Poho\v{z}aev identity}
In this subsection we establish a Poho\v{z}aev identity for the weighted case. The Poho\v{z}aev identity for the subcritical Choquard equation without weight was obtained in \cite{MS1}.
\begin{lem}\label{QWE}
Assume that $N\geq3$, $\alpha\geq0$, $0<\mu<N$, $0<\mu+2\alpha\leq N$. For $u\in W^{2,2}_{loc}(\mathbb{R}^{N})\cap L^{\frac{2Np}{2N-2\al-\mu}}_{loc}(\mathbb{R}^{N})$ is a positive solution of \eqref{SWE}, then
$$
\frac{N-2}{2}\int_{\mathbb{R}^{N}}|\nabla u|^{2}dx+\frac{N}{2}\int_{\mathbb{R}^{N}}|u|^{2}dx
=\frac{2N-2\alpha-\mu}{2p}\int_{\mathbb{R}^{N}}\left(\int_{\mathbb{R}^{N}}\frac{|u(y)|^{p}}
{|x|^{\alpha}|x-y|^{\mu}|y|^{\alpha}}dy\right)|u|^{p}dx.
$$
\end{lem}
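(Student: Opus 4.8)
The plan is to test equation \eqref{SWE} against the infinitesimal generator of dilations, $x\cdot\nabla u$, truncated at scale $R$, and then let $R\to\infty$. Fix $\varphi\in C_c^\infty(\mathbb{R}^N)$ with $\varphi\equiv1$ on $B_1$, $\mathrm{supp}\,\varphi\subset B_2$, $0\le\varphi\le1$, and put $\varphi_R(x):=\varphi(x/R)$. Since $u\in W^{2,2}_{loc}(\mathbb{R}^N)$, equation \eqref{SWE} holds a.e.; multiplying it by $\varphi_R\,(x\cdot\nabla u)\in W^{1,2}(\mathbb{R}^N)$, which is compactly supported, and integrating over $B_{2R}$ gives
$$
\int_{\mathbb{R}^N}(-\Delta u+u)\,\varphi_R\,(x\cdot\nabla u)\,dx=\int_{\mathbb{R}^N}\frac{1}{|x|^{\alpha}}\Big(\int_{\mathbb{R}^N}\frac{|u(y)|^{p}}{|x-y|^{\mu}|y|^{\alpha}}dy\Big)|u|^{p-2}u\,\varphi_R\,(x\cdot\nabla u)\,dx.
$$
I will then compute the limit of both sides as $R\to\infty$, using the pointwise identities $u\,(x\cdot\nabla u)=\tfrac12 x\cdot\nabla(u^2)$ and $|u|^{p-2}u\,(x\cdot\nabla u)=\tfrac1p x\cdot\nabla(|u|^p)$ together with integration by parts. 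Here one works under the hypothesis, automatically satisfied in our applications, that $\int|\nabla u|^2$, $\int u^2$ and the nonlocal energy $D(u):=\int_{\mathbb{R}^N}\!\int_{\mathbb{R}^N}|u(x)|^p|u(y)|^p|x|^{-\alpha}|x-y|^{-\mu}|y|^{-\alpha}\,dx\,dy$ are finite; this holds for the $H^1$ ground states of Theorem \ref{Existencesub}, finiteness of $D(u)$ in the admissible range $0<2\alpha+\mu\le N$, $2-\tfrac{2\alpha+\mu}{N}<p<2^\ast_{\alpha,\mu}$ being exactly the weighted Hardy--Littlewood--Sobolev inequality \eqref{WHLS}.

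For the left side, integration by parts gives $\int(-\Delta u)\varphi_R(x\cdot\nabla u)=\int\nabla u\cdot\nabla\!\big(\varphi_R(x\cdot\nabla u)\big)$, and expanding $\nabla u\cdot\nabla(x\cdot\nabla u)=|\nabla u|^2+\tfrac12 x\cdot\nabla(|\nabla u|^2)$ and integrating by parts once more yields, in the limit $R\to\infty$, the value $(1-\tfrac N2)\int|\nabla u|^2=-\tfrac{N-2}{2}\int|\nabla u|^2$; the terms carrying $\nabla\varphi_R$ live on the annulus $B_{2R}\setminus B_R$, where $|x\cdot\nabla\varphi_R|\le C$, and hence vanish because $|\nabla u|^2\in L^1(\mathbb{R}^N)$. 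Similarly $\int u\,\varphi_R(x\cdot\nabla u)=\tfrac12\int\varphi_R\,x\cdot\nabla(u^2)=-\tfrac12\int u^2\,\mathrm{div}(x\varphi_R)\to-\tfrac N2\int u^2$ by the same annulus estimate with $u^2\in L^1(\mathbb{R}^N)$. Thus the left side tends to $-\tfrac{N-2}{2}\int|\nabla u|^2-\tfrac N2\int u^2$.

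The crux is the right side. Writing $w:=|u|^p$ and $g(x,y):=|x|^{-\alpha}|x-y|^{-\mu}|y|^{-\alpha}$, it equals $\tfrac1p\int\varphi_R(x)\big(\int g(x,y)w(y)dy\big)\,x\cdot\nabla w(x)\,dx$, and after letting $R\to\infty$ and symmetrising in $x\leftrightarrow y$ (permissible as $g$ is symmetric) it becomes
$$
\frac{1}{2p}\int_{\mathbb{R}^N}\int_{\mathbb{R}^N}\big(x\cdot\nabla_x+y\cdot\nabla_y\big)\!\big(w(x)w(y)\big)\,g(x,y)\,dx\,dy.
$$
Integrating by parts in both variables and using that $g$ is positively homogeneous of degree $-(2\alpha+\mu)$ jointly in $(x,y)$, so that Euler's relation gives $(x\cdot\nabla_x+y\cdot\nabla_y)g=-(2\alpha+\mu)g$, this collapses to $(2\alpha+\mu-2N)D(u)$; hence the right side tends to $\tfrac{2\alpha+\mu-2N}{2p}D(u)$. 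Combining this with the previous paragraph, $-\tfrac{N-2}{2}\int|\nabla u|^2-\tfrac N2\int u^2=\tfrac{2\alpha+\mu-2N}{2p}D(u)$, which upon multiplication by $-1$ is precisely the claimed identity.

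The one genuinely delicate step is justifying the integration by parts for the nonlocal term: the kernel $g$ is singular along $\{x=0\}\cup\{y=0\}\cup\{x=y\}$ and the domain is unbounded, so one must first excise small neighbourhoods of these sets and of infinity and then show the boundary and error integrals vanish — this uses precisely the constraint $0<2\alpha+\mu\le N$ together with \eqref{WHLS} and the integrability of $u$. An alternative that sidesteps this computation, and which I expect to be the cleanest route to full rigour, is to use the \emph{exact} scaling identity $D\big(u(t\,\cdot)\big)=t^{2\alpha+\mu-2N}D(u)$ — a mere change of variables in a convergent integral — differentiate it at $t=1$, and identify the derivative with $2p\int\big(\int g(x,y)w(y)dy\big)|u|^{p-2}u\,(x\cdot\nabla u)\,dx$ by a dominated-convergence argument; the only subtlety then is to dominate the difference quotient of $D\big(u(t\,\cdot)\big)$ uniformly near the three singular sets and at infinity.
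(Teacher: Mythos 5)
Your computation is correct and the left-hand side is treated exactly as in the paper (both reduce to the limits $-\tfrac{N-2}{2}\int|\nabla u|^2$ and $-\tfrac N2\int u^2$ from \cite[Proposition 3.1]{MS1}), but your handling of the weighted nonlocal term is genuinely different from the paper's. The paper never integrates by parts against the weighted kernel: it substitutes $v(x)=|u(x)|/|x|^{\alpha/p}$, uses $\tfrac{x\cdot\nabla u}{|x|^{\alpha/p}}=x\cdot\nabla v+\tfrac{\alpha}{p}v$ to rewrite the weighted term as the \emph{unweighted} Choquard Poho\v{z}aev term for $v$ plus the extra piece $\tfrac{\alpha}{p}\int\big(\int|x-y|^{-\mu}|v(y)|^{p}dy\big)|v|^{p}\varphi(\lambda x)\,dx$, and then simply cites \cite[Proposition 3.1]{MS1} for the first piece (constant $-\tfrac{2N-\mu}{2p}$) and dominated convergence for the second (constant $+\tfrac{\alpha}{p}$), the two adding to $-\tfrac{2N-2\alpha-\mu}{2p}$. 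Your route — symmetrizing in $(x,y)$ and invoking Euler's relation $(x\cdot\nabla_x+y\cdot\nabla_y)g=-(2\alpha+\mu)g$ for the jointly homogeneous kernel, or equivalently differentiating $D(u(t\,\cdot))=t^{2\alpha+\mu-2N}D(u)$ at $t=1$ — is self-contained and conceptually cleaner, making the provenance of the constant transparent; what it costs is that the singular analysis (excision near $\{x=0\}$, $\{y=0\}$, $\{x=y\}$ and at infinity, and the vanishing of the resulting boundary/error terms) must be done from scratch for the weighted kernel, and this is precisely the step you defer, whereas the paper's substitution inherits those estimates from the translation-invariant case already worked out in \cite{MS1}, the new singularity at the origin being absorbed into $v$. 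One ordering point you should fix when writing this up: integrate by parts while the cutoff $\varphi_R$ is still in place (so every integral is compactly supported and exact) and only then let $R\to\infty$; passing to the limit before integrating by parts, as written, presupposes absolute convergence of $\int\big(\int g(x,y)w(y)\,dy\big)\,x\cdot\nabla w(x)\,dx$, for which you have no a priori control. With that reordering and the deferred domination estimates supplied (they do go through under $0<2\alpha+\mu\le N$, \eqref{WHLS}, and the finiteness of $\|u\|_{H^1}$ and $D(u)$ you assume), your argument is a valid alternative proof of the identity.
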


\begin{proof}
Define a cut-off function $\varphi(x)\in C_{c}^{\infty}(\mathbb{R}^{N})$ with $0\leq\varphi(x)\leq1$ and $\varphi(x)=1$ in $B_{1}(0)$.
For $0<\lambda<\infty$, we multiply equation \eqref{SWE} by $\psi_{u,\lambda}(x)=\varphi(\lambda x)x\cdot\nabla u(x)$ and integrate over $\mathbb{R}^{N}$,
$$
\int_{\mathbb{R}^{N}}\nabla u\nabla\psi_{u,\lambda}+u\psi_{u,\lambda}dx
=\int_{\mathbb{R}^{N}}\left(\int_{\mathbb{R}^{N}}\frac{|u(y)|^{p}}
{|x|^{\alpha}|x-y|^{\mu}|y|^{\alpha}}dy\right)|u|^{p-2}u\psi_{u,\lambda}dx.
$$
It had been proved in \cite[Proposition 3.1]{MS1} that
\begin{equation}\label{ABC}
\lim_{\lambda\rightarrow0}\int_{\mathbb{R}^{N}}\nabla u\nabla\psi_{u,\lambda}dx
=-\frac{N-2}{2}\int_{\mathbb{R}^{N}}|\nabla u|^{2}dx
\end{equation}
and
\begin{equation}\label{TGB}
\lim_{\lambda\rightarrow0}\int_{\mathbb{R}^{N}}u\psi_{u,\lambda}dx
=-\frac{N}{2}\int_{\mathbb{R}^{N}}|u|^{2}dx.
\end{equation}
Define a function
$$
v(x)=\frac{|u(x)|}{|x|^{\frac{\alpha}{p}}},
$$
then
$$
\frac{x\cdot\nabla u(x)}{|x|^{\frac{\alpha}{p}}}=x\cdot\nabla v(x)+\frac{\alpha}{p}v(x).
$$
Hence we have
\begin{equation}\nonumber
\begin{aligned}
&\left(\int_{\mathbb{R}^{N}}\frac{|u(y)|^{p}}
{|x|^{\alpha}|x-y|^{\mu}|y|^{\alpha}}dy\right)|u|^{p-2}u\psi_{u,\lambda}\\
%&=\left(\int_{\mathbb{R}^{N}}\frac{|v(y)|^{p}}
%{|x-y|^{\mu}}dy\right)|v|^{p-2}v\frac{\psi_{u,\lambda}}{|x|^{\frac{\alpha}{p}}}\\
%&=\left(\int_{\mathbb{R}^{N}}\frac{|v(y)|^{p}}
%{|x-y|^{\mu}}dy\right)|v|^{p-2}v(x\cdot\nabla v(x)+\frac{\alpha}{p}v(x))\\
&\hspace{6mm}=\left(\int_{\mathbb{R}^{N}}\frac{|v(y)|^{p}}
{|x-y|^{\mu}}dy\right)|v|^{p-2}v\psi_{v,\lambda}+\frac{\alpha}{p}\left(\int_{\mathbb{R}^{N}}\frac{|v(y)|^{p}}
{|x-y|^{\mu}}dy\right)|v|^{p}\varphi(\lambda x).\\
\end{aligned}
\end{equation}
Again with the result in \cite[Proposition 3.1]{MS1} that
$$
\lim_{\lambda\rightarrow0}\int_{\mathbb{R}^{N}}\left(\int_{\mathbb{R}^{N}}\frac{|v(y)|^{p}}
{|x-y|^{\mu}}dy\right)|v|^{p-2}v\psi_{v,\lambda}dx
=-\frac{2N-\mu}{2p}\int_{\mathbb{R}^{N}}\left(\int_{\mathbb{R}^{N}}\frac{|v(y)|^{p}}
{|x-y|^{\mu}}dy\right)|v|^{p}dx
$$
and the fact that, by Dominated Convergence Theorem,
$$
\lim_{\lambda\rightarrow0}\int_{\mathbb{R}^{N}}\left(\int_{\mathbb{R}^{N}}\frac{|v(y)|^{p}}
{|x-y|^{\mu}}dy\right)|v|^{p}\varphi(\lambda x)dx
=\left(\int_{\mathbb{R}^{N}}\frac{|v(y)|^{p}}
{|x-y|^{\mu}}dy\right)|v|^{p}dx,
$$
we conclude that
\begin{equation}\label{DEF}
\begin{aligned}
&\lim_{\lambda\rightarrow0}\int_{\mathbb{R}^{N}}\left(\int_{\mathbb{R}^{N}}\frac{|u(y)|^{p}}
{|x|^{\alpha}|x-y|^{\mu}|y|^{\alpha}}dy\right)|u|^{p-2}u\psi_{u,\lambda}dx\\
&=-\frac{2N-2\alpha-\mu}{2p}\int_{\mathbb{R}^{N}}\left(\int_{\mathbb{R}^{N}}\frac{|u(y)|^{p}}
{|x|^{\alpha}|x-y|^{\mu}|y|^{\alpha}}dy\right)|u|^{p}dx.
\end{aligned}
\end{equation}
Taking \eqref{ABC}, \eqref{TGB} and \eqref{DEF} into account, we know
$$
\frac{N-2}{2}\int_{\mathbb{R}^{N}}|\nabla u|^{2}dx+\frac{N}{2}\int_{\mathbb{R}^{N}}|u|^{2}dx
=\frac{2N-2\alpha-\mu}{2p}\int_{\mathbb{R}^{N}}\left(\int_{\mathbb{R}^{N}}\frac{|u(y)|^{p}}
{|x|^{\alpha}|x-y|^{\mu}|y|^{\alpha}}dy\right)|u|^{p}dx.
$$
\end{proof}
{\flushleft\bf{Proof of Theorem \ref{Po}}.}
Once we have the theorem \ref{QWE}, we can multiply equation \eqref{SWE} by $u$ and integrate by parts,
$$
\int_{\mathbb{R}^{N}}|\nabla u|^{2}dx+\int_{\mathbb{R}^{N}}|u|^{2}dx
=\int_{\mathbb{R}^{N}}\left(\int_{\mathbb{R}^{N}}\frac{|u(y)|^{p}}
{|x|^{\alpha}|x-y|^{\mu}|y|^{\alpha}}dy\right)|u|^{p}dx.
$$
Theorem \eqref{QWE} implies
$$
\left(\frac{N-2}{2}-\frac{2N-2\alpha-\mu}{2p}\right)\int_{\mathbb{R}^{N}}|\nabla u|^{2}dx+\left(\frac{N}{2}-\frac{2N-2\alpha-\mu}{2p}\right)\int_{\mathbb{R}^{N}}|u|^{2}dx
=0.
$$
If $p\geq\frac{2N-2\alpha-\mu}{N-2}$ or $p\leq\frac{2N-2\alpha-\mu}{N}$, then $u\equiv0$.$\hfill{} \Box$

\subsection{Regularity} The appearance of nonlocality and singular weight brings the main difficulty in proving the regularity of the solutions. Inspired by the techniques in \cite{LZ, MS1}, we are able to prove the following lemma.

\begin{lem}\label{INT22}
 Assume that $N\geq3$, $\alpha\geq0$, $0<\mu<N$, $0<2\alpha+\mu\leq N$ and $2-\frac{2\alpha+\mu}{N}<p<\frac{2N-2\alpha-\mu}{N-2}$. Let $u\in H^{1}(\mathbb{R}^N)$ be a positive solution of \eqref{SWE}.
Then $u\in W^{2,s}(\mathbb{R}^N)$ for any $s>1$ and $u\in C^\infty(\R^N-\{0\})$.
\end{lem}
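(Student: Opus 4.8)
\textbf{Proof plan for Lemma \ref{INT22}.}
The plan is to bootstrap the integrability of $u$ starting from $u\in H^1(\mathbb{R}^N)\hookrightarrow L^{2^*}(\mathbb{R}^N)$, then convert this into $W^{2,s}$ regularity by the Calder\'on--Zygmund estimate, and finally upgrade to $C^\infty$ away from the origin by local Schauder theory plus the smoothness of the Riesz potential term in $\mathbb{R}^N-\{0\}$. First I would write the nonlocal nonlinearity as
\[
g(x):=\frac{1}{|x|^{\alpha}}\Big(\int_{\mathbb{R}^{N}}\frac{|u(y)|^{p}}{|x-y|^{\mu}|y|^{\alpha}}dy\Big)|u(x)|^{p-2}u(x),
\]
so that $-\Delta u+u=g$, and estimate $\|g\|_{L^s_{loc}}$ and $\|g\|_{L^s(\mathbb{R}^N\setminus B_R)}$ in terms of the $L^t$-norms of $u$ via the weighted Hardy--Littlewood--Sobolev inequality (Proposition \ref{prop1}) and H\"older's inequality. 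Because $2-\frac{2\alpha+\mu}{N}<p<\frac{2N-2\alpha-\mu}{N-2}$, the exponents sit strictly inside the admissible range for \eqref{WHLS}, which is exactly what makes the iteration gain integrability at each step rather than stall.

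The core is a Moser-type iteration. Suppose $u\in L^{t}$ for some $t$ in a suitable range. Then $|u|^p\in L^{t/p}$, the inner convolution $\int |u(y)|^p|y|^{-\alpha}|x-y|^{-\mu}\,dy$ lies in a Lebesgue space determined by \eqref{WHLS}, and multiplying by $|x|^{-\alpha}|u|^{p-1}$ and applying H\"older gives $g\in L^{\sigma(t)}$ for an explicit $\sigma(t)$. Then $u=(-\Delta+1)^{-1}g = \mathcal{B}_2 * g$ (convolution with the Bessel kernel) and the mapping properties of the Bessel potential — it improves integrability like $(-\Delta)^{-1}$ locally but is globally $L^s\to L^s$ — yield $u\in L^{t'}$ with $t'>t$ as long as the relevant exponent is subcritical; one checks that the recursion $t\mapsto t'$ strictly increases and, in finitely many steps, escapes every bounded interval, so $u\in L^s(\mathbb{R}^N)$ for all $s\ge 2^*$ (and in particular for all large $s$). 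Feeding this back, $g\in L^s(\mathbb{R}^N)$ for all $s>1$, whence $u\in W^{2,s}(\mathbb{R}^N)$ for all $s>1$ by Calder\'on--Zygmund, and by Sobolev embedding $u\in C^{1,\gamma}_{loc}(\mathbb{R}^N)$ for every $\gamma\in(0,1)$.

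For the $C^\infty$ statement away from the origin I would argue as in the proof of Theorem \ref{DF2}: fix $x_0\neq 0$ and $r<|x_0|/2$, split the convolution defining $u$ (equivalently, split $g$) into a piece supported near $x_0$ and a piece supported away from $x_0$; the near piece is handled by interior elliptic regularity since $u$ is already H\"older continuous there, while the far piece, together with the weight $|x|^{-\alpha}$ which is smooth on $B_r(x_0)$, can be differentiated under the integral sign arbitrarily many times thanks to the decay/integrability of $u$ established above (the estimates \eqref{ER}, \eqref{WE}-type bounds carry over verbatim, using $u\in L^\infty$ locally and the integrability at infinity). A standard bootstrap — each derivative of $u$ solves an elliptic equation with right-hand side one degree smoother — then promotes $u$ to $C^\infty(\mathbb{R}^N-\{0\})$. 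The main obstacle is purely bookkeeping: one must verify that the recursion on the integrability exponents genuinely moves strictly in the improving direction throughout the full parameter range $2\alpha+\mu\le N$, $2-\frac{2\alpha+\mu}{N}<p<\frac{2N-2\alpha-\mu}{N-2}$, i.e. that neither the lower nor the upper critical exponent is ever hit during the iteration; handling the endpoint $2\alpha+\mu=N$ and keeping the H\"older exponents in \eqref{WHLS} admissible at every stage is the delicate point, but it follows from the strict subcriticality of $p$.
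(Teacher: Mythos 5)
Your plan follows essentially the same route as the paper's proof: starting from $u\in L^{2^*}$, iterate the weighted Hardy--Littlewood--Sobolev inequality together with Calder\'on--Zygmund estimates and Sobolev embedding (the paper organizes this iteration as in \cite[Proposition 4.1]{MS2}) to reach $u\in W^{2,s}(\mathbb{R}^N)$ for all $s>1$, and then obtain $C^\infty(\mathbb{R}^N-\{0\})$ exactly as in Theorem \ref{DF2}, by splitting the convolution, differentiating the far part under the integral sign using the boundedness and integrability of $u$, and bootstrapping with Schauder estimates. The only differences are presentational (you phrase the inversion via the Bessel kernel rather than the two-stage exponent recursion the paper writes out), so the proposal is correct and matches the paper's argument.
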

\begin{proof}
Since $u\in H^{1}(\mathbb{R}^N)$, $u\in L^{s_{0}}(\mathbb{R}^N)$ with
$$
s_0 =\frac{2Np}{2N-\mu-2\alpha}.
$$
Following the methods in \cite{MS2}, we set $\overline{s}_0 = \underline{s}_0
= s_0$ and assume that $u\in L^{s}(\mathbb{R}^N)$ for every $s\in[\underline{s}_n, \overline{s}_n]$.
Let $Tf(x)=\displaystyle\int_{\mathbb{R}^{N}}\frac{f(y)}
{|x|^{\alpha}|x-y|^{\mu}|y|^{\beta}}dy$, then the weighted Hardy-Littlewood-Sobolev inequality can be written in the form of
$$
|Tf|_{l}=\sup_{|h|_{s}=1}\langle Tf,h\rangle\leq C|f|_{r},
$$
where $\frac{1}{r}+\frac{\alpha+\beta+\mu}{N}=1+\frac{1}{l}$, $\frac{1}{l}+\frac{1}{s}=1$.
Thus, if
$$
\frac{1}{t}=\frac{p}{s}-\frac{N-\mu-2\alpha}{N}>0,
$$
then $\displaystyle\int_{\mathbb{R}^{N}}\frac{|u(y)|^{p}}
{|x|^{\alpha}|x-y|^{\mu}|y|^{\alpha}}dy\in L^{t}(\mathbb{R}^N)$. Further, if
$$
\frac{1}{r}=\frac{2p-1}{s}-\frac{N-\mu-2\alpha}{N}<1,
$$
then $\left(\displaystyle\int_{\mathbb{R}^{N}}\frac{|u(y)|^{p}}
{|x|^{\alpha}|x-y|^{\mu}|y|^{\alpha}}dy\right)|u|^{p-2}u\in L^{r}(\mathbb{R}^N)$. Consequently, if
$$
\frac{N-\mu-2\alpha}{Np}<\frac{1}{s}<\frac{2N-\mu-2\alpha}{N(2p-1)},
$$
by the classical Calder\'{o}n-Zygmund $L^p$ regularity estimates \cite[Chapter 9]{GT}, we know $u\in W^{2,r}(\mathbb{R}^N)$. Since $s\in[\underline{s}_n, \overline{s}_n]$, we know that $u\in W^{2,r}(\mathbb{R}^N)$, for every $r > 1$ such that
$$
\frac{2p-1}{\overline{s}_n}-\frac{N-\mu-2\alpha}{N}<\frac{1}{r}<\frac{2p-1}{\underline{s}_n}-\frac{N-\mu-2\alpha}{N},\
\hbox{and}\
\frac{1}{r}>\frac{N-\mu-2\alpha}{N}\left(1-\frac{1}{p}\right).
$$
By the Sobolev embedding theorem, we know $u\in L^{s}(\mathbb{R}^N)$ provided that
$$
\frac{2p-1}{\overline{s}_n}-\frac{N-\mu-2\alpha+2}{N}<\frac{1}{s}<\frac{2p-1}{\underline{s}_n}-\frac{N-\mu-2\alpha}{N},\
\hbox{and}\
\frac{1}{s}>\frac{N-\mu-2\alpha}{N}\left(1-\frac{1}{p}\right)-\frac{2}{N}.
$$
%From $\underline{s}_n<\frac{2Np}{2N-\mu-2\alpha}<\overline{s}_n$, we deduce that
%$$
%\frac{2p-1}{\overline{s}_n}-\frac{N-\mu-2\alpha+2}{N}<\frac{1}{\overline{s}_n}<\frac{2Np}{2N-\mu-2\alpha}
%<\frac{1}{\underline{s}_n}<\frac{2p-1}{\underline{s}_n}-\frac{N-\mu-2\alpha}{N}.
%$$
%If \textcolor[rgb]{0.98,0.00,0.00}{$\frac{2p-1}{\overline{s}_n}-\frac{N-\mu-2\alpha}{N}\geq1$}, we are done. Otherwise, we set
%$\frac{1}{\overline{s}_{n+1}}=\frac{2p-1}{\overline{s}_n}-\frac{N-\mu-2\alpha}{N}$. Similarly, if
%$\frac{2p-1}{\overline{s}_n}-\frac{N-\mu-2\alpha+2}{N}\leq \frac{N-\mu-2\alpha}{N}(1-\frac{1}{p})-\frac{2}{N}$, then we %are done. Otherwise we set
%$\frac{1}{\overline{s}_{n+1}}=\frac{2p-1}{\overline{s}_n}-\frac{N-\mu-2\alpha+2}{N}$ and iterate. This allows
%to reach the conclusion after a finite number of steps.
Following the iterative steps in \cite[Proposition 4.1]{MS2}, we know
\begin{equation}\label{RegP1}
u\in L^s(\R^N), \ \ \hbox{if}\ \ s\geq1 \ \ \hbox{and}\ \ \frac{1}{s}>\frac{N-\mu-2\alpha}{N}\left(1-\frac{1}{p}\right)-\frac{2}{N},
\end{equation}
and
\begin{equation}\label{RegP2}
u\in W^{2,r}(\mathbb{R}^N), \ \ \hbox{if}\ \ r>1 \ \ \hbox{and}\ \ \frac{1}{r}>\frac{N-\mu-2\alpha}{N}\left(1-\frac{1}{p}\right).
\end{equation}

We can prove more regularity of $u$, that is, for every $r >1$, $u\in W^{2,r}(\mathbb{R}^N)$.

In fact, since
%$\frac{1}{p}>\frac{N-2}{2N-\mu-2\alpha}$, we have
$\frac{N-\mu-2\alpha}{N}\left(1-\frac{1}{p}\right)-\frac{2}{N}<\frac{N-\mu-2\alpha}{Np}$, by \eqref{RegP1} we know that $\displaystyle\int_{\mathbb{R}^{N}}\frac{|u(y)|^{p}}
{|x|^{\alpha}|x-y|^{\mu}|y|^{\alpha}}dy\in L^{\infty}(\mathbb{R}^N)$. If $r_0\in(1,\infty)$ is defined by $\frac{1}{r_0}=\frac{N-\mu-2\alpha}{N}\left(1-\frac{1}{p}\right)$, by \eqref{RegP2}, $u\in W^{2,r}(\mathbb{R}^N)$ for every $r\in(1,r_0)$.
Assume now that $u\in W^{2,r}(\mathbb{R}^N)$ for every $r\in(1, r_n)$. By the classical Sobolev embedding
theorem, $u\in L^{s}(\mathbb{R}^N)$ for every $s\in[1,\infty)$ such that
$$
\frac{1}{s}>\frac{1}{r_n}-\frac{2}{N}.
$$
Hence, $\left(\displaystyle\int_{\mathbb{R}^{N}}\frac{|u(y)|^{p}}
{|x|^{\alpha}|x-y|^{\mu}|y|^{\alpha}}dy\right)|u|^{p-2}u\in L^{r}(\mathbb{R}^N)$ for every $r\in(1,\infty)$ such that
$$
p-1 >\frac{1}{r}>(p-1)\left(\frac{1}{r_n}-\frac{2}{N}\right).
$$
By the classical
Calder\'{o}n-Zygmund theory \cite[Chapter 9]{GT}, $u\in W^{2,r}(\mathbb{R}^N)$ for every $r\in(1,\infty)$ such that
$$
\frac{1}{r}>(p-1)\left(\frac{1}{r_n}-\frac{2}{N}\right).
$$
If $r_n\geq \frac N2$, we are done. Otherwise set
$$
\frac{1}{r_{n+1}} =(p-1)\left(\frac{1}{r_n}-\frac{2}{N}\right).
$$
Following the arguments in \cite[Proposition 4.1]{MS2} again, we obtain that
$$
\frac{1}{r_{n+1}}<\frac{1}{r_n},
$$
%If $p\leq2$, then
%$$
%\frac{1}{r_{n+1}}<\frac{1}{r_n}-\frac{2(p-1)}{N},
%$$
%and the conclusion is reached after a finite number of steps. If $p >2$, then since $\frac{1}{r_n}<\frac{N-\mu-2\alpha}{N}(1-\frac{1}{p})$ and $\frac{1}{p}\geq\frac{N-2}{2N-\mu-2\alpha}$,
%$$\aligned
%\frac{1}{r_{n+1}}&<\frac{1}{r_n}+(p-1)\Big(\frac{N-\mu-2\alpha}{N}\Big(1-\frac{2}{p}\Big)-\frac{2}{N}\Big)\\
%&<\frac{1}{r_n}+(p-1)\Big(\frac{N-\mu-2\alpha}{N}\Big(1-2\frac{N-2}{2N-\mu-2\alpha}\Big)-\frac{2}{N}\Big)<\frac{1}{r_n},
%\endaligned$$
and then the conclusion is again reached after a finite number of steps.

Apply the Morrey-Sobolev embedding, we know that $u\in C_{loc}^{k,\lambda}(\mathbb{R}^N)$ for
$k\in\{0, 1\}$. Now, for any $x\in\mathbb{R}^{N}-\{0\}$, there exists $r<\frac{|x|}{2}$ such that
$$
\int_{\mathbb{R}^{N}}\frac{|u(y)|^{p}}{|x-y|^{\mu}|y|^{\alpha}}dy=\int_{B_{2r}(x)}\frac{|u(y)|^{p}}{|x-y|^{\mu}|y|^{\alpha}}dy+\int_{\mathbb{R}^{N}-B_{2r}(x)}\frac{|u(y)|^{p}}{|x-y|^{\mu}|y|^{\alpha}}dy.
$$
As the estimate in \eqref{CDE}, for any $\delta<N-\mu$,
\begin{equation}\label{HY}
\int_{B_{2r}(x)}\frac{|u(y)|^{p}}{|x-y|^{\mu}|y|^{\alpha}}dy\in C^{\delta}(\mathbb{R}^{N}-\{0\}).
\end{equation}
Next we show that
$$
\int_{\mathbb{R}^{N}-B_{2r}(x)}\frac{|u(y)|^{p}}{|x-y|^{\mu}|y|^{\alpha}}dy\in C^{\infty}(\mathbb{R}^{N}-\{0\}).
$$
We denote
$$
\psi(x)=\int_{\mathbb{R}^{N}}\frac{|u(y)|^{p}}{|x-y|^{\mu}|y|^{\alpha}}\chi_{\{\mathbb{R}^{N}-B_{2r}(x)\}}dy,
$$
for any small $t<r$, $0<\theta<1$ and $e_{i}$ is the unit $i$th vector,
\begin{equation}\nonumber
\begin{aligned}
\left|\frac{\psi(x+te_{i})-\psi(x)}{t}\right|
&\leq\frac{1}{|t||y|^{\alpha}}\int_{\mathbb{R}^{N}}\left|\frac{|u(y)|^{p}}{|x+te_{i}-y|^{\mu}}\chi_{\{\mathbb{R}^{N}-B_{2r}(x+te_{i})\}}-\frac{|u(y)|^{p}}{|x-y|^{\mu}}\chi_{\{\mathbb{R}^{N}-B_{2r}(x)\}}\right|dy\\
%&\leq C\int_{\mathbb{R}^{N}}\frac{|u(y)|^{p}}{|x+\theta te_{i}-y|^{\mu+1}|y|^{\alpha}}\chi_{\{\mathbb{R}^{N}-B_{2r}(x+\theta te_{i})\}}dy\\
&\leq C\int_{B_{r}(0)}\frac{|u(y)|^{p}}{|x-y|^{\mu+1}|y|^{\alpha}}dy
+C\int_{\mathbb{R}^{N}-B_{r}(x)-B_{r}(0)}\frac{|u(y)|^{p}}{|x-y|^{\mu+1}|y|^{\alpha}}dy.\\
\end{aligned}
\end{equation}
The boundedness of $u(x)$ implies
\begin{equation}\label{KJ}
\int_{B_{r}(0)}\frac{|u(y)|^{p}}{|x-y|^{\mu+1}|y|^{\alpha}}dy
\leq \frac{1}{r^{\mu+1}}||u||^{p}_{L^{\infty}(\mathbb{R}^{N})}\int_{B_{r}(0)}\frac{1}{|y|^{\alpha}}dy<\infty.
\end{equation}
We also have
\begin{equation}\label{LK}
\int_{\mathbb{R}^{N}-B_{r}(x)- B_{r}(0)}\frac{|u(y)|^{p}}{|x-y|^{\mu+1}|y|^{\alpha}}dy
\leq\frac{1}{r^{\mu+1+\alpha}}||u||^{p}_{L^{p}(\mathbb{R}^{N})}<\infty.
\end{equation}
Hence \eqref{KJ}, \eqref{LK} and the Lebesgue's Dominated Convergence Theorem implies $\displaystyle\psi(x)\in C^{1}(\mathbb{R}^{N}-\{0\})$.
Continuing this process, we have $\displaystyle\psi(x)\in C^{\infty}(\mathbb{R}^{N}-\{0\})$.
On the other hand,  $\frac{|u|^p}{|x|^{\alpha}}\in C_{loc}^{k,\lambda}(\mathbb{R}^N)$.
Therefore $\left(\displaystyle\int_{\mathbb{R}^{N}}\frac{|u(y)|^{p}}
{|x|^{\alpha}|x-y|^{\mu}|y|^{\alpha}}dy\right)|u|^{p-2}u\in  C_{loc}^{k,\lambda}(\mathbb{R}^N-\{0\})$ and by the classical Schauder regularity estimates \cite[Theorem 4.6]{GT}, $u\in  C_{loc}^{2,\lambda}(\mathbb{R}^N-\{0\})$. Continuing this process, we obtain the conclusion.
\end{proof}

\subsection{Symmetry}

The Bessel potential $B_{\tau}$ is denoted as
$$
B_{\tau}=(I-\Delta)^{-\frac{\tau}{2}},~~\tau>0.
$$
$\mathcal{F}$ is defined by the Fourier transform
$$
\mathcal{F}(B_{\tau}(f))=(1+4\pi^{2}|x|^{2})^{-\frac{\tau}{2}}\mathcal{F}(f),~~f\in H^{1}(\mathbb{R}^N).
$$
Particularly, when $\alpha=2$, the Bessel potential is the inverse operator of $I-\Delta$ in the space $H^{1}(\mathbb{R}^N)$.

If we consider the nonlinear equation:
\begin{equation}\label{A}
(I-\Delta)^{\frac{\tau}{2}}u=f(x),~~\tau>0.
\end{equation}
It's well-known that equation \eqref{A} is equivalent to
$$
u=g_{\tau}\ast f(x),~~\tau>0.
$$
Where $g_{\tau}$ is the Bessel Kernel defined by
$$
g_{\tau}(x)=\frac{1}{(4\pi)^{\frac{\tau}{2}}\Gamma(\frac{\tau}{2})}
\int_{0}^{\infty}e^{\frac{-\pi|x|^{2}}{t}-\frac{t}{4\pi}}t^{-\frac{N-\tau}{2}-1}dt.
$$
We introduce an important estimate of the Bessel Kernel that will play a crucial role in the rest of the arguments, see \cite{Z}.
\begin{lem}\label{lem1}
For any Bessel Kernel $g_{\tau}(x)$, there holds for any $0<\delta\leq\tau$,
$$
g_{\tau}(x)\leq\frac{C_{1}}{|x|^{N-\tau}e^{C_{2}|x|}}\leq\frac{C}{|x|^{N-\delta}}.
$$
Where $C$, $C_{1}$, $C_{2}$ are fixed constant.
\end{lem}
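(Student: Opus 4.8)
The plan is to estimate the defining integral for $g_{\tau}$ directly, isolating the exponential decay by a single use of the arithmetic--geometric mean inequality and recovering the power $|x|^{\tau-N}$ by a scaling change of variables. Throughout one works in the range $0<\tau<N$ relevant to the applications here (in particular $\tau=2\le N-1$), so that $\gamma:=(N-\tau)/2$ is strictly positive; write $c_{N,\tau}:=\bigl[(4\pi)^{\tau/2}\Gamma(\tfrac{\tau}{2})\bigr]^{-1}$ for the normalising constant, so that $g_{\tau}(x)=c_{N,\tau}\int_{0}^{\infty}e^{-\pi|x|^{2}/t-t/(4\pi)}\,t^{-\gamma-1}\,dt$.

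First I would split the exponent as $\frac{\pi|x|^{2}}{t}+\frac{t}{4\pi}=\bigl(\frac{\pi|x|^{2}}{2t}+\frac{t}{4\pi}\bigr)+\frac{\pi|x|^{2}}{2t}$ and apply AM--GM to the first bracket, which gives $\frac{\pi|x|^{2}}{2t}+\frac{t}{4\pi}\ge 2\sqrt{\frac{|x|^{2}}{8}}=\frac{|x|}{\sqrt{2}}$ for every $t>0$. Consequently $e^{-\pi|x|^{2}/t-t/(4\pi)}\le e^{-|x|/\sqrt{2}}\,e^{-\pi|x|^{2}/(2t)}$, and hence
$$
g_{\tau}(x)\ \le\ c_{N,\tau}\,e^{-|x|/\sqrt{2}}\int_{0}^{\infty}e^{-\pi|x|^{2}/(2t)}\,t^{-\gamma-1}\,dt.
$$
Next I would substitute $t=\tfrac{\pi}{2}|x|^{2}s$ in the remaining integral; a direct computation gives $\int_{0}^{\infty}e^{-\pi|x|^{2}/(2t)}t^{-\gamma-1}\,dt=\bigl(\tfrac{\pi}{2}\bigr)^{-\gamma}|x|^{-2\gamma}\int_{0}^{\infty}e^{-1/s}s^{-\gamma-1}\,ds$, and the last integral is finite because $e^{-1/s}$ is flat at $s=0$ while $-\gamma-1<-1$ (this is exactly where $\gamma>0$ is used) makes the tail integrable. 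Since $2\gamma=N-\tau$, this yields
$$
g_{\tau}(x)\ \le\ \frac{C_{1}}{|x|^{N-\tau}e^{C_{2}|x|}},\qquad C_{2}=\tfrac{1}{\sqrt{2}},
$$
for all $x\neq0$, which is the first asserted inequality.

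For the second inequality, given $0<\delta\le\tau$ I would write $\dfrac{C_{1}}{|x|^{N-\tau}e^{C_{2}|x|}}=\dfrac{C_{1}}{|x|^{N-\delta}}\cdot\dfrac{|x|^{\tau-\delta}}{e^{C_{2}|x|}}$ and observe that $r\mapsto r^{\tau-\delta}e^{-C_{2}r}$ is bounded on $(0,\infty)$: it is continuous there, equals $1$ at $r=0$ if $\tau=\delta$ (and vanishes at $r=0$ otherwise), and tends to $0$ as $r\to\infty$ since $\tau-\delta\ge0$; calling its supremum $M$ gives the claim with $C=C_{1}M$. The only genuinely delicate point is the bookkeeping around convergence of the scaled integral, which is what confines the clean statement to $\tau<N$ (the regime used throughout the paper); an entirely equivalent alternative route is to identify $\int_{0}^{\infty}e^{-a/t-bt}t^{\nu-1}\,dt=2(a/b)^{\nu/2}K_{\nu}(2\sqrt{ab})$ with $a=\pi|x|^{2}$, $b=1/(4\pi)$, $\nu=-\gamma$, so that $g_{\tau}(x)$ is a constant multiple of $|x|^{-\gamma}K_{\gamma}(|x|)$, and then to invoke the classical asymptotics $K_{\gamma}(r)\sim\tfrac{1}{2}\Gamma(\gamma)(r/2)^{-\gamma}$ as $r\to0^{+}$ and $K_{\gamma}(r)\sim\sqrt{\pi/(2r)}\,e^{-r}$ as $r\to\infty$; I prefer the self-contained AM--GM argument above.
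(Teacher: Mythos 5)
Your argument is correct, but note that the paper does not prove this lemma at all: it is quoted with a citation to \cite{Z}, so your proof is genuinely different in that it is self-contained. Your route — splitting the exponent, one application of AM--GM to extract the factor $e^{-|x|/\sqrt{2}}$, and the scaling $t=\tfrac{\pi}{2}|x|^{2}s$ to produce $|x|^{-2\gamma}=|x|^{-(N-\tau)}$ times the convergent integral $\int_{0}^{\infty}e^{-1/s}s^{-\gamma-1}\,ds=\Gamma(\gamma)$ — is sound, and the passage from the first to the second inequality via the boundedness of $r\mapsto r^{\tau-\delta}e^{-C_{2}r}$ for $\tau-\delta\ge 0$ is also fine (with $C$ depending on $\delta$, as is implicit in the statement). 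Compared with the standard reference proof, which identifies $g_{\tau}$ as a constant multiple of $|x|^{-\gamma}K_{\gamma}(|x|)$ and invokes the asymptotics of the modified Bessel function (the alternative you sketch), your computation is more elementary and yields explicit constants such as $C_{2}=1/\sqrt{2}$, at the cost of not giving the sharp behaviour at $0$ and $\infty$, which is not needed here. The only caveat worth recording is the restriction $0<\tau<N$: it is not merely bookkeeping, since for $\tau>N$ the kernel $g_{\tau}$ is bounded and strictly positive at the origin while the right-hand side $C_{1}|x|^{\tau-N}e^{-C_{2}|x|}$ vanishes there, so the first inequality as displayed genuinely requires $\tau<N$; this is exactly the regime in which the lemma is applied in the paper ($\tau=2$, $N\ge 3$), and it is also the hypothesis under which the estimate is stated in \cite{Z}, so your restriction is appropriate rather than a gap.
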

Hence we can rewrite \eqref{SWE} into an equivalent integral system
\begin{equation}\label{B}
\begin{cases}
u(x)=\displaystyle\int_{\mathbb{R}^{N}}g_{2}(x-y)v(y)u(y)^{p-1}dy,\vspace{3mm}\\
v(x)=\displaystyle\int_{\mathbb{R}^{N}}\frac{u(y)^{p}}{|x|^{\alpha}|x-y|^{\mu}|y|^{\alpha}}dy.
\end{cases}
\end{equation}
Moreover, we present the following symbols:
$$
x^{\lambda}=(2\lambda-x_{1},...,x_{N});~~u(x^{\lambda})=u_{\lambda}(x);~~v(x^{\lambda})=v_{\lambda}(x).
$$
Based on the symbols, we specify some planes:
$$
\Sigma_{\lambda}=\{x\in\mathbb{R}^{N}:x_{1}<\lambda\};~~T_{\lambda}=\{x\in\mathbb{R}^{N}:x_{1}=\lambda\};
$$
$$
\Sigma^{u}_{\lambda}=\{x\in\Sigma_{\lambda}|u(x)>u(x^{\lambda})\};~~\Sigma^{v}_{\lambda}=\{x\in\Sigma_{\lambda}|v(x)>v(x^{\lambda})\}.
$$
\begin{lem}\label{lem3}
For any solution $(u, v)$ of system \eqref{B}, we have
\begin{equation}\label{C}
u(x)-u_{\lambda}(x)=\int_{\Sigma_{\lambda}}\left[g_{2}(x-y)-g_{2}(x^{\lambda}-y)\right]\left[v(y)u(y)^{p-1}-v(y^{\lambda})u(y^{\lambda})^{p-1}\right]dy.
\end{equation}
\end{lem}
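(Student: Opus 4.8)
The plan is to imitate the computation already carried out for Lemma~\ref{lem31} in the critical case, using only the first line of the integral system \eqref{B} together with the radial symmetry of the Bessel kernel $g_{2}$. First I would start from $u(x)=\int_{\mathbb{R}^{N}}g_{2}(x-y)v(y)u(y)^{p-1}\,dy$ and split the domain of integration as $\mathbb{R}^{N}=\Sigma_{\lambda}\cup(\mathbb{R}^{N}\setminus\Sigma_{\lambda})$, where $\mathbb{R}^{N}\setminus\Sigma_{\lambda}=\{y:y_{1}>\lambda\}$. Since the reflection $y\mapsto y^{\lambda}$ is a measure-preserving involution carrying $\{y_{1}>\lambda\}$ onto $\Sigma_{\lambda}$, performing the change of variables $y\mapsto y^{\lambda}$ in the second piece gives
\[
u(x)=\int_{\Sigma_{\lambda}}g_{2}(x-y)v(y)u(y)^{p-1}\,dy+\int_{\Sigma_{\lambda}}g_{2}(x-y^{\lambda})v(y^{\lambda})u(y^{\lambda})^{p-1}\,dy.
\]
Replacing $x$ by $x^{\lambda}$ in this identity yields the corresponding representation of $u_{\lambda}(x)=u(x^{\lambda})$, namely $u_{\lambda}(x)=\int_{\Sigma_{\lambda}}g_{2}(x^{\lambda}-y)v(y)u(y)^{p-1}\,dy+\int_{\Sigma_{\lambda}}g_{2}(x^{\lambda}-y^{\lambda})v(y^{\lambda})u(y^{\lambda})^{p-1}\,dy$.

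The second step is to record the reflection identities for the kernel. Because $g_{2}$ is radial (it depends on $x$ only through $|x|$, as is clear from the integral formula defining $g_{\tau}$) and the reflection through $T_{\lambda}$ is an isometry, one has $|x^{\lambda}-y^{\lambda}|=|x-y|$ and $|x-y^{\lambda}|=|x^{\lambda}-y|$, hence
\[
g_{2}(x^{\lambda}-y^{\lambda})=g_{2}(x-y),\qquad g_{2}(x-y^{\lambda})=g_{2}(x^{\lambda}-y).
\]
Substituting these into the two representations above and subtracting $u_{\lambda}(x)$ from $u(x)$, the four integrals over $\Sigma_{\lambda}$ recombine: the terms with density $v(y)u(y)^{p-1}$ contribute $[g_{2}(x-y)-g_{2}(x^{\lambda}-y)]v(y)u(y)^{p-1}$, while the terms with density $v(y^{\lambda})u(y^{\lambda})^{p-1}$ contribute $[g_{2}(x^{\lambda}-y)-g_{2}(x-y)]v(y^{\lambda})u(y^{\lambda})^{p-1}$, and collecting factors of $g_{2}(x-y)-g_{2}(x^{\lambda}-y)$ gives exactly \eqref{C}.

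The only point that needs a word of justification is that all the integrals involved converge absolutely, so that the splitting of the domain and the change of variables are legitimate; this follows from $u\in H^{1}(\mathbb{R}^{N})\cap L^{\infty}(\mathbb{R}^{N})$ (Lemma~\ref{INT22}), the decay estimate $g_{2}(z)\le C|z|^{-(N-\delta)}$ of Lemma~\ref{lem1}, and the weighted Hardy--Littlewood--Sobolev inequality controlling the inner convolution in the definition of $v$. I do not anticipate any genuine obstacle here: once the two reflection identities for $g_{2}$ are in hand, the derivation of \eqref{C} is purely algebraic, exactly as in the proof of Lemma~\ref{lem31}.
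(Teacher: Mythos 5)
Your proposal is correct and follows essentially the same route as the paper: split $\mathbb{R}^{N}$ into $\Sigma_{\lambda}$ and its reflection, change variables $y\mapsto y^{\lambda}$, use $|x^{\lambda}-y^{\lambda}|=|x-y|$ and $|x-y^{\lambda}|=|x^{\lambda}-y|$ together with the radiality of $g_{2}$, and subtract the two representations. The extra remarks on absolute convergence are fine but not something the paper dwells on.
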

\begin{proof}
Direct computing shows
\begin{equation}\nonumber
\begin{aligned}
u(x)%&=\int_{\mathbb{R}^{N}}g_{2}(x-y)v(y)u(y)^{p-1}dy\\
%&=\int_{\Sigma_{\lambda}}g_{2}(x-y)v(y)u(y)^{p-1}dy
%+\int_{\mathbb{R}^{N}-\Sigma_{\lambda}}g_{2}(x-y)v(y)u(y)^{p-1}dy\\
&=\int_{\Sigma_{\lambda}}g_{2}(x-y)v(y)u(y)^{p-1}dy
+\int_{\Sigma_{\lambda}}g_{2}(x-y^{\lambda})v(y^{\lambda})u(y^{\lambda})^{p-1}dy.
\end{aligned}
\end{equation}
%Similarly,
%$$
%u_{\lambda}(x)=\int_{\Sigma_{\lambda}}g_{2}(x^{\lambda}-y)v(y)u(y)^{p-1}dy
%+\int_{\Sigma_{\lambda}}g_{2}(x^{\lambda}-y^{\lambda})v(y^{\lambda})u(y^{\lambda})^{p-1}dy.
%$$
%Since $|x^{\lambda}-y^{\lambda}|=|x-y|$ and $|x-y^{\lambda}|=|x^{\lambda}-y|$,
Hence we have
$$
u(x)-u_{\lambda}(x)=\int_{\Sigma_{\lambda}}\left[g_{2}(x-y)-g_{2}(x^{\lambda}-y)\right]\left[v(y)u(y)^{p-1}-v(y^{\lambda})u(y^{\lambda})^{p-1}\right]dy.
$$
\end{proof}
Then we have a similar result for $v$ which the proof is similar to Lemma \ref{lem31}, hence we omit it.
\begin{lem}\label{lem4}
For any solution $(u, v)$ of system \eqref{B}, we have
\begin{equation}\label{D}
\begin{aligned}
&v(x)-v_{\lambda}(x)\\
&=\int_{\Sigma_{\lambda}}\frac{1}{|x-y|^{\mu}}\left(\frac{u(y)^{p}}{|x|^{\alpha}|y|^{\alpha}}-\frac{u(y^{\lambda})^{p}}{|x^{\lambda}|^{\alpha}|y^{\lambda}|^{\alpha}}\right)
+\frac{1}{|x^{\lambda}-y|^{\mu}}\left(\frac{u(y^{\lambda})^{p}}{|x|^{\alpha}|y^{\lambda}|^{\alpha}}-\frac{u(y)^{p}}{|x^{\lambda}|^{\alpha}|y|^{\alpha}}\right)dy.
\end{aligned}
\end{equation}
\end{lem}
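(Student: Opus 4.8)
\textbf{Proof strategy for Lemma~\ref{lem4}.} The identity \eqref{D} will be obtained exactly as in Lemma~\ref{lem31}, namely by splitting the integral defining $v$ across the hyperplane $T_{\lambda}$ and reflecting the piece over $\mathbb{R}^{N}\setminus\Sigma_{\lambda}$ back into $\Sigma_{\lambda}$. To begin, I would record that the kernel defining $v(x)$ is absolutely integrable for a.e.\ $x$: this follows from $u\in H^{1}(\mathbb{R}^{N})$, the regularity gained in Lemma~\ref{INT22}, and the weighted Hardy--Littlewood--Sobolev inequality, so all the manipulations below are justified.

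Next, writing $v(x)=\int_{\Sigma_{\lambda}}(\cdots)\,dy+\int_{\mathbb{R}^{N}\setminus\Sigma_{\lambda}}(\cdots)\,dy$ and applying to the second integral the measure-preserving involution $y\mapsto y^{\lambda}$, which interchanges $\Sigma_{\lambda}$ with its complement and turns $|y|$ into $|y^{\lambda}|$, together with the reflection identity $|x-y^{\lambda}|=|x^{\lambda}-y|$, one gets
\begin{equation}\nonumber
v(x)=\int_{\Sigma_{\lambda}}\frac{u(y)^{p}}{|x|^{\alpha}|x-y|^{\mu}|y|^{\alpha}}\,dy
+\int_{\Sigma_{\lambda}}\frac{u(y^{\lambda})^{p}}{|x|^{\alpha}|x^{\lambda}-y|^{\mu}|y^{\lambda}|^{\alpha}}\,dy .
\end{equation}
Performing the same decomposition for $v_{\lambda}(x)=v(x^{\lambda})$ and using in addition $|x^{\lambda}-y^{\lambda}|=|x-y|$ yields
\begin{equation}\nonumber
v_{\lambda}(x)=\int_{\Sigma_{\lambda}}\frac{u(y)^{p}}{|x^{\lambda}|^{\alpha}|x^{\lambda}-y|^{\mu}|y|^{\alpha}}\,dy
+\int_{\Sigma_{\lambda}}\frac{u(y^{\lambda})^{p}}{|x^{\lambda}|^{\alpha}|x-y|^{\mu}|y^{\lambda}|^{\alpha}}\,dy .
\end{equation}

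Subtracting the two displays and then grouping the four resulting terms according to their convolution kernel---the two that carry $|x-y|^{-\mu}$ into one bracket, the two that carry $|x^{\lambda}-y|^{-\mu}$ into the other---produces exactly \eqref{D}. I expect no serious obstacle here: the only delicate point is the bookkeeping of the singular weights under the reflection, so that the reflected term issued from $v(x)$ retains the weight pair $(|x|^{-\alpha},|y^{\lambda}|^{-\alpha})$ while the one issued from $v_{\lambda}(x)$ retains $(|x^{\lambda}|^{-\alpha},|y^{\lambda}|^{-\alpha})$, and symmetrically for the unreflected terms; it is precisely this asymmetry that makes the two cross terms combine into the stated form. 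Since this computation is verbatim that of Lemma~\ref{lem31} with the critical exponent $2_{\alpha,\mu}^{\ast}$ replaced by $p$, it is omitted, as stated in the text.
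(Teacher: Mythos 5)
Your proposal is correct and is exactly the argument the paper intends: the paper omits the proof as ``similar to Lemma~\ref{lem31}'', and your reflection of the integral over $\mathbb{R}^{N}\setminus\Sigma_{\lambda}$ via $y\mapsto y^{\lambda}$, using $|x-y^{\lambda}|=|x^{\lambda}-y|$ and $|x^{\lambda}-y^{\lambda}|=|x-y|$, followed by regrouping the four terms according to the kernels $|x-y|^{-\mu}$ and $|x^{\lambda}-y|^{-\mu}$, reproduces that computation verbatim with $2_{\alpha,\mu}^{\ast}$ replaced by $p$. The bookkeeping of the weights $(|x|^{-\alpha},|x^{\lambda}|^{-\alpha},|y|^{-\alpha},|y^{\lambda}|^{-\alpha})$ in your two displays is the correct one and yields \eqref{D} upon subtraction.
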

We should first realize that for all $s>1$, $u\in L^{s}$ from Lemma \ref{INT22}. Based on the preliminary work, for the case $p\neq2$, we can get the following estimate:
\begin{lem}\label{lem5}
Suppose that $\alpha\geq0$, $0<\mu<N$, $2\alpha+\mu\leq3$ if $N=3$ while $2\alpha+\mu<4$ if $N\geq4$ and $2<p<\frac{2N-2\alpha-\mu}{N-2}$. For any $\lambda<0$, $
0<\delta<\min\left\{\frac{2\alpha+\mu}{2},2N(p-2),2\right\}$ and $q>\frac{N}{N+\delta-2\alpha-\mu}$, there exists a constant $C>0$ such that:
\begin{equation}\nonumber
\begin{aligned}
&||u-u_{\lambda}||_{L^{q}({\Sigma^{u}_{\lambda}})}\\
&\leq C\left[||u||^{p}_{L^{\frac{2Np}{\delta+2(N-2\alpha-\mu)}}(\mathbb{R}^{N})}
||u||^{p-2}_{L^{\frac{2N(p-2)}{\delta}}(\Sigma^{u}_{\lambda})}
+||u||^{p-1}_{L^{q(p-1)}(\Sigma^{v}_{\lambda})}||u||^{p-1}_{L^{\frac{Nq(p-1)}{(N+\delta-2\alpha-\mu)q-N}}(\Sigma^{u}_{\lambda})}\right]||u-u_{\lambda}||_{L^{q}(\Sigma^{u}_{\lambda})}.
\end{aligned}
\end{equation}
\end{lem}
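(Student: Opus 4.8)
The plan is to follow the scheme of Lemma~\ref{lem51}, with the Riesz kernel $|x-y|^{2-N}$ of the critical case replaced by the Bessel kernel $g_2$ and then majorized, via Lemma~\ref{lem1}, by $C|x-y|^{\delta-N}$. First I would record that $g_2$ is radially decreasing, so for $x,y\in\Sigma_\lambda$ one has $|x-y|\le|x^\lambda-y|$ and hence
$$
0\le g_2(x-y)-g_2(x^\lambda-y)\le g_2(x-y)\le \frac{C}{|x-y|^{N-\delta}},
$$
the last step using Lemma~\ref{lem1} with $0<\delta\le 2$. Inserting this into \eqref{C} and discarding the contribution of $\Sigma_\lambda\setminus(\Sigma^u_\lambda\cup\Sigma^v_\lambda)$ (where the kernel factor is nonnegative and the bracket $v u^{p-1}-v_\lambda u_\lambda^{p-1}$ is nonpositive), the splitting of $\Sigma_\lambda$ into four disjoint pieces together with the Mean Value Theorem applied to $t\mapsto t^{p-1}$ (this is where $p>2$ enters, so that $u_\lambda^{p-1}\le u^{p-1}$ and $u^{p-1}-u_\lambda^{p-1}\le (p-1)u^{p-2}(u-u_\lambda)$ on $\Sigma^u_\lambda$) yields, for $x\in\Sigma_\lambda$,
$$
u(x)-u_\lambda(x)\le C\int_{\Sigma^u_\lambda}\frac{v\,u^{p-2}(u-u_\lambda)}{|x-y|^{N-\delta}}\,dy+C\int_{\Sigma^v_\lambda}\frac{u^{p-1}(v-v_\lambda)}{|x-y|^{N-\delta}}\,dy.
$$
In parallel, starting from \eqref{D} and arguing exactly as in the proof of Lemma~\ref{lem51} (using $|x|\le|x^\lambda|$, $|y|\le|y^\lambda|$ and $|x-y|\le|x^\lambda-y|$ for $x,y\in\Sigma_\lambda$, then the Mean Value Theorem for $t\mapsto t^p$) one gets
$$
v(x)-v_\lambda(x)\le p\int_{\Sigma^u_\lambda}\frac{u(y)^{p-1}\,(u(y)-u(y^\lambda))}{|x|^{\alpha}|x-y|^{\mu}|y|^{\alpha}}\,dy,\qquad x\in\Sigma_\lambda.
$$

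The analytic core is then to feed these two pointwise inequalities into the weighted Hardy--Littlewood--Sobolev inequality in its $T$-form (Proposition~\ref{prop1}) and the classical Hardy--Littlewood--Sobolev inequality for the Riesz potential $I_\delta$ of order $\delta$ (the kernel $|x-y|^{\delta-N}$), interleaved with Hölder's inequality. From the $v$-inequality, Proposition~\ref{prop1} with the weights $|x|^{-\alpha},|y|^{-\alpha},|x-y|^{-\mu}$ and the exponent relation $\tfrac1{r'}+\tfrac{2\alpha+\mu}{N}=1+\tfrac\delta N$ gives
$$
\|v-v_\lambda\|_{L^{N/\delta}(\Sigma^v_\lambda)}\le C\,\|u\|^{p-1}_{L^{\frac{Nq(p-1)}{(N+\delta-2\alpha-\mu)q-N}}(\Sigma^u_\lambda)}\,\|u-u_\lambda\|_{L^q(\Sigma^u_\lambda)},
$$
where the hypothesis $q>\frac{N}{N+\delta-2\alpha-\mu}$ makes the occurring Lebesgue exponent positive. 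Applying $I_\delta$ to the $u$-inequality (which maps $L^a\to L^q$ with $\tfrac1a=\tfrac1q+\tfrac\delta N$) and then Hölder, the first integral contributes $\|u\|^p_{L^{2Np/(\delta+2(N-2\alpha-\mu))}(\R^N)}\|u\|^{p-2}_{L^{2N(p-2)/\delta}(\Sigma^u_\lambda)}\|u-u_\lambda\|_{L^q(\Sigma^u_\lambda)}$ (here $v=T(u^p)$ is again estimated by Proposition~\ref{prop1}), and the second, after inserting the $v$-estimate above, contributes $\|u\|^{p-1}_{L^{q(p-1)}(\Sigma^v_\lambda)}\|u\|^{p-1}_{L^{\frac{Nq(p-1)}{(N+\delta-2\alpha-\mu)q-N}}(\Sigma^u_\lambda)}\|u-u_\lambda\|_{L^q(\Sigma^u_\lambda)}$. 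Summing the two contributions gives the claimed inequality; all these norms are finite because $u\in L^s(\R^N)$ for every $s>1$ by Lemma~\ref{INT22}.

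The main obstacle, and the place where the specific restrictions in the statement are used, is the bookkeeping of exponents: one must exhibit an admissible choice of intermediate Lebesgue indices and verify the positivity and finiteness constraints. Concretely, $\delta<2$ is needed for Lemma~\ref{lem1} and for $I_\delta$ to be a genuine Riesz potential; $\delta<2N(p-2)$ (hence $p>2$) is needed so that $\tfrac{2N(p-2)}{\delta}>1$ and the Hölder split $\tfrac1{a_1}+\tfrac1{a_2}=\tfrac\delta N$ with $a_1=a_2=\tfrac{2N}{\delta}$ is available; $\delta<\tfrac{2\alpha+\mu}{2}$ forces $\tfrac{N}{N+\delta-2\alpha-\mu}\ge\tfrac{N}{N-\delta}$, so the single hypothesis $q>\tfrac{N}{N+\delta-2\alpha-\mu}$ already guarantees $I_\delta$ acts from $L^a$ into $L^q$; and the dimensional restrictions $2\alpha+\mu\le3$ for $N=3$, $2\alpha+\mu<4$ for $N\ge4$ ensure $N+\delta-2\alpha-\mu>0$ and $\delta+2(N-2\alpha-\mu)>0$, so that the exponent $r'$ in Proposition~\ref{prop1} and the exponent $\tfrac{2Np}{\delta+2(N-2\alpha-\mu)}$ are admissible. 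Once this arithmetic is carried out, the geometric and sign part of the argument is essentially verbatim that of Lemma~\ref{lem51}.
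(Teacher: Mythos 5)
Your proposal is correct and follows essentially the same route as the paper's proof: the same four-part decomposition of $\Sigma_{\lambda}$ with the Mean Value Theorem, the Bessel-kernel bound from Lemma \ref{lem1}, the same pointwise inequalities for $u-u_{\lambda}$ and $v-v_{\lambda}$, and the same weighted HLS/H\"older bookkeeping with the exponents $\frac{2N}{\delta}$, $\frac{N}{\delta}$ and $\frac{Nq}{N+\delta q}$. Your only additions are the explicit justification of the sign of $g_{2}(x-y)-g_{2}(x^{\lambda}-y)$ and of the role of $\delta<\frac{2\alpha+\mu}{2}$ in making $q>\frac{N}{N+\delta-2\alpha-\mu}$ sufficient, points the paper leaves implicit.
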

\begin{proof}
We divide the domain of integration into four disjoint parts as
$$
\Sigma_{\lambda}=\{\Sigma^{u}_{\lambda}\cap\Sigma^{v}_{\lambda}\}\cup\{\Sigma^{u}_{\lambda}-\Sigma^{v}_{\lambda}\}\cup\{\Sigma^{v}_{\lambda}-\Sigma^{u}_{\lambda}\}\cup
\{\Sigma_{\lambda}-\Sigma^{u}_{\lambda}-\Sigma^{v}_{\lambda}\}.
$$
In $\Sigma^{u}_{\lambda}\cap\Sigma^{v}_{\lambda}$, we can apply the Mean Value Theorem to get
\begin{equation}\nonumber
\begin{aligned}
vu^{p-1}-v_{\lambda}u_{\lambda}^{p-1}
%&=v[u^{p-1}-u_{\lambda}^{p-1}]+(v-v_{\lambda})u_{\lambda}^{p-1}\\
&\leq(p-1)vu^{p-2}(u-u_{\lambda})+u^{p-1}(v-v_{\lambda}).
\end{aligned}
\end{equation}
In $\Sigma^{u}_{\lambda}-\Sigma^{v}_{\lambda}$, by the Mean Value Theorem again, we know
\begin{equation}\nonumber
\begin{aligned}
vu^{p-1}-v_{\lambda}u_{\lambda}^{p-1}
%&\leq vu^{p-1}-vu_{\lambda}^{p-1}\\
&=(p-1)vu^{p-2}(u-u_{\lambda}).
\end{aligned}
\end{equation}
In $\Sigma^{v}_{\lambda}-\Sigma^{u}_{\lambda}$,
\begin{equation}\nonumber
\begin{aligned}
vu^{p-1}-v_{\lambda}u_{\lambda}^{p-1}
%&\leq vu^{p-1}-v_{\lambda}u^{p-1}\\
&=u^{p-1}(v-v_{\lambda}).
\end{aligned}
\end{equation}
In $\Sigma_{\lambda}-\Sigma^{u}_{\lambda}-\Sigma^{v}_{\lambda}$,
\begin{equation}\nonumber
\begin{aligned}
vu^{p-1}-v_{\lambda}u_{\lambda}^{p-1}&\leq0.
\end{aligned}
\end{equation}
Hence Lemma \ref{lem1} and Lemma \ref{lem3} imply that for any $0<\delta\leq2$, there exists a constant $C>0$ such that when $x\in\Sigma_{\lambda}$,
\begin{equation}\nonumber
\begin{aligned}
u(x)-u_{\lambda}(x)
%&\leq\int_{\Sigma_{\lambda}}\left[g_{2}(x-y)-g_{2}(x^{\lambda}-y)\right]
%\left[(p-1)vu^{p-2}(u-u_{\lambda})\chi_{\{\Sigma^{u}_{\lambda}\cap\Sigma^{v}_{\lambda}\}}
%+u^{p-1}(v-v_{\lambda})\chi_{\{\Sigma^{u}_{\lambda}\cap\Sigma^{v}_{\lambda}\}}\right]dy\\
%&~~~+\int_{\Sigma_{\lambda}}\left[g_{2}(x-y)-g_{2}(x^{\lambda}-y)\right]\left[(p-1)vu^{p-2}(u-u_{\lambda})\chi_{\{\Sigma^{u}_{\lambda}-\Sigma^{v}_{\lambda}\}}\right]dy\\
%&~~~+\int_{\Sigma_{\lambda}}\left[g_{2}(x-y)-g_{2}(x^{\lambda}-y)\right]u^{p-1}(v-v_{\lambda})\chi_{\{\Sigma^{v}_{\lambda}\cap\Sigma^{u}_{\lambda}\}}dy\\
&\leq\int_{\Sigma_{\lambda}}\left[g_{2}(x-y)-g_{2}(x^{\lambda}-y)\right]
\left[(p-1)vu^{p-2}(u-u_{\lambda})\chi_{\{\Sigma^{u}_{\lambda}\}}+u^{p-1}(v-v_{\lambda})\chi_{\{\Sigma^{v}_{\lambda}\}}\right]dy\\
%&\leq\int_{\Sigma_{\lambda}}g_{2}(x-y)
%\left[(p-1)vu^{p-2}(u-u_{\lambda})\chi_{\{\Sigma^{u}_{\lambda}\}}+u^{p-1}(v-v_{\lambda})\chi_{\{\Sigma^{v}_{\lambda}\}}\right]dy\\
%&=(p-1)\int_{\Sigma^{u}_{\lambda}}g_{2}(x-y)
%vu^{p-2}(u-u_{\lambda})dy+\int_{\Sigma^{v}_{\lambda}}g_{2}(x-y)u^{p-1}(v-v_{\lambda})dy\\
&\leq C\int_{\Sigma^{u}_{\lambda}}\frac{vu^{p-2}(u-u_{\lambda})}{|x-y|^{N-\delta}}
dy+C\int_{\Sigma^{v}_{\lambda}}\frac{u^{p-1}(v-v_{\lambda})}{|x-y|^{N-\delta}}dy.
\end{aligned}
\end{equation}
By Lemma \ref{lem4} we know
\begin{equation}\nonumber
\begin{aligned}
v(x)-v_{\lambda}(x)
%&=\int_{\Sigma_{\lambda}}\frac{1}{|x-y|^{\mu}}\left(\frac{u(y)^{p}}{|x|^{\alpha}|y|^{\alpha}}-\frac{u(y^{\lambda})^{p}}{|x^{\lambda}|^{\alpha}|y^{\lambda}|^{\alpha}}\right)
%+\frac{1}{|x^{\lambda}-y|^{\mu}}\left(\frac{u(y^{\lambda})^{p}}{|x|^{\alpha}|y^{\lambda}|^{\alpha}}-\frac{u(y)^{p}}{|x^{\lambda}|^{\alpha}|y|^{\alpha}}\right)dy\\
%&\leq\int_{\Sigma_{\lambda}}\frac{1}{|x|^{\alpha}}\left[\frac{1}{|x-y|^{\mu}}\left(\frac{u(y)^{p}}{|y|^{\alpha}}-\frac{u(y^{\lambda})^{p}}{|y^{\lambda}|^{\alpha}}\right)
%+\frac{1}{|x^{\lambda}-y|^{\mu}}\left(\frac{u(y^{\lambda})^{p}}{|y^{\lambda}|^{\alpha}}-\frac{u(y)^{p}}{|y|^{\alpha}}\right)\right]dy\\
%&=\int_{\Sigma_{\lambda}}\frac{1}{|x|^{\alpha}}\left(\frac{1}{|x-y|^{\mu}}-\frac{1}{|x^{\lambda}-y|^{\mu}}\right)\left(\frac{u(y)^{p}}{|y|^{\alpha}}-\frac{u(y^{\lambda})^{p}}{|y^{\lambda}|^{\alpha}}\right)dy\\
%&\leq\int_{\Sigma_{\lambda}}\frac{1}{|x|^{\alpha}}\left(\frac{1}{|x-y|^{\mu}}-\frac{1}{|x^{\lambda}-y|^{\mu}}\right)\frac{u(y)^{p}-u(y^{\lambda})^{p}}{|y|^{\alpha}}dy\\
%&\leq\int_{\Sigma^{u}_{\lambda}}\frac{1}{|x|^{\alpha}}\left(\frac{1}{|x-y|^{\mu}}-\frac{1}{|x^{\lambda}-y|^{\mu}}\right)\frac{u(y)^{p}-u(y^{\lambda})^{p}}{|y|^{\alpha}}dy\\
&\leq\int_{\Sigma^{u}_{\lambda}}\frac{u(y)^{p}-u(y^{\lambda})^{p}}{|x|^{\alpha}|x-y|^{\mu}|y|^{\alpha}}dy,  \ \ x\in\Sigma_{\lambda}.
\end{aligned}
\end{equation}
With the Mean Value Theorem, we see that
\begin{equation}\label{H}
v(x)-v_{\lambda}(x)\leq p\int_{\Sigma^{u}_{\lambda}}\frac{u(y)^{p-1}[u(y)-u(y^{\lambda})]}{|x|^{\alpha}|x-y|^{\mu}|y|^{\alpha}}dy.
\end{equation}
Choose $\delta$ such that
$$
0<\delta<\min\left\{\frac{2\alpha+\mu}{2},2N(p-2),2\right\},
$$
then for any $q>\frac{N}{N+\delta-2\alpha-\mu}$, by the H\"{o}lder inequality and the weighted HLS inequality we find that
\begin{equation}\label{G}
\begin{aligned}
||v-v_{\lambda}||_{L^{\frac{N}{\delta}}(\Sigma^{v}_{\lambda})}
%&\leq C\left\|\int_{\Sigma^{u}_{\lambda}}\frac{u^{p-1}(u-u_{\lambda})}{|x|^{\alpha}|x-y|^{\mu}|y|^{\alpha}}dy\right\|_{L^{\frac{N}{\delta}}(\Sigma^{v}_{\lambda})}\\
&\leq C||u^{p-1}(u-u_{\lambda})||_{L^{\frac{N}{N+\delta-2\alpha-\mu}}(\Sigma^{u}_{\lambda})}\\
%&\leq C||u^{p-1}||_{L^{\frac{Nq}{(N+\delta-2\alpha-\mu)q-N}}(\Sigma^{u}_{\lambda})}||u-u_{\lambda}||_{L^{q}(\Sigma^{u}_{\lambda})}\\
&\leq C||u||^{p-1}_{L^{\frac{Nq(p-1)}{(N+\delta-2\alpha-\mu)q-N}}(\Sigma^{u}_{\lambda})}||u-u_{\lambda}||_{L^{q}(\Sigma^{u}_{\lambda})}.
\end{aligned}
\end{equation}
%Where $\frac{N}{\delta}$, $\frac{N}{N+\delta-2\alpha-\mu}$, $\frac{Nq}{(N+\delta-2\alpha-\mu)q-N}$, $\frac{Nq(p-1)}{(N+\delta-2\alpha-\mu)q-N}>1$.
In virtue of the H\"{o}lder inequality and the HLS inequality again, it follows that
\begin{equation}\nonumber
\begin{aligned}
\left\|\int_{\Sigma^{u}_{\lambda}}\frac{vu^{p-2}(u-u_{\lambda})}{|x-y|^{N-\delta}}
dy\right\|_{L^{q}(\Sigma^{u}_{\lambda})}
&\leq C||vu^{p-2}(u-u_{\lambda})||_{L^{\frac{Nq}{N+\delta q}}(\Sigma^{u}_{\lambda})}\\
%&\leq C||v||_{L^{\frac{2N}{\delta}}(\Sigma^{u}_{\lambda})}
%|u^{p-2}||_{L^{\frac{2N}{\delta}}(\Sigma^{u}_{\lambda})}
%||u-u_{\lambda}||_{L^{q}(\Sigma^{u}_{\lambda})}\\
%&\leq C||u^{p}||_{L^{\frac{2N}{\delta+2(N-2\alpha-\mu)}}(\mathbb{R}^{N})}
%||u||^{p-2}_{L^{\frac{2N(p-2)}{\delta}}(\Sigma^{u}_{\lambda})}
%||u-u_{\lambda}||_{L^{q}(\Sigma^{u}_{\lambda})}\\
&\leq C||u||^{p}_{L^{\frac{2Np}{\delta+2(N-2\alpha-\mu)}}(\mathbb{R}^{N})}
||u||^{p-2}_{L^{\frac{2N(p-2)}{\delta}}(\Sigma^{u}_{\lambda})}
||u-u_{\lambda}||_{L^{q}(\Sigma^{u}_{\lambda})}.
\end{aligned}
\end{equation}
%Where $\frac{Nq}{N+\delta q}\geq1$ and $\frac{2N}{\delta}$, $\frac{2Np}{\delta+2(N-2\alpha-\mu)}$, $\frac{2N}{\delta+2(N-2\alpha-\mu)}$, $\frac{2N(p-2)}{\delta}>1$.
On the other hand, the H\"{o}lder inequality and \eqref{G} imply that
\begin{equation}\nonumber
\begin{aligned}
\left\|\int_{\Sigma^{v}_{\lambda}}\frac{u^{p-1}(v-v_{\lambda})}{|x-y|^{N-\delta}}dy\right\|_{L^{q}(\Sigma^{u}_{\lambda})}
&\leq C||u^{p-1}(v-v_{\lambda})||_{L^{\frac{Nq}{N+\delta q}}(\Sigma^{v}_{\lambda})}\\
%&\leq C||u^{p-1}||_{L^{q}(\Sigma^{v}_{\lambda})}
%||v-v_{\lambda}||_{L^{\frac{N}{\delta}}(\Sigma^{v}_{\lambda})}\\
%&=C||u||^{p-1}_{L^{q(p-1)}(\Sigma^{v}_{\lambda})}
%||v-v_{\lambda}||_{L^{\frac{N}{\delta}}(\Sigma^{v}_{\lambda})}\\
&\leq C||u||^{p-1}_{L^{q(p-1)}(\Sigma^{v}_{\lambda})}||u||^{p-1}_{L^{\frac{Nq(p-1)}{(N+\delta-2\alpha-\mu)q-N}}(\Sigma^{u}_{\lambda})}||u-u_{\lambda}||_{L^{q}(\Sigma^{u}_{\lambda})}.
\end{aligned}
\end{equation}
%Where $q(p-1)>1$.
Hence there exists a constant $C>0$,
\begin{equation}\nonumber
\begin{aligned}
&||u-u_{\lambda}||_{L^{q}({\Sigma^{u}_{\lambda}})}\\
%&\leq C\left[\left\|\int_{\Sigma^{u}_{\lambda}}\frac{1}{|x-y|^{N-\delta}}
%vu^{p-2}(u-u_{\lambda})dy\right\|_{L^{q}({\Sigma^{u}_{\lambda}})}
%+\left\|\int_{\Sigma^{v}_{\lambda}}\frac{1}{|x-y|^{N-\delta}}u^{p-1}(v-v_{\lambda})dy\right\|_{L^{q}({\Sigma^{u}_{\lambda}})}\right]\\
&\leq C\left[||u||^{p}_{L^{\frac{2Np}{\delta+2(N-2\alpha-\mu)}}(\mathbb{R}^{N})}
||u||^{p-2}_{L^{\frac{2N(p-2)}{\delta}}(\Sigma^{u}_{\lambda})}
+||u||^{p-1}_{L^{q(p-1)}(\Sigma^{v}_{\lambda})}||u||^{p-1}_{L^{\frac{Nq(p-1)}{(N+\delta-2\alpha-\mu)q-N}}(\Sigma^{u}_{\lambda})}\right]||u-u_{\lambda}||_{L^{q}(\Sigma^{u}_{\lambda})}.
\end{aligned}
\end{equation}
\end{proof}
For the case $p=2$, Lemma \ref{lem5} should be replace by
\begin{lem}\label{lemABCDEFG}
Suppose that $\alpha\geq0$, $0<\mu<N$, $2\alpha+\mu\leq3$ if $N=3$ while $2\alpha+\mu<4$ if $N\geq4$ and $p=2$. For any $\lambda<0$, $
0<\delta<\min\left\{\frac{2\alpha+\mu}{2},2\right\}$ and $q>\frac{N}{N+\delta-2\alpha-\mu}$, there exists a constant $C>0$ such that:
\begin{equation}\nonumber
\begin{aligned}
&||u-u_{\lambda}||_{L^{q}({\Sigma^{u}_{\lambda}})}\\
&\hspace{6mm}\leq C\left[||v||^{2}_{L^{\frac{2N}{\delta}}(\Sigma^{u}_{\lambda})}
+||u||_{L^{q}(\Sigma^{v}_{\lambda})}||u||_{L^{\frac{Nq}{(N+\delta-2\alpha-\mu)q-N}}(\Sigma^{u}_{\lambda})}\right]||u-u_{\lambda}||_{L^{q}(\Sigma^{u}_{\lambda})}.
\end{aligned}
\end{equation}
\end{lem}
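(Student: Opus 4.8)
The plan is to run the argument of the proof of Lemma~\ref{lem5} verbatim with $p=2$; the only genuine difference is that the factor $u^{p-2}$ in the nonlinearity becomes $1$, so the singular weight $v$ need not be split into two factors and is instead estimated directly in one Lebesgue space. First I would decompose $\Sigma_{\lambda}$ into the four disjoint pieces $\Sigma^{u}_{\lambda}\cap\Sigma^{v}_{\lambda}$, $\Sigma^{u}_{\lambda}-\Sigma^{v}_{\lambda}$, $\Sigma^{v}_{\lambda}-\Sigma^{u}_{\lambda}$, $\Sigma_{\lambda}-\Sigma^{u}_{\lambda}-\Sigma^{v}_{\lambda}$, and use the elementary bound
$$
vu-v_{\lambda}u_{\lambda}\leq v(u-u_{\lambda})\,\chi_{\{\Sigma^{u}_{\lambda}\}}+u(v-v_{\lambda})\,\chi_{\{\Sigma^{v}_{\lambda}\}}\qquad\text{on }\Sigma_{\lambda},
$$
which replaces the Mean Value Theorem computations of Lemma~\ref{lem5} (unnecessary here since $p-1=1$). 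Combining this with Lemma~\ref{lem1} — so that $0\le g_{2}(x-y)-g_{2}(x^{\lambda}-y)\le g_{2}(x-y)\le C|x-y|^{-(N-\delta)}$ for $0<\delta\le2$ — and the identity \eqref{C}, I get, for $x\in\Sigma_{\lambda}$,
$$
u(x)-u_{\lambda}(x)\le C\int_{\Sigma^{u}_{\lambda}}\frac{v(y)(u(y)-u_{\lambda}(y))}{|x-y|^{N-\delta}}dy+C\int_{\Sigma^{v}_{\lambda}}\frac{u(y)(v(y)-v_{\lambda}(y))}{|x-y|^{N-\delta}}dy,
$$
while from Lemma~\ref{lem4}, discarding the manifestly negative contribution and applying the Mean Value Theorem, $v(x)-v_{\lambda}(x)\le 2\int_{\Sigma^{u}_{\lambda}}\frac{u(y)(u(y)-u_{\lambda}(y))}{|x|^{\alpha}|x-y|^{\mu}|y|^{\alpha}}dy$ for $x\in\Sigma_{\lambda}$.

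Next I would estimate the three integrals by H\"older's inequality together with the classical and the weighted Hardy--Littlewood--Sobolev inequalities. Applying \eqref{WHLS} with $l=N/\delta$ to the last bound gives
$$
\|v-v_{\lambda}\|_{L^{N/\delta}(\Sigma^{v}_{\lambda})}\le C\|u(u-u_{\lambda})\|_{L^{r}(\Sigma^{u}_{\lambda})}\le C\|u\|_{L^{\frac{Nq}{(N+\delta-2\alpha-\mu)q-N}}(\Sigma^{u}_{\lambda})}\|u-u_{\lambda}\|_{L^{q}(\Sigma^{u}_{\lambda})},
$$
with $r=\frac{N}{N+\delta-2\alpha-\mu}$; applying the same weighted inequality directly to $v=\int_{\mathbb{R}^{N}}\frac{u(y)^{2}}{|x|^{\alpha}|x-y|^{\mu}|y|^{\alpha}}dy$ yields $\|v\|_{L^{N/\delta}(\mathbb{R}^{N})}\le C\|u\|^{2}_{L^{\frac{2N}{N+\delta-2\alpha-\mu}}(\mathbb{R}^{N})}$ — this is exactly the step where, $p$ being $2$, the absence of a $u^{p-2}$ factor makes the two $L^{2N/\delta}$ norms of Lemma~\ref{lem5} collapse into a single one. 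Finally, applying the (unweighted) Hardy--Littlewood--Sobolev inequality with the Riesz kernel $|x-y|^{-(N-\delta)}$ to the two integrals in the bound for $u-u_{\lambda}$ and inserting the two estimates above together with H\"older's inequality, I arrive at
$$
\|u-u_{\lambda}\|_{L^{q}(\Sigma^{u}_{\lambda})}\le C\Big[\|u\|^{2}_{L^{\frac{2N}{N+\delta-2\alpha-\mu}}(\mathbb{R}^{N})}+\|u\|_{L^{q}(\Sigma^{v}_{\lambda})}\|u\|_{L^{\frac{Nq}{(N+\delta-2\alpha-\mu)q-N}}(\Sigma^{u}_{\lambda})}\Big]\|u-u_{\lambda}\|_{L^{q}(\Sigma^{u}_{\lambda})},
$$
which is the claimed inequality.

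I do not expect a genuine obstacle; the work is entirely a specialization of the proof of Lemma~\ref{lem5}, and the only care required is the bookkeeping of exponents. One must check that $r=\frac{N}{N+\delta-2\alpha-\mu}$ is admissible in the weighted Hardy--Littlewood--Sobolev inequality and that $q$ exceeds both $\frac{N}{N-\delta}$ (for the unweighted HLS step) and $\frac{N}{N+\delta-2\alpha-\mu}$ (so that the H\"older conjugate exponents are positive); these are precisely guaranteed by the hypotheses $0<\delta<\min\{\tfrac{2\alpha+\mu}{2},\,2\}$ and $q>\tfrac{N}{N+\delta-2\alpha-\mu}$ — the term $2N(p-2)$ in the range of $\delta$ from Lemma~\ref{lem5} simply vanishes at $p=2$. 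One also invokes Lemma~\ref{INT22} to know that $u\in L^{s}(\mathbb{R}^{N})$ for every $s>1$, so that the global norm $\|u\|_{L^{\frac{2N}{N+\delta-2\alpha-\mu}}(\mathbb{R}^{N})}$ that appears is finite.
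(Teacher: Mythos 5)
Your proposal is correct and follows exactly the route the paper intends: the paper states this lemma without a separate proof, as the specialization of the argument for Lemma \ref{lem51}'s subcritical analogue (Lemma \ref{lem5}) to $p=2$, and your specialization — including the key observation that without the $u^{p-2}$ factor the H\"older step places $v$ (and $v-v_\lambda$) in $L^{N/\delta}$, which via the weighted HLS inequality produces the exponent $\frac{2N}{N+\delta-2\alpha-\mu}$ rather than a naive substitution of $p=2$ into Lemma \ref{lem5} — reproduces the stated estimate with the correct exponent bookkeeping. The admissibility checks you list (in particular $\delta<\frac{2\alpha+\mu}{2}$ forcing $q>\frac{N}{N+\delta-2\alpha-\mu}>\frac{N}{N-\delta}$) are exactly the ones needed, so there is no gap relative to the paper's argument.
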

We will first show that, for $\lambda$ is sufficiently negative,
\begin{equation}\label{E}
\begin{aligned}
u(x)\leq u(x^{\lambda}),~~~v(x)\leq v(x^{\lambda}),~~\forall x\in\Sigma_{\lambda}.
\end{aligned}
\end{equation}
Then we can start moving the plane from near $-\infty$ to the right as long as \eqref{E} holds.
\begin{lem}\label{lem6}
Under the assumption of Theorem \ref{thm}, there exists $\lambda_{0}<0$ such that for any $\lambda\leq\lambda_{0}$, $u(x)\leq u(x^{\lambda})$ and $v(x)\leq v(x^{\lambda})$ hold in $\Sigma_{\lambda}$.
\end{lem}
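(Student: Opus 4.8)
The plan is to combine the integral inequality of Lemma~\ref{lem5} (respectively Lemma~\ref{lemABCDEFG} when $p=2$) with the absolute continuity of the Lebesgue norms of $u$. By Lemma~\ref{INT22} we have $u\in L^{s}(\mathbb{R}^{N})$ for every $s>1$, so all the norms of $u$ entering the statement of Lemma~\ref{lem5} are finite. Since $\Sigma^{u}_{\lambda},\Sigma^{v}_{\lambda}\subset\Sigma_{\lambda}=\{x_{1}<\lambda\}$ and the half-space $\Sigma_{\lambda}$ recedes to spatial infinity as $\lambda\to-\infty$, dominated convergence gives $\|u\|_{L^{s}(\Sigma^{u}_{\lambda})}\to0$ and $\|u\|_{L^{s}(\Sigma^{v}_{\lambda})}\to0$ for each admissible exponent $s$. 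Consequently the bracketed coefficient on the right-hand side of Lemma~\ref{lem5}, being a finite sum of products each of which carries a factor that is a norm of $u$ over $\Sigma^{u}_{\lambda}$ or $\Sigma^{v}_{\lambda}$, tends to $0$ as $\lambda\to-\infty$.

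First I would fix the parameters $\delta$ and $q$ in the ranges allowed by Lemma~\ref{lem5}, namely $0<\delta<\min\{\tfrac{2\alpha+\mu}{2},\,2N(p-2),\,2\}$ and $q>\tfrac{N}{N+\delta-2\alpha-\mu}$ (for $p=2$ one uses instead the ranges in Lemma~\ref{lemABCDEFG}), and check that all the Lebesgue exponents occurring there lie in $(1,\infty)$ under the standing hypotheses $2\alpha+\mu\leq3$ if $N=3$, $2\alpha+\mu<4$ if $N\geq4$, and $2\leq p<\tfrac{2N-2\alpha-\mu}{N-2}$. Having done this, I would choose $\lambda_{0}<0$ so negative that the bracketed coefficient in Lemma~\ref{lem5} is strictly less than $1$ for every $\lambda\leq\lambda_{0}$. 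Then Lemma~\ref{lem5} reads $\|u-u_{\lambda}\|_{L^{q}(\Sigma^{u}_{\lambda})}\leq\theta\|u-u_{\lambda}\|_{L^{q}(\Sigma^{u}_{\lambda})}$ with $\theta<1$, which forces $\|u-u_{\lambda}\|_{L^{q}(\Sigma^{u}_{\lambda})}=0$. By the definition of $\Sigma^{u}_{\lambda}$ this means $\Sigma^{u}_{\lambda}$ is Lebesgue-null, i.e.\ $u(x)\leq u(x^{\lambda})$ for a.e.\ $x\in\Sigma_{\lambda}$, and continuity of $u$ (again Lemma~\ref{INT22}) upgrades this to all of $\Sigma_{\lambda}$.

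It then remains to deduce the analogous bound for $v$. Feeding $\|u-u_{\lambda}\|_{L^{q}(\Sigma^{u}_{\lambda})}=0$ into the intermediate estimate \eqref{G} produced inside the proof of Lemma~\ref{lem5}, namely $\|v-v_{\lambda}\|_{L^{N/\delta}(\Sigma^{v}_{\lambda})}\leq C\,\|u\|^{p-1}_{L^{\frac{Nq(p-1)}{(N+\delta-2\alpha-\mu)q-N}}(\Sigma^{u}_{\lambda})}\|u-u_{\lambda}\|_{L^{q}(\Sigma^{u}_{\lambda})}$, gives $\|v-v_{\lambda}\|_{L^{N/\delta}(\Sigma^{v}_{\lambda})}=0$, hence $\Sigma^{v}_{\lambda}$ is null and $v(x)\leq v(x^{\lambda})$ throughout $\Sigma_{\lambda}$. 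The case $p=2$ is handled verbatim, using Lemma~\ref{lemABCDEFG} in place of Lemma~\ref{lem5}. This establishes \eqref{E} for all $\lambda\leq\lambda_{0}$.

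The main (really the only) obstacle is the bookkeeping verification that \emph{every} norm occurring in the coefficient of Lemma~\ref{lem5} is either a fixed finite quantity (the factor over all of $\mathbb{R}^{N}$) or a genuine tail $\|u\|_{L^{s}(\Sigma^{u}_{\lambda})}$ or $\|u\|_{L^{s}(\Sigma^{v}_{\lambda})}$ of a convergent integral with exponent $s\in(1,\infty)$; once the constraints on $\delta$, $q$, $N$, $\alpha$, $\mu$, $p$ are checked, the vanishing is an immediate dominated-convergence argument, and no maximum principle or further analytic input is needed.
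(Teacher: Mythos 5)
Your proposal is exactly the paper's (omitted) argument: the authors prove Lemma \ref{lem6} by saying it is "similar to the proof of Lemma \ref{lem61}", which is precisely your scheme — use the integrability of $u$ (Lemma \ref{INT22}) and absolute continuity of the integral to make the bracketed coefficient in Lemma \ref{lem5} strictly less than $1$ for $\lambda\leq\lambda_{0}$, conclude $\|u-u_{\lambda}\|_{L^{q}(\Sigma^{u}_{\lambda})}=0$ so that $\Sigma^{u}_{\lambda}$ is empty, and then feed this into the intermediate estimate \eqref{G} to empty $\Sigma^{v}_{\lambda}$ as well. One small caveat: your claim that \emph{every} product in the coefficient carries a factor over $\Sigma^{u}_{\lambda}$ or $\Sigma^{v}_{\lambda}$ fails for the $p=2$ statement, Lemma \ref{lemABCDEFG}, whose first term is the global norm $\|u\|^{2}_{L^{2N/(N+\delta-2\alpha-\mu)}(\mathbb{R}^{N})}$ and hence does not shrink as $\lambda\to-\infty$; to run the argument there (a defect the paper shares, since it glosses over this case) one should keep the localized factor $\|v\|_{L^{N/\delta}(\Sigma^{u}_{\lambda})}$, which does tend to $0$ because $v\in L^{N/\delta}(\mathbb{R}^{N})$ by the weighted Hardy--Littlewood--Sobolev inequality.
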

\begin{proof}
The proof is similar to the proof of Lemma \ref{lem61}, hence we omit it.
%Since $u(x)$ and $v(x)$ is integrable, we can choose $\lambda_{0}$ is sufficiently negative, such that for $\lambda\leq\lambda_{0}$, we have
%\begin{equation}\label{J}
%C\left[||u||^{p}_{L^{\frac{2Np}{\delta+2(N-2\alpha-\mu)}}(\Sigma^{u}_{\lambda})}
%||u||^{p-2}_{L^{\frac{2N(p-2)}{\delta}}(\Sigma^{u}_{\lambda})}
%+||u||^{p-1}_{L^{q(p-1)}(\Sigma^{v}_{\lambda})}||u||^{p-1}_{L^{\frac{Nq(p-1)}{(N+\delta-2\alpha-\mu)q-N}}(\Sigma^{u}_{\lambda})}\right]<1.
%\end{equation}
%Then Lemma \ref{lem5} implies that
%$$
%||u-u_{\lambda}||_{L^{q}(\Sigma^{u}_{\lambda})}=0,
%$$
%therefore we conclude that $\Sigma^{u}_{\lambda}$ must be empty, that is $u(x)\leq u(x^{\lambda})$ in $\Sigma_{\lambda}$, from inequality \eqref{G}, we find $\Sigma^{v}_{\lambda}$ is also empty, hence $v(x)\leq v(x^{\lambda})$.
\end{proof}
We now move the plane $T_{\lambda}$ to the left as long as \eqref{E} holds. If we define
$$
\lambda_{1}=\sup\{~\lambda~|~u(x)\leq u(x^{\mu})~,~v(x)\leq v(x^{\mu})~,~\forall x\in\Sigma_{\mu}~,~\mu\leq\lambda\},
$$
then we must have $\lambda_{1}<\infty$ because of applying a similar argument for $\lambda$ near $+\infty$.
%Since the plane can be moved from $-\infty$, we show that
\begin{lem}\label{lem7}
Under the assumption of Theorem \ref{thm}, then for any $\lambda_{1}<0$, there holds $u(x)\equiv u(x^{\lambda_{1}})$ and $v(x)\equiv v(x^{\lambda_{1}})$ in $\Sigma_{\lambda_{1}}$.
\end{lem}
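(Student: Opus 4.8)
The plan is to run the same moving-plane scheme used for Lemma~\ref{lem71}, but with the Bessel-kernel representations \eqref{C}--\eqref{D} replacing the Newtonian ones and with the integral estimate of Lemma~\ref{lem5} (respectively Lemma~\ref{lemABCDEFG} when $p=2$) replacing Lemma~\ref{lem51}. I would argue by contradiction, assuming $u\not\equiv u_{\lambda_{1}}$ in $\Sigma_{\lambda_{1}}$. The first step is to upgrade this to the strict inequality $u(x)<u(x^{\lambda_{1}})$ for \emph{every} $x\in\Sigma_{\lambda_{1}}$. Indeed, Lemma~\ref{lem6} and the definition of $\lambda_{1}$ as a supremum already give $u\leq u_{\lambda_{1}}$ and $v\leq v_{\lambda_{1}}$ on $\Sigma_{\lambda_{1}}$, while $u,v>0$ by \eqref{B}; for $x,y\in\Sigma_{\lambda_{1}}$ one has $|x-y|<|x^{\lambda_{1}}-y|$, so the radial strict monotonicity of the Bessel kernel makes $g_{2}(x-y)-g_{2}(x^{\lambda_{1}}-y)>0$, whereas the factor $v(y)u(y)^{p-1}-v(y^{\lambda_{1}})u(y^{\lambda_{1}})^{p-1}$ is $\leq0$ everywhere on $\Sigma_{\lambda_{1}}$ and, since $u\not\equiv u_{\lambda_{1}}$ and $p>1$, strictly negative on a set of positive measure; feeding this into \eqref{C} gives the claimed strict inequality.

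The second step is the narrow-region argument. By Lemma~\ref{INT22} we have $u\in L^{s}(\mathbb{R}^{N})$ for all $s>1$, so I would fix admissible parameters $\delta$ and $q$ as in Lemma~\ref{lem5} and then choose $R>0$ large enough that the contribution of $\mathbb{R}^{N}\setminus B_{R}(0)$ to the bracketed coefficient in Lemma~\ref{lem5} is below any prescribed $\eta>0$. For $\varepsilon>0$ split $\Sigma_{\lambda_{1}}\cap B_{R}(0)$ into $P_{\varepsilon}=\{x:u(x^{\lambda_{1}})-u(x)>\varepsilon\}$ and $Q_{\varepsilon}=\{x:u(x^{\lambda_{1}})-u(x)\leq\varepsilon\}$, and for $\lambda$ slightly to the right of $\lambda_{1}$ introduce the thin shell $\Omega_{\lambda}=(\Sigma_{\lambda}\setminus\Sigma_{\lambda_{1}})\cap B_{R}(0)$, whose measure tends to $0$ as $\lambda\downarrow\lambda_{1}$. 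On $\Sigma^{u}_{\lambda}\cap P_{\varepsilon}$ one has $u(x^{\lambda_{1}})-u(x^{\lambda})>\varepsilon$, so Chebyshev's inequality together with $L^{q}_{loc}$-continuity of the reflection $\lambda\mapsto u(\cdot^{\lambda})$ forces $\mathcal{L}(\Sigma^{u}_{\lambda}\cap P_{\varepsilon})\to0$ as $\lambda\downarrow\lambda_{1}$; and since $u<u_{\lambda_{1}}$ a.e.\ on $\Sigma_{\lambda_{1}}$ we get $\mathcal{L}(Q_{\varepsilon})\to0$ as $\varepsilon\to0$. Adding the three contributions shows $\mathcal{L}(\Sigma^{u}_{\lambda}\cap B_{R}(0))\to0$, and the bound \eqref{G} gives the same for $\Sigma^{v}_{\lambda}$.

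To close the argument I would split $\Sigma^{u}_{\lambda}$ and $\Sigma^{v}_{\lambda}$ into the parts inside and outside $B_{R}(0)$; by absolute continuity of the $L^{s}$-integrals the inside parts contribute arbitrarily little once $\lambda$ is close enough to $\lambda_{1}$, and the outside parts contribute at most $\eta$ by the choice of $R$. Hence there is $\tau>0$ such that for every $\lambda\in[\lambda_{1},\lambda_{1}+\tau]$ the bracketed coefficient in Lemma~\ref{lem5} is $<1$, and then Lemma~\ref{lem5} yields $\|u-u_{\lambda}\|_{L^{q}(\Sigma^{u}_{\lambda})}=0$, so $\Sigma^{u}_{\lambda}$ is null, i.e.\ $u\leq u_{\lambda}$ on $\Sigma_{\lambda}$; \eqref{G} then makes $\Sigma^{v}_{\lambda}$ null, i.e.\ $v\leq v_{\lambda}$. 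This means \eqref{E} persists for $\lambda\in[\lambda_{1},\lambda_{1}+\tau]$, contradicting the maximality of $\lambda_{1}$. Therefore $u\equiv u_{\lambda_{1}}$ in $\Sigma_{\lambda_{1}}$, and then \eqref{C} (equivalently \eqref{D}) immediately gives $v\equiv v_{\lambda_{1}}$. I expect the main obstacle to be the narrow-region step: one must check that the admissible range of $(\delta,q)$ in Lemma~\ref{lem5} is nonempty under the hypotheses of Theorem~\ref{thm}, so that all the mixed Lebesgue norms there are finite, and that smallness of the measure of $\Sigma^{u}_{\lambda}\cap B_{R}(0)$ really does transfer, via H\"older, into smallness of those norms; the borderline case $p=2$ has to be run separately with Lemma~\ref{lemABCDEFG}.
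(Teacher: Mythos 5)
Your proposal is correct and takes essentially the same route as the paper, which omits this proof as being ``similar to the proof of Lemma \ref{lem71}'': a contradiction argument using the strict inequality from the kernel representation \eqref{C}, the $P_{\varepsilon}$/$Q_{\varepsilon}$/$\Omega_{\lambda}$ narrow-region and Chebyshev step inside a large ball combined with small tail norms, and then Lemma \ref{lem5} (Lemma \ref{lemABCDEFG} when $p=2$) to force $\Sigma^{u}_{\lambda}$ to be null beyond $\lambda_{1}$. One small caveat: the final conclusion $v\equiv v_{\lambda_{1}}$ comes from \eqref{C} (the signed integrand must vanish), not ``equivalently'' from \eqref{D} --- indeed \eqref{D} with $u\equiv u_{\lambda_{1}}$ is precisely what later yields the contradiction in the proof of Theorem \ref{thm}.
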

\begin{proof}
The proof is similar to the proof of Lemma \ref{lem71}, hence we omit it.
\end{proof}
{\flushleft\bf{Proof of Theorem \ref{thm}}.}
Similarly, the plane can be moved from $+\infty$ to left, therefore we denote the corresponding parameter $\lambda_{1}^{\prime}$ satisfying
$$
\lambda^{\prime}_{1}=\inf\{~\lambda~|~s(x)\leq s(x^{\mu})~,~t(x)\leq t(x^{\mu})~,~x\in\Sigma^{\prime}_{\mu}~,~\mu\geq\lambda\}.
$$
Where $\Sigma^{\prime}_{\lambda}=\{x\in\mathbb{R}^{N}:x_{1}>\lambda\}$. If $\lambda^{\prime}_{1}>0$, $u(x)$ and $v(x)$ are also radially symmetric about $T_{\lambda^{\prime}_{1}}$.
Hence when $\lambda_{1}=\lambda^{\prime}_{1}\neq0$, we can deduce from Lemma \ref{lem7} that $u(x)\equiv u(x^{\lambda_{1}})$ and $v(x)\equiv v(x^{\lambda_{1}})$ in $\Sigma_{\lambda_{1}}$,
that contradicts to \eqref{D}.
%because
%\begin{equation}\nonumber
%\begin{aligned}
%0&=\int_{\Sigma_{\lambda}}\frac{1}{|x-y|^{\mu}}\left(\frac{u(y)^{p}}{|x|^{\alpha}|y|^{\alpha}}-\frac{u(y^{\lambda})^{p}}{|x^{\lambda}|^{\alpha}|y^{\lambda}|^{\alpha}}\right)
%+\frac{1}{|x^{\lambda}-y|^{\mu}}\left(\frac{u(y^{\lambda})^{p}}{|x|^{\alpha}|y^{\lambda}|^{\alpha}}-\frac{u(y)^{p}}{|x^{\lambda}|^{\alpha}|y|^{\alpha}}\right)dy\\
%&\leq\int_{\Sigma_{\lambda}}\frac{u(y)^{p}}{|x|^{\alpha}}\left(\frac{1}{|x-y|^{\mu}}-\frac{1}{|x^{\lambda}-y|^{\mu}}\right)\left(\frac{1}{|y|^{\alpha}}-\frac{1}{|y^{\lambda}|^{\alpha}}\right)dy\leq0.
%\end{aligned}
%\end{equation}
%that is
%$$
%\left(\frac{1}{|x-y|^{\mu}}-\frac{1}{|x^{\lambda}-y|^{\mu}}\right)\left(\frac{1}{|y|^{\alpha}}-\frac{1}{|y^{\lambda}|^{\alpha}}\right)\equiv0.
%$$
%However, that's impossible.
Therefore, we have $\lambda_{1}=\lambda^{\prime}_{1}=0$, $u(x)$ and $v(x)$ are radially symmetric about $T_{0}$.
%In conclusion, $u(x)$ and $v(x)$ are radially symmetric about origin on $x_{1}$ direction.
Since the directions are chosen arbitrarily, we deduce $u(x)$ and $v(x)$ are radially symmetric about origin.$\hfill{} \Box$

\vspace{1cm}
\noindent {\bf Acknowledgements.} \
The authors would like to thank  the anonymous referee
for his/her useful comments and suggestions which help to improve the presentation of the paper greatly.

\end{document}